\documentclass{amsart}
\usepackage{color}
\usepackage{amscd}
\usepackage{graphicx}
\usepackage{hyperref}
\usepackage{amssymb}
\usepackage{amsfonts}
\usepackage{euscript}
\usepackage[all]{xy}

\parindent0em
\parskip1em
\hoffset0em \oddsidemargin10pt \evensidemargin10pt \textwidth40em
\numberwithin{equation}{section}
\linespread{1.1}
\newcommand{\stareq}{\tag{\theequation H}\refstepcounter{equation}}
\renewcommand{\subsection}[1]{\hspace{-\parindent}\refstepcounter{subsection}{\bf (\arabic{section}\alph{subsection}) #1.}\addcontentsline{toc}{subsection}{\bf #1.}}

\newenvironment{nouppercase}{%
  \renewcommand{\uppercasenonmath}[1]{}}{}


\theoremstyle{plain}
\newtheorem{thm}{Theorem}[section]
\newtheorem{theorem}[thm]{Theorem}
\newtheorem{application}[thm]{Application}

\newtheorem{corollary}[thm]{Corollary}

\newtheorem{prop}[thm]{Proposition}
\newtheorem{assumption}[thm]{Assumption}

\newtheorem{remark}[thm]{Remark}

\newtheorem{proposition}[thm]{Proposition}

\newtheorem{lemma}[thm]{Lemma}
\newtheorem{setup}[thm]{Setup}

\newtheorem{conjecture}[thm]{Conjecture}

\newtheorem{discussion}[thm]{Discussion}

\newtheorem*{claim*}{Claim} 
\newtheorem*{lemma*}{Lemma}
\newtheorem*{theorem*}{Theorem}
\newtheorem*{conjecture*}{Conjecture}



\newcommand{\bC}{{\mathbb C}}

\newcommand{\bF}{{\mathbb F}}

\newcommand{\bK}{{\mathbb K}}

\newcommand{\bQ}{{\mathbb Q}}
\newcommand{\bR}{{\mathbb R}}

\newcommand{\bZ}{{\mathbb Z}}

\newcommand{\scrA}{\EuScript A}

\newcommand{\scrL}{\EuScript L}

\newcommand{\scrO}{\EuScript O}
\newcommand{\scrP}{\EuScript P}

\newcommand{\scrW}{\EuScript W}


\newcommand{\half}{{\textstyle\frac{1}{2}}}

\newcommand{\iso}{\cong}
\newcommand{\htp}{\simeq}
\newcommand{\smooth}{C^\infty}


%
\newcommand{\qabla}{\nabla\mkern-13mu/\mkern3mu}

\title[LEFSCHETZ FIBRATIONS]{Fukaya \protect{$A_\infty$}-structures associated to\\ Lefschetz fibrations. III}
\author{Paul Seidel}

\begin{document}
\begin{nouppercase}
\maketitle
\end{nouppercase}

\begin{abstract}
Floer cohomology groups are usually defined over a field of formal functions (a Novikov field). Under certain assumptions, one can equip them with connections, which means operations of differentiation with respect to the Novikov variable. This allows one to write differential equations for Floer cohomology classes. Here, we apply that idea to symplectic cohomology groups associated to Lefschetz fibrations, and establish a relation with enumerative geometry.
\end{abstract}


\section{Introduction}

This paper continues a discussion \cite{seidel12b,seidel14b,seidel15} of the Floer-theoretic structures associated to Lefschetz fibrations. The specific part under consideration is a linear differential equation introduced in \cite{seidel15}, for a pair of functions $(\rho,\sigma)$ of a formal parameter $q$:
\begin{equation} \label{eq:1st-order}
\partial_q \begin{pmatrix} \rho \\ \sigma \end{pmatrix} +
\begin{pmatrix} 0 & \psi \\ 4z^{(2)} \psi & \eta \end{pmatrix} \begin{pmatrix} \rho \\ \sigma \end{pmatrix} = 0.
\end{equation}
The geometric context is that we are looking at a Lefschetz fibration over $\bC P^1$, for which the fibre represents the first Chern class of the total space (this is the case for a rational elliptic surface; or more generally, for the fibrations obtained by blowing up the base locus of an anticanonical Lefschetz pencil). The coefficients $\psi$, $\eta$ and $z^{(2)}$ in \eqref{eq:1st-order} (each a function of $q$) are extracted from the genus zero Gromov-Witten invariants of the total space. In fact, a restriction on those Gromov-Witten invariants must be imposed: that is Assumption \ref{th:psi-eta} below, already extensively discussed in \cite{seidel15}. 

Let's look at \eqref{eq:1st-order} on an elementary level, as a formal differential equation (topologically-minded readers are hereby asked for some patience). In our context $\psi \neq 0$, so we can eliminate $\sigma = -\psi^{-1}\partial_q \rho $, which leaves the second order equation
\begin{equation} \label{eq:2nd-order}
\partial_q^2 \rho + \Big(\eta - \frac{\partial_q \psi}{\psi} \Big) \partial_q \rho - 4z^{(2)} \psi^2 \rho = 0.
\end{equation}
For the logarithmic derivative $\alpha = \rho^{-1}\partial_q \rho$, we get a nonlinear first order equation (a Riccati equation \cite{riccati24})
\begin{equation} \label{eq:nonlinear-1st-order}
\partial_q \alpha + \alpha^2 + \Big(\eta - \frac{\partial_q \psi}{\psi} \Big) \alpha - 4 z^{(2)} \psi^2 = 0.
\end{equation}
Alternatively, let's consider the action of \eqref{eq:1st-order} on the projective line, which concretely means on the quotient $\lambda = \rho^{-1}\sigma = -\psi^{-1}\alpha$. This satisfies an equation closely related to \eqref{eq:nonlinear-1st-order},
\begin{equation} \label{eq:projective-eq}
\partial_q\lambda - \psi \lambda^2 + \eta \lambda + 4 z^{(2)} \psi = 0. 
\end{equation}
The quotient $\theta = \rho_0^{-1}\rho_1$ of two solutions of \eqref{eq:2nd-order} satisfies 
\begin{equation} \label{eq:rho01}
\frac{\partial_q^2\theta}{\partial_q\theta} + \Big(\eta - \frac{\partial_q \psi}{\psi}\Big) + 2 \frac{\partial_q\rho_0}{\rho_0} = 0;
\end{equation}
from that and \eqref{eq:nonlinear-1st-order} one gets a nonlinear third order equation involving the Schwarzian operator \cite{kummer36} $S_q \theta = \partial_q (\partial_q^2 \theta/\partial_q \theta) - \half (\partial_q^2 \theta/\partial_q \theta)^2$,
\begin{equation} \label{eq:schwarz}
S_q \theta + \partial_q \Big(\eta - \frac{\partial_q\psi}{\psi}\Big) + \half \Big(\eta - \frac{\partial_q\psi}{\psi}\Big)^2 + 8 z^{(2)} \psi^2 = 0.
\end{equation}
Finally, one can use \eqref{eq:rho01} to write the solutions of \eqref{eq:2nd-order} in terms of $\theta$,
\begin{equation}
\rho_0 = \exp\Big(-\half \int \frac{\partial_q^2\theta}{\partial_q\theta} + \Big(\eta - \frac{\partial_q \psi}{\psi}\Big) \; \mathit{dq} \Big), \quad \rho_1 = \theta\rho_0.
\end{equation}
%
%

In the application to Lefschetz fibrations (more precisely, ones obtained by blowing up the base locus of an anticanonical Lefschetz pencil), \eqref{eq:projective-eq} has a straightforward enumerative meaning: it describes the $q$-dependence of the eigenvalues of quantum multiplication with the fibre class \cite[Section 3]{seidel15}. On a deeper level, which is where the main motivation lies, it has been conjectured that \eqref{eq:1st-order} governs the $q$-dependence of canonical natural transformations on the Fukaya category of the Lefschetz fibration \cite[Section 4b]{seidel15}; as a consequence, \eqref{eq:schwarz} would then describe a kind of mirror map for the fibre of the Lefschetz fibration. That idea was supported by a form of the Homological Mirror Symmetry conjecture, and by example computations, but lacked a direct geometric interpretation. Our task is to show that versions of \eqref{eq:1st-order}--\eqref{eq:projective-eq} arise in symplectic cohomology. More precisely, we remove a fibre from the Lefschetz fibration, so as to obtain an open symplectic manifold, whose symplectic cohomology we study. To obtain the differential equations above, one replaces the functions in them by distinguished symplectic cohomology classes, and $\partial_q$ by a connection (an operator of differentiation in $q$-direction) acting on symplectic cohomology. The relation between Gromov-Witten invariants and symplectic cohomology has already been the object of considerable study: see e.g.\ \cite{diogo12, diogo-lisi15}, and unpublished work of Borman-Sheridan (one can also consider Symplectic Field Theory as an intermediate theory, and then the relevant literature becomes much larger). What's new here is the appearance of connections, and their interplay both with Gromov-Witten invariants and with the algebraic structure of symplectic cohomology.

There are at least two perspectives on the material developed here. One can focus on the abstract framework surrounding the construction and properties of connections. This leads away from the specific geometric situation, towards more abstract and axiomatic (TQFT) considerations. Alternatively, one can emphasize the concrete computational aspects, in relation with Gromov-Witten theory. We will try to accomodate both tastes. The first part of the paper, consisting of Sections \ref{sec:results}--\ref{sec:motivation}, presents an overview of the constructions and results, including their motivation.
After that, in Sections \ref{sec:conf}--\ref{sec:bv}, we operate in a greatly simplified TQFT-style framework, and explain what can be seen at that level of abstraction. In contrast, Sections \ref{sec:floer}--\ref{sec:operations-on-floer-cohomology} are much more geometric, and set up just enough machinery in (finite-dimensional) Hamiltonian Floer cohomology groups to enable us to carry out the main computations. In Section \ref{sec:symplectic-cohomology}, we translate the outcome of those computations to the (infinite-dimensional) symplectic cohomology groups, and this fairly straightforward process completes our argument.

{\em Acknowledgments.} I would like to thank Nick Sheridan for an illuminating conversation about his joint work with Borman, and Amitai Netser-Zernik for a helpful observation concerning \eqref{eq:more-gw}. Partial support was provided by the Simons Foundation, through a Simons Investigator award, and by NSF grant DMS-1500954.

\section{The results\label{sec:results}}

This section summarizes the Floer-theoretic constructions and results which are the main topic of the paper. Technical details will be kept to a minimum.

\subsection{Connections\label{subsec:sh}}
Symplectic cohomology $\mathit{SH}^*(E)$ \cite{viterbo97a, cieliebak-floer-hofer95} is an invariant of a certain kind of non-compact symplectic manifold $(E^{2n},\omega_E)$. Our version of symplectic cohomology will be linear over a specific (characteristic $0$) field $\bK$, the single-variable Novikov field with real coefficients; elements $f \in \bK$ are formal series
\begin{equation} \label{eq:novikov}
f(q) = c_0 q^{d_0} + c_1 q^{d_1} + \cdots \qquad \text{with $c_i \in \bR$, $d_i \in \bR$, $\textstyle\;\lim_i d_i = \infty$.}
\end{equation}
Note that $\bK$ is closed under differentation $\partial_q$. We'll assume throughout that $c_1(E) = 0$. This allows us to lift the usual $\bZ/2$-grading of symplectic cohomology to a $\bZ$-grading, and it also simplifies the technical aspects of the construction. (Readers who wish for more specifics about the class of symplectic manifolds under consideration are referred to Section \ref{subsec:define-sh}, but should keep in mind that the same ideas apply to other situations as well.)
%

Symplectic cohomology has the structure of a BV (Batalin-Vilko\-vi\-sky) algebra, which one can think of as a Gerstenhaber algebra structure (consisting of a product, denoted here by $\bullet$, and a degree $-1$ Lie bracket) together with a BV operator $\Delta$ (see e.g.\ \cite{seidel07, ritter10}). It also comes with a map
\begin{equation} \label{eq:acc}
B: H^*(E;\bK) \longrightarrow \mathit{SH}^*(E).
\end{equation}
We like to think of $B$ as a version of the construction from \cite{piunikhin-salamon-schwarz94}, hence call it a PSS map. It is a map of BV algebras, if one equips $H^*(E;\bK)$ with the small quantum product (denoted here by $\ast_E$), as well as the zero bracket and zero BV operator. In particular, $1 \in H^0(E;\bK)$ maps to the unit for the product structure, which we denote by $e \in\mathit{SH}^0(E)$. We denote the image of $q^{-1}[\omega_E]$ under \eqref{eq:acc} by $k$, and call it the Kodaira-Spencer class (the terminology is motivated by mirror symmetry). 

There is also a reduced symplectic cohomology group $\mathit{SH}^*(E)_{\mathit{red}}$ (see e.g.\ \cite{bourgeois-oancea09}), which fits into a long exact sequence
\begin{equation} \label{eq:plus-sequence}
\cdots \rightarrow H^*(E;\bK) \stackrel{B}{\longrightarrow} \mathit{SH}^*(E) \longrightarrow \mathit{SH}^*(E)_{\mathit{red}} \rightarrow \cdots
\end{equation}
Based on the vanishing of the BV operator on $H^*(E;\bK)$, one can define a map $\Delta_{\mathit{red}}$ which fits into a diagram
\begin{equation} \label{eq:delta-minus}
\xymatrix{
\mathit{SH}^*(E) \ar[d]_-{\Delta} \ar[r]  &
\mathit{SH}^*(E)_{\mathit{red}} \ar[dl]^-{\Delta_{\mathit{red}}} 
\\
\mathit{SH}^{*-1}(E),
}
\end{equation}
and satisfies 
\begin{equation} \label{eq:delta-delta-red}
\Delta \Delta_{\mathit{red}} = 0.
\end{equation}

As should already be obvious from the discussion so far, we do not require that $[\omega_E] \in H^2(E;\bR)$ is trivial, or even that it vanishes on spherical homology classes (such assumptions occur in most, but not all, of the literature concerning symplectic cohomology; among the exceptions are \cite{ritter-smith12, harris13, groman15}). Eventually, we will impose a vanishing condition, but one which is weaker and not expressed in terms of classical topology:

\begin{assumption} \label{th:as-1}
The Kodaira-Spencer class vanishes. 
\end{assumption}

Whenever Assumption \ref{th:as-1} holds, we want to fix a choice of bounding cochain, in the following sense. Consider the chain complex underlying symplectic cohomology. Pseudo-holomorphic curve theory yields a cocycle in that complex which represents $k$. By assumption, this is nullhomologous, and the desired choice is that of a cochain which bounds it. Two such choices have the same effect if they differ by a nullhomologous cocycle, so the essential amount of freedom is an affine space over $\mathit{SH}^1(E)$. A choice of bounding cochain determines a class
\begin{equation} \label{eq:t-class}
t \in \mathit{SH}^1(E)_{\mathit{red}},
\end{equation}
which maps to $q^{-1}[\omega_E]$ under the connecting homomorphism from \eqref{eq:plus-sequence}; and that gives rise to a class
\begin{equation} \label{eq:a-class}
a = \Delta_{\mathit{red}} t \in \mathit{SH}^0(E),
\end{equation}
which, in view of \eqref{eq:delta-delta-red}, satisfies
\begin{equation} \label{eq:delta-a-is-zero}
\Delta a = 0.
\end{equation}
If one changes the bounding cochain by adding $\alpha \in \mathit{SH}^1(E)$, \eqref{eq:t-class} changes by the image of $\alpha$ in $\mathit{SH}^1(E)_{\mathit{red}}$, and therefore, the corresponding change in \eqref{eq:a-class} is
\begin{equation} \label{eq:change-a}
\tilde{a} = a + \Delta \alpha.
\end{equation}

\begin{prop} \label{th:1}
If Assumption \ref{th:as-1} holds, a choice of bounding cochain determines a connection $\nabla$ on symplectic cohomology, which is compatible with the Gerstenhaber algebra structure.
\end{prop}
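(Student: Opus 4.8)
The plan is to construct $\nabla$ by correcting the naive operation of $q$-differentiation at the chain level, absorbing the correction into the choice of bounding cochain. Work with the symplectic cochain complex $(\mathit{SC}^*(E),d)$ over $\bK$ computing $\mathit{SH}^*(E)$; it carries chain-level refinements of the BV algebra structure (a homotopy-BV structure, with fixed homotopies witnessing the Poisson rule and the Jacobi identity) as well as a chain-level PSS map. Differentiation of Novikov coefficients defines an $\bR$-linear operator $\partial_q$ on $\mathit{SC}^*(E)$ of degree $0$, which satisfies the Leibniz rule over $\bK$ but is not a chain map.

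The geometric heart of the argument is the identification of the failure $[\partial_q,d]=d\,\partial_q-\partial_q\,d$. Since $\partial_q(d\circ d)=0$, this is a degree $1$ chain endomorphism of $\mathit{SC}^*(E)$; concretely it is the Floer differential in which each rigid cylinder $u$ is reweighted by $q^{-1}$ times its symplectic area $\int u^*\omega_E$, equivalently by $\int u^*\Theta$ for a closed two-form $\Theta$ cohomologous to $q^{-1}\omega_E$. A divisor-axiom-style comparison --- deforming $\Theta$, adjoining a free interior marked point to the cylinder, and relating the resulting configuration spaces to the PSS moduli spaces by a breaking-and-gluing argument --- shows that, up to an explicit chain homotopy, this operation equals $x\mapsto[\kappa,x]$, where $\kappa\in\mathit{SC}^2(E)$ is a cocycle representing the Kodaira-Spencer class $k=B(q^{-1}[\omega_E])$ and $[-,-]$ is the chain-level Gerstenhaber bracket. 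In particular $[\partial_q,d]$ induces the operation $[k,-]$ on $\mathit{SH}^*(E)$, which vanishes by Assumption~\ref{th:as-1}.

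Hence $[\partial_q,d]$ is a coboundary in the complex of operators, and a primitive is exactly what a bounding cochain supplies: for $\beta\in\mathit{SC}^1(E)$ with $d\beta=\kappa$, the (homotopy) Leibniz rule for the bracket gives $[\kappa,-]=[d\beta,-]=[d,[\beta,-]]$ up to homotopy, so $[\partial_q,d]=[d,\mathcal{N}]$ for a $\bK$-linear operator $\mathcal{N}$ of degree $0$ assembled from $[\beta,-]$ and the homotopies above. Set $\nabla:=\partial_q-\mathcal{N}$. Then $[\nabla,d]=0$, so $\nabla$ descends to $\mathit{SH}^*(E)$; it is a connection, since $\partial_q$ satisfies the Leibniz rule over $\bK$ and $\mathcal{N}$ is $\bK$-linear. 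Changing $\beta$ by a cocycle $\alpha$ changes $\mathcal{N}$, hence $\nabla$, by $[\alpha,-]$, which on cohomology is an inner derivation depending only on $[\alpha]\in\mathit{SH}^1(E)$ --- matching the asserted freedom in the choice of bounding cochain.

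For compatibility with the Gerstenhaber structure, run the same two-form-insertion analysis on the chain-level product and bracket operations. The obstruction to $\partial_q$ being a derivation for $\bullet$ (resp.\ for $[-,-]$) is a chain-level expression whose induced map on $\mathit{SH}^*(E)$ is again assembled out of $k$ --- here one uses that $B$ is a morphism of BV algebras --- and therefore vanishes; the residual chain-level discrepancy is cancelled, up to boundaries, by the discrepancy produced by $\mathcal{N}$, which is controlled by the chosen homotopies for the Poisson rule and the Jacobi identity. Passing to cohomology then shows $\nabla$ is a derivation for both $\bullet$ and $[-,-]$. I expect the second paragraph to be the main obstacle: turning the slogan ``$q$-differentiation of a Floer operation $=$ insertion of the Kodaira-Spencer cocycle'' into a precise chain-level identity, with all the requisite homotopies, is where the genuine pseudoholomorphic curve analysis --- and the promised link with Gromov-Witten theory --- resides, while the rest is algebra within the homotopy-BV framework.
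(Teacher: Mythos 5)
Your outline follows the same route as the paper: identify $[\partial_q,d]$ geometrically (reweight rigid Floer cylinders by intersection number with a cycle Poincar\'e dual to $q^{-1}[\omega_E]$), show by a marked-point/breaking comparison that this operation is chain-homotopic to the bracket with the PSS image of $q^{-1}[\omega_E]$ --- this is exactly Lemma~\ref{th:r-is-b} ($r_K\htp \pm[b_K,\cdot]^{-1}$) --- and then use a bounding cochain $\theta$ with $d\theta=k$ to cancel the defect, appealing to the chain-level Leibniz rule for the bracket. Compatibility with the product and bracket then comes from ``pulling out the marked point'' once more and comparing with the Poisson/Jacobi homotopies, which is Proposition~\ref{th:nabla-gerstenhaber} in the abstract model and Proposition~\ref{th:ninth-circle} in the Floer realization. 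You also correctly flag the geometric heart (the divisor-axiom-style comparison) as the main technical work.

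The genuine gap is in your opening premise: ``Work with the symplectic cochain complex $(\mathit{SC}^*(E),d)$ \ldots\ it carries chain-level refinements of the BV algebra structure.'' In the Floer-theoretic implementation there is no single such complex. The chain-level pair-of-pants and bracket strictly raise the Hamiltonian slope, so they land in a \emph{different} Floer complex, and $\partial_q$ is only related to the geometric operation $r_{q^{-1}\Omega}$ after composing with a continuation map. The paper therefore works with a direct system of Floer complexes, and the actual connection formula \eqref{eq:alt-nabla-m} reads $[x]\mapsto[C(\partial_q x)-h^+(x)+\rho^c_{q^{-1}\Omega}(x)-[\theta,x]^c]$, where $C$ is a continuation map and $h^+$ is a compensating homotopy; checking the derivation property (Proposition~\ref{th:ninth-circle}) requires ``padding'' every term in the Leibniz identity with additional continuation maps so that both sides live in a common target, leading to the appreciably more intricate homotopy statement \eqref{eq:to-show}. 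So: right strategy, and the abstract picture of ``correct $\partial_q$ by $[\beta,-]$'' is exactly what Sections~\ref{sec:conf}--\ref{sec:bv} formalize, but a real portion of the proof consists of bending your arrows --- which in your sketch all start and end at the same object --- so that they actually compose.
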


Here, by a connection, we mean an operator of differentiation in $\partial_q$-direction: an additive (and grading-preserving) endomorphism $\nabla$ which satisfies
\begin{equation} \label{eq:connection-property}
\nabla( f x) = f \nabla x + (\partial_q f) x \quad \text{for $f \in \bK$.}
\end{equation}
Of course, any $\bK$-vector space can be equipped with a connection, simply by differentiating the coefficients with respect to some choice of basis. What makes Proposition \ref{th:1} nontrivial is compatibility with the Gerstenhaber algebra structure, which means that $\nabla$ acts as a derivation for both the product and Lie bracket: 
\begin{align}
& \label{eq:nabla-derivation} \nabla(x_2 \bullet x_1) = \nabla x_2 \bullet x_1 + x_2 \bullet \nabla x_1, \\
& \label{eq:nabla-derivation-2} \nabla [x_2,x_1] = [\nabla x_2,x_1] + [x_2,\nabla x_1].
\end{align}
Note that \eqref{eq:nabla-derivation} implies
\begin{equation} \label{eq:nabla-e-is-0}
\nabla e = 0.
\end{equation}
Our connection depends on the choice of bounding cochain. Changing that cochain by $\alpha$ affects it as follows:
\begin{equation} \label{eq:nabla-add}
\tilde\nabla x = \nabla x - [\alpha,x].
\end{equation}
Consider the interaction of the connection and the BV operator, as measured by the $\bK$-linear endomorphism $\nabla \Delta - \Delta \nabla$. From the general relation between the BV operator, the product, and the bracket, it follows that $\nabla \Delta - \Delta \nabla$ is a (degree $-1$) derivation for the product. The following result sharpens that statement, by saying that this derivation is inner:

\begin{prop} \label{th:a-element}
The connection from Proposition \ref{th:1} satisfies
\begin{equation} \label{eq:delta-nabla}
\nabla \Delta x = \Delta \nabla x - [a,x].
\end{equation}
\end{prop}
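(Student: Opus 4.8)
The plan is to prove \eqref{eq:delta-nabla} at the chain level, by a TQFT/moduli-space argument, and then pass to cohomology. First I would fix chain models. Let $(C,d)$ be the cochain complex underlying $\mathit{SH}^*(E)$. The BV operator comes from a chain endomorphism $\delta$ of degree $-1$ with $[d,\delta]=0$, built from cylinders carrying a free rotation of one asymptotic marker. The connection is induced by $\nabla=\nabla^\flat+\Xi_\beta$, where $\nabla^\flat$ is the naive differentiation operator (differentiating the $q$-coefficients of $d$ in the chosen basis of generators), which fails to be a chain map precisely by the cocycle $\dot d:=[\nabla^\flat,d]$ — geometrically, the differential carrying an extra ``$\int q^{-1}\omega_E$ over the domain'' weight — and $\Xi_\beta$ is the correction term determined by the bounding cochain $\beta$, characterized by $[d,\Xi_\beta]=-\dot d$; its $\beta$-dependence enters through inserting $\beta$ into a two-input operation, which is exactly why changing $\beta$ by a cocycle $\alpha$ changes $\nabla$ by $-[\alpha,\cdot]$ as in \eqref{eq:nabla-add}. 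Dually, $a$ has a chain representative $\mathsf a\in C^0$ obtained by feeding $\beta$ into the rotation operation and then correcting by the chain-level nullhomotopy that witnesses the vanishing of $\Delta$ on the image of $B$; this is the chain-level content of $a=\Delta_{\mathit{red}}t$ with $t$ represented by $\beta$.

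The first step is to expand $\nabla\Delta-\Delta\nabla$ in these models, obtaining $[\nabla^\flat,\delta]+[\Xi_\beta,\delta]$. The term $[\nabla^\flat,\delta]=\dot\delta$ is the $q$-derivative of the structure constants of $\delta$, i.e.\ the rotation operation carrying an extra ``$\int q^{-1}\omega_E$ over the domain'' weight; the term $[\Xi_\beta,\delta]$ is governed by the $\beta$-insertion and by how $\delta$ interacts with the Gerstenhaber bracket. The main step is then to realize $\dot\delta+[\Xi_\beta,\delta]+[a,\cdot]$ as an explicit $[d,H]$, by introducing a single moduli space that sees all of this at once: cylinders carrying simultaneously a rotating asymptotic marker and one interior marked point supporting the differentiation datum (a cycle Poincaré-dual to $q^{-1}\omega_E$, or, once the marked point has run to an end, the PSS cocycle $d\beta$, respectively the insertion of $\beta$). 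Analyzing the boundary of the associated $1$-dimensional parametrized moduli space, I expect the strata to be: (i) the two compositions $\nabla\Delta$ and $\Delta\nabla$, up to sign, when the differentiation datum sits ``far'' from the rotation on either side; (ii) homotopy terms $[d,H]$ from broken cylinders; and (iii) the degeneration in which the interior marked point collides with the rotating marker, which produces a pair-of-pants configuration, i.e.\ the Gerstenhaber bracket, with one input obtained from the differentiation datum passing through the rotation — and after using $d\beta=$ (the PSS cocycle) together with the nullhomotopy of $\Delta|_{\operatorname{im}B}$, that input is exactly the representative $\mathsf a$ of $a$. Assembling (i)--(iii) with their signs yields $\nabla\Delta x-\Delta\nabla x+[a,x]=[d,H]$ on the chain level.

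Finally I would pass to cohomology to get \eqref{eq:delta-nabla}, and record the consistency checks: the identity is invariant under change of bounding cochain by \eqref{eq:nabla-add} and \eqref{eq:change-a} combined with the BV-algebra relations, and post-composing \eqref{eq:delta-nabla} with $\Delta$ recovers $[\Delta a,\cdot]=0$, in line with \eqref{eq:delta-a-is-zero}; it is also reassuring that $\nabla\Delta-\Delta\nabla$ is forced to be a degree $-1$ derivation of $\bullet$ (this follows formally from \eqref{eq:nabla-derivation}, \eqref{eq:nabla-derivation-2} and the BV relations), consistent with it being $-[a,\cdot]$.

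The main obstacle is the step (ii)--(iii) analysis: the compactness and gluing description of the parametrized moduli space, and especially the orientation bookkeeping that pins down both the sign and the coefficient of the bracket term and matches it with $a=\Delta_{\mathit{red}}t$ rather than, say, with $\delta\beta$ alone or a multiple thereof. Getting the ``marked point meets rotating marker'' combinatorics right, and correctly accounting for the two auxiliary homotopies — the one hidden in $\Xi_\beta$ and the one entering the chain representative of $a$ via the vanishing of $\Delta$ on $\operatorname{im}B$ — is where the real work lies.
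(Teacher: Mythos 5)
Your proposal is correct in spirit but takes a genuinely different route from the paper, and I think it is worth being precise about the difference, because the paper's detour is where most of the leverage comes from. You propose to build a \emph{single} parametrized moduli space — cylinders carrying both a rotating asymptotic marker (for $\Delta$) and a roaming interior marked point (for differentiation, with the usual trichotomy of $q^{-1}\Omega$-incidence, PSS cocycle, or $\theta$-insertion as that point runs to an end) — and to read off $\nabla\Delta-\Delta\nabla+[a,\cdot]=[d,H]$ directly from its boundary strata. The paper instead \emph{decomposes} the statement through the auxiliary connection $\nabla^{-1}$: it introduces the framing-twisted bracket $[\cdot,\cdot]^{-1}$ and the homotopy $\rho^{-1}$ (Figures \ref{fig:bv}, \ref{fig:rho-1}), proves that $\nabla^{-1}$ commutes with $\Delta$ (Proposition \ref{th:nabla-bv}, via the homotopy $\varpi$ of \eqref{eq:chi-operation}), and separately shows $\nabla-\nabla^{-1}\simeq a\,\bullet$ (Lemma \ref{th:01-connections}, via $\eta$ of \eqref{eq:epsilon-homotopy} and the secondary nullhomotopy \eqref{eq:secondary-epsilon} that produces the $-\kappa$ correction inside $a=\Delta\theta-\kappa$); formula \eqref{eq:delta-nabla} then follows by pure algebra from \eqref{eq:delta-bracket} and $\Delta a=0$. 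In the Floer-theoretic realization the same split appears as Proposition \ref{th:ninth-circle-2} plus Lemma \ref{th:01-connections-2}.

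What the paper's route buys, and what your proposal would have to reconstruct by hand, is exactly the framing bookkeeping at the boundary stratum you describe loosely as ``the marked point collides with the rotating marker.'' Your unframed $Z$ point meeting a boundary whose asymptotic marker is itself rotating is precisely the interaction that $[\cdot,\cdot]^{-1}$, $\rho^{-1}$ and $\varpi$ are engineered to control; there is no single clean ``pair-of-pants with input $a$'' sitting at that stratum, but rather a chain of identifications \textemdash\ first to $[k,\cdot]^{-1}$ via $\rho^{-1}$, then from $[k,\cdot]^{-1}$ to $[k,\cdot]+(\Delta k)\bullet(\cdot)$ via $\eta$, then from $\Delta k$ to $-d\kappa$ \textemdash\ which is why $a$ comes out as $\Delta\theta-\kappa$ with that precise correction term. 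You flag this correctly as the main obstacle; the paper's answer is to isolate it as a separate, reusable statement (``$\nabla^{-1}$ is $\Delta$-flat'') rather than to fight it inside one large boundary analysis. So: not a gap in the sense of a wrong step, but a route that is substantially harder to execute than the paper's, precisely because it forgoes the key structural observation ($\nabla^{-1}$ and $[\cdot,\cdot]^{-1}$) that the paper introduces to make the framing and sign bookkeeping tractable.
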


One can think of $\nabla$ as part of a family of connections,
\begin{equation} \label{eq:c-term}
\nabla^c x = \nabla x + ca \bullet x, \quad c \in \bK.
\end{equation}
In particular, from \eqref{eq:nabla-e-is-0} one gets
\begin{equation} \label{eq:nabla-c-e-2}
\nabla^c e = c\,a.
\end{equation}
One of those connections, $\nabla^{-1}$, is compatible with the BV operator (but not with the product or bracket, in general): by \eqref{eq:delta-a-is-zero}, \eqref{eq:delta-nabla}, and the definition \eqref{eq:c-term},
\begin{equation} \label{eq:-1-connection}
\nabla^{-1}(\Delta x) - \Delta(\nabla^{-1} x) = -[a,x] - a \bullet \Delta x + \Delta(a \bullet x) = (\Delta a) \bullet x = 0.
\end{equation}

\begin{remark} \label{th:c-is-minus-one}
From \eqref{eq:nabla-add}, one can derive formulae for how a change of bounding cochain affects $\nabla^c$. Again, the case $c = -1$ is of particular interest:
\begin{equation} \label{eq:-1-ambiguity}
\begin{aligned}
\tilde{\nabla}^{-1} x &= \tilde{\nabla}x - \tilde{a} \bullet x = 
\nabla x - [\alpha,x] - (a + \Delta \alpha) \bullet x \\ &= \nabla^{-1}x - \Delta(\alpha \bullet x) - \alpha \bullet \Delta x. \end{aligned}
\end{equation}
Combining \eqref{eq:-1-connection} and \eqref{eq:-1-ambiguity} yields the following: if one considers $\Delta$ as a differential on $\mathit{SH}^*(E)$, then $\nabla^{-1}$ induces a connection on its cohomology, and that induced connection is independent of the choice of bounding cochain.
\end{remark}

\subsection{Lefschetz fibrations\label{subsec:define-lefschetz}}
Take a symplectic Lefschetz fibration 
\begin{equation} \label{eq:lefschetz}
p: F \longrightarrow \bC P^1,
\end{equation}
where $F$ is a manifold without boundary, and the map $p$ is proper. Choose a smooth fibre $M$. We also assume that $c_1({F})$ is Poincar{\'e} dual to $[M]$. Consider genus $0$ pseudo-holomorphic curves in ${F}$ which have degree $k$ over the base of \eqref{eq:lefschetz}, and which come with one marked point. The evaluation map on the moduli space of such curves gives rise to Gromov-Witten invariants
\begin{equation} \label{eq:zk}
z^{(k)} \in H^{4-2k}({F};\bK), 
\end{equation}
which are zero if $k<0$. Let's remove a fibre from \eqref{eq:lefschetz} to get an open symplectic manifold 
\begin{equation} \label{eq:e}
E = F \setminus M. 
\end{equation}
By assumption, $c_1(E) = 0$. Our main Floer-theoretic object of study is the symplectic cohomology of $E$. 

\begin{lemma} \label{th:z1-vanish}
The cohomology class $z^{(1)}|E$ lies in the kernel of \eqref{eq:acc}.
\end{lemma}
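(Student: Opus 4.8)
The plan is to realise $z^{(1)}|E$ as the image of a reduced symplectic cohomology class under the connecting homomorphism of \eqref{eq:plus-sequence}. Extending that exact sequence one term to the left, $\ker\bigl(B\colon H^2(E;\bK)\to \mathit{SH}^2(E)\bigr)$ equals the image of the connecting homomorphism $\delta\colon \mathit{SH}^1(E)_{\mathit{red}}\to H^2(E;\bK)$. So it suffices to produce a class $\tau\in \mathit{SH}^1(E)_{\mathit{red}}$ with $\delta\tau = z^{(1)}|E$; then $B(z^{(1)}|E) = B\delta\tau = 0$.

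The class $\tau$ should be the one carried by the shortest orbits at infinity. Since $[M]$ is Poincar\'e dual to $c_1(F)$, the normal bundle of $M$ in $F$ is pulled back from $\bC P^1$ and hence trivial, so a neighbourhood of $M$ looks like $M\times D$ and the end of $E$ like $M\times(D\setminus\{0\})$; the Reeb flow rotates the disc factor, and the shortest closed orbits form a Morse--Bott family parametrised by $M$. Let $\tau$ be the degree $1$ class this family contributes to $\mathit{SH}^1(E)_{\mathit{red}}$, the grading being pinned down by $c_1(E)=0$ and the resulting $\bZ$-grading; one checks that it is represented by a cocycle. To identify $\delta\tau$, recall that $\delta$ counts Floer solutions emanating from an orbit at infinity and converging, at the opposite end, to a critical point of a Morse function on $E$, with that critical point recorded as a class in $H^*(E;\bK)$. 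Stretching the neck along the circle bundle $M\times\partial D$, such a solution degenerates into a punctured pseudo-holomorphic curve in the completion $\hat E$, asymptotic to the simple orbit over some $m\in M$, to which one glues the fibre disc $\{m\}\times D\subset F$, which meets $M$ transversally once. The result is a genus zero curve in $F$ of degree one over the base of \eqref{eq:lefschetz} carrying one marked point, and conversely every such curve arises this way. Hence the count defining $\delta\tau$ agrees with the Gromov--Witten count defining $z^{(1)}$, evaluated on cycles in $E$, so $\delta\tau = z^{(1)}|E$.

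The main obstacle is this last, neck-stretching step: one must establish the relevant compactness statement (limits of the Floer solutions in question are exactly buildings of the stated shape), deal with multiply covered orbits and with reducible configurations having components in fibres of $p$, and match orientations and the Morse--Bott bookkeeping on the two sides. This transfer between the Gromov--Witten theory of $F$ and the symplectic cohomology of $E$ is exactly what the geometric part of the paper (Sections~\ref{sec:floer}--\ref{sec:operations-on-floer-cohomology}) is built to handle, and it is also treated in \cite{diogo12, diogo-lisi15, borman-sheridan15}; by comparison, the homological-algebra part of the argument above is routine.
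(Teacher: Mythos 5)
Your proposal is a genuinely different argument from the paper's, and it is worth comparing the two carefully. You aim to realise $z^{(1)}|E$ as the image of a class $\tau\in\mathit{SH}^1(E)_{\mathit{red}}$ under the connecting map of \eqref{eq:plus-sequence}, constructing $\tau$ from the Morse--Bott family of shortest Reeb orbits over $M$ and identifying $\delta\tau$ by stretching the neck along $M\times\partial D$. This is close in spirit to the approaches of \cite{diogo12, diogo-lisi15, borman-sheridan15} and the log PSS morphism of \cite{ganatra-pomerleano}. The paper does something considerably more elementary: it works directly with maps into the compact manifold $F$ (not into the completion $\hat E$ and its symplectisation), using the projection $p$ to get the maximum-principle controls of Lemmas~\ref{th:max-principle-1}--\ref{th:nonnegative-2} instead of SFT compactness. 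Concretely, the paper constructs an explicit Floer cochain $\nu$ with $d\nu = b_{Z^{(1)}|E}$ by a one-parameter family of thimbles in $F$ carrying two marked points, one sent to $F_0$ and one to a reference fibre $F_\dag$; when the parameter $\sigma\to+\infty$ a $\tilde J$-holomorphic sphere bubbles off at the $F_0$-marked point via ordinary Gromov compactness, and the bubble (suitably normalised) is precisely a configuration contributing to $Z^{(1)}$.

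There is one inaccuracy you should fix, because it would mislead a reader about what the rest of the paper provides. You write that the neck-stretching/compactness transfer ``is exactly what the geometric part of the paper (Sections~\ref{sec:floer}--\ref{sec:operations-on-floer-cohomology}) is built to handle.'' It is not. Those sections deliberately do \emph{not} carry out SFT neck-stretching: they never complete $E$ to $\hat E$, never work with a symplectisation, and never deal with buildings with levels. All compactness there is classical Gromov compactness for maps into the closed manifold $F$, together with the algebraic intersection-number bookkeeping against $F_0$. So if you wanted to follow your route, you would have to import the SFT apparatus (and its transversality and orientation conventions) from outside; that is a real cost, not something free within the paper's framework. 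A second, smaller point: the paper's proof is more than an existence statement --- it exhibits a specific bounding cochain $\nu$, which feeds into \eqref{eq:z1-rel}, \eqref{eq:corrected-beta}, and ultimately the definition \eqref{eq:s-element} of the Borman--Sheridan class. Your $\tau$ would give \emph{some} bounding cochain, but you would then need a further argument to relate it to the one used downstream. Your approach buys a conceptually appealing link to Reeb dynamics and $\mathit{SH}_{\mathit{red}}$; the paper's approach buys self-containedness within the technology it has already set up.
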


The proof will exhibit a geometrically defined bounding cochain for $B(z^{(1)}|E)$. In analogy with the previous discussion of Assumption \ref{th:as-1}, the bounding cochain determines a class in $\mathit{SH}^1(E)_{\mathit{red}}$ which goes to $B(z^{(1)}|E)$ under the connecting map from \eqref{eq:plus-sequence}; and by applying $-\Delta_{\mathit{red}}$, another class
\begin{equation} \label{eq:s-element}
s \in \mathit{SH}^0(E).
\end{equation}
The last-mentioned class has an alternative and more direct description, in terms of pseudo-holomorphic thimbles going through $M$ (this goes back to unpublished work of Borman-Sheridan, see also \cite{ganatra-pomerleano}). With that description in mind, we call $s$ the Borman-Sheridan class.

\begin{remark}
From the perspective of \cite{seidel14b}, the Borman-Sheridan class can be viewed as a special case of a more general idea. Consider a Lefschetz fibration $p: E \rightarrow \bC$, which does not necessarily extend over $\bC P^1$. Let $\mu$ be the monodromy around $\infty$, which is a symplectic automorphism of the fibre. Its fixed point Floer cohomology $\mathit{HF}^*(\mu)$ comes with a map (which already played a role in \cite[Conjecture 6.1]{seidel00b})
\begin{equation} \label{eq:mu-map}
\mathit{HF}^*(\mu) \longrightarrow H^*(E;\bK),
\end{equation}
obtained by counting sections of $p$ with suitable asymptotic conditions. It turns out that the composition of this map and \eqref{eq:acc} is always zero (this is a generalization of Lemma \ref{th:z1-vanish}). Now consider the double composition
\begin{equation}
\mathit{HF}^*(\mu) \longrightarrow H^*(E;\bK) \xrightarrow{B} \mathit{SH}^*(E) \xrightarrow{\Delta} \mathit{SH}^{*-1}(E).
\end{equation}
This is zero for two different reasons: the one which we have just mentioned, and also because $\Delta$ vanishes on the image of the PSS map. By subtracting the two underlying nullhomotopies from each other, one gets a map
\begin{equation} \label{eq:secondary-map}
\mathit{HF}^*(\mu) \longrightarrow \mathit{SH}^{*-2}(E).
\end{equation}
In the specific situation arising from \eqref{eq:lefschetz}, $\mu$ is essentially the identity map; more precisely, because of our assumption on $c_1(F)$, one has $\mathit{HF}^*(\mu) \iso H^{*-2}(M;\bK)$. One plugs $1 \in H^0(M;\bK)$ into \eqref{eq:secondary-map} to obtain \eqref{eq:s-element}. \end{remark}

So far, our observations have not depended on making any assumptions on the enumerative geometry of $F$. We will now introduce the key restriction:

\begin{assumption} \label{th:psi-eta}
There are $\psi, \eta \in \bK$ (with $\psi$ necessarily nonzero), such that
\begin{equation} \label{eq:express-o}
q^{-1}[\omega_F] = \psi z^{(1)} - \eta [M] \in H^2(F;\bK).
\end{equation}
\end{assumption}

In order for \eqref{eq:express-o} to hold, $z^{(1)}$ must be nonzero, which implies that the Lefschetz fibration must have pseudo-holomorphic sections (for any choice of compatible almost complex structure which makes the map \eqref{eq:lefschetz} pseudo-holomorphic). As we will now explain, there is a natural source of examples where that is the case.

\begin{application} \label{th:pencil}
Start with a Lefschetz pencil of anticanonical hypersurfaces on a monotone symplectic manifold (or algebro-geometric Fano variety). Construct \eqref{eq:lefschetz} by blowing up the base locus of the pencil. Write $D \subset F$ for the exceptional divisor. The symplectic class can be chosen to be
\begin{equation} \label{eq:symplectic-blowup-class}
[\omega_F] = [D] + \gamma [M] \quad \text{for $\gamma \gg 0$.}
\end{equation}
The obvious ruling of $D$ by lines gives a contribution of $q^{\gamma-1} [D]$ to $z^{(1)}$, and all other terms have powers $q^{\gamma}, q^{\gamma+1},\dots$. If we assume additionally that 
\begin{equation} \label{eq:betti-2}
\mathrm{dim}\, H^2(F;\bR) = 2,
\end{equation}
then 
\begin{equation}
\begin{aligned}
q^{-\gamma} z^{(1)} 
= \, & (1 + \text{positive integer powers of $q$})\, q^{-1}[\omega_F] \\ & + \text{(nonnegative integer powers of $q$)} \,[M].
\end{aligned}
\end{equation}
Assumption \ref{th:psi-eta} will therefore be satisfied. One example is the algebro-geometric pencil of Calabi-Yau hypersurfaces on $\bC P^n$, $n \geq 3$. There is also a useful generalization, in which one replaces $H^2(F;\bR)$ with the subspace which is invariant under a suitable automorphism group \cite[Lemma 3.3]{seidel15}. That generalization applies e.g.\ to the pencil of elliptic curves on $\bC P^2$ (giving rise to a space $F$ which is a rational elliptic surface).
\end{application}

\begin{remark}
As explained in \cite{seidel15}, allowing formal bulk deformations of the symplectic form greatly extends the reach of Assumption \ref{th:psi-eta}, for appropriately twisted versions of all the Floer-theoretic structures. It would be possible to allow such deformations throughout the current paper as well, but we won't do that, in the interest of simplicity.
\end{remark}

\subsection{Main results\label{subsec:main}}
Returning to the main thread of our argument, we notice that, by Lemma \ref{th:z1-vanish}, Assumption \ref{th:psi-eta} implies Assumption \ref{th:as-1}. One can be a little more precise:

\begin{lemma} \label{th:bounding}
Suppose that Assumption \ref{th:psi-eta} holds. Then, for a suitable choice of bounding cochain, the class \eqref{eq:a-class} is given by
\begin{equation}
\label{eq:a-s} 
a = -\psi s.
\end{equation}
\end{lemma}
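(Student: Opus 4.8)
The plan is to relate the two bounding-cochain constructions at the chain level, using the defining property \eqref{eq:express-o}. Recall that Assumption \ref{th:as-1} holds because $k = B(q^{-1}[\omega_F]|E)$ is nullhomologous, and by Lemma \ref{th:z1-vanish} the class $B(z^{(1)}|E)$ is nullhomologous with a \emph{geometrically defined} bounding cochain. Applying $B$ to \eqref{eq:express-o} and restricting to $E$ (where $[M]|E = 0$, since $M$ is the fibre we removed) gives the cocycle identity, at the level of $\mathit{SH}^*$-classes,
\begin{equation} \label{eq:k-is-psi-z1}
k = \psi \cdot (z^{(1)}|E).
\end{equation}
So the obvious choice of bounding cochain for $k$ is $\psi$ times the geometric bounding cochain for $B(z^{(1)}|E)$ supplied by the proof of Lemma \ref{th:z1-vanish}. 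First I would make this precise on the chain complex $C$ underlying symplectic cohomology: if $\beta \in C^0$ is the geometric cochain with $d\beta$ representing $B(z^{(1)}|E)$, then $\psi\beta$ (using that $\partial_q$ plays no role here since we are only bounding, and $d$ is $\bK$-linear — wait, one must be slightly careful: $d(\psi\beta) = \psi\, d\beta$ since $d$ is $\bK$-linear, so this is fine) bounds a representative of $k$, up to the correction coming from $\eta[M]|E$, which vanishes on $E$.

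Next I would trace through the construction of the classes \eqref{eq:t-class} and \eqref{eq:s-element}. The bounding cochain for $k$ produces $t \in \mathit{SH}^1(E)_{\mathit{red}}$ mapping to $q^{-1}[\omega_E]$ under the connecting map of \eqref{eq:plus-sequence}; the geometric bounding cochain for $B(z^{(1)}|E)$ produces, by the same recipe, a class in $\mathit{SH}^1(E)_{\mathit{red}}$ mapping to $z^{(1)}|E$, call it $t_s$, with $s = -\Delta_{\mathit{red}} t_s$. Since the two bounding cochains are related by the scalar $\psi \in \bK$, and the whole construction (the map $C \to \mathit{SH}^*_{\mathit{red}}$ and the operator $\Delta_{\mathit{red}}$) is $\bK$-linear, one gets $t = \psi\, t_s$ in $\mathit{SH}^1(E)_{\mathit{red}}$ — this is consistent with the connecting map, since it sends $t \mapsto q^{-1}[\omega_E] = \psi(z^{(1)}|E)$ by \eqref{eq:k-is-psi-z1}. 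Applying $\Delta_{\mathit{red}}$ and using $a = \Delta_{\mathit{red}} t$ from \eqref{eq:a-class} and $s = -\Delta_{\mathit{red}} t_s$ then yields $a = \Delta_{\mathit{red}}(\psi t_s) = \psi\,\Delta_{\mathit{red}} t_s = -\psi s$, which is \eqref{eq:a-s}.

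The statement "for a suitable choice of bounding cochain" is exactly what allows us to pick the bounding cochain for $k$ to be $\psi$ times the geometric one rather than an arbitrary one; different choices would shift $a$ by $\Delta\alpha$ as in \eqref{eq:change-a}, and the point is merely that this particular choice realizes \eqref{eq:a-s} on the nose.

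I expect the main obstacle to be the bookkeeping at the chain level: the cochain bounding $B(q^{-1}[\omega_F]|E)$ is built from the full class $q^{-1}[\omega_F]$, and one must check that the $-\eta[M]$ summand genuinely contributes nothing after restriction to $E$ — i.e.\ that the geometric count defining the bounding cochain, when fed the class $[M]$, restricts to zero on $E$, not just that $[M]|E = 0$ in cohomology. This requires knowing that the PSS-type construction behind \eqref{eq:acc} is natural enough that it factors through cohomology classes on $F$ before restriction, or equivalently that one can choose the chain-level representative of $q^{-1}[\omega_F]|E$ to be $\psi$ times the representative of $z^{(1)}|E$ exactly. The secondary subtlety is compatibility of $\Delta_{\mathit{red}}$ with the identification, but since $\Delta_{\mathit{red}}$ is a fixed $\bK$-linear operator defined via \eqref{eq:delta-minus} independently of any bounding data, this should be automatic once the chain-level relation $t = \psi t_s$ is established.
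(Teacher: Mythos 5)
Your plan is on the right track in spirit: the proof does amount to choosing the bounding cochain so that the recipes producing $a$ and $s$ line up, and then invoking $\bK$-linearity. However, there is a genuine gap at exactly the step you flag as ``automatic.''

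The gap is that the chain-level cocycle representing $k$ and $\psi$ times the chain-level cocycle representing $B(z^{(1)}|E)$ are not equal, only cohomologous. Concretely, $k$ is represented by $b_{q^{-1}\Omega|E}$ (where $\Omega$ is the chosen cycle representative for $[\omega_F]$ from Setup~\ref{th:define-f}), while the geometric bounding cochain $\nu$ of Lemma~\ref{th:z1-vanish} satisfies $d\nu = b_{Z^{(1)}|E}$. The pseudo-cycles $q^{-1}\Omega|E$ and $\psi Z^{(1)}|E$ are different geometric objects representing the same class in $H^2(E;\bK)$, so $b_{q^{-1}\Omega|E} \neq \psi b_{Z^{(1)}|E}$ as cochains, and $\psi\nu$ does \emph{not} bound the cocycle $b_{q^{-1}\Omega|E}$. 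Your $\bK$-linearity argument would give the class associated to the pair $(\psi Z^{(1)}|E, \psi\nu)$; it says nothing directly about the class associated to $(q^{-1}\Omega|E,\theta)$, which involves a different pseudo-cycle, a different $\beta_K$, and hence a different formula in \eqref{eq:combine-into-a-class}. You also mislocate the obstacle: the $-\eta[M]$ term is harmless because $F_0$ disappears under restriction to $E$ at the level of pseudo-cycles, not merely cohomology; what cannot be waved away is the discrepancy between $q^{-1}\Omega|E$ and $\psi Z^{(1)}|E$.

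The paper's fix is to choose a pseudo-chain $A$ with $\partial A \iso \psi Z^{(1)} - q^{-1}\Omega - \eta F_0$ (as in \eqref{eq:boundary-of-a}), which supplies a chain-level homotopy $db_{A|E} = \psi b_{Z^{(1)}|E} - b_{q^{-1}\Omega|E}$. The bounding cochain for $k$ is then $\theta = -b_{A|E} - \psi\nu$ (equation \eqref{eq:theta-new}), containing the correction term your argument omits. Computing $a = \Delta\theta - \kappa$ and feeding in Proposition~\ref{th:interpret-bs}, one obtains $-\psi s$ plus the extra combination $\psi\beta_{Z^{(1)}|E} - \beta_{q^{-1}\Omega|E} - \Delta b_{A|E}$; the paper then shows this is a coboundary by applying the construction of Lemma~\ref{th:bv-annihilates} to the pseudo-chain $A|E$ rather than to a pseudo-cycle, producing $\beta_{A|E}$ with $d\beta_{A|E} + \Delta b_{A|E} = \beta_{\partial A|E}$. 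That pseudo-chain step is the missing ingredient your proposal needs before it can be considered complete.
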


This is fundamentally not surprising: the definitions of the two sides of \eqref{eq:a-s} both start with the vanishing of certain other elements in symplectic cohomology, and the condition $q^{-1}[\omega_E] = \psi(z^{(1)}|E)$ relates those elements. In view of  \eqref{eq:nabla-c-e-2}, one can equivalently rewrite \eqref{eq:a-s} as
\begin{equation} \label{eq:nabla-c-e}
\nabla^c e = -c \,\psi s.
\end{equation}
We can go further, and describe how the connection acts on $s$:

\begin{theorem} \label{th:sigma1}
Suppose that Assumption \ref{th:psi-eta} holds. Then, one can choose the bounding cochain so that Lemma \ref{th:bounding} is satisfied, and with the additional property that
\begin{equation} \label{eq:nabla-s-squared}
\nabla s - \psi s \bullet s + \eta  s + 4 z^{(2)} \psi \, e = 0.
\end{equation}
\end{theorem}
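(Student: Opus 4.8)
The plan is to prove \eqref{eq:nabla-s-squared} --- which is the symplectic-cohomology incarnation of the projective (Riccati) equation \eqref{eq:projective-eq}, with $s$ in the role of $\lambda$ and $\nabla$ in that of $\partial_q$ --- by a degeneration argument for a one-parameter family of moduli spaces of pseudo-holomorphic thimbles through $M$. The first step is to fix the bounding cochain. By Lemma \ref{th:z1-vanish} there is a geometric bounding cochain $\beta_s$ in the cochain complex underlying $\mathit{SH}^*(E)$ for $B(z^{(1)}|E)$, defined by counting pseudo-holomorphic thimbles through $M$, and $s$ is $-\Delta_{\mathit{red}}$ applied to the associated reduced class. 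Restricting \eqref{eq:express-o} to $E$ annihilates the $[M]$ summand (since $[M]|E=0$), so that $q^{-1}[\omega_E]=\psi\,(z^{(1)}|E)$; choosing the cochain-level representatives compatibly --- the cocycle for $q^{-1}[\omega_E]$ taken to be $\psi$ times the one for $z^{(1)}|E$, at the price of an explicit chain homotopy that must be carried along --- exhibits $\beta:=\psi\beta_s$ as a bounding cochain for the Kodaira--Spencer cocycle. This is the choice realizing Lemma \ref{th:bounding}, so that $a=-\psi s$; and the connection $\nabla$ attached to $\beta$ has a chain model $\nabla^{\mathrm{ch}}=\delta_q-\mu_\beta$, where $\delta_q$ differentiates the Novikov weights (equivalently, inserts a cocycle for $q^{-1}[\omega_E]$) and $\mu_\beta=\psi\mu_{\beta_s}$ is the bracket-type operation determined by $\beta$, consistent with \eqref{eq:nabla-add}.

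Next I would reduce \eqref{eq:nabla-s-squared} to a product identity. Applying $\nabla^{\mathrm{ch}}$ to the thimble-count cocycle $s^{\mathrm{ch}}$ representing $s$, the term $\delta_q s^{\mathrm{ch}}$ differentiates the weight $q^{E_F(u)}$ of a thimble $u$, whose area $E_F(u)$ is governed by $[\omega_F]$; the cochain-level form of \eqref{eq:express-o} splits this into $\psi$ times ``differentiation through the $z^{(1)}$-cocycle'' minus $\eta$ times the $[M]$-insertion. Since the thimbles defining $s$ meet $M$ exactly once, the $[M]$-insertion merely reproduces $s^{\mathrm{ch}}$ (a divisor-equation argument), contributing $-\eta\,s$. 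Hence \eqref{eq:nabla-s-squared} reduces to showing that the cocycle $X^{\mathrm{ch}}$, obtained by attaching a $z^{(1)}$-curve at a marked point of $s^{\mathrm{ch}}$ and correcting by $\mu_{\beta_s}(s^{\mathrm{ch}})$, represents $s\bullet s-4z^{(2)}\,e$ in $\mathit{SH}^0(E)$; reassembling with the overall factor $\psi$ then yields the theorem.

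The third step is the degeneration proper. I would realize $X^{\mathrm{ch}}$ as one boundary face of a compact $1$-dimensional moduli space $\overline{\mathcal M}$ of ``augmented thimbles'' --- thimbles through $M$ carrying one attached $z^{(1)}$-curve along a gluing (or conformal) parameter, with $\mu_{\beta_s}(s^{\mathrm{ch}})$ accounting for one family of ends; the dimension is correct because $z^{(1)}\in H^2(F;\bK)$ while $s\in\mathit{SH}^0(E)$. The remaining codimension-one strata of $\overline{\mathcal M}$ are: (i) the configuration breaks into two thimbles through $M$ joined by a pair-of-pants, contributing $\pm(s\bullet s)$; (ii) a closed genus-$0$ curve of degree $2$ over the base of \eqref{eq:lefschetz} bubbles off, leaving an unconstrained thimble, which represents $e$, and contributing $\pm c\,z^{(2)}\,e$ for some gluing multiplicity $c$; together with interior strata (e.g.\ the attached curve acquiring a node on $M$), which cancel in pairs or are absorbed into $\mu_{\beta_s}$ and the chain homotopy from the first step. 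Since the signed count of $\partial\overline{\mathcal M}$ vanishes, $X^{\mathrm{ch}}$ represents $s\bullet s-c\,z^{(2)}\,e$, and it remains only to check $c=4$.

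The main obstacle is the stratum analysis of the third step: showing the listed strata are exhaustive, fixing all orientation signs, and --- the single hardest point --- treating the degree-$2$ bubbling face (ii), where one must identify the relevant curve count with precisely the Gromov--Witten invariant $z^{(2)}$ of \eqref{eq:zk} (not some multiply-marked relative), identify the residual thimble with $e$, and pin down $c=4$. One must also check that the chain homotopy of the first step contributes only terms that cancel or are absorbed into $\nabla s$, and confirm consistency with Proposition \ref{th:a-element}: applying $\Delta$ to \eqref{eq:nabla-s-squared} and using \eqref{eq:delta-nabla} together with $\Delta s=0$ (which follows from \eqref{eq:a-s} and \eqref{eq:delta-a-is-zero}) gives $\Delta\nabla s=-\psi[s,s]$, which should match the direct computation. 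Should the direct route become unwieldy, an alternative is to prove the analogous identity one level up, for the class in $\mathit{SH}^1(E)_{\mathit{red}}$ underlying $s$, and then descend by applying $-\Delta_{\mathit{red}}$ and using Proposition \ref{th:a-element} to account for the non-commutation of $\nabla$ and $\Delta_{\mathit{red}}$; this moves some of the moduli analysis into bookkeeping but does not avoid the $z^{(2)}$-computation.
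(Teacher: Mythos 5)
Your overall strategy is in the same spirit as the paper's: fix the bounding cochain using the pseudo-holomorphic thimble count for $z^{(1)}|E$ scaled by $\psi$ (so that Lemma~\ref{th:bounding} holds), then interpret $\nabla s$ chain-level by splitting the $q$-derivative of the Novikov weight via \eqref{eq:express-o} into a $z^{(1)}$-insertion term and an $[M]$-insertion term, recover the $-\eta s$ term from the latter by a ghost-bubble/divisor argument, and finally identify the $z^{(1)}$-insertion contribution with a combination of $s\bullet s$ and $z^{(2)}e$. Those first two steps match the paper's Lemma~\ref{th:many-3} quite closely (including the role of the $\eta F_0$ summand in the pseudo-chain \eqref{eq:boundary-of-a}), and your consistency check via $\Delta$ and Proposition~\ref{th:a-element} is sensible.

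The third step, however, has a genuine gap, and it is precisely the one you flagged as the hardest. The codimension-one boundary of a one-parameter family of augmented thimbles (a single thimble carrying an attached $z^{(1)}$-sphere along a gluing parameter) does not contain the ``two thimbles joined by a pair-of-pants'' stratum you list as (i): the glued surface has a single Floer end, so breaking at that end produces additional Floer strips, not a two-input pair-of-pants, and no conformal degeneration of the thimble can create two input ends. What actually appears when the attached sphere collides with the thimble's marked point on $M$ is a ghost bubble, which enforces a tangency condition and produces the class $\tilde{s}^{(2)}$ of \eqref{eq:s01}, not $s\bullet s$. The paper accordingly uses $\tilde{s}^{(2)}$ as its intermediate degeneration endpoint (Lemma~\ref{th:many-4}, which produces $2\tilde{s}^{(2)}$, with the factor $2$ coming from a marked-point count), and then \emph{separately} translates $\tilde{s}^{(2)}$ into $s^{(2)} - 2z^{(2)}e$ by Lemma~\ref{th:bisections} and $s^{(2)}$ into $s\bullet s$ by a dedicated pair-of-pants gluing (Lemma~\ref{th:square-of-borman-sheridan}). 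These are genuinely distinct moduli problems, and they cannot be collapsed into a single one-dimensional compact family as you sketch. The coefficient $4$ you worry about is $2\times 2$: one factor from the marked-point count in Lemma~\ref{th:many-4}, the other from the reparametrization ambiguity of the degree-$2$ bubble in Lemma~\ref{th:bisections}; your single-stratum picture cannot see either factor, let alone both.

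A second, structural, issue: you take the chain model $\nabla^{\mathrm{ch}} = \delta_q - \mu_\beta$ with the plain bracket $[\theta,\cdot]$, but the degeneration you intend to use naturally produces the framed bracket $[\cdot,\cdot]^{-1}$, not $[\cdot,\cdot]$. This is exactly why the paper carries out all the hard moduli computations (Propositions~\ref{th:nabla-e} and~\ref{th:main-computation}) for $\nabla^{-1}$, where the bracket correction and the thimble gluings have compatible framing conventions (the proof of Lemma~\ref{th:0-is-0} makes this explicit), and only at the very end passes to $\nabla$ and the other $\nabla^c$ via Lemma~\ref{th:01-connections-2}, using the nullhomotopy of \eqref{eq:m-map}. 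If you try to work with $\nabla$ directly, the framing in the $\mu_\beta$ correction will not line up with the framing in your degenerating family, and either the boundary strata or the signs will fail to close. Working with $\nabla^{-1}$ first is not a stylistic detour but a necessity of the Floer-theoretic gluing picture.
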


This is a counterpart of \eqref{eq:projective-eq}, with $s$ instead of $\lambda$, and $\nabla$ instead of ordinary differentiation. We will now draw some immediate consequences from it. An equivalent statement is that $a$ satisfies the analogue of \eqref{eq:nonlinear-1st-order}:
\begin{equation} \label{eq:nonlinear-a}
\nabla a + a \bullet a + \Big(\eta - \frac{\partial_q \psi}{\psi} \Big) a - 4 z^{(2)} \psi^2 e = 0.
\end{equation}
In terms of $\nabla^c$, \eqref{eq:nabla-s-squared} takes on the following form:
\begin{equation} \label{eq:nablac-s}
\nabla^c s + (c-1) \psi s \bullet s + \eta s + 4 z^{(2)} \psi \, e = 0.
\end{equation}
In particular, setting $c = 1$ causes the nonlinear term to vanish,
\begin{equation} \label{eq:nabla1-2}
\nabla^1 s + \eta s + 4 z^{(2)} \psi \, e = 0.
\end{equation}
This equation and the $c = 1$ case of \eqref{eq:nabla-c-e} are counterparts of the components of \eqref{eq:1st-order}, with $\nabla^1$ replacing ordinary differentiation. Imitating the argument there, one can eliminate $s$ by substitution, and obtain the analogue of \eqref{eq:2nd-order} for $e$ alone:
\begin{equation} \label{eq:2nd-order-2}
\begin{aligned}
\nabla^1\nabla^1 e + \Big(\frac{\partial_q \psi}{\psi} - \eta\Big) \nabla^1 e - 4 z^{(2)} \psi^2 e = 0.
\end{aligned}
\end{equation}
We have now fulfilled the promise, made at the beginning of the paper, that all forms \eqref{eq:1st-order}--\eqref{eq:projective-eq} of the basic differential equation would occur in the context of symplectic cohomology. 

\section{A sketch of the equivariant theory\label{sec:equivariant}}

This (partly speculative) section is not strictly necessary for the rest of the paper, but expands our overall theme in a natural direction. There is an operation on $S^1$-equivariant symplectic cohomology, which we'll call the Gauss-Manin connection. It exists without any additional assumptions, and is canonical \cite{seidel17}. Even though it is therefore somewhat different from our $\nabla^c$, one expects the two to be related via a suitable intermediate object. More precisely, the relevant value is $c = -1$, as foreshadowed by Remark \ref{th:c-is-minus-one}. We will explain this picture, and how it leads to another approach towards proving our main results. Some of the steps in this approach remain conjectural, but those (specifically Conjecture \ref{th:equi-sheridan}) may actually be of independent interest.

\subsection{Background\label{subsec:sh-background}}
Let's start in the same overall situation as in our discussion of ordinary symplectic cohomology (Section \ref{subsec:sh}). The $S^1$-equivariant symplectic cohomology $\mathit{SH}^*_{\mathit{eq}}(E)$ is a $\bZ$-graded module over $\bK[[u]]$, where the formal variable $u$ has degree $2$. It fits into a long exact sequence
\begin{equation} \label{eq:u-sequence}
\cdots \rightarrow \mathit{SH}^{*-2}_{\mathit{eq}}(E) \stackrel{u}{\longrightarrow} \mathit{SH}^*_{\mathit{eq}}(E) \longrightarrow \mathit{SH}^*(E) \longrightarrow \mathit{SH}^{*-1}_{\mathit{eq}}(E) \rightarrow \cdots
\end{equation}
The BV operator can be recovered by composing the middle and right maps in this sequence (in the nontrivial order). There is also an equivariant version of the PSS map, which fits into a commutative diagram with the forgetful maps from \eqref{eq:u-sequence}:
\begin{equation}
\xymatrix{
\ar[d] H^*(E;\bK[[u]]) \ar[rr]^-{B_{\mathit{eq}}} && \mathit{SH}^*_{\mathit{eq}}(E) \ar[d] \\
H^*(E;\bK) \ar[rr]^-{B} && \mathit{SH}^*(E).
}
\end{equation}
There are actually different versions of the equivariant theory, which share the basic properties mentioned so far, but otherwise behave quite differently. The one convenient for the present discussion was introduced in \cite{albers-cieliebak-frauenfelder14, zhao14}; in that version, the underlying chain complex is not $u$-adically complete (as a consequence, vanishing of $\mathit{SH}^*(E)$ does not imply the corresponding result for $\mathit{SH}^*_{\mathit{eq}}(E)$). The main result of \cite{albers-cieliebak-frauenfelder14, zhao14} is a localisation theorem, which says that the equivariant PSS map induces an isomorphism
\begin{equation} \label{eq:zhao}
H^*(E;\bK((u))) \iso \mathit{SH}^*_{\mathit{eq}}(E) \otimes_{\bK[[u]]} \bK((u)).
\end{equation}
To be clear, the geometric situation here is not exactly the same as that in which \eqref{eq:zhao} was proved originally, but the proof goes through without any significant modifications. 

The ordinary cohomology of $E$ carries the quantum connection, which is defined in terms of the small quantum product, as follows:
\begin{equation}
\begin{aligned}
& D: H^*(E;\bK[[u]]) \longrightarrow H^{*+2}(E;\bK[[u]]), \\
& D(x) = u\partial_q x + q^{-1}[\omega_E] \ast_E x.
\end{aligned}
\end{equation}
In spite of the name, this (in our formulation) is not a connection in the strict sense, but rather what one would get after multiplying a connection by $u$. It turns out that a similar operation can be defined on equivariant symplectic cohomology:

\begin{theorem} \label{th:q2}
There is a canonical additive endomorphism
\begin{equation} \label{eq:eq-connection}
\begin{aligned}
& \Gamma: \mathit{SH}^*_{\mathit{eq}}(E) \longrightarrow \mathit{SH}^{*+2}_{\mathit{eq}}(E), \\
& \Gamma( f x) = f \Gamma(x) + u (\partial_q f) x \quad \text{for $f \in \bK[[u]]$,}
\end{aligned}
\end{equation}
which fits into a commutative diagram
\begin{equation} \label{eq:sh-gamma}
\xymatrix{ 
\ar[d]_-{B_{\mathit{eq}}} H^*(E;\bK[[u]]) \ar[rr]^-{D} && \ar[d]^-{B_{\mathit{eq}}} H^{*+2}(E;\bK[[u]])
\\
\ar[d] \mathit{SH}^*_{\mathit{eq}}(E) \ar[rr]^-{\Gamma} && \mathit{SH}^{*+2}_{\mathit{eq}}(E) \ar[d] 
\\
\mathit{SH}^*(E) \ar[rr]^-{k \bullet} && \mathit{SH}^{*+2}(E).
}
\end{equation}
\end{theorem}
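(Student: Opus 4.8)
The strategy is to construct $\Gamma$ at the chain level by counting solutions to a parametrized Floer equation and then verify that the two squares in \eqref{eq:sh-gamma} commute. Concretely, I would mimic the construction of the non-equivariant connection $\nabla$ from Proposition \ref{th:1}, but carried out $S^1$-equivariantly. Recall that $\nabla$ arises from differentiating, in the Novikov variable, the structure maps of the Floer complex: the Hamiltonian, almost complex structure, and the auxiliary perturbation data all depend on $q$ through the symplectic form, and $\partial_q$ applied to the count of Floer solutions produces a chain endomorphism satisfying the Leibniz rule \eqref{eq:connection-property}. The equivariant refinement uses the Borel-type model of \cite{albers-cieliebak-frauenfelder14, zhao14}, in which the equivariant complex is built from a chain of Floer data indexed by a cofibrant replacement of $S^1$; one obtains $\Gamma$ by differentiating all of this data simultaneously with respect to $q$, so that the extra term $u(\partial_q f)x$ in \eqref{eq:eq-connection} is exactly the contribution of the $u$-linear pieces of the equivariant complex. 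That $\Gamma$ descends to cohomology and is canonical (independent of the auxiliary choices up to the appropriate notion of equivalence) follows by the standard continuation argument, here in its equivariant form.

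The two commuting squares are the substantive content. For the bottom square, I would note that the composite $\mathit{SH}^*_{\mathit{eq}}(E) \to \mathit{SH}^*(E)$ is the forgetful map of \eqref{eq:u-sequence}, and that under this map $\Gamma$ reduces to $\nabla$ plus the multiplication operator coming from the $q^{-1}[\omega_E]$-term; but multiplication by $q^{-1}[\omega_E]$ in $H^*(E)$ becomes, after applying $B$ and using that $B$ intertwines $\ast_E$ with $\bullet$, multiplication by $k = B(q^{-1}[\omega_E])$ in $\mathit{SH}^*(E)$. The reduction $\Gamma \bmod u$ agreeing with ``$\nabla$ plus $k\bullet$'' should be visible directly from the chain-level formula, since setting $u = 0$ collapses the equivariant complex to the ordinary one and kills precisely the differentiated perturbation terms that make up $\nabla$, leaving the quantum-multiplication contribution. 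Actually the cleanest route is probably to observe that on the ordinary complex, $\nabla$ is built so that its "curvature" or defect relative to the naive coefficient-differentiation is governed by $q^{-1}[\omega_E]\ast_E(-)$; transported through $B$ this is $k\bullet(-)$, and this is exactly the bottom arrow. For the top square, one compares $\Gamma \circ B_{\mathit{eq}}$ with $B_{\mathit{eq}} \circ D$ by a moduli-space argument: both sides count equivariant PSS-type configurations (a half-cylinder or thimble carrying a marked point on the closed side), with the $q$-derivative acting either on the symplectic-cohomology end (that is $\Gamma$) or on the quantum-cohomology input (that is the $u\partial_q$ part of $D$), while the difference between the two placements is a boundary term that reassembles into the $q^{-1}[\omega_E]\ast_E(-)$ summand of $D$. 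Technically this is a one-parameter moduli space whose ends are enumerated by the two sides, so the identity holds up to chain homotopy.

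The main obstacle, I expect, is not any single geometric count but rather the bookkeeping needed to make the $q$-differentiation interact cleanly with the equivariant (Borel) model: one must check that $\partial_q$ of the equivariant structure maps again assembles into an equivariant chain map, i.e.\ that differentiation commutes with the operations encoding the $S^1$-action up to the controlled $u$-linear correction recorded in \eqref{eq:eq-connection}, and that the resulting operator is well defined on the non-$u$-complete complex of \cite{albers-cieliebak-frauenfelder14, zhao14} (where, as the text warns, the usual vanishing arguments fail). A secondary difficulty is transversality and compactness for the one-parameter moduli spaces appearing in the proof of the top square, together with the need to track the marked point carrying the quantum product — but these are handled by the same methods already in place for constructing $\nabla$ and the PSS map, so they are routine given the machinery of Sections \ref{sec:floer}--\ref{sec:operations-on-floer-cohomology}. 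I would therefore organize the proof so that the equivariant algebra is isolated first (as an abstract statement about differentiating a filtered/Borel complex), and the geometric input is quoted from the non-equivariant constructions.
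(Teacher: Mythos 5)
The paper does not actually give a proof of Theorem~\ref{th:q2}; it refers to the construction of $\Gamma$ in the cited work (the reference written as \cite{seidel17}), where the operator is built at the chain level for a fixed Hamiltonian of finite slope, and then notes that compatibility with slope-increasing continuation maps (or a single unbounded-slope Hamiltonian) extends the construction to $\mathit{SH}^*_{\mathit{eq}}(E)$. Your overall strategy---equivariant refinement of the PSS and ``insert a marked point on $\Omega$'' constructions, a forgetful-map computation for the bottom square, a one-parameter moduli argument for the top---is aligned in spirit with that source, so the high-level plan is reasonable.

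However, there is a concrete error in your discussion of the bottom square. You repeatedly invoke the non-equivariant connection $\nabla$ (claiming $\Gamma \bmod u$ agrees with ``$\nabla$ plus $k\bullet$''), but $\nabla$ only exists under Assumption~\ref{th:as-1} (vanishing of the Kodaira-Spencer class), which Theorem~\ref{th:q2} does not assume---indeed the theorem is precisely a tool for measuring the obstruction that would have to vanish for $\nabla$ to exist. The bottom arrow of \eqref{eq:sh-gamma} is just $k\bullet(\cdot)$, with no $\nabla$. The mechanism is this: at the chain level $\Gamma$ has the form $u\partial_q + (\text{\it $\bK[[u]]$-linear correction via an $\Omega$-insertion})$; the two pieces individually fail to be chain maps, but their failures cancel, and this cancellation holds unconditionally \emph{because} of the $u$-twist. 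Under the forgetful map (``$u=0$''), the $u\partial_q$ term dies and only the correction survives, which by the PSS comparison is exactly $k\bullet(\cdot)$. Your phrase ``differentiating all of the equivariant data simultaneously'' omits this correction term altogether, and your explanation that the $u$ in \eqref{eq:eq-connection} is ``the contribution of the $u$-linear pieces of the equivariant complex'' doesn't capture why the operator is well defined without a bounding cochain; that unconditional existence is the substantive content of the theorem, and your sketch leaves a gap there.
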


This $\Gamma$ is what we call the Gauss-Manin connection. For the finite-dimensional Floer cohomology groups obtained from Hamiltonians with ``finite slope'', it is constructed in \cite{seidel17} (where the notation is $\Gamma_q$); the argument from \cite[Section 5f]{seidel17} could be adapted to prove its compatibility with continuation maps which ``increase the slope'', yielding the theorem in the form stated above.

Intuitively, \eqref{eq:sh-gamma} says that the Kodaira-Spencer class is the obstruction to being able to divide $\Gamma$ by $u$ in order to get an honest connection on the equivariant theory. A more precise form of this idea is the following (see \cite[Remark 5.3]{seidel17} for further discussion):

\begin{conjecture} \label{th:eq-connection}
Suppose that Assumption \ref{th:as-1} holds (and that a bounding cochain has been chosen). Then, 
$\mathit{SH}^*_{\mathit{eq}}(E)$ can be equipped with a connection $\nabla_{\mathit{eq}}$, which preserves gradings and satisfies the analogue of \eqref{eq:connection-property} for $f \in \bK[[u]]$. The Gauss-Manin connection is related to it by
\begin{equation}
\Gamma = u \nabla_{\mathit{eq}};
\end{equation}
moreover, $\nabla_{\mathit{eq}}$ is an equivariant version of $\nabla^{-1}$, meaning that it fits into the diagram
\begin{equation} \label{eq:eq-connection-2}
\xymatrix{
\ar[d] \mathit{SH}^*_{\mathit{eq}}(E) \ar[rr]^-{\nabla_{\mathit{eq}}} && \mathit{SH}^*_{\mathit{eq}}(E) \ar[d] \\
\mathit{SH}^*(E) \ar[rr]^-{\nabla^{-1}} && \mathit{SH}^*(E).
}
\end{equation}
\end{conjecture}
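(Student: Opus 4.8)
The plan is to build $\nabla_{\mathit{eq}}$ at the chain level of an equivariant symplectic cochain complex $\mathit{SC}^*_{\mathit{eq}}(E)$ (in the non-complete model of \cite{albers-cieliebak-frauenfelder14, zhao14}), by combining the chain-level recipe behind the Gauss--Manin connection $\Gamma$ from Theorem \ref{th:q2} with the bounding-cochain correction used in the proof of Proposition \ref{th:1}. Concretely, $\Gamma$ is defined on $\mathit{SC}^*_{\mathit{eq}}(E)$ as a $\bK$-linear chain map of the form $\Gamma = u\,\delta_q + \Phi$, where $\delta_q$ differentiates Novikov coefficients with respect to a fixed basis of orbit generators (accounting for the $u(\partial_q f)x$ term in \eqref{eq:eq-connection}) and $\Phi$ is a $\bK[[u]]$-linear operator assembled from moduli spaces of equivariant Floer solutions carrying an extra insertion that records $\partial_q$ of the Floer data. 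The diagram \eqref{eq:sh-gamma} is the cohomological reflection of a chain-level fact: the reduction $\bar\Phi$ of $\Phi$ modulo $u$, regarded as a chain endomorphism of $\mathit{SC}^*(E)$, is chain homotopic to a cocycle-level representative of $k\,\bullet(-)$. Under Assumption \ref{th:as-1}, a bounding cochain $b$ satisfies $db = \kappa$ for the chosen cocycle $\kappa$ representing $k$, so $\kappa\,\bullet(-)$ is itself chain homotopic to zero via an explicit homotopy built from $b$ and its higher TQFT corrections. Splicing the two homotopies produces a nullhomotopy of $\bar\Phi$, which lets one write $\Phi = u\,\Psi$ up to $[d_{\mathit{eq}},-]$ for a $\bK[[u]]$-linear, degree-zero operator $\Psi$. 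One then sets $\nabla_{\mathit{eq}} := \delta_q + \Psi$, so that $u\,\nabla_{\mathit{eq}} = \Gamma$ holds on cohomology by construction.

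Given this construction, the remaining assertions follow with modest work. Grading preservation is automatic, since $\Gamma$ raises degree by $2$ and $u$ has degree $2$. The connection property (the analogue of \eqref{eq:connection-property} over $\bK[[u]]$) can be read off directly at the chain level: the $(\partial_q f)x$ term comes from $\delta_q$ and $\Psi$ is $\bK[[u]]$-linear; if one prefers, it also follows from $u\,\nabla_{\mathit{eq}} = \Gamma$ and the corresponding property of $\Gamma$ in those chain models where $u$ acts injectively on generators. The identity $\Gamma = u\,\nabla_{\mathit{eq}}$ is part of the construction. Finally, to check that $\nabla_{\mathit{eq}}$ lifts $\nabla^{-1}$: reducing $\nabla_{\mathit{eq}} = \delta_q + \Psi$ modulo $u$ gives a connection $\delta_q + \bar\Psi$ on $\mathit{SC}^*(E)$, and tracing the construction of $\Psi$ back to $b$ and comparing with the construction of $\nabla$ in Proposition \ref{th:1} (using the same $b$) identifies $\delta_q + \bar\Psi$ with $\nabla - a\,\bullet(-)$, which is $\nabla^{-1}$ by \eqref{eq:c-term}. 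The appearance of the specific value $c = -1$ is structural rather than accidental: the equivariant complex is a Tate-type construction in which $\Delta$ is incorporated into $d_{\mathit{eq}}$ via \eqref{eq:u-sequence}, so a connection descending to it must be compatible with $\Delta$, and by \eqref{eq:-1-connection} that is precisely $\nabla^{-1}$.

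The genuine difficulty, and the reason the statement is posed as a conjecture, lies entirely in the chain-level input. First, $\Gamma$ is constructed in \cite{seidel17} for Hamiltonian Floer cohomology of finite slope; one must verify its compatibility with slope-increasing continuation maps (as sketched in the discussion following Theorem \ref{th:q2}), or set it up for a single Hamiltonian on $E$ of unboundedly growing slope, and then carry the bounding-cochain homotopy through the resulting direct limit. Second, and more seriously, one must show that a \emph{single} bounding cochain really suffices: that the homotopies above can be assembled into an operator $\Psi$ strictly compatible with $d_{\mathit{eq}}$, with no secondary obstruction. This is a coherence statement of the kind that is transparent in the abstract TQFT formalism of Sections \ref{sec:conf}--\ref{sec:bv}, but whose verification in the equivariant Floer setting requires the full geometric machinery, including transversality and gluing for the relevant parametrized moduli spaces. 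Once that input is available, everything else is bookkeeping.
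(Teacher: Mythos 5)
This statement is Conjecture~\ref{th:eq-connection} and the paper does not prove it, so there is no author's proof to compare against. What you have written is not a proof either, and you are candid about that in your last paragraph; what follows is an assessment of whether your sketch is the \emph{right} sketch.

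Your strategy matches the author's intent as expressed in the paragraph preceding the conjecture (``the Kodaira-Spencer class is the obstruction to being able to divide $\Gamma$ by $u$'') and in the algebraic mirror of this statement later in Section~\ref{sec:motivation}, where Getzler's connection \eqref{eq:gamma-connection}, its obstruction diagram \eqref{eq:obst}, and the induced connection satisfying \eqref{eq:alg1}--\eqref{eq:alg2} play exactly the roles you assign to $\Gamma$, to $\bar\Phi \htp k\bullet(-)$, and to $\nabla_{\mathit{eq}}$. Your structural explanation for why $c=-1$ appears (the equivariant differential is built from $\Delta$, so the induced connection must commute with $\Delta$, which by \eqref{eq:-1-connection} pins down $\nabla^{-1}$) is the same reasoning that motivates Remark~\ref{th:c-is-minus-one}. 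You also correctly locate the genuine content: the splicing step --- passing from ``$\bar\Phi$ is nullhomotopic on $\mathit{SC}^*(E)$'' to ``$\Phi=u\Psi$ up to $[d_{\mathit{eq}},-]$ on the equivariant complex with $\Psi$ itself a chain map'' --- is not automatic, because the equivariant differential $d_{\mathit{eq}}=d+u\Delta_1+u^2\Delta_2+\cdots$ has infinitely many higher corrections, and the nullhomotopy of $\bar\Phi$ must be extended coherently against all of them. Establishing that coherence, together with the compatibility of the finite-slope $\Gamma$ of \cite{seidel17} with the direct limit defining symplectic cohomology, is precisely what is open. So your sketch is a faithful blueprint for a proof that does not yet exist, and correctly diagnoses why. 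Nothing in it contradicts the paper; I have no substantive correction to offer beyond noting that one could equally well try to deduce the conjecture from the open-closed-string / negative cyclic homology comparison hinted at in the final Remark of Section~\ref{sec:motivation}, which would shift the difficulty to the construction of equivariant open-closed maps rather than to the direct chain-level divisibility argument.
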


\subsection{A bit more Gromov-Witten theory\label{subsec:gw}}
We now return to the more specific geometric situation from Section \ref{subsec:define-lefschetz}. $H^*(F;\bK)$ also carries a quantum product, but that no longer preserves the grading. Let's write it as
\begin{equation} \label{eq:quantum}
x_1 \ast_F x_2 = \sum_k x_1 \ast^{(k)}_F x_2,
\end{equation}
where each term $\ast_F^{(k)}$ has degree $-2k$. The terms with $k<0$ are zero, while the $k = 0$ contribution is related to the quantum product on $E$ by restriction:
\begin{equation} \label{eq:star-ast}
(x_1 \ast^{(0)}_F x_2)|E = (x_1|E) \ast_E (x_2|E).
\end{equation}
For classes $x_k \in H^2(F;\bK)$, one can use the divisor axiom to reduce the quantum product to one-pointed invariants. In particular, the following products can be expressed in terms of \eqref{eq:zk}:
\begin{align} \label{eq:m-ast-m}
& [M ] \ast_F [M] = z^{(1)} + 4 z^{(2)}, \\
\label{eq:omega-ast-m}
& q^{-1}[\omega_{F}] \ast_F [M] = q^{-1}[\omega_{F}]\smile [M] +
\partial_q (z^{(1)} + 2 z^{(2)}), \\
& 
\begin{aligned}
q^{-1}[\omega_{F}] \ast_F q^{-1}[\omega_{{F}}] = & \, q^{-1}[\omega_F] \smile q^{-1}[\omega_F] \\ + & (q^{-1}\partial_q + \partial_q^2) (z^{(0)} + z^{(1)} + z^{(2)}),
\end{aligned}
\end{align}
where $\smile$ is the cup product. More relations between Gromov-Witten invariants come from the WDVV (associativity) equation. One instance which is relevant for us is this:
\begin{equation} \label{eq:wdvv}
\begin{aligned}
x \ast_F^{(0)} z^{(1)} & = x \ast_F^{(0)} ([M] \ast^{(1)}_F [M]) \\ & = (x \smile [M]) \ast^{(1)}_F [M] + (x \ast^{(1)}_F [M]) \smile [M].
\end{aligned}
\end{equation}

Gromov-Witten invariants can be generalized in various directions. The most relevant one for our purpose are relative invariants, and specifically ones with tangency conditions to $M$. Consider curves which intersect $M$ in a single point, with multiplicity $k$, and which carry an additional marked point. These give rise to classes
\begin{equation} \label{eq:tilde-z}
\tilde{z}^{(k)} \in H^2(F;\bK),
\end{equation}
with $\tilde{z}^{(1)} = z^{(1)}$. All these relative invariants can be reduced to ordinary Gromov-Witten theory, see \cite{maulik-pandharipande06} for a general discussion. We will only appeal to the simplest special case of such a reduction:
\begin{equation} \label{eq:relative-gw}
\tilde{z}^{(2)}|E = \half (z^{(1)} \ast_F^{(1)} [M])|E \in H^2(E;\bK).
\end{equation}
This is proved by first using a degeneration argument to relate $\tilde{z}^{(2)}$ to a Gromov-Witten invariant with a gravitational descendant, and then eliminating that descendant using the Topological Recursion Relation.

\subsection{The Borman-Sheridan class and enumerative geometry}
Let's return to our main object of study, the symplectic cohomology of \eqref{eq:e}. We will use the following (highly convenient, but not absolutely indispensable) fact. One can arrange that the Conley-Zehnder indices of periodic orbits are all nonnegative, which implies that
\begin{equation} \label{eq:vanishing}
\mathit{SH}^*(E) = 0, \quad \mathit{SH}^*_{\mathit{eq}}(E) = 0 \quad \text{for $\ast < 0$.}
\end{equation}
As a consequence of this and \eqref{eq:u-sequence}, we have
\begin{equation} \label{eq:forgetful-iso}
\mathit{SH}^0_{\mathit{eq}}(E) \iso \mathit{SH}^0(E).
\end{equation}
We will denote by $x_{\mathit{eq}}$ the unique equivariant lift of any class $x \in \mathit{SH}^0(E)$. 
Note that in several important cases, such as the unit class $e$ or the Borman-Sheridan class $s$, there are geometric constructions of equivariant lifts which do not depend on \eqref{eq:forgetful-iso}.
%

At this point, we are faced with an interesting puzzle. After multiplication by some power of $u$, any element of $\mathit{SH}^*_{\mathit{eq}}(E)$ comes from an ordinary cohomology class, by \eqref{eq:zhao}. Applying this to the Borman-Sheridan class and its powers, we get specific cohomology classes, which ought to have some enumerative meaning. An attempt to make this more precise leads to the following:

\begin{conjecture} \label{th:equi-sheridan}
In $\mathit{SH}_{\mathit{eq}}^{*}(E)$,
\begin{align} \label{eq:ueq}
& u\, s_{\mathit{eq}} = B_{\mathit{eq}}(z^{(1)}|E), \\
\label{eq:uueq}
& 
\begin{aligned}
u^2 \, (s \bullet s)_{\mathit{eq}} = \;
&
B_{\mathit{eq}}\big( {\textstyle \half} (z^{(1)} \ast_F^{(0)} z^{(1)} + u z^{(1)} \ast^{(1)}_F [M]) |E \\
& \qquad \qquad + 2u^2 z^{(2)} \big). \end{aligned}
%
\end{align}
\end{conjecture}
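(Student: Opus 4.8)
The plan is to match the two sides of \eqref{eq:ueq} and \eqref{eq:uueq} by realizing both as outputs of a single Floer-theoretic moduli problem in the equivariant setting, and then reading off the enumerative content by degeneration to the normal bundle of $M$ in $F$. First I would recall from the proof of Lemma \ref{th:z1-vanish} that $B(z^{(1)}|E)$ is nullhomologous via a geometrically defined bounding cochain built from pseudo-holomorphic thimbles meeting $M$ once (the Borman-Sheridan construction), and that $s = -\Delta_{\mathit{red}}$ of the associated reduced class. The key point is that in the equivariant theory this bounding cochain lifts: one has an equivariant chain-level primitive for $B_{\mathit{eq}}(z^{(1)}|E)$, because the relevant moduli spaces of thimbles carry a free $S^1$-action away from constant configurations, and \eqref{eq:vanishing}--\eqref{eq:forgetful-iso} guarantee there is no obstruction to lifting the comparison to $\mathit{SH}^0_{\mathit{eq}}$. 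Combining this with the fact that $\Delta$ is recovered as the composite of the two maps in \eqref{eq:u-sequence}, the identity $s_{\mathit{eq}} = (-\Delta_{\mathit{red}} t)_{\mathit{eq}}$ translates into exactly $u\, s_{\mathit{eq}} = B_{\mathit{eq}}(z^{(1)}|E)$: multiplying by $u$ kills the ``reduced'' ambiguity and returns the image of the bounding cochain's coboundary, which is $B_{\mathit{eq}}(z^{(1)}|E)$. So \eqref{eq:ueq} is essentially a formal consequence of the equivariant refinement of Lemma \ref{th:z1-vanish} together with \eqref{eq:forgetful-iso}.

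For \eqref{eq:uueq} the strategy is to compute $u^2 (s\bullet s)_{\mathit{eq}}$ by feeding the relation $u\,s_{\mathit{eq}} = B_{\mathit{eq}}(z^{(1)}|E)$ through the product. Multiplying the product $s\bullet s$ by $u^2$ and using $u\,s_{\mathit{eq}} = B_{\mathit{eq}}(z^{(1)}|E)$ twice would naively give $B_{\mathit{eq}}(z^{(1)}|E) \bullet B_{\mathit{eq}}(z^{(1)}|E)$; but the equivariant PSS map is not a ring map on the nose — the failure is governed by the BV/quantum-product discrepancy, which at the chain level is measured by an operation of degree $-1$ and hence produces the $u\, z^{(1)}\ast_F^{(1)}[M]$ correction term. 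Concretely, $B_{\mathit{eq}}$ intertwines the equivariant pair-of-pants product with the quantum product on $H^*(E;\bK[[u]])$ only up to terms involving $u$ times the class counting configurations with an extra constraint at $M$; and the relevant such count is precisely $z^{(1)}\ast_F^{(1)}[M]$ restricted to $E$ — compare \eqref{eq:wdvv} and the relative-invariant identity \eqref{eq:relative-gw}, which already packages $z^{(1)}\ast_F^{(1)}[M]|E$ as (twice) a tangency-$2$ invariant $\tilde z^{(2)}|E$. The leading term $\half z^{(1)}\ast_F^{(0)} z^{(1)}|E$ is just $\half (z^{(1)}|E)\ast_E(z^{(1)}|E)$ by \eqref{eq:star-ast}, i.e. the honest quantum square, and the $2u^2 z^{(2)}$ term arises from the degree-$2$ piece of the gluing — morally the ``double cover'' contribution, consistent with the coefficient $4z^{(2)}$ in \eqref{eq:m-ast-m} after accounting for the two factors of $\half$ and a factor of $2$ from the two tangency points coalescing. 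I would pin down these coefficients by a neck-stretching argument along $M$: degenerate $F$ into $E$ (or its completion) glued to the total space of the normal bundle $N_{F/M}$, express both sides as sums over the number and tangency orders of components mapping into the $N_{F/M}$ piece, and match term by term using the relative GW reduction cited after \eqref{eq:relative-gw}.

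The main obstacle, as I see it, is not the combinatorics of the coefficients but the chain-level bookkeeping of the equivariant bounding cochain and of the failure of $B_{\mathit{eq}}$ to be multiplicative: one must produce an explicit homotopy, in the Albers--Cieliebak--Frauenfelder / Zhao model, between the equivariant product of two copies of the primitive for $B_{\mathit{eq}}(z^{(1)}|E)$ and $u^2(s\bullet s)_{\mathit{eq}}$, and then identify the ``error term'' of that homotopy with $B_{\mathit{eq}}$ of the stated Gromov--Witten class. This requires care because the ACFF/Zhao complex is not $u$-adically complete, so one cannot freely invert $u$; the argument has to be organized so that only genuine (polynomial in $u$) identities in $\mathit{SH}^*_{\mathit{eq}}(E)$ are used, and the passage through \eqref{eq:zhao} is invoked only at the very end to certify that the enumerative class is the correct one. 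A secondary technical point is transversality for the thimble moduli spaces with prescribed tangency to $M$ and with an $S^1$-action, which I expect can be handled by the same virtual/abstract perturbation scheme already used to set up $\mathit{SH}^*_{\mathit{eq}}(E)$, perhaps combined with the domain-dependent methods standard in relative Gromov--Witten theory. Given those inputs, \eqref{eq:uueq} should follow, and the pattern suggests (though we do not pursue it here) an analogous formula for all powers $u^k (s^{\bullet k})_{\mathit{eq}}$ in terms of tangency-$k$ relative invariants.
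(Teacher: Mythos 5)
The statement you are trying to prove is stated in the paper as a \emph{conjecture}, not a theorem: Section~\ref{sec:equivariant} is labelled ``partly speculative,'' the text surrounding Conjecture~\ref{th:equi-sheridan} says ``the steps in this approach remain conjectural,'' and the paper explicitly points out that its (already established) main results only serve as ``a strong check on Conjecture~\ref{th:equi-sheridan},'' not a proof of it. So there is no paper proof to compare your proposal against, and your sketch should be judged on its own merits as an outline of an attack, not as an alternative route to a known result.

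On those terms, your proposal is a reasonable research plan that points at the correct ingredients, but the load-bearing steps are precisely the ones you flag as obstacles and then leave unresolved. For~\eqref{eq:ueq}, the claim that the identity ``is essentially a formal consequence of the equivariant refinement of Lemma~\ref{th:z1-vanish} together with~\eqref{eq:forgetful-iso}'' glosses over the real content: one must construct a chain-level equivariant lift of the bounding cochain $\nu$ (satisfying $d\nu = b_{Z^{(1)}|E}$), check that the higher $u$-terms in $d_{\mathit{eq}}\nu$ assemble into $u\,s_{\mathit{eq}}$ (via the relation between $\delta_1$, $\Delta$, and the sign convention in Proposition~\ref{th:interpret-bs}), and verify that the equivariant PSS cochain for $z^{(1)}|E$ is what comes out. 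That is a genuine computation in the Albers--Cieliebak--Frauenfelder/Zhao model, not a formal consequence of the long exact sequence. For~\eqref{eq:uueq}, you correctly identify that the key difficulty is the failure of $B_{\mathit{eq}}$ to intertwine the equivariant pair-of-pants product with the quantum product, and that the correction must be an enumerative class; but your coefficient bookkeeping (``two factors of $\half$ and a factor of $2$'') is a plausibility argument, not a derivation, and you do not produce the explicit homotopy whose error term you would need to identify with $B_{\mathit{eq}}$ of the stated class. Note also that the paper simplifies~\eqref{eq:uueq} to~\eqref{eq:more-gw} using~\eqref{eq:s22},~\eqref{eq:relative-gw}, and~\eqref{eq:wdvv}, which reformulates the right-hand side in terms of the tangency-$2$ class $\tilde s^{(2)}$; any proof of~\eqref{eq:uueq} will likely have to pass through this reformulation, since Lemma~\ref{th:bisections} and~\eqref{eq:s22} are what connect $s\bullet s$ to concretely computable moduli. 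In short: the heuristics are sound and mirror the paper's, but what you have written is an outline with explicitly acknowledged gaps, consistent with this statement being open.
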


The first equation \eqref{eq:ueq} is closely related to the discussion in Section \ref{subsec:secondary} below. Concerning the somewhat less transparent \eqref{eq:uueq}, we have formulated it in a way which is convenient for our applications, but which is not necessarily the most conceptual one. To get a better picture, we should point out that the Borman-Sheridan class is the first in a sequence of classes 
\begin{equation} \label{eq:s-k}
\tilde{s}^{(k)} \in \mathit{SH}^0(E),
\end{equation}
defined by looking at thimbles with $k$-fold tangency to $M$. In the simplest nontrivial case, \eqref{eq:s-k} is related to the pair-of-pants square of \eqref{eq:s-element} by the formula (proved in Section \ref{subsec:higher-s} below)
\begin{equation} \label{eq:s22}
s \bullet s = \tilde{s}^{(2)} + 2 z^{(2)} e,
\end{equation}
where $e$ is the unit. (The classes $\tilde{s}^{(k)}$ have natural equivariant lifts, independent of \eqref{eq:forgetful-iso}, something which is not a priori true of $s \bullet s$.) Using \eqref{eq:s22}, as well as \eqref{eq:relative-gw} and \eqref{eq:wdvv}, one rewrites \eqref{eq:uueq} as
\begin{equation} \label{eq:more-gw}
\begin{aligned}
u^2 \tilde{s}^{(2)}_{\mathit{eq}} & =  B_{\mathit{eq}} \big( {\textstyle \half} ((z^{(1)} \smile [M] + uz^{(1)}) \ast^{(1)}_F [M]) |E\big) \\
& = B_{\mathit{eq}} \big( (\textstyle \half (z^{(1)} \smile [M]) \ast^{(1)}_F [M] + u\tilde{z}^{(2)})|E \big). 
\end{aligned}
\end{equation}

\begin{remark}
Figure \ref{fig:gw} gives a schematic picture of the two terms in the second line of \eqref{eq:more-gw}. One can think of them as corresponding to strata in the space of relative stable maps \cite{li} to $(F, M)$, which have degree $2$ over $\bC P^1$. In both cases, the curve has a component not drawn in Figure \ref{fig:gw}, which is a parametrized map to $(\bC P^1 \times M, \{0,\infty\} \times M)$ that is a double cover of $\bC P^1 \times \{\mathit{point}\}$, with one branch point at $(\infty, \mathit{point})$. The difference between the two cases lies in the position of the other branch point. More precisely, what one may want to think of is a parametrized version of the space of stable maps, which as in \cite{givental} can be interpreted as unparametrized maps in $\bC P^1 \times F$. In spite of this tentative description, we will not speculate on the infinite hierarchy of which the equations \eqref{eq:ueq} and \eqref{eq:more-gw} should be the start.
\end{remark}
%
\begin{figure}
\begin{centering}
\begin{picture}(0,0)%
\includegraphics{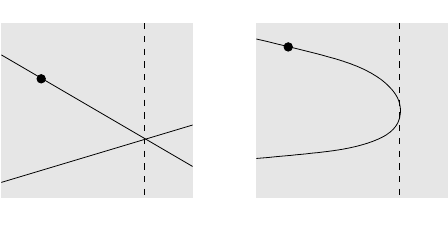}%
\end{picture}%
\setlength{\unitlength}{3355sp}%
\begingroup\makeatletter\ifx\SetFigFont\undefined%
\gdef\SetFigFont#1#2#3#4#5{%
  \reset@font\fontsize{#1}{#2pt}%
  \fontfamily{#3}\fontseries{#4}\fontshape{#5}%
  \selectfont}%
\fi\endgroup%
\begin{picture}(4213,2217)(-4361,-1630)
\put(-3074,464){\makebox(0,0)[lb]{\smash{{\SetFigFont{10}{12.0}{\rmdefault}{\mddefault}{\updefault}{\color[rgb]{0,0,0}$M$}%
}}}}
\put(-674,464){\makebox(0,0)[lb]{\smash{{\SetFigFont{10}{12.0}{\rmdefault}{\mddefault}{\updefault}{\color[rgb]{0,0,0}$M$}%
}}}}
\put(-1274,-1561){\makebox(0,0)[lb]{\smash{{\SetFigFont{10}{12.0}{\rmdefault}{\mddefault}{\updefault}{\color[rgb]{0,0,0}$\tilde{z}^{(2)}$}%
}}}}
\put(-4299,-1561){\makebox(0,0)[lb]{\smash{{\SetFigFont{10}{12.0}{\rmdefault}{\mddefault}{\updefault}{\color[rgb]{0,0,0}$(z^{(1)} \smile [M]) \ast^{(1)}_F [M]$}%
}}}}
\end{picture}%
\caption{\label{fig:gw}Holomorphic curves corresponding to terms in the second line of \eqref{eq:more-gw}.}
\end{centering}
\end{figure}

\begin{remark}
Here is another way to see the meaning of formulae such as \eqref{eq:ueq}. Suppose that $L \subset E \setminus \partial E$ is a closed Lagrangian submanifold ({\it Spin}, and with vanishing Maslov class). For the sake of argument, let's also assume that $L$ bounds only constant holomorphic discs in $E$ (for some choice of almost complex structure). Consider holomorphic discs in $F$ with boundary on $L$, and which intersect $M$ once. The boundary values of such discs gives rise to a class
\begin{equation} \label{eq:w-class}
w_{\mathit{eq}} \in H_{\mathit{eq}, n}(\scrL L;\bK),
\end{equation}
lying in the equivariant homology of the free loop space. More precisely, we use the equivariant homology theory from \cite{jones-petrack90}, which localises along constant loops. Taking Poincar{\'e} duality into account, the localisation theorem says that
\begin{equation} \label{eq:l-localisation}
H_{\mathit{eq},-*}(\scrL L;\bK) \otimes_{\bK[[u]]} \bK((u)) \iso H^{*+n}(L;\bK((u))).
\end{equation}
After localisation, \eqref{eq:w-class} contains only contributions from $S^1$-invariant (stable) discs, which are holomorphic spheres with a constant disc attached. Hence, under \eqref{eq:l-localisation},
\begin{equation} \label{eq:w-maps-to}
w_{\mathit{eq}} \longmapsto u^{-1} z^{(1)}|L \in H^0(L;\bK((u))).
\end{equation}
On the other hand, \eqref{eq:w-class} is the image of the Borman-Sheridan class under a canonical map $\mathit{SH}^*_{\mathit{eq}}(E) \rightarrow H_{\mathit{eq},n-*}(\scrL L;\bK)$ (see \cite[Theorem 1.1.9]{zhao16}). Hence, \eqref{eq:w-maps-to} is consistent with \eqref{eq:ueq}.
\end{remark}

\subsection{Implications for the connections $\nabla^c$}
We now combine the general theory above with Assumption \ref{th:psi-eta}. As an immediate consequence, the formulae \eqref{eq:ueq}, \eqref{eq:uueq} take on the following form:
\begin{equation}
\begin{aligned}
\label{eq:ueq-prime} 
& 
u \psi s_{\mathit{eq}} = B_{\mathit{eq}}(q^{-1}[\omega_E]), \\
& 
u^2 \psi^2\, (s \bullet s)_{\mathit{eq}}  = B_{\mathit{eq}}\Big( \half q^{-1}[\omega_E] \ast_E q^{-1}[\omega_E] \\ & \qquad \qquad \qquad + 
\half u \Big(\eta - \frac{\partial_q\psi}{\psi} - q^{-1}\Big) q^{-1}[\omega_E] + 2u^2\, z^{(2)} \psi^2 \Big).
\end{aligned}
\end{equation}
Using \eqref{eq:sh-gamma} as well as \eqref{eq:m-ast-m} and \eqref{eq:omega-ast-m}, one arrives at the following statement for the Gauss-Manin connection:
\begin{equation} \label{eq:ueq-doubleprime}
\begin{aligned} 
&
\Gamma( e_{\mathit{eq}}) = \Gamma(B_{\mathit{eq}}(1)) = B_{\mathit{eq}}( q^{-1} [\omega_E])
= u \psi s_{\mathit{eq}}, 
\\
&
u \Gamma (s_{\mathit{eq}}) = \Gamma( B_{\mathit{eq}}(\psi^{-1} q^{-1}[\omega_E]  ))
\\ & \qquad = B_{\mathit{eq}} \big( u\partial_q(\psi^{-1}q^{-1}[\omega_E]) + q^{-1}[\omega_E] \ast_E \psi^{-1}q^{-1}[\omega_E] \big)
\\
& \qquad = B_{\mathit{eq}} \Big(
u \psi^{-1} \big(-\frac{\partial_q\psi}{\psi} - q^{-1}\big) q^{-1}[\omega_E] \\ & \qquad \qquad \qquad \qquad +
\psi^{-1} (q^{-1}[\omega_E] \ast_E q^{-1}[\omega_E]) \Big) \\
& \qquad = 2u^2 \psi\, (s \bullet s)_{\mathit{eq}} - u^2 \eta s_{\mathit{eq}} - 4 u^2 z^{(2)} \psi e_{\mathit{eq}}.
\end{aligned}
\end{equation}
Assuming Conjecture \ref{th:eq-connection}, one would conclude that
\begin{equation} \label{eq:ueq-tripleprime}
\begin{aligned}  
&
\nabla_{\mathit{eq}} e_{\mathit{eq}} = \psi s_{\mathit{eq}} \; \text{modulo $\mathit{ker}(u)$,} \\
&
\nabla_{\mathit{eq}} s_{\mathit{eq}} = 2 \psi (s \bullet s)_{\mathit{eq}} - \eta s_{\mathit{eq}} - 4 z^{(2)} \psi e_{\mathit{eq}} \; \text{modulo $\mathit{ker}(u^2)$.}
\end{aligned}
\end{equation}
Let's suppose that we already knew that, for some undetermined constants $\gamma_{ij} \in \bK$,
\begin{equation} \label{eq:know}
\begin{aligned}
& \nabla^{-1} e = \gamma_{10} s + \gamma_{00} e, \\
& \nabla^{-1} s = \gamma_{21} (s \bullet s) + \gamma_{11} s + \gamma_{01} e.
\end{aligned}
\end{equation}
(It is not hard to arrive at such a qualitative statement, using a filtration as in \cite{mclean12}; however, we will not explain that argument here.) Because of \eqref{eq:eq-connection-2} and \eqref{eq:forgetful-iso}, the equivariant counterpart must then hold as well:
\begin{equation}
\begin{aligned}
& \nabla_{\mathit{eq}} e_{\mathit{eq}} = \gamma_{10} s_{\mathit{eq}} + \gamma_{00} e_{\mathit{eq}}, \\
& \nabla_{\mathit{eq}} s_{\mathit{eq}} = \gamma_{21} (s \bullet s)_{\mathit{eq}} + \gamma_{11} s_{\mathit{eq}} + \gamma_{01} e_{\mathit{eq}}.
\end{aligned}
\end{equation}
Comparison with \eqref{eq:ueq-tripleprime} yields
\begin{equation} \label{eq:ueq-quadrupleprime}
\begin{aligned}
& (\gamma_{10} - \psi) s_{\mathit{eq}} + \gamma_{00} e_{\mathit{eq}} \in \mathit{ker}(u), \\
& (\gamma_{21} - 2\psi) (s \bullet s)_{\mathit{eq}} + (\gamma_{11} + \eta) s_{\mathit{eq}} + (\gamma_{01} + 4 z^{(2)} \psi) e_{\mathit{eq}} \in \mathit{ker}(u^2).
\end{aligned}
\end{equation}
From \eqref{eq:ueq} and \eqref{eq:zhao}, one sees that $e_{\mathit{eq}}$ and $s_{\mathit{eq}}$ generate a free rank two $\bK[[u]]$-submodule of $\mathit{SH}^*_{\mathit{eq}}(E)$. Hence, the first expression in \eqref{eq:ueq-quadrupleprime} is not just $u$-torsion, but actually zero. If we additionally suppose that $\mathrm{dim}(E) > 4$, the same argument shows that $(e_{\mathit{eq}}, s_{\mathit{eq}}, (s \bullet s)_{\mathit{eq}})$ generate a free rank three submodule, leading to the same conclusion for \eqref{eq:ueq-quadrupleprime}. The outcome would then be that
\begin{equation} \label{eq:enum}
\begin{aligned}
&
\nabla^{-1} e = \psi s, \\
&
\nabla^{-1} s = 2 \psi (s \bullet s) - \eta s - 4 z^{(2)} \psi e,
\end{aligned}
\end{equation}
which are the $c = -1$ cases of \eqref{eq:nabla-c-e} and \eqref{eq:nablac-s}, respectively. By the same argument as in Section \ref{subsec:main} (taken in reverse), Lemma \ref{th:bounding} and Theorem \ref{th:sigma1} would then follow from this, in a way which is independent of the proofs given in this paper (conversely, the fact that we already have a proof of these results provides a strong check on Conjecture \ref{th:equi-sheridan}).

\begin{remark}
It is worthwhile reiterating the ingredients that enter into this (speculative) approach to \eqref{eq:enum}. One is Conjecture \ref{th:equi-sheridan}, which is a relation between equivariant symplectic cohomology and enumerative geometry (it does not require Assumption \ref{th:psi-eta}, and does not involve connections in any way). The second ingredient is the general theory of Gauss-Manin connections. Finally, one needs to know that a formula of the form \eqref{eq:know} exists, which is a qualitative statement about filtrations on symplectic cohomology. Altogether, this approach is somewhat complicated, but has the advantage of separating out different parts, each of which is meaningful in itself.
\end{remark}

\section{Motivation and context\label{sec:motivation}}

Like the previous one, this section is not strictly speaking needed for the rest of the paper. Its aim is to put the main results in a wider context. With that in mind, most of the arguments will just be outlined.

\subsection{Homological algebra}
Let $\scrA$ be an $A_\infty$-algebra over $\bK$. We denote by $\mathit{HH}_*(\scrA,\scrA)$ and $\mathit{HH}^*(\scrA,\scrA)$ its Hochschild homology and cohomology. $\mathit{HH}^*(\scrA,\scrA)$ is a Gerstenhaber algebra. $\mathit{HH}_*(\scrA,\scrA)$ is a Gerstenhaber module over that algebra, and also carries the Connes operator, an endomorphism of degree $-1$. The Kaledin class \cite{kaledin07,lunts10}
\begin{equation} \label{eq:kaledin}
k \in \mathit{HH}^2(\scrA,\scrA)
\end{equation}
is a distinguished element which describes the dependence of $\scrA$ on the formal variable $q$. To define it, choose a connection on $\scrA$ as a vector space; then, the derivative of the $A_\infty$-operations with respect to that connection yields a cocycle representing \eqref{eq:kaledin}. 

One can generalize Hochschild (co)homology by allowing coefficients in a bimodule $\scrP$ (the classical case corresponds to taking $\scrP$ to be the diagonal bimodule $\scrA$). The previously mentioned module structure extends to maps (expressing the functoriality of Hochschild homology)
\begin{equation} \label{eq:funct-hh}
\mathit{HH}^*(\scrA,\scrP) \otimes \mathit{HH}_*(\scrA,\scrA) \longrightarrow \mathit{HH}_*(\scrA,\scrP).
\end{equation}
The choice that will be of particular interest for us is the ``inverse dualizing'' bimodule $\scrP = \scrA^!$ \cite[Definition 8.1.6]{kontsevich-soibelman06}. A little confusingly, its cohomology is already itself a Hochschild cohomology group,
\begin{equation} \label{eq:ashriek}
H^*(\scrA^!) = \mathit{HH}^*(\scrA, \scrA \otimes_{\bK} \scrA);
\end{equation}
Let's assume that $\scrA$ is homologically smooth (the diagonal bimodule is perfect): in that case, $\scrA^!$ is also perfect \cite[Proposition 8.1.5]{kontsevich-soibelman06}, and there are natural isomorphisms \cite[Corollary 2.3]{ganatra13}
\begin{equation} \label{eq:shriek-2}
\mathit{HH}^*(\scrA,\scrP) \iso \mathit{HH}_*(\scrA, \scrA^! \otimes_{\scrA} \scrP)
\end{equation}
for perfect $\scrP$ (one recovers \eqref{eq:ashriek} by taking $\scrP = \scrA \otimes_{\bK} \scrA$). Assume additionally that $\scrA^!$ is an invertible bimodule (with respect to tensor product), and denote its inverse by $(\scrA^!)^{-1}$. Then, tensor product with $\scrA^!$ or its inverse preserves perfect bimodules (because of their characterization as compact objects). We can therefore replace $\scrP$ by $(\scrA^!)^{-1} \otimes_{\scrA} \scrP$ in \eqref{eq:shriek-2}, and rewrite the outcome as
\begin{equation} \label{eq:hom-from-shriek}
H^*(\mathit{hom}_{(\scrA,\scrA)}(\scrA^!,\scrP)) \iso \mathit{HH}_*(\scrA,\scrP),
\end{equation}
where the morphism space on the left hand is in the category of bimodules. We will actually use only the special case $\scrP = \scrA$.

Following \cite[Definition 3.2.3]{ginzburg06}, let's say that $\scrA$ is Calabi-Yau (of dimension $n$) if it is homologically smooth and comes with a choice of quasi-isomorphism
\begin{equation} \label{eq:calabi-yau}
\scrA^! \stackrel{\htp}{\longrightarrow} \scrA[-n].
\end{equation}
One then gets induced isomorphisms 
\begin{equation} \label{eq:calabi-yau-hh}
\mathit{HH}^{*+n}(\scrA,\scrA^!) \stackrel{\iso}{\longrightarrow} \mathit{HH}^*(\scrA,\scrA) \stackrel{\iso}{\longrightarrow}
\mathit{HH}_{*-n}(\scrA,\scrA).
\end{equation}
By \eqref{eq:hom-from-shriek}, the map \eqref{eq:calabi-yau} can also be viewed as a $\nu \in \mathit{HH}_{-n}(\scrA,\scrA)$ (there is a stronger version of the Calabi-Yau property, which involves a lift of $\nu$ to negative cyclic homology, see e.g.\ \cite[Definition 6.7]{ganatra-perutz-sheridan15}; but that won't be necessary for our purpose). In those terms, the second map in \eqref{eq:calabi-yau-hh} is given by letting Hochschild cohomology classes act on $\nu$ (through the module structure of Hochschild homology). Similarly, the first map is given by inserting $\nu$ into \eqref{eq:funct-hh}, for $\scrP = \scrA^!$, to get a map $\mathit{HH}^{*+n}(\scrA,\scrA^!) \rightarrow \mathit{HH}_*(\scrA,\scrA^!)$, and then using the isomorphism \eqref{eq:ashriek} (which is independent of the Calabi-Yau structure).

From now on, let's consider the special case when the Kaledin class \eqref{eq:kaledin} vanishes. This holds if and only if $\scrA$ admits an $A_\infty$-connection, which is defined to be a sequence of operations
\begin{equation}
\begin{aligned}
& \partial^0 \in \scrA^1, \\
& \partial^1: \scrA \longrightarrow \scrA, \\
& \partial^2: \scrA \otimes \scrA \longrightarrow \scrA[-1], \\
& \dots
\end{aligned}
\end{equation}
with the following properties: each $\partial^d$, $d \neq 1$, is $\bK$-(multi)linear, whereas $\partial^1$ is a connection; and the relations for a Hochschild cocycle are satisfied. An $A_\infty$-connection on $\scrA$ induces a connection $\nabla$ (in the ordinary sense) on $\mathit{HH}^*(\scrA,\scrA)$, as well as connections $\nabla^{-1}$ on $\mathit{HH}_*(\scrA,\scrA)$ and $\nabla^1$ on $\mathit{HH}^*(\scrA,\scrA^!)$. These connections are compatible with all algebraic structures on those groups. In particular, $\nabla$ satisfies analogues of \eqref{eq:nabla-derivation} and \eqref{eq:nabla-derivation-2}, while $\nabla^{-1}$ commutes with the Connes operator.

\begin{proposition}
Assume that $\scrA$ is Calabi-Yau, and that it carries an $A_\infty$-connection. Let $a \in \mathit{HH}^0(\scrA,\scrA)$ be the unique class such that
\begin{equation}
\nabla^{-1} \nu = -a\nu \in \mathit{HH}_{-n}(\scrA,\scrA).
\end{equation}
Then, if one uses \eqref{eq:calabi-yau-hh} to think of $\nabla^{\pm 1}$ as living on $\mathit{HH}^*(\scrA,\scrA)$, 
\begin{equation} \label{eq:c-term-alg}
\nabla^{\pm 1} x = \nabla x \pm ax.
\end{equation}
\end{proposition}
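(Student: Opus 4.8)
The plan is to unwind the two isomorphisms of \eqref{eq:calabi-yau-hh}, transport $\nabla^{-1}$ and $\nabla^{1}$ along them to endomorphisms of $\mathit{HH}^*(\scrA,\scrA)$, and read off their discrepancy from $\nabla$ using the defining identity $\nabla^{-1}\nu = -a\nu$ and the Leibniz rule. First note that $a$ is legitimately defined: the second map in \eqref{eq:calabi-yau-hh}, $x \mapsto x\cdot\nu$ (the action of $\mathit{HH}^*(\scrA,\scrA)$ on $\mathit{HH}_*(\scrA,\scrA)$), is an isomorphism, and $\nabla^{-1}\nu$ has the same degree $-n$ as $\nu$, so there is a unique $a \in \mathit{HH}^0(\scrA,\scrA)$ with $\nabla^{-1}\nu = -a\nu$.

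The $-1$ case is immediate. Since the induced connection $\nabla^{-1}$ on $\mathit{HH}_*(\scrA,\scrA)$ is compatible with the module structure over $(\mathit{HH}^*(\scrA,\scrA),\nabla)$, for $x \in \mathit{HH}^*(\scrA,\scrA)$ one has
\[
\nabla^{-1}(x\cdot\nu) = (\nabla x)\cdot\nu + x\cdot(\nabla^{-1}\nu) = (\nabla x)\cdot\nu - x\cdot(a\nu) = (\nabla x - ax)\cdot\nu,
\]
where the last equality uses associativity of the module action and graded-commutativity of the Hochschild cup product ($|a| = 0$). So, transported through $x \mapsto x\cdot\nu$, the operator $\nabla^{-1}$ becomes $x \mapsto \nabla x - ax$: this is \eqref{eq:c-term-alg} with the lower sign.

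For the $+1$ case I would unwind the first map of \eqref{eq:calabi-yau-hh} as a composite $\mathit{HH}^{*+n}(\scrA,\scrA^!) \to \mathit{HH}_*(\scrA,\scrA^!) \to \mathit{HH}^*(\scrA,\scrA)$, the first arrow being insertion of $\nu$ into the functoriality pairing \eqref{eq:funct-hh} for $\scrP = \scrA^!$, the second being the natural isomorphism $\Psi\colon \mathit{HH}_*(\scrA,\scrA^!) \iso \mathit{HH}^*(\scrA,\scrA)$ inverse to \eqref{eq:shriek-2} with $\scrP = \scrA$. Using that \eqref{eq:funct-hh} is parallel for the induced connections --- write $\bar\nabla$ for the one on $\mathit{HH}_*(\scrA,\scrA^!)$ --- and again $\nabla^{-1}\nu = -a\nu$, one finds for $\beta \in \mathit{HH}^{*+n}(\scrA,\scrA^!)$
\[
\bar\nabla(\beta\cdot\nu) = (\nabla^{1}\beta)\cdot\nu + \beta\cdot(\nabla^{-1}\nu) = (\nabla^{1}\beta - a\beta)\cdot\nu,
\]
hence $(\nabla^{1}\beta)\cdot\nu = \bar\nabla(\beta\cdot\nu) + (a\beta)\cdot\nu$. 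Since $\Psi$ intertwines $\bar\nabla$ with $\nabla$ and is $\mathit{HH}^*(\scrA,\scrA)$-linear, applying it shows that if $x$ is the image of $\beta$ under the first map, then the image of $\nabla^{1}\beta$ is $\nabla x + ax$. So $\nabla^{1}$, transported through the first map, becomes $x \mapsto \nabla x + ax$: this is \eqref{eq:c-term-alg} with the upper sign. The reversal of sign relative to the $-1$ case is precisely the difference between $\nu$ occupying the module slot of the cap product in one map and being fed through the pairing \eqref{eq:funct-hh} in the other.

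The step I expect to be the real obstacle is not this formal manipulation but the input it relies on: that every structure map in \eqref{eq:funct-hh}--\eqref{eq:calabi-yau-hh} --- the module actions, the functoriality pairing \eqref{eq:funct-hh}, and the isomorphisms \eqref{eq:shriek-2} and \eqref{eq:hom-from-shriek} --- is parallel for the connections a fixed chain-level $A_\infty$-connection induces on the relevant Hochschild invariants, including those with coefficients in $\scrA^!$. This is the expected naturality of the constructions of $\nabla$ and $\nabla^{\pm 1}$, but establishing it rigorously amounts to choosing chain-level representatives of all these maps and producing explicit homotopies witnessing their compatibility with differentiation in $q$; that homotopy-coherent bookkeeping --- which also uses homological smoothness and the invertibility of $\scrA^!$ built into \eqref{eq:shriek-2} --- is where the work lies, the displayed computations becoming automatic once it is in place. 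An equivalent but more conceptual packaging is to argue directly with the Calabi-Yau quasi-isomorphism $\scrA^! \htp \scrA[-n]$: the relation $\nabla^{-1}\nu = -a\nu$ says exactly that this quasi-isomorphism is parallel up to the inner derivation determined by $a$, and \eqref{eq:c-term-alg} records how that single defect propagates to the two isomorphisms in \eqref{eq:calabi-yau-hh}, the sign flip reflecting that $\scrA^!$ enters as coefficients on one side and that the quasi-isomorphism gets used in opposite directions.
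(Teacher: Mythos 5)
Your argument coincides with the paper's sketch of proof: both deduce the $-1$ case from compatibility of $\nabla^{-1}$ with the $\mathit{HH}^*(\scrA,\scrA)$-module structure on $\mathit{HH}_*(\scrA,\scrA)$, and the $+1$ case from compatibility of the triple $(\nabla^1,\nabla^{-1},\nabla)$ with the pairing \eqref{eq:funct-hh} for $\scrP = \scrA^!$, with the defining relation $\nabla^{-1}\nu = -a\nu$ feeding into the Leibniz rule in the $\nu$-slot. Your closing paragraph (on the chain-level parallelness of the structure maps as the real content) names a genuine gap that the paper's ``Sketch of proof'' also leaves unaddressed, which is consistent with it being a sketch.
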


\begin{proof}[Sketch of proof]
Because the connections $(\nabla,\nabla^{-1})$ are compatible with the module structure of Hochschild homology,
\begin{multline}
\nabla^{-1}(x \nu) = (\nabla x)\nu + x (\nabla^{-1} \nu) = (\nabla x - ax) \nu \\ \in \mathit{HH}_*(\scrA,\scrA),
\;\;\text{ for $x \in \mathit{HH}^*(\scrA,\scrA)$.}
\end{multline}
Similarly, compatibility with \eqref{eq:funct-hh} for $\scrP = \scrA^!$ means that
\begin{multline}
\nabla(x \nu) = (\nabla^1 x) \nu + (x \nabla^{-1} \nu) = (\nabla^1 x - ax) \nu \\  \in \mathit{HH}_*(\scrA,\scrA^!) \iso \mathit{HH}^*(\scrA,\scrA), \;\;\text{ for $x \in \mathit{HH}^*(\scrA,\scrA^!)$.}
\end{multline}
\end{proof}

The formula \eqref{eq:c-term-alg} is obviously parallel to \eqref{eq:c-term}, even though the direction of the argument is reversed. Here, we were given $\nabla$ and $\nabla^{\pm 1}$ with certain properties (for instance, that $\nabla^{-1}$ was compatible with the Connes operator), and then derived the relation \eqref{eq:c-term-alg} from those; in the case of \eqref{eq:c-term}, we obtained $\nabla^c$ by an explicit modification of $\nabla$, and any properties of those connections had to be established based on that.
By using powers of the Serre functor, one could construct homological algebra counterparts of $\nabla^c$ for any integer value of $c$, but we will not discuss that here.

The relation between homological algebra and symplectic cohomology goes via the wrapped Fukaya category $\scrW(E)$. This is a $\bZ$-graded $A_\infty$-category defined over $\bK$ (and expected to be Calabi-Yau, see \cite{ganatra13}). In general, Fukaya categories have curvature ($\mu^0$) terms. Here, we work in the framework of Lefschetz fibrations (as in Section \ref{subsec:define-lefschetz}), and allow only Lefschetz thimbles as objects. In that case, one can arrange that $\mu^0 = 0$. The previous discussion of connections on $A_\infty$-algebras carries over without any difficulties to the categorical case.
%
%
The Hochschild (co)homology of $\scrW(E)$ is related to symplectic cohomology by open-closed string maps \cite{abouzaid10, ganatra13, ritter-smith12}
\begin{align} \label{eq:co}
& \mathit{SH}^*(E) \longrightarrow \mathit{HH}^*(\scrW(E),\scrW(E)), \\
\label{eq:oc}
& \mathit{HH}_*(\scrW(E),\scrW(E)) \longrightarrow \mathit{SH}^{*+n}(E).
\end{align}
The image of $q^{-1}[\omega_E]$ under the composition of \eqref{eq:co} and \eqref{eq:acc} is the Kaledin class of $\scrW(E)$. In particular, if Assumption \ref{th:as-1} holds, the Kaledin class is trivial.

\begin{conjecture} \label{th:constant}
Suppose that Assumption \ref{th:as-1} holds. Then, for a suitable choice of connection $\nabla$ on $\mathit{SH}^*(E)$ as in Proposition \ref{th:1}, and of $A_\infty$-connection on $\scrW(E)$, the map \eqref{eq:co} is covariantly constant. Similarly, \eqref{eq:oc} is covariantly constant if we equip $\mathit{SH}^*(E)$ with the connection $\nabla^{-1}$ from \eqref{eq:c-term}.
\end{conjecture}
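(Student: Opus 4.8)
The argument should be run at the chain level, starting from one coherent system of Floer and perturbation data that simultaneously defines the symplectic cochain complex $(\mathit{SC}^*(E),d)$ with its $q$-dependent differential, the $A_\infty$-structure $\mu^*$ on $\scrW(E)$ (arranged so that $\mu^0=0$, as one can when the objects are Lefschetz thimbles), chain-level open--closed maps $\mathcal{CO}\colon\mathit{SC}^*(E)\to\mathit{CC}^*(\scrW(E))$ and $\mathcal{OC}\colon\mathit{CC}_*(\scrW(E))\to\mathit{SC}^{*+n}(E)$, and the chain-level module compatibility of $\mathcal{OC}$ over $\mathcal{CO}$. The structural input that drives everything is this: each of $d$, $\mu^*$, $\mathcal{CO}$, $\mathcal{OC}$ is a count of pseudo-holomorphic configurations weighted by $q$ to the symplectic area, and differentiating such a count in $q$ equals — by the divisor equation for $[\omega_E]$, up to a chain homotopy moving an auxiliary interior marked point — the count of the same configurations carrying one extra interior marked point constrained to a fixed cocycle representative $\kappa$ of $q^{-1}[\omega_E]$. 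On $\mathit{SC}^*(E)$ this is exactly the mechanism producing the Kodaira--Spencer class $k=B(q^{-1}[\omega_E])$ and, once a primitive $b$ with $db=\kappa$ has been chosen (possible by Assumption \ref{th:as-1}), the connection $\nabla=\partial_q^{\mathrm{basis}}+N$ of Proposition \ref{th:1}, where $N$ is the Lie-type operation ``insert $b$ at an auxiliary marked point'' (degree $0$, since the bracket has degree $-1$) and $\partial_q^{\mathrm{basis}}$ is naive differentiation of matrix entries with respect to the orbit basis.

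The point of the proof is to propagate the primitive $b$ across $\mathcal{CO}$. Since $\mathcal{CO}$ is a chain map and $\mathcal{CO}(k)$ is the Kaledin class of $\scrW(E)$, the cocycle $\mathcal{CO}(\kappa)$ represents the Kaledin class; by Assumption \ref{th:as-1} it is a coboundary, and one may take its primitive to be $\mathcal{CO}(b)$ corrected by a $d_{\mathit{CC}}$-bounding term. This primitive defines an $A_\infty$-connection on $\scrW(E)$, hence a connection $\nabla_{\mathit{HH}}=\partial_q^{\mathrm{basis}}+N'$ on $\mathit{CC}^*(\scrW(E))$ that is ``$\mathcal{CO}$-related'' to $\nabla$ by construction. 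With these choices fixed, differentiating the identity $\mathcal{CO}\circ d=d_{\mathit{CC}}\circ\mathcal{CO}$ and using $[N,d]=[\partial_q^{\mathrm{basis}},d]$-up-to-homotopy (and the analogous relation for $N'$) shows that $T:=\partial_q^{\mathrm{basis}}\mathcal{CO}+N'\mathcal{CO}-\mathcal{CO}\,N$ is a degree-zero chain map $\mathit{SC}^*(E)\to\mathit{CC}^*(\scrW(E))$, and that the operator it induces on cohomology measures precisely the failure of \eqref{eq:co} to be covariantly constant. So it remains to exhibit a null-homotopy of $T$.

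That null-homotopy is geometric: define a degree $-1$ map $H\colon\mathit{SC}^*(E)\to\mathit{CC}^*(\scrW(E))$ by running the moduli problem defining $\mathcal{CO}$ with one extra interior marked point constrained to $b$. The codimension-one boundary of the compactified parametrized moduli spaces breaks into four types of strata: the extra point colliding with the distinguished interior puncture, giving $\partial_q^{\mathrm{basis}}\mathcal{CO}$ via the area identity of the first paragraph; Floer breaking at the inputs and outputs, giving $d_{\mathit{CC}}H\pm H d$; the extra point bubbling off on the closed side, giving $\pm\,\mathcal{CO}\,N$; and the extra point degenerating on the open (Hochschild) side, giving $\pm\,N'\mathcal{CO}$, where this last stratum uses the $\mathcal{CO}$-relatedness of $N$ and $N'$. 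The resulting identity $\partial_q^{\mathrm{basis}}\mathcal{CO}+N'\mathcal{CO}-\mathcal{CO}\,N=d_{\mathit{CC}}H\pm H d$ says that $T$ is null-homotopic, hence \eqref{eq:co} is covariantly constant on cohomology. The same construction with $\mathcal{OC}$ in place of $\mathcal{CO}$ shows that \eqref{eq:oc} intertwines the connection $\nabla^{-1}_{\mathit{HH}}$ induced on $\mathit{HH}_*(\scrW(E),\scrW(E))$ with a connection $\nabla^{c}$ on $\mathit{SH}^*(E)$ for one value of $c$; to pin down $c=-1$ one uses that $\mathcal{OC}$ intertwines the Connes operator with the BV operator $\Delta$, that $\nabla^{-1}_{\mathit{HH}}$ commutes with the Connes operator, and that $\nabla^{-1}$ is the unique member of the family \eqref{eq:c-term} whose commutator with $\Delta$ vanishes (cf.\ \eqref{eq:-1-connection}) — equivalently, the module compatibility of $\mathcal{OC}$ over $\mathcal{CO}$ together with $\nabla^{-1}\nu=-a\nu$ in the algebraic model of Section \ref{sec:motivation} gives the same normalization, and a careful sign analysis of the strata above should reproduce it directly.

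The main obstacle I expect is not any single step but assembling one globally coherent choice of perturbation data, and of bounding-cochain-type primitives, for the whole package $\{d,\ \mu^*,\ \mathcal{CO},\ \mathcal{OC}\}$ and its module compatibilities at once, so that all the boundary-degeneration identities above hold on the nose with consistent orientations. In particular, transversality for the parametrized moduli spaces carrying the extra marked point, and the sign analysis of precisely the strata where that point collides with a puncture — which is where the ``$\partial_q$'' is produced, and where the distinction between $\nabla$ and $\nabla^{-1}$ is most sensitive — are the technical heart of the matter. By contrast, the Calabi--Yau identifications of \eqref{eq:calabi-yau-hh} are not needed for the bare statement of covariant constancy: only the module compatibility of $\mathcal{OC}$ over $\mathcal{CO}$ (or, for the sign, the Connes-operator computation) enters.
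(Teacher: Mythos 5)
Note first that this statement is a \emph{conjecture} in the paper, not a theorem: Section \ref{sec:motivation} is explicitly flagged as motivation, with ``most of the arguments... just outlined,'' and no proof of this statement appears anywhere in the paper. So there is no paper proof to compare against; I can only assess your sketch on its own terms. On those terms, your sketch has the right shape --- it is precisely the mechanism the paper itself uses to build $\nabla^c$ on symplectic cohomology (differentiating a $q$-weighted count produces an extra-marked-point term via the divisor axiom; inserting a bounding cochain cancels it --- compare the operations $r_K$, $\rho^c_K$, $[\theta,\cdot]^c$ and the formula \eqref{eq:alt-nabla-m}), and propagating this through $\mathcal{CO}$ and $\mathcal{OC}$ while using BV/Connes compatibility to pin $c=-1$ is the natural continuation. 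You also correctly flag the central obstacle (coherent perturbation data, transversality, signs).

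However, the gap is deeper than ``assemble coherent perturbation data.'' Your picture $\nabla = \partial_q^{\mathrm{basis}} + N$ presupposes a single well-behaved cochain complex, whereas the paper's actual chain-level formula \eqref{eq:alt-nabla-m} is forced to route through continuation maps $C$, a correction term $h^+$, and --- in the compatibility statements Propositions \ref{th:ninth-circle} and \ref{th:ninth-circle-2} --- additional ``padding'' by extra continuation maps, because the individual ingredients of the connection live between Floer complexes with \emph{different slope sequences} and only fit together up to controlled homotopy. Any parametrized moduli space argument for your $T$ and $H$ would need to reproduce exactly this bookkeeping, matching each codimension-one stratum to a specific continuation or homotopy term, and it would have to do so simultaneously across the full chain-level open-closed package --- including the module compatibility of $\mathcal{OC}$ over $\mathcal{CO}$ that your $c=-1$ normalization invokes. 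None of that infrastructure is set up in Section \ref{sec:symplectic-cohomology} or elsewhere in the paper, which is precisely why the author leaves this as a conjecture. Your sketch is a plausible blueprint and identifies the right ingredients, but it elides the part that makes the problem genuinely hard; it is not a proof.
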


It makes sense to think that there should be another canonical map
\begin{equation} 
\label{eq:co2}
\mathit{SH}^*(E) \longrightarrow \mathit{HH}^{*+n}(\scrW(E),\scrW(E)^!).
\end{equation}
The image of $e$ under \eqref{eq:co2} can be thought of as a bimodule map $\scrW(E)[-n] \rightarrow \scrW(E)^!$ (conjecturally, it is the inverse of the isomorphism \eqref{eq:calabi-yau} defining the Calabi-Yau structure). The analogue of Conjecture \ref{th:constant} for that map involves the connection $\nabla^1$. It would then follow that the $q$-dependence of this bimodule map is governed by \eqref{eq:2nd-order}.

\begin{remark}
The material from Section \ref{sec:equivariant} fits into this discussion as follows. Negative cyclic homology $\mathit{HC}_*^-(\scrA)$ is the natural $S^1$-equivariant analogue of $\mathit{HH}_*(\scrA,\scrA)$: it is a module over $\bK[[u]]$, where $u$ is a formal variable of degree $2$, and fits into a long exact sequence
\begin{equation}
\cdots \rightarrow \mathit{HC}_{*-2}^-(\scrA) \stackrel{u}{\longrightarrow} \mathit{HC}_*^-(\scrA) \longrightarrow \mathit{HH}_*(\scrA) \rightarrow \cdots
\end{equation}
Getzler's connection \cite{getzler95} is a canonical $u$-linear map
\begin{equation} \label{eq:gamma-connection}
\begin{aligned}
& \Gamma: \mathit{HC}_*^-(\scrA) \longrightarrow \mathit{HC}_{*+2}^-(\scrA), \\
& \Gamma(f(q)x) = f(q) \Gamma(x) + u (\partial_q f) x,
\end{aligned}
\end{equation}
which is related to the Kaledin class by a diagram
\begin{equation} \label{eq:obst}
\xymatrix{
\ar[d] \mathit{HC}_*^-(\scrA) \ar[rr]^-{\Gamma} && \mathit{HC}_{*+2}^-(\scrA) \ar[d] \\
\mathit{HH}_*(\scrA,\scrA) \ar[rr]^-{k} && \mathit{HH}_{*+2}(\scrA,\scrA).
}
\end{equation}
Assuming that the Kaledin class vanishes, an $A_\infty$-connection on $\scrA$ induces a $u$-linear connection $\nabla_{\mathit{eq}}$ on $\mathit{HC}_*^-(\scrA)$, which satisfies
\begin{equation} \label{eq:alg1}
\Gamma = u\nabla_{\mathit{eq}},
\end{equation} 
and which is a lift of $\nabla^{-1}$:
\begin{equation} \label{eq:alg2}
\xymatrix{
\ar[d] \mathit{HC}_*^-(\scrA) \ar[rr]^-{\nabla_{\mathit{eq}}} && \ar[d] \mathit{HC}_*^-(\scrA)
\\
\mathit{HH}_*(\scrA,\scrA) \ar[rr]^-{\nabla^{-1}} && \mathit{HH}_*(\scrA,\scrA).
}
\end{equation}
Suitable equivariant open-closed string maps (like those for compact manifolds studied in \cite{ganatra-perutz-sheridan15}) should relate $\mathit{HC}_*^-(\scrA)$ to the $u$-adically completed version of equivariant symplectic cohomology. This is not the version we discussed in Section \ref{sec:equivariant}, but it comes with a map from that version, and structures like the Gauss-Manin connection carry over by $u$-adically completing (on the chain level). Therefore, is still makes sense to consider \eqref{eq:alg1} and \eqref{eq:alg2} as algebraic counterparts of Conjecture \ref{th:eq-connection}.
\end{remark}

\subsection{Mirror symmetry}
Homological mirror symmetry relates wrap\-ped Fukaya categories to categories of coherent sheaves (on smooth open Calabi-Yau varieties). Symplectic cohomology is mirror to polyvector field cohomology. We will consider a specific algebro-geometric situation which formally corresponds to the symplectic one encountered before. 

For the sake of familiarity, let's switch the coefficient field to $\bF = \overline{\bC((h))}$, which is the algebraic closure of the Laurent series field (obtained by adjoining roots $h^{1/d}$ of all orders; in symplectic geometry, such a field can be used instead of the Novikov field whenever the symplectic class is rational). Let $X$ be a smooth variety of dimension $n$ over $\bF$. The polyvector field cohomology 
\begin{equation} \label{eq:ht}
HT^*(X) = H^*(X, \Lambda^* TX)
\end{equation}
has the structure of a Gerstenhaber algebra (given by the exterior product and Schouten bracket). 
The dependence of $X$ on $h$ is measured by the Kodaira-Spencer class
\begin{equation} \label{eq:ks}
k \in H^1(X,TX) \subset \mathit{HT}^2(X).
\end{equation}
If $k$ vanishes, $X$ admits a connection in $\partial_h$-direction (acting on the sheaf of functions). This induces a connection on polyvector fields (and their cohomology), which we denote by $\nabla$; and also one on differential forms, which we denote by $\nabla^{-1}$.

Suppose that $X$ has trivial canonical bundle, and comes with an algebraic volume form $\nu$. This gives rise to a BV operator on \eqref{eq:ht}, induced by the sheaf homomorphism
\begin{equation} \label{eq:dd}
\Delta: \Lambda^i TX \stackrel{\nu}{\iso} \Omega^{n-i}_{X}
\stackrel{d}{\longrightarrow} \Omega^{n+1-i}_{X} \stackrel{\nu}{\iso}
\Lambda^{i-1} TX.
\end{equation}
Now consider the case where $X$ comes with a connection, and set
\begin{equation} \label{eq:holomorphic-a}
a = -\frac{\nabla^{-1} \nu}{\nu} \in H^0(X,\scrO_{X}) \subset \mathit{HT}^0(X).
\end{equation}
If we use $\nu$ to identify the differential forms and polyvector fields, then
\begin{equation} \label{eq:nabla-1-t}
\nabla^{-1}x = \nabla x - ax.
\end{equation}
Since $\nabla^{-1}$ commutes with the de Rham differential, \eqref{eq:nabla-1-t} defines a connection on polyvector fields which commutes with the BV operator. This implies that the original connection $\nabla$ satisfies the analogue of \eqref{eq:delta-nabla} for the function \eqref{eq:holomorphic-a}.
More generally, there are natural connections $\nabla^c$ on $\Lambda^* TX \otimes K_{X}^{\otimes -c}$ for any $c \in \bZ$ (reducing to the case of differential forms for $c = -1$). If we use the volume form to trivialize $K_{X}$, these connections can be written as $\nabla^c = \nabla  + ca$, acting on polyvector fields.

%
The specific situation we want to consider is the following one. Start with a fibration over $\bC P^1$ which is log Calabi-Yau. By this, we mean a smooth projective variety $\bar{X}_0$ over $\bC$ and a surjective map
\begin{equation} \label{eq:cy-pencil}
\bar{X}_0 \longrightarrow \bC P^1 = \bC \cup \{\infty\},
\end{equation}
such that $K_{\bar{X}_0}$ is the pullback of $\scrO(-1)$. If we remove the fibre at $\infty$ to form
\begin{equation}
p_0: X_0 \longrightarrow \bC,
\end{equation}
then $X_0$ carries a holomorphic volume form $\nu_0$ (with a simple pole along the divisor we have removed). Extend constants to form $\bar{X} = \bar{X}_0 \times_{\bC} \bF$, and then remove the fibre at $h^{-1}$ to get $X \subset \bar{X}$. This carries an algebraic volume form
\begin{equation}
\nu = f \frac{\nu_0}{1-p_0 h},
\end{equation}
where $f \in \bF^\times$ is an arbitrary scaling factor. Let's write $l = (\partial_h f)f^{-1}$. In this situation, the connection on $X$ is just differentiation $\partial_h$. The function \eqref{eq:holomorphic-a} is therefore
\begin{equation} \label{eq:compute-a}
a = -\frac{\partial_h \nu}{\nu} = \frac{p_0}{p_0 h - 1} - l,
\end{equation}
which obviously satisfies
\begin{equation} \label{eq:a-a2}
\partial_h a + a^2 + 2l a + (\partial_h l + l^2) = 0.
\end{equation}
This equation is the mirror of \eqref{eq:nonlinear-a}. Correspondingly, the section of $K_X^{-1}$ given by
\begin{equation}
\eta = \frac{1}{\nu} = f^{-1} \frac{1-p_0 h}{\nu_0}
\end{equation}
satisfies the mirror of \eqref{eq:2nd-order-2}, which is
\begin{equation} \label{eq:trivial-ode}
\partial_h^2 \eta + 2l \partial_h \eta + (\partial_h l + l^2) \eta = 0.
\end{equation}
In fact, \eqref{eq:trivial-ode} has scalar solutions $1/f$ and $h/f$, of which our $\eta$ was a linear combination (with coefficients in sections of $K_{X_0}^{-1}$). If one wrote down an equation for quotients of solutions of \eqref{eq:trivial-ode} in parallel with \eqref{eq:schwarz}, that equation would just be vanishing of the Schwarzian $S_h$ (functions with that property are exactly the rational automorphisms of the $h$-line). What this means is that under mirror symmetry, the variable $h$ here does not correspond to $q$, but rather to the ``mirror coordinate'' which is one particular solution of \eqref{eq:schwarz} (see the discussion in \cite[Sections 4c and 7b]{seidel15}).




\section{Configuration spaces\label{sec:conf}}

This section and the two following ones serve as a transition between our introductory discussion, in which results were stated with no attempt at explaining the underlying Floer-theoretic ideas, and the subsequent detailed constructions. We will consider an operad-style framework in which abstract versions of the desired properties can be seen to hold, without the technicalities inherent in working with pseudo-holomorphic curves (one could call this a ``photoshopped version'' of the operations in Floer theory).

\subsection{Gerstenhaber algebras\label{subsec:gerstenhaber}}
We begin by recalling classical material: the little discs operad and its algebraic counterpart (see e.g.\ \cite{sinha13} for an exposition).

\begin{setup} \label{th:setup-sigma}
A disc configuration is a collection of points $\zeta_i \in \bC$ together with radii $r_i > 0$, indexed by a finite set $I$. We require that the closed discs of radius $r_i$ around the $\zeta_i$ should be pairwise disjoint, and contained in the open unit disc centered at the origin. We allow one exception to the last-mentioned condition, called the identity configuration: this consists of a single point $\zeta_1 = 0$ at the origin, with $r_1 = 1$. (When representing disc configurations graphically, we will often only draw the centers $\zeta_i$; this causes no major issues, since the choices of $r_i$ form a contractible set.)

Given two disc configurations, indexed by sets $I_1$ and $I_2$, and a choice of $i_1 \in I_1$, one can carry out the following gluing process. Take the second configuration, rescale it by $r_{i_1}$, and then insert it into the first configuration centered at $\zeta_{i_1}$, replacing the point originally located there. The outcome is a configuration indexed by $I = (I_1 \setminus \{i_1\}) \cup I_2$. As the name suggests, gluing with the identity configuration does nothing. 

We will also consider families of disc configurations, parametrized by a smooth compact oriented manifold $P$ (which may have boundary or corners). Given two families with parameter spaces $P_1$ and $P_2$, gluing produces a family over $P_1 \times P_2$.
\end{setup}

Fix some commutative coefficient ring $R$. The algebraic structure we will look at is a graded $R$-module $H^*$ with operations induced by families of configurations. More precisely, we require that for any family with closed $P$, there is an associated $R$-multilinear map
\begin{equation} \stareq
\label{eq:basic-operation}
(H^*)^{\otimes I} \longrightarrow H^{*-\mathrm{dim}(P)}. 
\end{equation}
(The (H) added to the equation numbers helps us record the fact that these structures live on the cohomology level space $H^*$, as opposed to the cochain level operations to be considered later on.) The tensor product of graded $R$-modules in \eqref{eq:basic-operation} includes the usual Koszul signs which relate different orderings of the indexing set $I$. The disjoint union of two parameter spaces (with the same $I$) should result in the sum of the associated operations. If we have a family over a (compact oriented, as always) manifold with boundary, then its restriction to the boundary yields the zero operation. The identity configuration should induce the identity map. Gluing of families should correspond to composition of operations, with appropriate signs. Namely, suppose for simplicity that we have operations $\phi_1$ and $\phi_2$ whose inputs are indexed by $I_1 = \{1,\dots,m_1\}$ and $I_2 = \{1,\dots,m_2\}$, respectively. Then, identifying $(I_1 \setminus \{i_1\}) \cup I_2 = \{1,\dots,m\}$ with $m = m_1+m_2-1$, the operation associated to the glued family is
\begin{equation} \stareq \label{eq:composition-law}
\begin{aligned}
\phi(x_1,\dots,x_m) & = (-1)^{(|\phi_1|+ |x_1| + \cdots + |x_{i_1-1}|) |\phi_2|}
\phi_1(x_1,\dots,x_{i_1-1}, \\  & \qquad \qquad
\phi_2(x_{i_1},\dots,x_{i_1+m_2-1}),x_{i_1+m_2},\dots,x_m).
\end{aligned}
 \end{equation}
%
\begin{figure}
\begin{centering}
\begin{picture}(0,0)%
\includegraphics{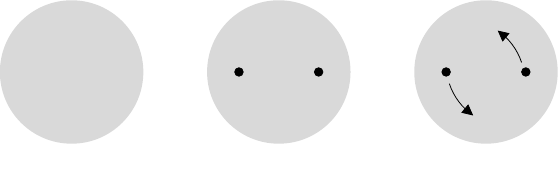}%
\end{picture}%
\setlength{\unitlength}{3355sp}%
\begingroup\makeatletter\ifx\SetFigFont\undefined%
\gdef\SetFigFont#1#2#3#4#5{%
  \reset@font\fontsize{#1}{#2pt}%
  \fontfamily{#3}\fontseries{#4}\fontshape{#5}%
  \selectfont}%
\fi\endgroup%
\begin{picture}(5250,1665)(-1349,-901)
\put(976,-886){\makebox(0,0)[lb]{\smash{{\SetFigFont{10}{12.0}{\rmdefault}{\mddefault}{\updefault}{Product}%
}}}}
\put(2926,-886){\makebox(0,0)[lb]{\smash{{\SetFigFont{10}{12.0}{\rmdefault}{\mddefault}{\updefault}{Bracket}%
}}}}
\put(751,164){\makebox(0,0)[lb]{\smash{{\SetFigFont{10}{12.0}{\rmdefault}{\mddefault}{\updefault}{1}%
}}}}
\put(2701,164){\makebox(0,0)[lb]{\smash{{\SetFigFont{10}{12.0}{\rmdefault}{\mddefault}{\updefault}{1}%
}}}}
\put(-824,-886){\makebox(0,0)[lb]{\smash{{\SetFigFont{10}{12.0}{\rmdefault}{\mddefault}{\updefault}{Unit}%
}}}}
\put(1651,-161){\makebox(0,0)[lb]{\smash{{\SetFigFont{10}{12.0}{\rmdefault}{\mddefault}{\updefault}{2}%
}}}}
\put(3601,-161){\makebox(0,0)[lb]{\smash{{\SetFigFont{10}{12.0}{\rmdefault}{\mddefault}{\updefault}{2}%
}}}}
\end{picture}%
\caption{\label{fig:tqft}The operations \eqref{eq:unit}--\eqref{eq:bracket}.}
\end{centering}
\end{figure}%

The basic operations obtained from this framework are:
\begin{align} 
\stareq \label{eq:unit}
& e \in H^0 && \text{unit}, \\
\stareq \label{eq:pair-of-pants}
& \bullet: H^* \otimes H^* \longrightarrow H^* && \text{pair-of-pants-product}, \\
\stareq \label{eq:bracket}
& [\cdot,\cdot]: H^* \otimes H^* \longrightarrow H^{*-1} && \text{bracket}.
\end{align}
The geometry underlying \eqref{eq:unit}--\eqref{eq:bracket} is shown in Figure \ref{fig:tqft} (where we always set $I = \{1,\dots,m\}$). The most interesting case is that of the bracket, which comes from a family of configurations parametrized by $P = S^1$.  From the general axioms, it follows that the pair-of-pants product is commutative, associative, and $e$ is its unit; and that the bracket satisfies 
\begin{align}
\stareq \label{eq:antisymmetry}
& [x_2,x_1] = (-1)^{|x_1|\cdot |x_2|} [x_1,x_2], \\
\stareq \label{eq:derivation-bracket}
& [x_1,x_2 \bullet x_3] = [x_1,x_2]\bullet x_3 + (-1)^{(|x_1|+1) |x_2|} x_2 \bullet [x_1,x_3], \\
\stareq \label{eq:jacobi} 
& 
\begin{aligned}
& (-1)^{|x_1|} [x_1,[x_2,x_3]] + (-1)^{|x_1|(|x_2|+|x_3|)+|x_2|} [x_2,[x_3,x_1]] \\
& \qquad \qquad \qquad \qquad + (-1)^{|x_3|(|x_1|+|x_2|+1)} [x_3,[x_1,x_2]] = 0, 
\end{aligned}
\\
\stareq \label{eq:e-is-ideal}
& [e, x] = 0.
\end{align}

\begin{remark}
Our sign for the bracket $[x_1,x_2]$ departs from the standard convention for Gerstenhaber algebras 
by $(-1)^{|x_1|}$, see e.g. \eqref{eq:antisymmetry}. This discrepancy will also be visible later on, when we define the bracket in terms of the BV operator \eqref{eq:delta-bracket}.
\end{remark}

As a reminder of how the relations \eqref{eq:antisymmetry}--\eqref{eq:e-is-ideal} are proved, consider for instance \eqref{eq:derivation-bracket}. By gluing, one defines families over $S^1$ which represent each of the terms involved; the equality involved is then given by a cobordism (a family whose parameter space $P$ is the pair-of-pants), see Figure \ref{fig:gamma}.
\begin{figure}
\begin{centering}
\begin{picture}(0,0)%
\includegraphics{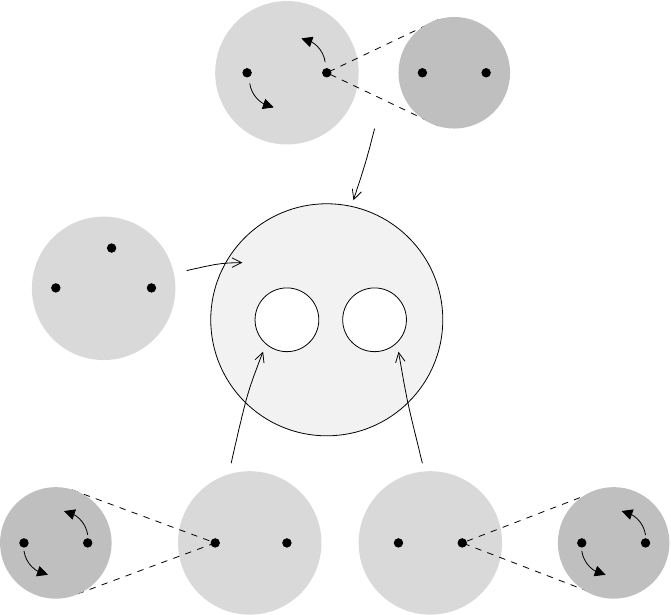}%
\end{picture}%
\setlength{\unitlength}{3355sp}%
\begingroup\makeatletter\ifx\SetFigFont\undefined%
\gdef\SetFigFont#1#2#3#4#5{%
  \reset@font\fontsize{#1}{#2pt}%
  \fontfamily{#3}\fontseries{#4}\fontshape{#5}%
  \selectfont}%
\fi\endgroup%
\begin{picture}(6300,5775)(-4424,-3736)
\put(-3674,-3286){\makebox(0,0)[lb]{\smash{{\SetFigFont{10}{12.0}{\rmdefault}{\mddefault}{\updefault}{2}%
}}}}
\put(-3524,-486){\makebox(0,0)[lb]{\smash{{\SetFigFont{10}{12.0}{\rmdefault}{\mddefault}{\updefault}{2}%
}}}}
\put(-4274,-2986){\makebox(0,0)[lb]{\smash{{\SetFigFont{10}{12.0}{\rmdefault}{\mddefault}{\updefault}{1}%
}}}}
\put(-2249,1439){\makebox(0,0)[lb]{\smash{{\SetFigFont{10}{12.0}{\rmdefault}{\mddefault}{\updefault}{1}%
}}}}
\put(-524,1439){\makebox(0,0)[lb]{\smash{{\SetFigFont{10}{12.0}{\rmdefault}{\mddefault}{\updefault}{2}%
}}}}
\put(151,1139){\makebox(0,0)[lb]{\smash{{\SetFigFont{10}{12.0}{\rmdefault}{\mddefault}{\updefault}{3}
}}}}
\put(-3899,-886){\makebox(0,0)[lb]{\smash{{\SetFigFont{10}{12.0}{\rmdefault}{\mddefault}{\updefault}{1}%
}}}}
\put(-3149,-886){\makebox(0,0)[lb]{\smash{{\SetFigFont{10}{12.0}{\rmdefault}{\mddefault}{\updefault}{3}%
}}}}
\put(-824,-2986){\makebox(0,0)[lb]{\smash{{\SetFigFont{10}{12.0}{\rmdefault}{\mddefault}{\updefault}{2}%
}}}}
\put(1576,-3286){\makebox(0,0)[lb]{\smash{{\SetFigFont{10}{12.0}{\rmdefault}{\mddefault}{\updefault}{3}%
}}}}
\put(-149,-886){\makebox(0,0)[lb]{\smash{{\SetFigFont{10}{12.0}{\rmdefault}{\mddefault}{\updefault}{parameter space $P$}%
}}}}
\put(-1799,-3286){\makebox(0,0)[lb]{\smash{{\SetFigFont{10}{12.0}{\rmdefault}{\mddefault}{\updefault}{3}%
}}}}
\put(901,-2986){\makebox(0,0)[lb]{\smash{{\SetFigFont{10}{12.0}{\rmdefault}{\mddefault}{\updefault}{1}%
}}}}
\end{picture}%
\caption{\label{fig:gamma}The relation \eqref{eq:derivation-bracket}.}
\end{centering}
\end{figure}

\begin{remark} \label{th:universal}
Take the universal family of disc configurations with $I = \{1,\dots,m\}$. Its parameter space $\scrP_m$ is homotopy equivalent to the configuration space of $m$ ordered points in $\bC$. As we have defined it, there is an operation \eqref{eq:basic-operation} for each bordism class in $\mathit{MSO}_*(\scrP_m) \otimes R$. One could add another axiom as follows: if a family of configurations is pulled back from a lower-dimensional parameter space, the associated map \eqref{eq:basic-operation} is zero. This mitigates the complexities of bordism, replacing it with a ``poor man's version of homology''. In particular, if $R$ contains $\bQ$, then operations would only depend on $\mathit{MSO}_*(\scrP_m) \otimes_{\mathit{MSO}_*} R \iso H_*(\scrP_m;R)$. 
\end{remark}

\subsection{Chain level operations}
The familiar next step is to refine the framework from Section \ref{subsec:gerstenhaber} by lifting it to the chain level. Here, we assume that the previous $H^*$ is the cohomology of a chain complex of $R$-modules, denoted by $(C^*,d)$. To any family of disc configurations over $P$, we now associate a map
\begin{equation} \label{eq:basic-chain-map}
(C^*)^{\otimes I} \longrightarrow C^{*-\mathrm{dim}(P)}.
\end{equation}
If $P$ is closed, this must be a chain map, inducing \eqref{eq:basic-operation} on cohomology. More generally, if $\phi$ is a map \eqref{eq:basic-chain-map} (with $I = \{1,\dots,m\}$ for simplicity), and $\psi$ the sum of the operations associated to the restriction of the family to the (closed codimension $1$) boundary faces of $P$, then
\begin{equation} \label{eq:boundary-axiom}
\begin{aligned}
& (-1)^{|\phi|} d\phi(x_1,\dots,x_m) - \phi(dx_1,\dots,x_m) - \cdots \\ & \;\;\cdots - (-1)^{|x_1|+\cdots+|x_{m-1}|} \phi(x_1,\dots,dx_m) + \psi(x_1,\dots,x_m) = 0.
\end{aligned}
\end{equation}
This replaces the cobordism axiom. The axioms concerning disjoint union (of parameter spaces), the identity configuration, and the composition law \eqref{eq:composition-law} remain as before.  Reversing the orientation of $P$ should switch the sign of the associated operation (up to homotopy, this follows from \eqref{eq:boundary-axiom}, but we prefer to impose a strict equality). We will also keep the property, inherent in the formulation with an arbitrary indexing set $I$, that permuting the inputs results in the standard Koszul signs.

\begin{remark}
For a more in-depth study of the theory, one would probably want to assume that the complex $C^*$ is cohomologically well-behaved (let's say, $R$ is a ring of finite global dimension, and $C^*$ is a complex of projective $R$-modules; this would imply that it is both $K$-flat and $K$-projective, in the terminology from \cite{spaltenstein}). However, for our very limited purpose, this is not necessary.
\end{remark}

The basic operations \eqref{eq:unit}--\eqref{eq:bracket} have chain level representatives which are unique up to chain homotopy. We fix such a choice once and for all, and use the same notation for them as before. The previously stated algebraic relations will now hold up to chain homotopies, which one can think of as secondary operations. For instance, the family from Figure \ref{fig:gamma} now gives rise to a map
\begin{equation} \label{eq:gamma-operation}
\begin{aligned}
& \gamma: (C^*)^{\otimes 3} \longrightarrow C^{*-2}, \\
& d\gamma(x_1,x_2,x_3) - \gamma(dx_1,x_2,x_3) - (-1)^{|x_1|} \gamma(x_1,dx_2,x_3) \\ & 
\;\; - (-1)^{|x_1|+|x_2|} \gamma(x_1,x_2,dx_3)
+ [x_1,x_2\bullet x_3] \\ & \;\; - [x_1,x_2] \bullet x_3 - (-1)^{(|x_1|+1)|x_2|} x_2 \bullet [x_1,x_3] = 0.
\end{aligned}
\end{equation}
Concerning such secondary operations, some uniqueness considerations are appropriate, and we use \eqref{eq:gamma-operation} as an example to explain them. From the viewpoint of Remark \ref{th:universal}, the construction of \eqref{eq:gamma-operation} relies on a choice of map $P \rightarrow \scrP_3$, where $P$ is a surface with three boundary components, and where the behaviour of the map on $\partial P$ is fixed. Within this general context there are genuinely different choices, since 
\begin{equation} \label{eq:mo-h}
\mathit{MSO}_2(\scrP_3) \otimes R \iso H_2(\scrP_3;R) \iso R^2. 
\end{equation}
This corresponds to the fact that we could add a multiple of $[x_3,[x_2,x_1]]$ or its cyclic permutations to $\gamma$, and the outcome would still satisfy \eqref{eq:gamma-operation}. When choosing the actual family in Figure \ref{fig:gamma}, we want to make sure that the point $\zeta_3$ always lies in the half-plane to the right of $\zeta_2$. This restricts us to a subspace of $\scrP_3$ for which the counterpart of \eqref{eq:mo-h} vanishes (in fact, that subspace is itself homotopy equivalent to a pair-of-pants). Two choices of $\gamma$ constructed under this restriction differ by a nullhomotopic chain map; we say that $\gamma$ has been defined ``in a homotopically unique way''. For future use, pick a concrete representative $\gamma$ within that class.

\subsection{The differentiation axiom\label{subsec:differentiation-axiom}}
We will now further enrich the previous framework. At first, this will be an extremely simple modification, which adds a distinguished cohomology class; in a second step, that class will take on a more fundamental meaning.

\begin{setup} \label{th:setup-extra}
We allow disc configurations with an optional extra point. Formally, this means that we equip them with the additional datum of some $Z \subset \bC$, with $|Z| \leq 1$, which lies in the closed unit disc, and avoids the interior of the discs around the $\zeta_i$.

In the exceptional instance where we have just one point $\zeta_1 = 0$ with $r_1 = 1$, the $Z$ point can lie anywhere on the unit circle. Let's call this the $r$-family. Given a disc configuration with no $Z$ point, gluing in the $r$-family corresponds to adding a $Z$ point which runs around a circle (either the unit circle, or the $r_i$-circles around the $\zeta_i$).

Suppose that we have a family over $P$ as in Setup \ref{th:setup-sigma}, which means with no extra point; and we also want to exclude the identity configuration. One can then define a new family by ``inserting the extra point in all possible places''. The base of the new family, denoted by $P^\circ$, has $\mathrm{dim}(P^\circ) = \mathrm{dim}(P) + 2$. (Figure \ref{fig:hat} shows how this looks like for the configuration underlying the pair-of-pants product; this time, it is helpful to see the radii $r_i$, and we have drawn them as dashed circles.)
\end{setup}
\begin{figure}
\begin{centering}
\begin{picture}(0,0)%
\includegraphics{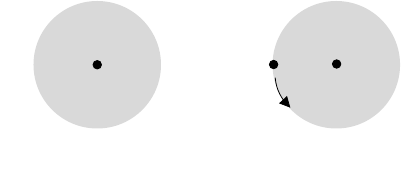}%
\end{picture}%
\setlength{\unitlength}{3355sp}%
\begingroup\makeatletter\ifx\SetFigFont\undefined%
\gdef\SetFigFont#1#2#3#4#5{%
  \reset@font\fontsize{#1}{#2pt}%
  \fontfamily{#3}\fontseries{#4}\fontshape{#5}%
  \selectfont}%
\fi\endgroup%
\begin{picture}(3765,1577)(-1589,-964)
\put(-1574,-886){\makebox(0,0)[lb]{\smash{{\SetFigFont{10}{12.0}{\rmdefault}{\mddefault}{\updefault}{Kodaira-Spencer class}%
}}}}
\put(826,-886){\makebox(0,0)[lb]{\smash{{\SetFigFont{10}{12.0}{\rmdefault}{\mddefault}{\updefault}{endomorphism $r$}%
}}}}
\put(-749,164){\makebox(0,0)[lb]{\smash{{\SetFigFont{10}{12.0}{\rmdefault}{\mddefault}{\updefault}{$Z$}%
}}}}
\put(826,164){\makebox(0,0)[lb]{\smash{{\SetFigFont{10}{12.0}{\rmdefault}{\mddefault}{\updefault}{$Z$}%
}}}}
\end{picture}%
\caption{\label{fig:tqft2}The operations \eqref{eq:k-class} and \eqref{eq:r-map}.}
\end{centering}
\end{figure}

Generalizing \eqref{eq:basic-operation}, each family over a closed $P$ should now give rise to a map
\begin{equation} \stareq \label{eq:basic-operation-z}
(H^*)^{\otimes I} \longrightarrow H^{*-\mathrm{dim}(P)+2|Z|},
\end{equation}
with the same properties as before. The effect of this is to introduce a distinguished element, called the Kodaira-Spencer class 
\begin{equation}
\stareq \label{eq:k-class}
k \in H^2.
\end{equation}
Geometrically, this comes from a single configuration with a $Z$ point (see Figure \ref{fig:tqft2}). We also use the $r$-family to define an endomorphism
\begin{equation}
\stareq \label{eq:r-map} 
r: H^* \longrightarrow H^{*+1},
\end{equation}
but that can be reduced to \eqref{eq:k-class},
\begin{equation} \stareq
r(x) = [k,x]. \label{eq:r-relation}
\end{equation}
The cobordism which proves this is based on a simple idea, which is to ``pull out the $Z$ point'', thinking of it as a copy of $k$ being glued in; see Figure \ref{fig:rho}.

It is straightforward to introduce the chain level version of \eqref{eq:basic-operation-z}. As before, we use the same notation for the chain level representatives of \eqref{eq:k-class} and \eqref{eq:r-map}. Then, Figure \ref{fig:rho} gives rise to a (homotopically unique) secondary operation
\begin{equation} \label{eq:rho-operation}
\begin{aligned}
& \rho: C^* \longrightarrow C^*, \\
& d\rho(x) - \rho(dx) + [k,x] - r(x) = 0.
\end{aligned}
\end{equation}
(When choosing the orientation of the underlying parameter space, our convention is as follows: the ``pulling out'' parameter, which is minimal for $r(x)$ and maximal for $[k,x]$, is the first coordinate; and the anticlockwise rotation of $Z$ is the second coordinate.)
\begin{figure}
\begin{centering}
\begin{picture}(0,0)%
\includegraphics{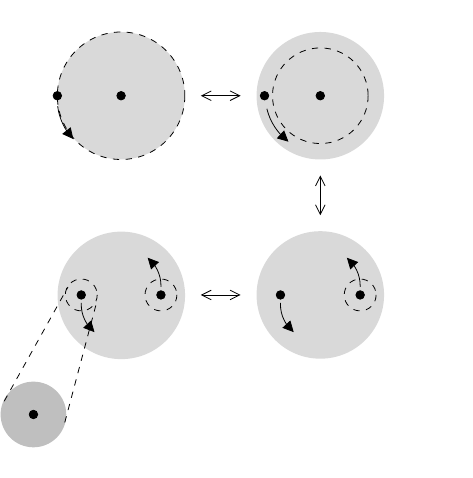}%
\end{picture}%
\setlength{\unitlength}{3355sp}%
\begingroup\makeatletter\ifx\SetFigFont\undefined%
\gdef\SetFigFont#1#2#3#4#5{%
  \reset@font\fontsize{#1}{#2pt}%
  \fontfamily{#3}\fontseries{#4}\fontshape{#5}%
  \selectfont}%
\fi\endgroup%
\begin{picture}(4437,4489)(-4064,-3500)
\put(-3224,-1561){\makebox(0,0)[lb]{\smash{{\SetFigFont{10}{12.0}{\rmdefault}{\mddefault}{\updefault}{\color[rgb]{0,0,0}$1$}%
}}}}
\put(-2774,-2086){\makebox(0,0)[lb]{\smash{{\SetFigFont{10}{12.0}{\rmdefault}{\mddefault}{\updefault}{\color[rgb]{0,0,0}$2$}%
}}}}
\put(-1649,239){\makebox(0,0)[lb]{\smash{{\SetFigFont{10}{12.0}{\rmdefault}{\mddefault}{\updefault}{\color[rgb]{0,0,0}$Z$}%
}}}}
\put(-1799,839){\makebox(0,0)[lb]{\smash{{\SetFigFont{10}{12.0}{\rmdefault}{\mddefault}{\updefault}{\color[rgb]{0,0,0}radius shrinks}%
}}}}
\put(-3674,839){\makebox(0,0)[lb]{\smash{{\SetFigFont{10}{12.0}{\rmdefault}{\mddefault}{\updefault}{\color[rgb]{0,0,0}endomorphism $r$}%
}}}}
\put(-1949,-2611){\makebox(0,0)[lb]{\smash{{\SetFigFont{10}{12.0}{\rmdefault}{\mddefault}{\updefault}{\color[rgb]{0,0,0}points move symmetrically}%
}}}}
\put(-4049,-3436){\makebox(0,0)[lb]{\smash{{\SetFigFont{10}{12.0}{\rmdefault}{\mddefault}{\updefault}{\color[rgb]{0,0,0}Kodaira-Spencer}%
}}}}
\put(-3074,-2611){\makebox(0,0)[lb]{\smash{{\SetFigFont{10}{12.0}{\rmdefault}{\mddefault}{\updefault}{\color[rgb]{0,0,0}bracket}%
}}}}
\put(-1499,-1636){\makebox(0,0)[lb]{\smash{{\SetFigFont{10}{12.0}{\rmdefault}{\mddefault}{\updefault}{\color[rgb]{0,0,0}$Z$}%
}}}}
\put(-3824,-2761){\makebox(0,0)[lb]{\smash{{\SetFigFont{10}{12.0}{\rmdefault}{\mddefault}{\updefault}{\color[rgb]{0,0,0}$Z$}%
}}}}
\end{picture}%
\caption{\label{fig:rho}The chain homotopy $\rho$ from \eqref{eq:rho-operation}.}
\end{centering}
\end{figure}

\begin{remark} \label{th:more-z}
In principle, it is possible to generalize our setup to $|Z| > 1$. However, one has to deal with points of $Z$ colliding, and we prefer not to discuss that here. The first consequence of such a generalization, using only $|Z| = 2$, would be that $[k,k] = 0$. More importantly, in the context of the differentiation axiom (to be introduced next), having $|Z| > 1$ would allow higher derivatives.
\end{remark}

Assume from now on that the coefficient ring $R$ comes with a derivation $\partial$. Correspondingly, we require that the graded module $C^*$ should come with a connection $\qabla$ in $\partial$-direction, which means an endomorphism (in each degree) which satisfies the analogue of \eqref{eq:connection-property}, with $\partial$ instead of $\partial_q$. This is not required to be compatible with the differential: instead, we ask that
\begin{equation} \label{eq:diff-axiom-1}
\qabla d x - d \qabla x = r(x). 
\end{equation}

\begin{remark}
An immediate consequence of \eqref{eq:diff-axiom-1} is that $r$ induces the trivial map on $H^*$. In spite of that, $r$ is not nullhomotopic in general (as an $R$-linear map; $\qabla$ is not $R$-linear, hence does not qualify as a nullhomotopy). For instance, suppose that $R = \bC[q]$, with $\partial = \partial_q$. Consider the chain complex
\begin{equation} \label{eq:stupid-r}
C^* = \big\{ R \stackrel{q}{\longrightarrow} R \big\},
\end{equation}
with $\qabla$ given by differentiation with respect to the obvious choice of basis. Then, $r$ maps the first $R$ group identically to the second one, and that is clearly not nullhomotopic. Alternatively (anticipating some of the later discussion) one can argue that if $r$ was nullhomotopic, $H^*$ would carry  a connection in $\partial$-direction; in our example, this contradicts the fact that the cohomology of \eqref{eq:stupid-r} is located at $q =0$.
\end{remark}

Suppose that $\phi$ is the operation \eqref{eq:basic-chain-map} associated to a family of configurations over $P$, with $Z = \emptyset$, and excluding the identity configuration. Let $\phi^\circ$ be the corresponding map for the associated family over $P^\circ$, as described in Setup \ref{th:setup-extra}. We then require that
\begin{multline} \label{eq:diff-axiom-2}
\qabla \phi(x_1, \dots, x_m) - \phi(\qabla x_1,\dots x_m) - \cdots - \phi(x_1,\dots, \qabla x_m) \\ = \phi^\circ(x_1,\dots,x_m).
\end{multline}
The two conditions \eqref{eq:diff-axiom-1} and \eqref{eq:diff-axiom-2} together are referred to as the differentiation axiom. 

\begin{remark}
As a check on the internal consistency of this two-part axiom, consider the case when $P$ is closed. It then follows from the structure of $P^\circ$ (see Figure \ref{fig:hat} for an example) that 
\begin{equation}
\begin{aligned}
& (-1)^{|\phi|} d\phi^\circ(x_1,\dots,x_m) - \phi^\circ(dx_1,\dots,x_m) - \cdots \\ & \qquad \qquad \qquad \cdots - 
(-1)^{|x_1|+\cdots+|x_{m-1}|} \phi^\circ(x_1,\dots,dx_m) \\
& \quad =  -(-1)^{|\phi|} r(\phi(x_1,\dots,x_m)) + \phi(r(x_1),\dots,x_m) + \\ & \qquad \qquad \qquad \cdots + 
(-1)^{|x_1|+\cdots+|x_{m-1}|} \phi(x_1,\dots,r(x_m)).
\end{aligned}
\end{equation}
From \eqref{eq:diff-axiom-1} and the fact that $\phi$ is a chain map, we get
\begin{equation}
\begin{aligned}
& (-1)^{|\phi|} d\big(\qabla \phi(x_1,\dots,x_m) - \phi(\qabla x_1,\dots,x_m) - \cdots - \phi(x_1,\dots, \qabla x_m)\big) 
\\
&\quad - \big(\qabla \phi(dx_1,\dots,x_m) - \phi(\qabla dx_1,\dots,x_m) - \cdots - \phi(dx_1,\dots, \qabla x_m)\big)
\\
&\quad \cdots 
\\
&\quad - (-1)^{|x_1|+\cdots+|x_{m-1}|} \big(\qabla \phi(x_1,\dots,dx_m) - \phi(\qabla x_1,\dots, dx_m) - \cdots \\ & \qquad \qquad \qquad\qquad - \phi(x_1,\dots,\qabla dx_m)\big) 
\\ &  = -(-1)^{|\phi|} r(\phi(x_1,\dots,x_m))
+ \phi(r(x_1),\dots,x_m) + \cdots \\ & \qquad \qquad \qquad \cdots + (-1)^{|x_1|+ \cdots + |x_{m-1}|}
\phi(x_1,\dots,r(x_m)).
\end{aligned}
\end{equation}
This shows that if we apply the differential $d$ to both sides of \eqref{eq:diff-axiom-2}, the outcome is the same; which is not a tautology, since the argument did not use \eqref{eq:diff-axiom-2}, only \eqref{eq:diff-axiom-1}.
\end{remark}
\begin{figure}
\begin{centering}
\begin{picture}(0,0)%
\includegraphics{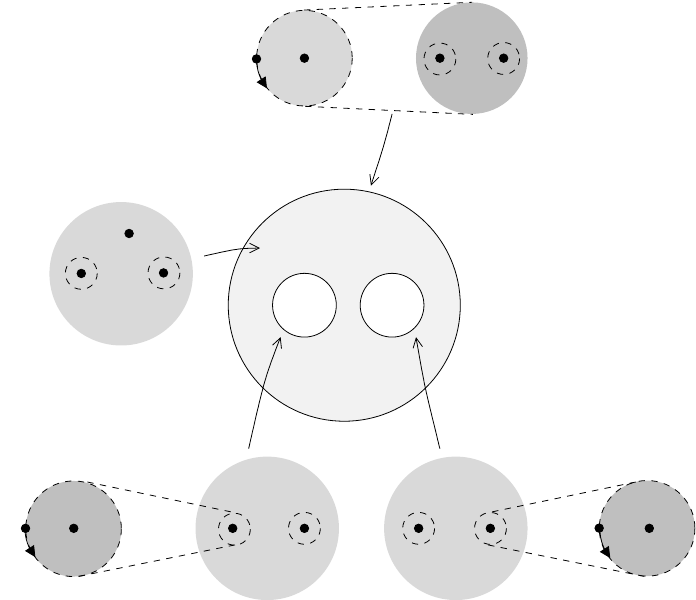}%
\end{picture}%
\setlength{\unitlength}{3355sp}%
\begingroup\makeatletter\ifx\SetFigFont\undefined%
\gdef\SetFigFont#1#2#3#4#5{%
  \reset@font\fontsize{#1}{#2pt}%
  \fontfamily{#3}\fontseries{#4}\fontshape{#5}%
  \selectfont}%
\fi\endgroup%
\begin{picture}(6548,5638)(-4589,-3736)
\put(-3834,-3265){\makebox(0,0)[lb]{\smash{{\SetFigFont{10}{12.0}{\rmdefault}{\mddefault}{\updefault}{$1$}%
}}}}
\put(1522,-3302){\makebox(0,0)[rb]{\smash{{\SetFigFont{10}{12.0}{\rmdefault}{\mddefault}{\updefault}{$2$}%
}}}}
\put(-3899,-991){\makebox(0,0)[lb]{\smash{{\SetFigFont{10}{12.0}{\rmdefault}{\mddefault}{\updefault}{$1$}%
}}}}
\put(-3149,-991){\makebox(0,0)[lb]{\smash{{\SetFigFont{10}{12.0}{\rmdefault}{\mddefault}{\updefault}{$2$}%
}}}}
\put(-149,-886){\makebox(0,0)[lb]{\smash{{\SetFigFont{10}{12.0}{\rmdefault}{\mddefault}{\updefault}{parameter space $P^\circ$}%
}}}}
\put(943,-3055){\rotatebox{360.0}{\makebox(0,0)[rb]{\smash{{\SetFigFont{10}{12.0}{\rmdefault}{\mddefault}{\updefault}{$Z$}%
}}}}}
\put(-4574,-2986){\makebox(0,0)[lb]{\smash{{\SetFigFont{10}{12.0}{\rmdefault}{\mddefault}{\updefault}{$Z$}%
}}}}
\put(-1799,-3427){\makebox(0,0)[lb]{\smash{{\SetFigFont{10}{12.0}{\rmdefault}{\mddefault}{\updefault}{$2$}%
}}}}
\put(-574,-3423){\makebox(0,0)[rb]{\smash{{\SetFigFont{10}{12.0}{\rmdefault}{\mddefault}{\updefault}{$1$}%
}}}}
\put(-3596,-407){\makebox(0,0)[lb]{\smash{{\SetFigFont{10}{12.0}{\rmdefault}{\mddefault}{\updefault}{$Z$}%
}}}}
\put(-524,1529){\makebox(0,0)[lb]{\smash{{\SetFigFont{10}{12.0}{\rmdefault}{\mddefault}{\updefault}{$1$}%
}}}}
\put(151,1079){\makebox(0,0)[lb]{\smash{{\SetFigFont{10}{12.0}{\rmdefault}{\mddefault}{\updefault}{$2$}%
}}}}
\put(-2391,1263){\makebox(0,0)[lb]{\smash{{\SetFigFont{10}{12.0}{\rmdefault}{\mddefault}{\updefault}{$Z$}%
}}}}
\end{picture}%
\caption{\label{fig:hat}Inserting an added point, as in Setup \ref{th:setup-extra}.}
\end{centering}
\end{figure}%


If we consider $\qabla + \rho$ instead of $\qabla$, \eqref{eq:rho-operation} implies that
\begin{equation} \label{eq:s-obstruction}
(\qabla + \rho)(dx) - d(\qabla +\rho)(x) = [k,x].
\end{equation}
In words, the failure of the modified connection to commute with the differential is measured by the Kodaira-Spencer class. We will now impose a final condition, but one which should be thought of in an entirely different way from the previous ones. Whereas the discussion so far described the setup for a general class of theories, we now specialize to instances where the Kodaira-Spencer class vanishes, something one shouldn't expect to be true in general.
%

\begin{assumption} \label{th:kill-k}
There is a $\theta \in C^1$ such that 
\begin{equation} \label{eq:theta}
d\theta = k.
\end{equation}
\end{assumption}

Naturally in view of \eqref{eq:s-obstruction}, the connection can then be modified to be compatible with the differential. This modified connection is 
\begin{equation} \label{eq:nabla}
\nabla x = \qabla x + \rho(x) - [\theta, x].
\end{equation}

\begin{proposition} \label{th:nabla-gerstenhaber}
The connection on $H^*$ induced by $\nabla$ is compatible with the Gerstenhaber algebra structure, meaning that \eqref{eq:nabla-derivation} and \eqref{eq:nabla-derivation-2} hold for it.
\end{proposition}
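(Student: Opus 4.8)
The plan is to prove the two derivation identities \eqref{eq:nabla-derivation} and \eqref{eq:nabla-derivation-2} by the cobordism method already used for \eqref{eq:antisymmetry}--\eqref{eq:e-is-ideal}, now run on families of $Z$-decorated configurations, with the differentiation axiom supplying the $\qabla$-contribution. First I would dispose of the easy points. The chain-level map $\nabla$ of \eqref{eq:nabla} commutes with $d$: this follows from \eqref{eq:s-obstruction} together with $d\theta=k$ from Assumption~\ref{th:kill-k} and the Leibniz rule for the chain-level bracket, exactly as the text preceding the proposition indicates. Moreover $\nabla$ is additive and satisfies \eqref{eq:connection-property}, since $\qabla$ does and $\rho$, $[\theta,\cdot]$ are $R$-linear. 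Hence $\nabla$ induces a connection on $H^*$, and only \eqref{eq:nabla-derivation} and \eqref{eq:nabla-derivation-2} remain. (Equation \eqref{eq:nabla-e-is-0} is not a separate point: applying \eqref{eq:nabla-derivation} to $e\bullet e=e$ forces $\nabla e=0$.)

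For \eqref{eq:nabla-derivation}, consider the chain-level operation
\[
\Phi(x_2,x_1)=\nabla(x_2\bullet x_1)-\nabla x_2\bullet x_1-x_2\bullet\nabla x_1 .
\]
Since $\nabla$ and $\bullet$ are chain maps, $\Phi$ is a degree-$0$ chain map $(C^*)^{\otimes 2}\to C^*$, so it descends to cohomology, and it suffices to show that the induced map vanishes, equivalently that $\Phi$ is nullhomotopic. Split $\Phi=\Phi^{\qabla}+\Phi^{\rho}-\Phi^{[\theta,\cdot]}$ according to the three summands of $\nabla$. By the second part of the differentiation axiom \eqref{eq:diff-axiom-2} applied to the pair-of-pants product, $\Phi^{\qabla}$ is the operation, call it $\mu^{\circ}$, attached to the family $P^{\circ}$ over which the pair-of-pants configuration carries a roaming extra point $Z$ (Setup~\ref{th:setup-extra}). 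For $\Phi^{[\theta,\cdot]}=[\theta,x_2\bullet x_1]-[\theta,x_2]\bullet x_1-x_2\bullet[\theta,x_1]$, insert $\theta$ in the first input of the homotopy \eqref{eq:gamma-operation}; since $|\theta|=1$ the Koszul signs of \eqref{eq:derivation-bracket} are trivial, and using $d\theta=k$ one reads off that $\Phi^{[\theta,\cdot]}$ equals the operation $\gamma(k,\cdot,\cdot)$ up to the exact term $d\gamma(\theta,\cdot,\cdot)$ and terms proportional to $dx_1$ and $dx_2$. It therefore remains to show that $\mu^{\circ}+\Phi^{\rho}-\gamma(k,\cdot,\cdot)$ is nullhomotopic.

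This last step is the core of the argument, and I would carry it out by constructing a single $3$-dimensional family $Q$ of $Z$-decorated configurations, supported on the pair-of-pants configuration, in which the extra point $Z$ roams over the surface and, along the codimension-one strata, is pushed out through each of the three boundary circles, or all the way out to infinity where it becomes a separate copy of $k$ — the same ``pull out the $Z$-point'' mechanism as in Figure~\ref{fig:rho}, but performed on the pair-of-pants rather than on the identity configuration. Its interior stratum is $\mu^{\circ}$; the three ``$Z$ through a boundary circle'' strata reproduce, by gluing with the $\rho$-family, the three terms $\rho(x_2\bullet x_1)$, $\rho(x_2)\bullet x_1$, $x_2\bullet\rho(x_1)$ making up $\Phi^{\rho}$; and the ``$Z$ at infinity'' stratum is the $2$-dimensional family underlying $\gamma$ with a $k$-configuration inserted in the first slot, contributing $\gamma(k,\cdot,\cdot)$. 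Feeding $Q$ into the boundary axiom \eqref{eq:boundary-axiom} exhibits $\mu^{\circ}+\Phi^{\rho}-\gamma(k,\cdot,\cdot)$, modulo the $dx_i$-terms, as $d$ of the operation attached to $Q$, completing the proof of \eqref{eq:nabla-derivation}. As with $\gamma$ itself, the configurations entering $Q$ should be confined to a suitably simple (contractible-enough) sublocus of configuration space — e.g.\ keeping the second input disc to the right of the first — so that $Q$, and hence the resulting homotopy, is homotopically unique.

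The argument for \eqref{eq:nabla-derivation-2} has the same shape, with the point-parametrised pair-of-pants replaced by the $S^{1}$-family defining the bracket, $\Phi^{\rho}$ now describing the failure of $\rho$ to be a bracket-derivation, and $\gamma$ replaced by the homotopically unique chain homotopy realising the Jacobi identity \eqref{eq:jacobi} with $\theta$ inserted in one argument (note that $[k,\cdot]$ is a derivation of the bracket, again by \eqref{eq:jacobi}). The main obstacle throughout is the honest construction of the families $Q$ together with the verification that their codimension-one boundary decomposes into precisely the strata listed above — in particular that the ``$Z$ at infinity'' stratum reproduces exactly the $\gamma(k,\cdot,\cdot)$ (respectively the Jacobi-homotopy-with-$k$) term and nothing more — and, as always in this formalism, the sign bookkeeping needed to make the cancellations hold on the nose.
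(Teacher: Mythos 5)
Your proposal follows essentially the same route as the paper's own proof: the same three-way decomposition of the chain-level $\nabla$, the same appeal to \eqref{eq:diff-axiom-2} to identify the $\qabla$-part with the $P^\circ$-family operation, the same three-dimensional ``pull out the $Z$ point'' cobordism whose five codimension-one faces produce $\phi^\circ$, the three $\rho$-gluings, and $\gamma(k,\cdot,\cdot)$, and the same use of \eqref{eq:gamma-operation} together with $d\theta = k$ to convert $\gamma(k,\cdot,\cdot)$ into the $[\theta,\cdot]$-terms. (One terminological slip: the face producing $\phi^\circ$ is a codimension-one boundary stratum of the three-dimensional family, not its ``interior stratum''.)
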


\begin{proof}
For simplicity, we will consider only compatibility with the product (the proof for the bracket is parallel). Start with the family of configurations shown in Figure \ref{fig:hat}. One can ``pull out'' the $Z$ point, in the same way as when defining $\rho$. The resulting three-dimensional parameter space (a pair-of-pants times an interval) has five codimension $1$ boundary faces. The first face corresponds to the original family. Three more boundary faces appear when the additional point moves either towards the boundary of the unit disc, or to those of the discs around the $\zeta_i$; one can arrange that this corresponds exactly to gluing in a copy of the family underlying $\rho$. The final boundary face occurs when we have completely ``pulled out'' the additional point; one can arrange that this corresponds exactly to the family from Figure \ref{fig:gamma}, with the Kodaira-Spencer element glued in at the first point. Together with \eqref{eq:diff-axiom-2}, this says that the following expression is a nullhomotopic map:
\begin{equation} \label{eq:nullhomotopy-product}
\begin{aligned}
& \qabla (x_1 \bullet x_2) - \qabla x_1 \bullet x_2 - x_1 \bullet \qabla x_2 \\
& \quad + \rho(x_1 \bullet x_2) - \rho(x_1) \bullet x_2 - x_1 \bullet \rho(x_2) - \gamma(k,x_1,x_2) \htp 0.
\end{aligned}
\end{equation}
On the other hand, using \eqref{eq:gamma-operation} and Assumption \ref{th:kill-k}, we find a chain homotopy
\begin{equation} \label{eq:gamma-produces-the-homotopy}
\gamma(k,x_1,x_2) \htp [\theta, x_1 \bullet x_2] - [\theta, x_1] \bullet x_2 - x_1 \bullet [\theta,x_2].
\end{equation}
Inserting that into \eqref{eq:nullhomotopy-product} yields the desired result.
\end{proof}

\section{Framed configuration spaces\label{sec:bv}}

The little disc operad can be enlarged by adding framings \cite{getzler94b, salvatore-wahl03}. The aim of this section is to extend the previous discussion to the framed context, and to explain how that gives rise to the phenomenon encountered in \eqref{eq:delta-nabla}. 

\subsection{The BV operator}
The first step is to introduce geometric data which give rise to a version of the framed little disc operad. For symplectic geometers, an appropriate idea to think of would be ``allowing the asymptotic markers to rotate''.

\begin{setup} \label{th:setup-framed}
A framing of a disc configuration is an additional choice of $\tau_i \in S^1 = \bR/\bZ$, also identified with the complex number $\exp(2\pi i \tau_i)$, for each $i \in I$. If one has an added $Z$ point, as in Setup \ref{th:setup-extra}, that point should not be equipped with a framing. Any disc configuration can be considered as a framed one, by setting all $\tau_i= 0$. For the identity configuration, as well as for the $r$-family (Setup \ref{th:setup-extra}), we allow only this trivial choice of framing.

Given two framed configurations, and an input point $i_1$ of the first one, we always rotate the second configuration by $\tau_{i_1}$ before gluing it into the first one. (Graphically, the $\tau_i$ are represented as a tangent arrow at $\zeta_i$. Actually, in all the pictures in Sections \ref{sec:conf}--\ref{sec:bv}, the complex plane is shown rotated, so that the positive real axis points upwards; for compatibility with standard drawing conventions, read this paper while lying on your side.)
\end{setup}
\begin{figure}
\begin{centering}
\begin{picture}(0,0)%
\includegraphics{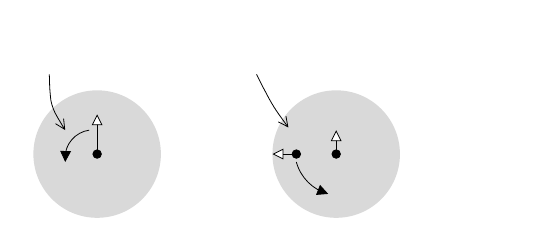}%
\end{picture}%
\setlength{\unitlength}{3355sp}%
\begingroup\makeatletter\ifx\SetFigFont\undefined%
\gdef\SetFigFont#1#2#3#4#5{%
  \reset@font\fontsize{#1}{#2pt}%
  \fontfamily{#3}\fontseries{#4}\fontshape{#5}%
  \selectfont}%
\fi\endgroup%
\begin{picture}(5081,2349)(361,-814)
\put(2551,1364){\makebox(0,0)[lb]{\smash{{\SetFigFont{10}{12.0}{\rmdefault}{\mddefault}{\updefault}{the first point moves, and its}%
}}}}
\put(3601,-61){\makebox(0,0)[lb]{\smash{{\SetFigFont{10}{12.0}{\rmdefault}{\mddefault}{\updefault}{2}%
}}}}
\put(3076,164){\makebox(0,0)[lb]{\smash{{\SetFigFont{10}{12.0}{\rmdefault}{\mddefault}{\updefault}{1}%
}}}}
\put(676,-736){\makebox(0,0)[lb]{\smash{{\SetFigFont{10}{12.0}{\rmdefault}{\mddefault}{\updefault}{BV operator}%
}}}}
\put(3001,-736){\makebox(0,0)[lb]{\smash{{\SetFigFont{10}{12.0}{\rmdefault}{\mddefault}{\updefault}{bracket $[\cdot,\cdot]^{-1}$}%
}}}}
\put(2551,914){\makebox(0,0)[lb]{\smash{{\SetFigFont{10}{12.0}{\rmdefault}{\mddefault}{\updefault}{away from the second point}%
}}}}
\put(376,914){\makebox(0,0)[lb]{\smash{{\SetFigFont{10}{12.0}{\rmdefault}{\mddefault}{\updefault}{framing rotates}%
}}}}
\put(2551,1139){\makebox(0,0)[lb]{\smash{{\SetFigFont{10}{12.0}{\rmdefault}{\mddefault}{\updefault}{framing rotates, always pointing}%
}}}}
\end{picture}%
\caption{\label{fig:bv}The BV operator \eqref{eq:bv-map} and the bracket \eqref{eq:bracket-1}.}
\end{centering}
\end{figure}%

Suppose that we have a graded $R$-module $H^*$ as before, with operations \eqref{eq:basic-operation} induced by families of framed configurations (also allowing ones with a $Z$ point). In addition to the previous \eqref{eq:unit}--\eqref{eq:bracket} and \eqref{eq:k-class}, we now have the BV operator (Figure \ref{fig:bv})
\begin{equation} \stareq \label{eq:bv-map}
\Delta: H^* \longrightarrow H^{*-1},
\end{equation}
which satisfies
\begin{align}
\stareq \label{eq:delta-e}
& \Delta e = 0, \\
\stareq \label{eq:delta-k}
& \Delta k = 0, \\
\stareq \label{eq:delta-squared}
& \Delta \Delta x = 0, \\
\stareq \label{eq:delta-bracket}
& [x_1,x_2] = \Delta(x_1 \bullet x_2) - (\Delta x_1) \bullet x_2 - (-1)^{|x_1|} x_1 \bullet \Delta x_2, \\
& \stareq \label{eq:delta-bracket-2}
\Delta [x_1,x_2] + [\Delta x_1,x_2] + (-1)^{|x_1} [x_1,\Delta x_2] = 0.
\end{align}
We will find it useful to consider a modified version of the bracket,
\begin{equation} \label{eq:bracket-1} \stareq
[x_1,x_2]^{-1} = [x_1,x_2] + (\Delta x_1) \bullet x_2 = \Delta(x_1 \bullet x_2) - (-1)^{|x_1|} x_1 \bullet \Delta x_2.
\end{equation}
This can be defined directly as shown in Figure \ref{fig:bv}. It is no longer graded commutative, but it commutes with the BV operator for fixed $x_1$:
\begin{equation} \label{eq:bracket-bv-1}
\stareq
[x_1,\Delta x_2]^{-1} = -(-1)^{|x_1|} \Delta [x_1,x_2]^{-1}.
\end{equation}
Because of \eqref{eq:delta-k}, one can write \eqref{eq:r-relation} equivalently as
\begin{equation} \label{eq:r-k-1}
\stareq
r(x) = [k,x]^{-1}.
\end{equation}

\subsection{Chain level operations}
One adds framings to the chain level story in the same way as before. Concerning the chain level versions of the relations \eqref{eq:delta-e}--\eqref{eq:delta-bracket-2}, consider for instance \eqref{eq:delta-squared}. If we glue together two copies of the family underlying $\Delta$, the outcome is a family over the two-torus, with parameters $\tau_1,\tau_2 \in S^1$, but which actually depends only on $\tau_1+\tau_2$. Because of this, the family extends over the solid torus, and such an extension gives rise to a secondary operation
\begin{equation} \label{eq:delta-2}
\begin{aligned}
& \delta: C^* \longrightarrow C^{*-3}, \\
& d\delta x + \delta d x + \Delta \Delta x = 0.
\end{aligned}
\end{equation}

As another example of chain level relations, take \eqref{eq:delta-k}. If we take one of the framed configurations in the family defining $\Delta$, and the configuration defining $k$, and glue them together, the outcome is always the same. Hence, the outcome of the gluing process is a family over a circle which bounds one over the disc, and that yields a cochain
\begin{equation} \label{eq:kappa-class}
\begin{aligned}
& \kappa \in C^0, \\
& d\kappa + \Delta k = 0.
\end{aligned}
\end{equation}

\begin{lemma} \label{th:delta-kappa-delta-kappa}
$\Delta \kappa + \delta k \in C^{-1}$ is nullhomologous.
\end{lemma}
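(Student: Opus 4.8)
Before the author's proof, here is how I would approach it.

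\textbf{Step 1: it is a cocycle.} Using \eqref{eq:kappa-class} and the fact that $\Delta$ is a chain map of degree $-1$ (so $d\Delta x = -\Delta dx$, read off from the boundary axiom for the closed family underlying $\Delta$), one gets $d(\Delta\kappa) = -\Delta(d\kappa) = \Delta(\Delta k)$. On the other hand, applying \eqref{eq:delta-2} to $k$ and using that $k$ is a cocycle gives $d(\delta k) = -\delta(dk) - \Delta(\Delta k) = -\Delta(\Delta k)$. The two contributions cancel, so $\Delta\kappa+\delta k$ is a cocycle in $C^{-1}$. Since $C^*$ need not be acyclic in degree $-1$, showing it is a coboundary requires a geometric input, not just algebra. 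Conceptually, $\Delta\kappa$ and $-\delta k$ are two primitives of the same cocycle $\Delta(\Delta k)$ — one built from $\Delta$ applied to the trivialization $d\kappa$ of $\Delta k$, the other from feeding $dk$ into $\delta$ — and the lemma says that their difference is exact.

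\textbf{Step 2: realize both primitives as fillings of one torus family.} Let $\mathcal F$ be the family of configurations (no input discs, a single $Z$-point) over $T^2 = S^1\times S^1$ obtained by gluing two copies of the $S^1$-family underlying $\Delta$, with rotation parameters $\tau_1,\tau_2$, and then inserting the configuration underlying $k$. As in the discussion preceding \eqref{eq:delta-2}, $\mathcal F$ factors through $\tau_1+\tau_2$ (the inner framing simply adds to the outer one, and the $Z$-point of $k$ is unaffected); hence it extends over the solid torus $\mathit{ST}$ filling the circles $\{\tau_1+\tau_2=\mathrm{const}\}$, and that extension is exactly the family computing $\delta k$. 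But for fixed $\tau_1$, the restriction of $\mathcal F$ is $\Delta$ applied to the $S^1_{\tau_2}$-family representing $\Delta k$, and the latter family is the (constant) boundary of the $D^2$-family $\kappa$; so $\mathcal F$ also extends over $S^1\times D^2$, by applying $\Delta$ to $\kappa$, and this extension is the family computing $\Delta\kappa$. Thus $\mathcal F$ admits two $3$-dimensional fillings, over $\mathit{ST}$ and over $S^1\times D^2$, with common boundary $T^2$.

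\textbf{Step 3: close up and fill in dimension four.} Glue $\mathit{ST}$ to $S^1\times D^2$ along $T^2$ (reversing one orientation) to obtain a closed oriented $3$-manifold $N$ carrying a family of (no-input, one-$Z$-point) configurations whose associated operation is, up to an overall sign, $\Delta\kappa+\delta k$. Since $\Omega^{SO}_3=0$ — concretely, $N$ is a union of two solid tori, hence a lens space, $S^3$, or $S^1\times S^2$ — the manifold $N$ bounds a compact oriented $4$-manifold $W$. Moreover the space of configurations with no input discs and one $Z$-point is just the closed unit disc (the position of $Z$), hence contractible, so the family over $N$ extends over $W$. Applying the chain-level boundary axiom \eqref{eq:boundary-axiom} to this $W$-family — whose only codimension-one contribution is the $N$-family, the internal $dx_i$-terms being absent since there are no inputs — exhibits the operation of the $N$-family, and hence $\Delta\kappa+\delta k$, as a coboundary.

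\textbf{Main obstacle.} The delicate part is the orientation bookkeeping: matching the boundary orientations on the common $T^2$ so that $\delta k$ and $\Delta\kappa$ enter with the correct relative sign, and then tracking signs through the boundary axiom so as to land on precisely the combination $\Delta\kappa+\delta k$ rather than $\Delta\kappa-\delta k$. This is where the orientation conventions fixed in \eqref{eq:delta-2}, \eqref{eq:kappa-class}, and for the families $P^\circ$ (Setup \ref{th:setup-extra}) get used. The geometric ingredients themselves — the factorization of $\mathcal F$ through $\tau_1+\tau_2$, the two fillings, and the existence of $W$ with the family extended over it — are routine, the last step essentially because the relevant configuration space is contractible.
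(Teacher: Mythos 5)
Your proof is correct and is essentially the paper's argument: both produce a compact oriented four-dimensional family whose boundary carries the solid-torus families defining $\Delta\kappa$ and $\delta k$, and then invoke the chain-level boundary axiom \eqref{eq:boundary-axiom}. The paper simply takes the filling to be $D^2 \times D^2$ explicitly (its two boundary faces are the two solid tori), so your appeal to $\Omega^{SO}_3 = 0$ is a harmless detour --- your glued $N$ is just $S^3 = \partial(D^2 \times D^2)$ --- and your observation that the relevant configuration space (no $I$-points, one $Z$-point) is the contractible disc $\overline{D}^2$, so the extension over any filling is automatic, is precisely why the paper calls the construction ``fairly straightforward.''
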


To prove that, one considers the families defining $\Delta \kappa$ and $\delta k$, which are both parametrized by $S^1 \times D^2$, and have common behaviour over the boundary. One constructs a family over $D^2 \times D^2$ whose restriction to the two boundary faces is given by those two families. This is fairly straightforward, and we will not discuss it further here.

We use the family of framed configurations from Figure \ref{fig:bv} to define a chain level representative of $[\cdot,\cdot]^{-1}$. Given this, as well as the standard chain level representative for $r$ from Figure \ref{fig:tqft2}, the equality \eqref{eq:r-k-1} should be replaced by a chain homotopy 
\begin{equation} \label{eq:rho-1-operation}
\begin{aligned}
& \rho^{-1}: C^* \longrightarrow C^*, \\
& d\rho^{-1}(x) - \rho^{-1}(dx) + [k,x]^{-1} - r(x) = 0.
\end{aligned}
\end{equation}
This is parallel to \eqref{eq:rho-operation}, and is defined by a similar family over $P = [0,1] \times S^1$ (Figure \ref{fig:rho-1}). At this point, it may make sense to briefly revisit the uniqueness question for such secondary operations. Let $\scrP_{m;q}^{\mathit{fr}}$ be the universal space of framed disc configurations with $|I| = m$ and $|Z| = q$. This is homotopy equivalent to the ordered configuration space of $m+q$ points times $(S^1)^m$. In principle, to find a homotopy $\rho^{-1}$ satisfying \eqref{eq:rho-1-operation}, one chooses a map from a surface $P$ with two boundary components to $\scrP^{\mathit{fr}}_{1;1}$, with fixed behaviour near the boundary. Since $\scrP^{\mathit{fr}}_{1;1} \htp (S^1)^2$, there are essentially different choices (up to bordism or homology) of such maps; the corresponding operations differ (up to chain homotopy) by multiples of $r(\Delta(x)) \htp - \Delta(r(x))$. However, in our construction (Figure \ref{fig:rho-1}), we impose the condition that the framing marker at the input point should not move. This narrows down the choice to a subspace of $\scrP^{\mathit{fr}}_{1;1}$ homotopy equivalent to $S^1$, which singles out a homotopically unique operation \eqref{eq:rho-1-operation}.

\begin{figure}
\begin{centering}
\begin{picture}(0,0)%
\includegraphics{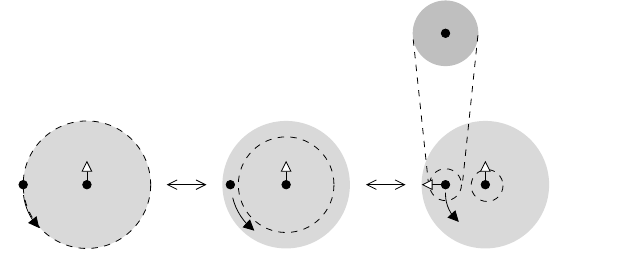}%
\end{picture}%
\setlength{\unitlength}{3355sp}%
\begingroup\makeatletter\ifx\SetFigFont\undefined%
\gdef\SetFigFont#1#2#3#4#5{%
  \reset@font\fontsize{#1}{#2pt}%
  \fontfamily{#3}\fontseries{#4}\fontshape{#5}%
  \selectfont}%
\fi\endgroup%
\begin{picture}(6043,2623)(-3743,-800)
\put(-3728,236){\makebox(0,0)[lb]{\smash{{\SetFigFont{10}{12.0}{\rmdefault}{\mddefault}{\updefault}{\color[rgb]{0,0,0}$Z$}%
}}}}
\put(-3599,-736){\makebox(0,0)[lb]{\smash{{\SetFigFont{10}{12.0}{\rmdefault}{\mddefault}{\updefault}{\color[rgb]{0,0,0}endomorphism $r$}%
}}}}
\put(-1649,239){\makebox(0,0)[lb]{\smash{{\SetFigFont{10}{12.0}{\rmdefault}{\mddefault}{\updefault}{\color[rgb]{0,0,0}$Z$}%
}}}}
\put(-1799,839){\makebox(0,0)[lb]{\smash{{\SetFigFont{10}{12.0}{\rmdefault}{\mddefault}{\updefault}{\color[rgb]{0,0,0}radius shrinks}%
}}}}
\put(451,-736){\makebox(0,0)[lb]{\smash{{\SetFigFont{10}{12.0}{\rmdefault}{\mddefault}{\updefault}{\color[rgb]{0,0,0}modified bracket}%
}}}}
\put(901,1514){\makebox(0,0)[lb]{\smash{{\SetFigFont{10}{12.0}{\rmdefault}{\mddefault}{\updefault}{\color[rgb]{0,0,0}Kodaira-Spencer}%
}}}}
\put(376,1664){\makebox(0,0)[lb]{\smash{{\SetFigFont{10}{12.0}{\rmdefault}{\mddefault}{\updefault}{\color[rgb]{0,0,0}$Z$}%
}}}}
\put(956,-173){\makebox(0,0)[lb]{\smash{{\SetFigFont{10}{12.0}{\rmdefault}{\mddefault}{\updefault}{\color[rgb]{0,0,0}$2$}%
}}}}
\put(376,314){\makebox(0,0)[lb]{\smash{{\SetFigFont{10}{12.0}{\rmdefault}{\mddefault}{\updefault}{\color[rgb]{0,0,0}$1$}%
}}}}
\end{picture}%
\caption{\label{fig:rho-1}The operation $\rho^{-1}$ from \eqref{eq:rho-1-operation}.}
\end{centering}
\end{figure}

Another property of $[\cdot,\cdot]^{-1}$ is \eqref{eq:bracket-bv-1}. The chain level version involves a homotopy
\begin{equation} \label{eq:chi-operation}
\begin{aligned}
& \varpi: C^* \otimes C^* \longrightarrow C^{*-3}, \\
& d\varpi(x_1,x_2) + \varpi(dx_1,x_2) + (-1)^{|x_1|} \varpi(x_1,dx_2) \\ & \qquad \qquad + \Delta[x_1,x_2]^{-1} + (-1)^{|x_1|} [x_1,\Delta x_2]^{-1} = 0.
\end{aligned}
\end{equation}
Both the bracket $[\cdot,\cdot]^{-1}$ and the BV operator are defined by families of framed configurations over $S^1$. Let's call their parameters $\tau_1$ (for the BV operator) and $\tau_2$ (for the bracket). The gluing process corresponding to $-\Delta[x_1,x_2]^{-1}$ results in a family over $(S^1)^2$. Changing parameters to $\tilde\tau_1 = \tau_1+ \tau_2$, $\tilde\tau_2 = \tau_1$ yields the corresponding glued family associated to $(-1)^{|x_1|+1} [x_1,\Delta x_2]^{-1}$ (note that the parameter change is orientation-reversing). The outcome of this consideration is a family over $[0,1] \times (S^1)^2$, shown schematically in Figure \ref{fig:chi}; in this case, homotopical uniqueness is obtained by the prescription that the framing marker at the first point should always point away from the second point.
\begin{figure}
\begin{centering}
\begin{picture}(0,0)%
\includegraphics{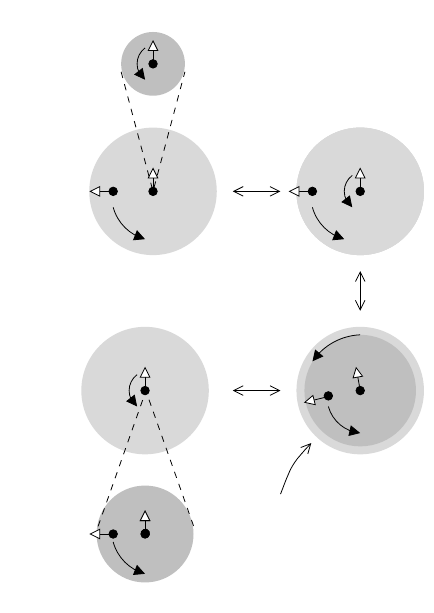}%
\end{picture}%
\setlength{\unitlength}{3355sp}%
\begingroup\makeatletter\ifx\SetFigFont\undefined%
\gdef\SetFigFont#1#2#3#4#5{%
  \reset@font\fontsize{#1}{#2pt}%
  \fontfamily{#3}\fontseries{#4}\fontshape{#5}%
  \selectfont}%
\fi\endgroup%
\begin{picture}(4068,5761)(-389,-3875)
\put(601,1739){\makebox(0,0)[lb]{\smash{{\SetFigFont{10}{12.0}{\rmdefault}{\mddefault}{\updefault}{\color[rgb]{0,0,0}BV operator}%
}}}}
\put(601,-736){\makebox(0,0)[lb]{\smash{{\SetFigFont{10}{12.0}{\rmdefault}{\mddefault}{\updefault}{\color[rgb]{0,0,0}$[\cdot,\cdot]^{-1}$}%
}}}}
\put(1726,-2911){\makebox(0,0)[lb]{\smash{{\SetFigFont{10}{12.0}{\rmdefault}{\mddefault}{\updefault}{\color[rgb]{0,0,0}interior disc rotates}%
}}}}
\put(601,-3811){\makebox(0,0)[lb]{\smash{{\SetFigFont{10}{12.0}{\rmdefault}{\mddefault}{\updefault}{\color[rgb]{0,0,0}$[\cdot,\cdot]^{-1}$}%
}}}}
\put(1126,1089){\makebox(0,0)[lb]{\smash{{\SetFigFont{10}{12.0}{\rmdefault}{\mddefault}{\updefault}{\color[rgb]{0,0,0}$2$}%
}}}}
\put(2476,164){\makebox(0,0)[lb]{\smash{{\SetFigFont{10}{12.0}{\rmdefault}{\mddefault}{\updefault}{\color[rgb]{0,0,0}$1$}%
}}}}
\put(3001,-136){\makebox(0,0)[lb]{\smash{{\SetFigFont{10}{12.0}{\rmdefault}{\mddefault}{\updefault}{\color[rgb]{0,0,0}$2$}%
}}}}
\put(3001,-2011){\makebox(0,0)[lb]{\smash{{\SetFigFont{10}{12.0}{\rmdefault}{\mddefault}{\updefault}{\color[rgb]{0,0,0}$2$}%
}}}}
\put(2626,-1711){\makebox(0,0)[lb]{\smash{{\SetFigFont{10}{12.0}{\rmdefault}{\mddefault}{\updefault}{\color[rgb]{0,0,0}$1$}%
}}}}
\put(976,-3361){\makebox(0,0)[lb]{\smash{{\SetFigFont{10}{12.0}{\rmdefault}{\mddefault}{\updefault}{\color[rgb]{0,0,0}$2$}%
}}}}
\put(601,-3061){\makebox(0,0)[lb]{\smash{{\SetFigFont{10}{12.0}{\rmdefault}{\mddefault}{\updefault}{\color[rgb]{0,0,0}$1$}%
}}}}
\put(601,164){\makebox(0,0)[lb]{\smash{{\SetFigFont{10}{12.0}{\rmdefault}{\mddefault}{\updefault}{\color[rgb]{0,0,0}$1$}%
}}}}
\put(-374,-1861){\makebox(0,0)[lb]{\smash{{\SetFigFont{10}{12.0}{\rmdefault}{\mddefault}{\updefault}{\color[rgb]{0,0,0}BV operator}%
}}}}
\end{picture}%
\caption{\label{fig:chi}The operation $\varpi$ from \eqref{eq:chi-operation}.}
\end{centering}
\end{figure}%

\subsection{The differentiation axiom in the framed context\label{subsec:differentiation-axiom-2}}
We now impose the same differentiation axiom as in Section \ref{subsec:differentiation-axiom},  extended to framed disc configurations. We also impose Assumption \ref{th:kill-k}, but modify the previous construction of connection \eqref{eq:nabla}, taking instead
\begin{equation} \label{eq:nabla-1}
\nabla^{-1} x = \qabla x + \rho^{-1}(x) - [\theta,x]^{-1}.
\end{equation}
This time, to show that $\nabla^{-1}$ is compatible with the differential, one uses \eqref{eq:diff-axiom-1} together with \eqref{eq:rho-1-operation} and \eqref{eq:theta}.

\begin{proposition} \label{th:nabla-bv}
The connection on $H^*$ induced by $\nabla^{-1}$ commutes with the BV operator.
\end{proposition}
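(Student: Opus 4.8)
The plan is to show that $\nabla^{-1}\Delta - \Delta\nabla^{-1}$ is chain homotopic to zero; since $\nabla^{-1}$ commutes with $d$ (by the construction behind \eqref{eq:nabla-1}) and $\Delta$ is a chain map, this operator descends to $H^*$, and a nullhomotopy on cochains yields the stated commutation on cohomology. Substituting \eqref{eq:nabla-1} and expanding, I would split
\begin{equation}
\begin{aligned}
\nabla^{-1}(\Delta x) - \Delta(\nabla^{-1}x) = {}& \bigl(\qabla\Delta x - \Delta\qabla x\bigr) + \bigl(\rho^{-1}(\Delta x) - \Delta\rho^{-1}(x)\bigr) \\ & {}- \bigl([\theta,\Delta x]^{-1} - \Delta[\theta,x]^{-1}\bigr),
\end{aligned}
\end{equation}
calling the three parenthesized groups $A$, $B$, $C$, and aim to prove $A \htp -B + \varpi(k,x)$ and $C \htp \varpi(k,x)$, so that $A + B - C \htp 0$.

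For $A$, I would apply the differentiation axiom \eqref{eq:diff-axiom-2} to the family of framed configurations underlying the BV operator $\phi = \Delta$ (which has $Z = \emptyset$ and base $P = S^1$): this gives $\qabla\Delta x - \Delta\qabla x = \Delta^\circ(x)$, the operation of the family over the $3$-dimensional base $P^\circ$ obtained by inserting the extra point everywhere. The crux is to identify $\Delta^\circ$ up to homotopy, in close analogy with the proof of Proposition~\ref{th:nabla-gerstenhaber}: I would ``pull out the $Z$ point'' of $\Delta^\circ$, producing a family over a $4$-dimensional parameter space $Q$ whose codimension-one boundary faces are (i) the $\Delta^\circ$ family itself; (ii) the faces where the extra point is absorbed into the outer unit circle, resp.\ into the boundary of the framed, rotating input disc, each arranged to glue in a copy of the $\rho^{-1}$-family and jointly contributing $\rho^{-1}(\Delta x) - \Delta\rho^{-1}(x)$; and (iii) the face where the extra point has been completely pulled out, which I would arrange to be the $\varpi$-family of Figure~\ref{fig:chi} with the Kodaira--Spencer configuration glued into its first input, contributing $-\varpi(k,x)$. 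Feeding \eqref{eq:diff-axiom-2} back in for the closed faces, the cobordism yields $\Delta^\circ(x) + \rho^{-1}(\Delta x) - \Delta\rho^{-1}(x) - \varpi(k,x) \htp 0$, i.e.\ $A \htp -B + \varpi(k,x)$.

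For $C$, I would use the defining relation \eqref{eq:chi-operation} of $\varpi$ with $x_1 = \theta$, $x_2 = x$; since $d\theta = k$ (Assumption~\ref{th:kill-k}) and $|\theta| = 1$ it reads $d\varpi(\theta,x) + \varpi(k,x) - \varpi(\theta,dx) + \Delta[\theta,x]^{-1} - [\theta,\Delta x]^{-1} = 0$, which exhibits $C = [\theta,\Delta x]^{-1} - \Delta[\theta,x]^{-1} = \varpi(k,x) + \bigl(d\varpi(\theta,x) - \varpi(\theta,dx)\bigr)$, i.e.\ $C \htp \varpi(k,x)$ via the explicit nullhomotopy $x \mapsto \varpi(\theta,x)$. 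Combining, the $\rho^{-1}$-contributions cancel ($-B + B$) and the two copies of $\varpi(k,\cdot)$ cancel, giving $\nabla^{-1}\Delta - \Delta\nabla^{-1} \htp 0$.

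I expect step (iii) in the identification of $\Delta^\circ$ to be the main obstacle: because the relevant input disc carries a \emph{rotating} framing, ``rotate the glued-in Kodaira--Spencer configuration'' and ``glue in the Kodaira--Spencer configuration, then rotate'' do not agree on the nose, and the resulting discrepancy on that face must be absorbed. This is precisely where I would bring in the cochain $\kappa$ from \eqref{eq:kappa-class}, the operation $\delta$ from \eqref{eq:delta-2}, and Lemma~\ref{th:delta-kappa-delta-kappa} (whose content, that $\Delta\kappa + \delta k$ is nullhomologous, is tailored to this situation). The rest — choosing compatible orientations on $Q$ and its faces, and tracking the Koszul signs together with the sign conventions fixed in \eqref{eq:rho-1-operation} and \eqref{eq:chi-operation} — is routine but needs to be carried out with care.
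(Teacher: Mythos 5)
Your decomposition and both homotopies are exactly the ones the paper uses: the pull-out of the $Z$ point from the family underlying $\qabla\Delta-\Delta\qabla$ yields the nullhomotopy of $A+B-\varpi(k,\cdot)$, and substituting $x_1=\theta$ into \eqref{eq:chi-operation} together with $d\theta = k$ yields $C \htp \varpi(k,\cdot)$; combining the two gives the result. So the approach matches the paper.

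The only place you and the paper part ways is the final paragraph, and there the worry is unfounded. When the extra point is fully pulled out, the configuration that gets glued in at the vacated position is a Kodaira--Spencer representative, which is a unit disc carrying \emph{only} a $Z$ point and no framed input. Rotating such a configuration moves the $Z$ point inside a contractible set, so ``rotate then glue'' and ``glue then rotate'' give families that are interchangeable within the homotopically unique class singled out by the framing prescription used to define $\varpi$ (the framing marker at the first point always pointing away from the second). No residual discrepancy remains, and in particular $\kappa$, $\delta$, and Lemma~\ref{th:delta-kappa-delta-kappa} play no role here: in the paper they are used for a different purpose, namely to show that the class $a = \Delta\theta - \kappa$ of \eqref{eq:a-cocycle} is annihilated by $\Delta$, which is a separate statement sitting alongside Proposition~\ref{th:nabla-bv} rather than inside its proof. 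Dropping that last paragraph leaves you with precisely the paper's argument.
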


\begin{proof} The basic approach is the same as for Proposition \ref{th:nabla-gerstenhaber}. The differentiation axiom describes $\qabla \Delta - \Delta \qabla$ in terms of a three-parameter family of framed disc configurations (with parameter space $[0,1] \times (S^1)^2$). One adds another parameter which ``pulls out'' the additional $Z$ point. The resulting family, drawn schematically in Figure \ref{fig:nabla-delta}, shows that the following expression is a nullhomotopic map:
\begin{equation} \label{eq:qabla-delta}
x \longmapsto \qabla \Delta x - \Delta \qabla x + \rho^{-1} (\Delta x) - \Delta \rho^{-1}(x) - \varpi(k,x).
\end{equation}
Using \eqref{eq:chi-operation} and Assumption \ref{th:kill-k}, we find a homotopy
\begin{equation}
\varpi(k,x) \htp [\theta, \Delta x]^{-1} - \Delta [\theta,x]^{-1}.
\end{equation}
Combining that with \eqref{eq:qabla-delta} yields the desired result.
\end{proof}
\begin{figure}
\begin{centering}
\begin{picture}(0,0)%
\includegraphics{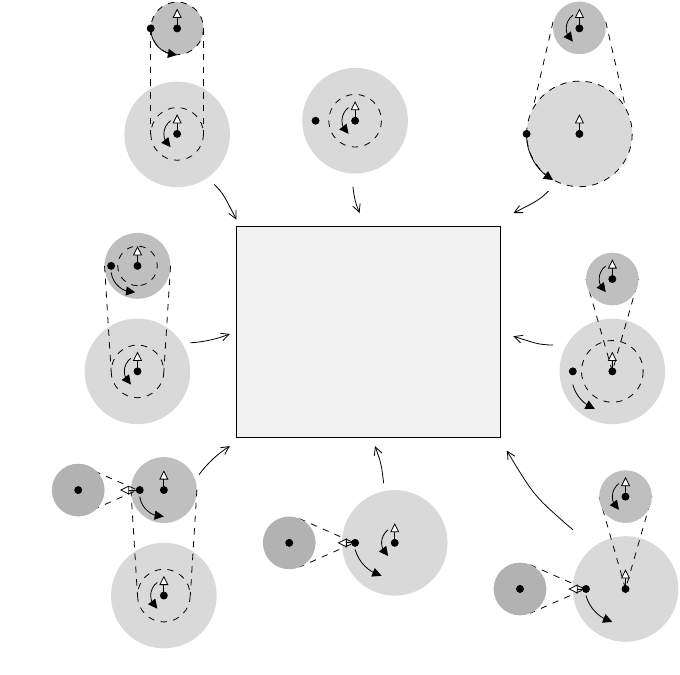}%
\end{picture}%
\setlength{\unitlength}{3355sp}%
\begingroup\makeatletter\ifx\SetFigFont\undefined%
\gdef\SetFigFont#1#2#3#4#5{%
  \reset@font\fontsize{#1}{#2pt}%
  \fontfamily{#3}\fontseries{#4}\fontshape{#5}%
  \selectfont}%
\fi\endgroup%
\begin{picture}(6385,6340)(-239,-5031)
\put(4532, -1){\makebox(0,0)[lb]{\smash{{\SetFigFont{10}{12.0}{\rmdefault}{\mddefault}{\updefault}{\color[rgb]{0,0,0}$Z$}%
}}}}
\put(2670,805){\makebox(0,0)[lb]{\smash{{\SetFigFont{10}{12.0}{\rmdefault}{\mddefault}{\updefault}{\color[rgb]{0,0,0}$\qabla\Delta-\Delta\qabla$}%
}}}}
\put(2732,309){\makebox(0,0)[lb]{\smash{{\SetFigFont{10}{12.0}{\rmdefault}{\mddefault}{\updefault}{\color[rgb]{0,0,0}$Z$}%
}}}}
\put(5525,-560){\makebox(0,0)[lb]{\smash{{\SetFigFont{10}{12.0}{\rmdefault}{\mddefault}{\updefault}{\color[rgb]{0,0,0}$r(\Delta(\cdot))$}%
}}}}
\put(5215,-2857){\makebox(0,0)[lb]{\smash{{\SetFigFont{10}{12.0}{\rmdefault}{\mddefault}{\updefault}{\color[rgb]{0,0,0}$\rho^{-1}(\Delta(\cdot))$}%
}}}}
\put(5277,-4905){\makebox(0,0)[lb]{\smash{{\SetFigFont{10}{12.0}{\rmdefault}{\mddefault}{\updefault}{\color[rgb]{0,0,0}$[k,\Delta(\cdot)]^{-1}$}%
}}}}
\put(5251,-4111){\makebox(0,0)[lb]{\smash{{\SetFigFont{10}{12.0}{\rmdefault}{\mddefault}{\updefault}{\color[rgb]{0,0,0}$1$}%
}}}}
\put(5701,-4411){\makebox(0,0)[lb]{\smash{{\SetFigFont{10}{12.0}{\rmdefault}{\mddefault}{\updefault}{\color[rgb]{0,0,0}$2$}%
}}}}
\put(1367,-3415){\makebox(0,0)[lb]{\smash{{\SetFigFont{9}{10.8}{\rmdefault}{\mddefault}{\updefault}{\color[rgb]{0,0,0}$2$}%
}}}}
\put(3076,-3661){\makebox(0,0)[lb]{\smash{{\SetFigFont{10}{12.0}{\rmdefault}{\mddefault}{\updefault}{\color[rgb]{0,0,0}$1$}%
}}}}
\put(976,-3211){\makebox(0,0)[lb]{\smash{{\SetFigFont{10}{12.0}{\rmdefault}{\mddefault}{\updefault}{\color[rgb]{0,0,0}$1$}%
}}}}
\put(622,-1119){\makebox(0,0)[lb]{\smash{{\SetFigFont{10}{12.0}{\rmdefault}{\mddefault}{\updefault}{\color[rgb]{0,0,0}$Z$}%
}}}}
\put(1180,-622){\makebox(0,0)[lb]{\smash{{\SetFigFont{10}{12.0}{\rmdefault}{\mddefault}{\updefault}{\color[rgb]{0,0,0}$\Delta(r(\cdot))$}%
}}}}
\put(435,-4967){\makebox(0,0)[lb]{\smash{{\SetFigFont{10}{12.0}{\rmdefault}{\mddefault}{\updefault}{\color[rgb]{0,0,0}$\Delta [k,\cdot]^{-1}$}%
}}}}
\put(2980,-4470){\makebox(0,0)[lb]{\smash{{\SetFigFont{10}{12.0}{\rmdefault}{\mddefault}{\updefault}{\color[rgb]{0,0,0}$\varpi(k,\cdot)$}%
}}}}
\put(994,992){\makebox(0,0)[lb]{\smash{{\SetFigFont{10}{12.0}{\rmdefault}{\mddefault}{\updefault}{\color[rgb]{0,0,0}$Z$}%
}}}}
\put(5101,-2086){\makebox(0,0)[lb]{\smash{{\SetFigFont{10}{12.0}{\rmdefault}{\mddefault}{\updefault}{\color[rgb]{0,0,0}$Z$}%
}}}}
\put(4594,-4098){\makebox(0,0)[lb]{\smash{{\SetFigFont{10}{12.0}{\rmdefault}{\mddefault}{\updefault}{\color[rgb]{0,0,0}$Z$}%
}}}}
\put(2422,-3974){\makebox(0,0)[lb]{\smash{{\SetFigFont{10}{12.0}{\rmdefault}{\mddefault}{\updefault}{\color[rgb]{0,0,0}$Z$}%
}}}}
\put(3539,-3912){\makebox(0,0)[lb]{\smash{{\SetFigFont{10}{12.0}{\rmdefault}{\mddefault}{\updefault}{\color[rgb]{0,0,0}$2$}%
}}}}
\put(498,-3477){\makebox(0,0)[lb]{\smash{{\SetFigFont{10}{12.0}{\rmdefault}{\mddefault}{\updefault}{\color[rgb]{0,0,0}$Z$}%
}}}}
\put(-224,-1786){\makebox(0,0)[lb]{\smash{{\SetFigFont{10}{12.0}{\rmdefault}{\mddefault}{\updefault}{\color[rgb]{0,0,0}$\Delta(\rho^{-1}(\cdot))$}%
}}}}
\end{picture}%
\caption{\label{fig:nabla-delta}The nullhomotopy \eqref{eq:qabla-delta}.}
\end{centering}
\end{figure}
%

One can associate to our choice of bounding cochain $\theta$ a cocycle
\begin{equation} \label{eq:a-cocycle}
a = \Delta \theta - \kappa \in C^0,
\end{equation}
where $\kappa$ is the cochain from \eqref{eq:kappa-class}. Using Lemma \ref{th:delta-kappa-delta-kappa}, ones sees that
\begin{equation}
\Delta a = \Delta \Delta \theta -  \Delta \kappa = - \delta(d\theta) + \delta k + \text{\it coboundary} =
\text{\it coboundary}.
\end{equation}
Hence, the cohomology class of $a$ (for which we use the same notation) is annihilated by the BV operator.

\begin{lemma} \label{th:01-connections}
$\nabla - \nabla^{-1}$ is homotopy equivalent to the pair-of-pants product with $a$.
\end{lemma}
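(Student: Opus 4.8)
The plan is to reduce the claim to a purely chain-level nullhomotopy and then establish that nullhomotopy by the configuration-space technique already used for Propositions~\ref{th:nabla-gerstenhaber} and~\ref{th:nabla-bv}. First I would write out the definitions \eqref{eq:nabla} and \eqref{eq:nabla-1}: as chain maps on $C^*$,
\[
(\nabla - \nabla^{-1})(x) = \rho(x) - \rho^{-1}(x) - [\theta,x] + [\theta,x]^{-1}.
\]
The terms involving $\theta$ are harmless: the chain-level form of \eqref{eq:bracket-1} (comparing the chosen representatives of the ordinary and the modified bracket) gives $[\theta,x]^{-1} - [\theta,x] = (\Delta\theta)\bullet x$ up to an explicit chain homotopy, so modulo chain homotopy $(\nabla - \nabla^{-1})(x) = \rho(x) - \rho^{-1}(x) + (\Delta\theta)\bullet x$. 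Since $a = \Delta\theta - \kappa$ by \eqref{eq:a-cocycle}, the lemma thus reduces to showing that $\rho(x) - \rho^{-1}(x) + \kappa\bullet x$ is nullhomotopic.

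For that I would exploit the following: the families of framed configurations with one $Z$ point underlying $\rho$ (Figure~\ref{fig:rho}) and $\rho^{-1}$ (Figure~\ref{fig:rho-1}) are both parametrized by $[0,1]\times S^1$, and they have the same restriction to the face $\{0\}\times S^1$ — the family computing the endomorphism $r$ — differing only in whether the framing marker at the input point rotates as the $Z$ point is pulled out and reborn as a separate disc carrying $k$. The plan is then to glue these two families along their common $r$-face and to fill the resulting cylinder with a compact $3$-dimensional family $P$ whose extra parameter lets the framing marker on that $k$-carrying disc turn through a full circle. By the very definition of $\kappa$ in \eqref{eq:kappa-class}, rotating the framing on the $k$-configuration is the $D^2$-family that bounds the $S^1$-family computing $\Delta k$; hence the face of $\partial P$ on which this rotation has been completed carries $\kappa$ and, combined with the retained pair-of-pants input, induces $\kappa\bullet(-)$. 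Reading off the remaining boundary faces of $P$ — the $\rho$-family, the $\rho^{-1}$-family with reversed orientation, and faces that are pulled back from lower-dimensional bases or occur in cancelling pairs — and applying the chain-level boundary axiom \eqref{eq:boundary-axiom} yields $\rho(x) - \rho^{-1}(x) + \kappa\bullet x \simeq 0$, as required.

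The hard part will be the construction and bookkeeping of the parameter space $P$: one must verify that its codimension-$1$ boundary really decomposes into exactly the faces listed above, and that all the orientation and Koszul signs match, which is the analogue of the fiddly parts in the proofs of Propositions~\ref{th:nabla-gerstenhaber} and~\ref{th:nabla-bv} and of Lemma~\ref{th:delta-kappa-delta-kappa}. A subsidiary point demanding care is homotopical uniqueness: as in those earlier arguments, one should restrict throughout to a contractible subfamily — here, keeping the framing marker at the input point pointing away from the $k$-point — so that no stray secondary operation (a multiple of $\Delta r(x)$, or a bracket with a nontrivial class) can enter either the reduction step or the identification of the $\kappa$-face.
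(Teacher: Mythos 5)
Your strategy --- expanding $\nabla-\nabla^{-1}$ using \eqref{eq:nabla} and \eqref{eq:nabla-1}, reducing the $\theta$-terms via the chain-level form of \eqref{eq:bracket-1}, and supplying the remaining nullhomotopy by a configuration-space bounding argument --- is the same as the paper's, and you have correctly located the $\rho$, $\rho^{-1}$, and $\kappa\bullet(-)$ pieces. But there is a gap in the reduction step. Applying the chain homotopy $\eta$ from \eqref{eq:epsilon-homotopy} with $x_1 = \theta$ and using $d\theta = k$, $|\theta| = 1$ gives
\begin{equation*}
[\theta,x]^{-1} - [\theta,x] - (\Delta\theta)\bullet x = -d\eta(\theta,x) - \eta(\theta,dx) + \eta(k,x).
\end{equation*}
The first two terms on the right are a nullhomotopy, but the third, $\eta(k,x)$, is a genuine leftover arising because $\theta$ is \emph{not} a cocycle. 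Dropping it leaves you claiming that $\rho - \rho^{-1} + \kappa\bullet(-)$ is nullhomotopic, which is ill-posed: that expression is not even a chain map, since combining \eqref{eq:rho-operation}, \eqref{eq:rho-1-operation} and \eqref{eq:kappa-class} shows that its failure to commute with $d$ is precisely $[k,x]^{-1} - [k,x] - (\Delta k)\bullet x$, nonzero at the chain level.

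Geometrically, after gluing the $\rho$- and $\rho^{-1}$-families along the common $r$-face and then attaching the $\kappa\bullet(-)$ piece, three boundary circles remain (the $[k,\cdot]$-family, the $[k,\cdot]^{-1}$-family, and the $(\Delta k)\bullet(-)$-family), so the parameter space you are describing cannot be closed up as you claim; the ``faces pulled back from lower-dimensional bases or occurring in cancelling pairs'' cannot absorb the missing contribution. The fourth piece is the pair-of-pants family underlying $\eta(k,\cdot)$, whose boundary circles are exactly those three. The correct secondary nullhomotopy is therefore the paper's \eqref{eq:secondary-epsilon}, $\eta(k,x) + \rho(x) - \rho^{-1}(x) + \kappa\bullet x \htp 0$: the four pieces glue into a torus (Figure~\ref{fig:epsilon2}), and the bounding argument is that the framing marker at the unique $I$-point stays fixed throughout, so the torus family lands in a circle inside $\scrP^{\mathit{fr}}_{1;1} \htp S^1\times S^1$ and hence bounds. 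Restoring the $\eta(k,x)$ term in both your steps turns your argument into the paper's.
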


\begin{proof}
There is a chain homotopy underlying the first of the cohomology level equalities in \eqref{eq:bracket-1}:
\begin{equation} \label{eq:epsilon-homotopy}
\begin{aligned}
& \eta: C^* \otimes C^* \longrightarrow C^{*-2}, \\
& d\eta(x_1,x_2) - \eta(dx_1,x_2) - (-1)^{|x_1|} \eta(x_1,dx_2) \\ & \qquad \qquad + [x_1,x_2] - [x_1,x_2]^{-1} + (\Delta x_1) \bullet x_2 = 0.
\end{aligned}
\end{equation}
This comes with a secondary nullhomotopy
\begin{equation} \label{eq:secondary-epsilon}
\eta(k,x) - \rho(x) + \rho^{-1}(x) - \kappa \bullet x \htp 0.
\end{equation}
The family behind \eqref{eq:epsilon-homotopy} is shown in Figure \ref{fig:epsilon}. Note that \eqref{eq:epsilon-homotopy}, together with the properties of $\rho$, $\rho^{-1}$ and $\kappa$, ensures that the left hand side of \eqref{eq:secondary-epsilon} is a chain map. The corresponding geometric fact is that the underlying families can be combined to form a family over a closed surface (a torus), as shown in Figure \ref{fig:epsilon2}. To obtain \eqref{eq:secondary-epsilon}, one has to show that this family bounds in $\scrP_{1;1}^{\mathit{fr}} \htp S^1 \times S^1$. For that, it is sufficient to notice that all disc configurations in the family have the property that the framing marker (at the unique $I$ point) points in the same direction.
\begin{figure}
\begin{centering}
\begin{picture}(0,0)%
\includegraphics{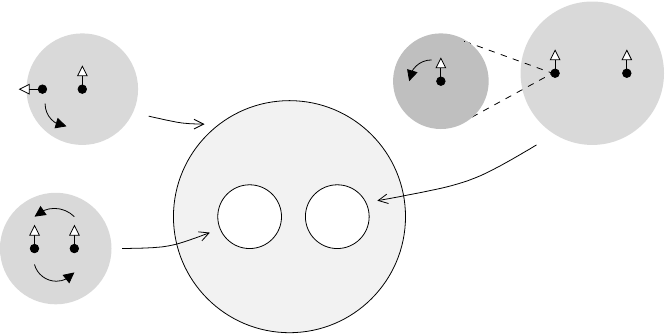}%
\end{picture}%
\setlength{\unitlength}{3355sp}%
\begingroup\makeatletter\ifx\SetFigFont\undefined%
\gdef\SetFigFont#1#2#3#4#5{%
  \reset@font\fontsize{#1}{#2pt}%
  \fontfamily{#3}\fontseries{#4}\fontshape{#5}%
  \selectfont}%
\fi\endgroup%
\begin{picture}(6250,3124)(-4074,-2060)
\put(1876, 89){\makebox(0,0)[rb]{\smash{{\SetFigFont{10}{12.0}{\rmdefault}{\mddefault}{\updefault}{$2$}%
}}}}
\put(-3224, 89){\makebox(0,0)[lb]{\smash{{\SetFigFont{10}{12.0}{\rmdefault}{\mddefault}{\updefault}{$2$}%
}}}}
\put(-3224,-1336){\makebox(0,0)[lb]{\smash{{\SetFigFont{10}{12.0}{\rmdefault}{\mddefault}{\updefault}{$2$}%
}}}}
\put(  1, 53){\makebox(0,0)[lb]{\smash{{\SetFigFont{10}{12.0}{\rmdefault}{\mddefault}{\updefault}{$1$}%
}}}}
\put(-3974,-1186){\makebox(0,0)[lb]{\smash{{\SetFigFont{10}{12.0}{\rmdefault}{\mddefault}{\updefault}{$1$}%
}}}}
\put(-3674,389){\makebox(0,0)[lb]{\smash{{\SetFigFont{10}{12.0}{\rmdefault}{\mddefault}{\updefault}{$1$}%
}}}}
\end{picture}%
\caption{\label{fig:epsilon}The operation $\eta$ from \eqref{eq:epsilon-homotopy}.}
\end{centering}
\end{figure}
\begin{figure}
\begin{centering}
\begin{picture}(0,0)%
\includegraphics{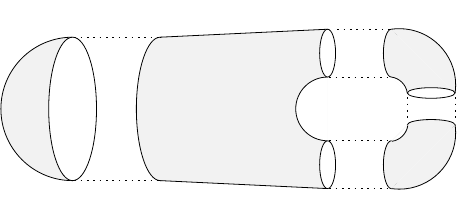}%
\end{picture}%
\setlength{\unitlength}{3355sp}%
\begingroup\makeatletter\ifx\SetFigFont\undefined%
\gdef\SetFigFont#1#2#3#4#5{%
  \reset@font\fontsize{#1}{#2pt}%
  \fontfamily{#3}\fontseries{#4}\fontshape{#5}%
  \selectfont}%
\fi\endgroup%
\begin{picture}(4298,2061)(-3533,-4339)
\put(-1499,-4261){\makebox(0,0)[lb]{\smash{{\SetFigFont{10}{12.0}{\rmdefault}{\mddefault}{\updefault}{$\eta(k,\cdot)$}%
}}}}
\put(376,-2461){\makebox(0,0)[lb]{\smash{{\SetFigFont{10}{12.0}{\rmdefault}{\mddefault}{\updefault}{$\rho^{-1}$}%
}}}}
\put(301,-4261){\makebox(0,0)[lb]{\smash{{\SetFigFont{10}{12.0}{\rmdefault}{\mddefault}{\updefault}{$\rho$}%
}}}}
\put(-3449,-4261){\makebox(0,0)[lb]{\smash{{\SetFigFont{10}{12.0}{\rmdefault}{\mddefault}{\updefault}{$\kappa \bullet \cdot$}%
}}}}
\end{picture}%
\caption{\label{fig:epsilon2}The parameter spaces appearing on the left in \eqref{eq:secondary-epsilon}.}
\end{centering}
\end{figure}%
Now let's see what this says about connections. By definition and \eqref{eq:epsilon-homotopy},
\begin{equation} \label{eq:nabla-vs-nabla-1}
\begin{aligned}
\nabla x - \nabla^{-1} x & = \rho(x) - \rho^{-1}(x) - [\theta,x] + [\theta,x]^{-1} \\ & \htp
\rho(x) - \rho^{-1}(x) + (\Delta \theta) \bullet x - \eta(k,x). 
\end{aligned}
\end{equation}
Then, applying \eqref{eq:secondary-epsilon} achieves the desired result.
\end{proof}

Combining Lemma \ref{th:01-connections} and Proposition \ref{th:nabla-bv} yields the following:

\begin{corollary} \label{th:nabla-delta-cor}
The connection on $H^*$ induced by $\nabla$ satisfies \eqref{eq:delta-nabla}.
\end{corollary}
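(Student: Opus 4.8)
The plan is to deduce \eqref{eq:delta-nabla} formally from the two preceding results, working throughout on the cohomology level $H^*$, where both Lemma \ref{th:01-connections} and Proposition \ref{th:nabla-bv} are stated. By Lemma \ref{th:01-connections}, the endomorphism of $H^*$ induced by $\nabla^{-1}$ is $\nabla^{-1} x = \nabla x - a \bullet x$; by Proposition \ref{th:nabla-bv}, this induced $\nabla^{-1}$ commutes with the BV operator, so $\nabla^{-1}(\Delta x) = \Delta(\nabla^{-1} x)$ for all $x \in H^*$.

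Substituting the first identity into the second and rearranging gives
\begin{equation}
\nabla(\Delta x) - a \bullet \Delta x = \Delta(\nabla x) - \Delta(a \bullet x),
\end{equation}
hence $\nabla \Delta x = \Delta \nabla x - \bigl(\Delta(a \bullet x) - a \bullet \Delta x\bigr)$. It then remains to recognize the parenthesized term as $[a,x]$. Since $a$ lies in degree $0$, the Koszul sign in \eqref{eq:delta-bracket} is trivial and that relation reads $[a,x] = \Delta(a \bullet x) - (\Delta a)\bullet x - a \bullet \Delta x$; combined with $\Delta a = 0$ (established in the computation following \eqref{eq:a-cocycle}, using Lemma \ref{th:delta-kappa-delta-kappa}), this yields $\Delta(a\bullet x) - a \bullet \Delta x = [a,x]$, and the previous display becomes precisely \eqref{eq:delta-nabla}.

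There is no real obstacle here: the substantive content has already been packaged into Lemma \ref{th:01-connections} and Proposition \ref{th:nabla-bv}, and the corollary is a one-line formal manipulation. The only points deserving a moment's care are that every statement is being invoked on $H^*$ (so that no chain-level homotopies need to be carried along), that the degree of $a$ is $0$ so the bracket sign conventions do not intervene, and that $\Delta a = 0$ holds already on the cohomology class level as recorded just before the statement.
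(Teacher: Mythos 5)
Your argument is correct and is precisely the intended one: the paper's own "proof" is the single line preceding the corollary, "Combining Lemma \ref{th:01-connections} and Proposition \ref{th:nabla-bv} yields the following," and your write-up just makes that combination explicit (substituting $\nabla^{-1} = \nabla - a\bullet$ into the commutation relation with $\Delta$, then identifying $\Delta(a\bullet x) - a\bullet\Delta x$ with $[a,x]$ via \eqref{eq:delta-bracket} and $\Delta a = 0$). The care you take about working on cohomology, the degree-$0$ sign triviality, and the prior verification of $\Delta a = 0$ is exactly what is needed.
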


\section{Floer cohomology\label{sec:floer}}
In contrast with the formal preoccupations of the preceding sections, the next few go straight for the jugular, meaning the relation between enumerative geometry and connections on Floer cohomology. 
The present section sets up the basic Floer-theoretic machinery needed for that purpose.

\subsection{The definition\label{subsec:define-floer}} 
We will work with a class of symplectic manifolds which includes the Lefschetz fibrations from Section \ref{subsec:define-lefschetz} (more precisely, it includes them after possibly deforming the symplectic form). Leaving aside technical details, the key condition is that the first Chern class of our manifold is represented by a symplectic hypersurface with trivial normal bundle.

\begin{setup} \label{th:define-f}
Let $F$ be a closed connected symplectic manifold, and $M \subset F$ a connected codimension two symplectic submanifold. We require that there is a map 
\begin{equation}
F \stackrel{p}{\longrightarrow} \bC P^1 = \bC \cup \{\infty\},
\end{equation}
which has $M = F_0 = p^{-1}(0)$ as a regular fibre, and which is a trivial symplectic fibration over some closed disc (centered at the origin) $B \subset \bC \subset \bC P^1$. This means that there is a (necessarily unique) diffeomorphism
\begin{equation} \label{eq:trivialization-outer}
\xymatrix{
\ar[dr]_{\text{projection}} B \times M\, \ar[rr]^-{\iso} && p^{-1}(B) \ar[dl]^{p} \\ & B
}
\end{equation}
which is symplectic (with respect to the product of $\omega_M = \omega_F|M$ and some rotationally invariant positive $\omega_B$), and which restricts to the inclusion on $\{0\} \times M$.

We require that $c_1(F)$ is Poincar{\'e} dual to $F_0$, and choose a trivialization of the canonical bundle with a pole along that fibre. This means that, for some compatible almost complex structure on $F$, we have a complex volume form $\eta$ on the complement 
\begin{equation} \label{eq:define-e}
E = F \setminus F_0 \stackrel{p}{\longrightarrow} \bC P^1 \setminus \{0\} = \bC^* \cup \{\infty\},
\end{equation}
such that $p \eta$ extends to a complex volume form near $F_0$. 

We also want to fix a cycle representative for $[\omega_F]$. More precisely, we choose (not necessarily symplectic) codimension two submanifolds $\Omega_1,\dots,\Omega_k \subset F$, which are transverse to $F_0$, along with multiplicities $\mu_1,\dots,\mu_k \in \bR$, such that $\Omega = \mu_1 \Omega_1 + \cdots \mu_k \Omega_k$ satisfies
\begin{equation} \label{eq:z-divisor}
[\omega_F] = [\Omega] = \mu_1[\Omega_1] + \cdots + \mu_k[\Omega_k] \in H^2(F;\bR).
\end{equation}
Additionally, we choose a current $\Theta$ on $F$, which is regular (represented by a smooth one-form) outside $\Omega$, and such that
\begin{equation} \label{eq:theta-z}
d\Theta = \omega_F - \Omega.
\end{equation}
Then, if $S$ is a compact oriented surface with boundary, and $u: S \rightarrow F$ a map with $u(\partial S) \cap \Omega = \emptyset$, we can compute its symplectic area by
\begin{equation}
\int_S u^*\omega_F = u \cdot \Omega + \int_{\partial S} u^*\Theta,
\end{equation}
where $u \cdot \Omega = \mu_1 (u \cdot \Omega_1) + \cdots + \mu_k (u \cdot \Omega_k)$.
\end{setup}

The auxiliary geometric data used to define Floer cohomology will be specifically chosen to be compatible with the structure of $F$, and in particular with \eqref{eq:trivialization-outer}.

\begin{setup}
We will use compatible almost complex structures $J$ on $F$ such that $p$ is $J$-holomorphic over $B$. This means that in the partial trivialization \eqref{eq:trivialization-outer}, 
\begin{equation} \label{eq:j}
J|(\{b\} \times M) = i \oplus J_{M,b} \in \mathit{End}(\bC \oplus TM)
\end{equation}
for some family $(J_{M,b})_{b \in B}$ of compatible almost complex structures on $M$. As a consequence of this and the assumption on $c_1(F)$, there are no $J$-holomorphic curves with negative Chern number. If one defines $\eta$ using such an almost complex structure, its restriction to \eqref{eq:trivialization-outer} is
\begin{equation} \label{eq:eta-j-split}
\eta = b^{-1}db \wedge \eta_{M,b}
\end{equation}
for some family $(\eta_{M,b})$, $b \in B$, of $J_{M,b}$-complex volume forms on the fibre.

Similarly, we consider Hamiltonian functions $H \in \smooth(F,\bR)$ whose associated vector field $X$ satisfies, for some $\alpha \in \bR \setminus \bZ$,
\begin{equation} \label{eq:hamiltonian-0}
Dp_x(X) = -\alpha \, 2\pi i b\partial_b \quad \text{for all $x \in p^{-1}(B)$.}
\end{equation}
($2\pi i b\partial_b$ is the standard rotational vector field on the complex plane, normalized so that its flow is one-periodic.) Equivalently, there is a Hamiltonian vector field $X_M$ on the fibre, such that
\begin{equation} \label{eq:hamiltonian}
X = -\alpha \, 2\pi i b\partial_b + X_M \quad \text{on $p^{-1}(B) \iso B \times M$.}
\end{equation}
\end{setup}

Fix $\alpha$, and choose a time-dependent Hamiltonian function $H = (H_t)$, $t \in S^1 = \bR/\bZ$, satisfying \eqref{eq:hamiltonian}. Each one-periodic orbit of the associated vector field $X = (X_{t})$ either lies entirely inside or entirely outside $F_0$; in the second case, such orbits in fact lie outside $p^{-1}(B)$, thanks to the assumption on $\alpha$. From now on, suppose that the one-periodic orbits are nondegenerate, and disjoint from $\Omega$ (this is true for generic $H$). To any orbit $x$, one can associate two quantities. The first one is the action, defined in terms of \eqref{eq:theta-z} as
\begin{equation}
A(x) = \int_{S^1} -x^*\Theta + H_t(x(t)) \, \mathit{dt}.
\end{equation}
The second one is the Conley-Zehnder index
\begin{equation}
i(x) \in \bZ. 
\end{equation}
There are actually two cases to be considered here: if $x$ lies outside $F_0$, we define $i(x)$ using $\eta$ (see e.g.\ \cite{seidel99}); and if $x$ is contained in $F_0$, we define it using $p \eta$ instead (in terms of  \eqref{eq:eta-j-split}, this would be $\mathit{db} \wedge \eta_{M,b}$). For the second case, we can also define an index $i_M(x)$ by working entirely inside $M = F_0$ (and using $\eta_{M,0})$. The two indices are related by
\begin{equation} \label{eq:two-indices}
i(x) = i_M(x) + (2 \lfloor \alpha \rfloor + 2).
\end{equation}

\begin{remark} \label{th:move-pole}
Equivalently, one can work with
\begin{equation} \label{eq:eta-e-b}
\frac{1}{1 - b/p} \eta = \frac{1}{b(p/b-1)} p\eta \quad \text{for some } b \in B \setminus \{0\},
\end{equation}
which is a complex volume form on $F \setminus F_b$. This yields the same index for one-periodic orbits lying outside $F_0$, since one can deform $\eta$ to \eqref{eq:eta-e-b} by moving the pole from $F_0$ to $F_b$. The same holds if $x$ is contained in $F_0$, since one can deform $(-1/b) p\eta$ to \eqref{eq:eta-e-b} by moving the pole from $F_\infty$ to $F_b$. The advantage of \eqref{eq:eta-e-b} is that the pole is now disjoint from all one-periodic orbits.
\end{remark}

Choose a family $J = (J_t)$, $t \in S^1$, of almost complex structures satisfying \eqref{eq:j}. The Hamiltonian Floer equation is
\begin{equation} \label{eq:floer}
\left\{
\begin{aligned}
& u: \bR \times S^1 \longrightarrow F, \\
& \partial_s u + J_t(u)(\partial_t u - X_{t}) = 0, \\
& \textstyle\lim_{s \rightarrow \pm \infty} u(s,\cdot) = x^{\pm},
\end{aligned}
\right.
\end{equation}
where $x^\pm$ are one-periodic orbits. We will consider three quantities associated to each solution $u$. The first is a kind of degree with respect to the map $p$,
\begin{equation} \label{eq:define-degree}
\mathrm{deg}(u) = u \cdot F_b \quad \text{for $b \in B \setminus \{0\}$}.
\end{equation}
The second, closely related, one is the Fredholm index of the associated linearized operator $D_u$. By a standard index formula (which becomes particularly clear from the perspective of Remark \ref{th:move-pole}), this can be written as
\begin{equation} \label{eq:index}
\mathrm{index}(D_u) = i(x^-) - i(x^+) + 2 \,\mathrm{deg}(u).
\end{equation}
The third one is the energy 
\begin{equation} \label{eq:energy}
\begin{aligned}
E(u) & = \int_{\bR \times S^1} \|\partial_s u\|^2 \, \mathit{ds} \wedge \mathit{dt} 
= \int_{\bR \times S^1} u^*\omega_F - dH_t(\partial_s u) \mathit{ds} \wedge \mathit{dt} \\
& = A(x^-) - A(x^+) + u \cdot \Omega.
\end{aligned}
\end{equation}

%
The quantities \eqref{eq:define-degree}--\eqref{eq:energy} are topological, by which we mean that they are unchanged under deformations of $u$. In fact, their definition can be extended to maps $u$ which do not satisfy our Cauchy-Riemann equation (but are still asymptotic to one-periodic orbits).

Let's discuss the behaviour of solutions $u$ of \eqref{eq:floer} with respect to \eqref{eq:trivialization-outer}. Write $v = p(u)$. Take the subset of all $(s,t) \in \bR \times S^1$ where $v(s,t) \in B$. There, $v$ obeys a linear equation
\begin{equation} \label{eq:v-equation}
\partial_s v + i\partial_t v - 2\pi \alpha v = 0,
\end{equation}
As a consequence, $(\partial_s^2 + \partial_t^2)(\log v) = 0$ at all points where $v(s,t) \in B \setminus \{0\}$. By applying the maximum (and minimum) principle to $\log |v| = \mathrm{re}(\log v)$, one gets:

\begin{lemma} \label{th:max-principle-1}
If a solution of \eqref{eq:floer} remains in $p^{-1}(B)$, it is in fact contained in $F_0$.
\end{lemma}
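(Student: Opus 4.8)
The goal is to prove that $v := p \circ u$ vanishes identically, which is exactly the statement that $u$ maps into $F_0 = p^{-1}(0)$. Since $u$ is assumed to stay in $p^{-1}(B)$, the function $v$ takes values in the closed disc $B$, say of radius $R$, so $|v| \le R$ everywhere; and, crucially, equation \eqref{eq:v-equation} then holds on \emph{all} of $\bR \times S^1$ rather than just on a subdomain. Writing $z = s + it$, that equation says $\partial_{\bar z} v = \pi\alpha v$.

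The first step is to extract the local structure of solutions of this $\bar\partial$-equation: setting $h := e^{-\pi\alpha\bar z} v$ one finds $\partial_{\bar z} h = 0$, so $h$ is holomorphic on $\bR \times S^1$. Hence either $v \equiv 0$ --- in which case we are done --- or else $h$, and therefore $v$, has a discrete zero set $Z$, and away from $Z$ the function $\log|v| = \pi\alpha s + \log|h|$ is harmonic (this is the content of the identity $\Delta(\log v)=0$ recalled just before the lemma), bounded above by $\log R$, and tending to $-\infty$ near each point of $Z$.

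The second step is to control the two ends $s \to \pm\infty$. There $u$ converges to one-periodic orbits $x^\pm$; being limits of points of the closed set $p^{-1}(B)$, these orbits lie in $p^{-1}(B)$, and by the dichotomy recalled just before the lemma (each orbit lies either inside $F_0$ or outside $p^{-1}(B)$, using $\alpha \notin \bZ$) they must lie in $F_0$. Consequently $v(s,\cdot) \to 0$ uniformly, i.e. $\log|v(s,\cdot)| \to -\infty$, at both ends. Now assume for contradiction that $v \not\equiv 0$. Then $c := \sup \log|v|$ is a finite number (it is $\le \log R$ and $> -\infty$), and since $\log|v|$ tends to $-\infty$ both near $Z$ and at the two ends, any maximizing sequence stays in a compact region and subconverges to a point $(s_0,t_0)$ at which $\log|v(s_0,t_0)| = c$ and $v(s_0,t_0)\neq 0$. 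Applying the strong maximum principle to the harmonic function $\log|v|$ on the connected open set $(\bR \times S^1)\setminus Z$ forces $\log|v| \equiv c$ there, hence $|v| \equiv e^c > 0$, contradicting the decay at the ends. Therefore $v \equiv 0$, as desired.

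I expect the only genuinely delicate point to be the use of non-compactness: an a priori bound $|v| \le R$ by itself does not force $v$ to be constant on the cylinder, so the argument really must invoke the behaviour at the ends (and hence the orbit dichotomy) in order to place the supremum of $\log|v|$ at an interior point. Everything else is routine. As a sanity check --- and an alternative proof that sidesteps the maximum principle --- one can Fourier-expand $v(s,t) = \sum_{n\in\bZ} v_n(s)\,e^{2\pi i n t}$; equation \eqref{eq:v-equation} gives $v_n(s) = c_n e^{2\pi(n+\alpha)s}$, and the uniform bound $|v_n(s)| \le R$ together with $\alpha \notin \bZ$ forces every $c_n$ to vanish.
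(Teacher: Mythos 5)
Your proof is correct and is essentially the paper's argument written out in full: the paper compresses it to the single sentence ``apply the maximum (and minimum) principle to $\log|v|$,'' and you supply exactly the missing ingredients, namely that the limits $x^{\pm}$ must lie in $F_0$ (by the orbit dichotomy, since they sit in $p^{-1}(B)$) so that $\log|v|\to-\infty$ at both ends, forcing any would-be maximum to occur at an interior point and hence contradicting the strong maximum principle. Your alternative Fourier-expansion argument is also valid and is in fact closer in spirit to what the paper does in \eqref{eq:v-tilde} and Lemma \ref{th:v-regular}, where solutions of \eqref{eq:v-equation} are analyzed explicitly mode by mode.
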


\begin{lemma} \label{th:max-principle-2}
Take a solution of \eqref{eq:floer} both of whose limits $x^{\pm}$ lie outside $F_0$. If that solution enters $p^{-1}(B \setminus \partial B)$ anywhere, it must intersect $F_0$.
\end{lemma}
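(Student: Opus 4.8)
The plan is to mimic, in the ``from below'' direction, the maximum-principle argument already used for Lemma~\ref{th:max-principle-1}. Write $v = p\circ u$, let $r_B>0$ be the radius of the disc $B$, and set
\[
\Omega_B = \{(s,t)\in\bR\times S^1 : |v(s,t)| < r_B\} = \{(s,t)\in\bR\times S^1 : v(s,t)\in B\setminus\partial B\},
\]
an open subset of the cylinder which is nonempty precisely because, by hypothesis, $u$ enters $p^{-1}(B\setminus\partial B)$. The first thing to establish is that $\Omega_B$ is precompact. Since $x^{\pm}$ lie outside $F_0$, they lie outside $p^{-1}(B)$ altogether (by the assumption on $\alpha$ recalled just before Lemma~\ref{th:max-principle-1}), so $|p(x^{\pm})| > r_B$; together with the uniform convergence $u(s,\cdot)\to x^{\pm}$ as $s\to\pm\infty$ built into \eqref{eq:floer}, this gives an $S_0$ with $|v(s,t)|>r_B$ for $|s|\geq S_0$. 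Hence $\overline{\Omega_B}\subseteq[-S_0,S_0]\times S^1$ is compact, and its topological boundary satisfies $\partial\Omega_B\subseteq\{|v|=r_B\}$.

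Now suppose, for contradiction, that $u$ never meets $F_0$, i.e.\ $v$ is nowhere zero. Because $0$ lies in the interior of $B$ while $|v|=r_B>0$ on $\partial\Omega_B$, this assumption in fact forces $v\neq 0$ on all of $\overline{\Omega_B}$, so $\log|v|$ is defined and continuous there. On the open set $\Omega_B$ the linear equation \eqref{eq:v-equation} holds, so $\log|v|$ is harmonic on $\Omega_B$ (this is the computation recorded just before Lemma~\ref{th:max-principle-1}), and it takes the constant value $\log r_B$ on $\partial\Omega_B$. Applying the minimum principle for harmonic functions, component by component, we conclude $\log|v|\geq\log r_B$, i.e.\ $|v|\geq r_B$, throughout $\Omega_B$. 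This contradicts the defining inequality $|v|<r_B$ on the nonempty set $\Omega_B$. Therefore $u$ must intersect $F_0$.

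The argument is short, and the single point that deserves care --- what I would regard as the crux --- is the precompactness of $\Omega_B$ and the attendant control of its boundary values. This is exactly where the hypothesis that \emph{both} asymptotic orbits lie outside $F_0$ is used: it prevents $\Omega_B$ from escaping to $s=\pm\infty$, where there would be no maximum-principle control, and where indeed Lemma~\ref{th:max-principle-1} exhibits the opposite behaviour. Beyond the already-established equation \eqref{eq:v-equation} and the asymptotic conditions in \eqref{eq:floer}, no pseudo-holomorphic curve analysis is needed.
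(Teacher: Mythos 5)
Your proof is correct and is exactly the argument the paper intends: the paper explicitly says Lemmas 6.6--6.7 follow "by applying the maximum (and minimum) principle to $\log|v|$," and you have carried out the minimum-principle half with the right precompactness set-up (using that both asymptotics lie outside $p^{-1}(B)$) and the right contradiction assumption ($v$ nowhere zero, so $\log|v|$ is defined and harmonic on all of $\Omega_B$).
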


Solutions of \eqref{eq:v-equation} are of the form
\begin{equation} 
\label{eq:v-tilde}
v(s,t) = \exp(2\pi \alpha s)\, \tilde{v}(\exp(2\pi (s+it))) 
\end{equation}
for holomorphic $\tilde{v}$. Consider the special case of \eqref{eq:floer} where $x^-$ lies in $F_0$, and assume that $u$ is not entirely contained in $F_0$. In that case, we get a nontrivial solution of \eqref{eq:v-equation}, defined for all $s \ll 0$. In the limit $s \rightarrow -\infty$,
\begin{multline} \label{eq:m-minus}
v(s,t) \sim \exp(2\pi \alpha s + 2\pi m^- (s+it)) \\ \text{for some integer $m^- = m^-(u) > -\alpha$.}
\end{multline}
Here, $\sim$ means that the quotient of the two functions converges to a nonzero (complex) constant. In terms of \eqref{eq:v-tilde}, $m^-$ is the order of vanishing (or pole order, if it is negative) of the function $\tilde{v}(z)$ at $z = 0$. For the corresponding case where $x^+$ lies in $F_0$, one considers $s \rightarrow +\infty$ and finds that there,
\begin{multline} \label{eq:m-plus}
v(s,t) \sim \exp(2 \pi \alpha s - 2\pi m^+(s+it)) \\ \text{for some integer $m^+ = m^+(u) > \alpha$.}
\end{multline}
Since $m^\pm$ is an integer but $\alpha$ isn't, the inequalities in \eqref{eq:m-minus}, \eqref{eq:m-plus} can be equivalently rewritten as
\begin{equation} \label{eq:rewritten-inequalities}
m^- \geq -\lfloor \alpha \rfloor, \quad m^+ \geq \lfloor \alpha \rfloor + 1.
\end{equation}

\begin{lemma} \label{th:nonnegative-1}
Consider a solution of \eqref{eq:floer} not entirely contained in $F_0$. Then, $u^{-1}(F_0)$ is finite, and each point of that subset contributes positively to the intersection number $u \cdot F_0$.
\end{lemma}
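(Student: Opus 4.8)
The plan is to reduce the statement to a one‑variable complex‑analytic fact about $v = p(u)$ near the fibre $F_0$, and then quote positivity of intersections for holomorphic functions. First I would note that $F_0 = p^{-1}(0) \subset p^{-1}(B)$, so $u^{-1}(F_0) = v^{-1}(0)$ is contained in the region where $u$ takes values in $p^{-1}(B)$; on that region $v$ obeys the linear equation \eqref{eq:v-equation}, hence by \eqref{eq:v-tilde} it can be written as $v(s,t) = \exp(2\pi\alpha s)\,\tilde v\big(\exp(2\pi(s+it))\big)$ with $\tilde v$ holomorphic on an open subset of $\bC^*$. Since the exponential prefactor never vanishes and $(s,t) \mapsto \exp(2\pi(s+it))$ is an orientation‑preserving biholomorphism onto its image, $u^{-1}(F_0) = v^{-1}(0)$ coincides with the zero set of $\tilde v$; in particular, on each connected component of the domain of $\tilde v$ this set is either everything or a discrete set of points, each carrying a well‑defined finite order of vanishing.

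Next I would rule out the first alternative. If $\tilde v$ vanished identically on a connected component, then $u$ would map a nonempty open subset of $\bR \times S^1$ into $F_0$. But $F_0$ is $J_t$-holomorphic, and by \eqref{eq:hamiltonian} evaluated along $F_0$ (where $b = 0$) the vector field $X_t$ is tangent to $F_0$; thus the restriction of $u$ to that open set is a solution of the Floer equation taking values in $F_0$. Unique continuation for solutions of \eqref{eq:floer} then forces $u$ to lie in $F_0$ everywhere, contradicting the hypothesis. Hence $\tilde v$ is nowhere identically zero, so $u^{-1}(F_0)$ is discrete.

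To finish I would first confine $v^{-1}(0)$ to a compact subset of $\bR \times S^1$. Near $s = -\infty$: if $x^-$ lies outside $F_0$ then $v(s,\cdot)$ is bounded away from $0$ for $s \ll 0$, while if $x^-$ lies in $F_0$ then the asymptotic formula \eqref{eq:m-minus}, which applies precisely because $u \not\subset F_0$, shows $v(s,t) \neq 0$ for $s$ sufficiently negative; the same dichotomy at $s = +\infty$ uses \eqref{eq:m-plus}. So $v^{-1}(0) \subset [s_-,s_+] \times S^1$, and a discrete subset of a compact set is finite. For positivity, the trivialization \eqref{eq:trivialization-outer} identifies a neighbourhood of $F_0$ with a product of a disc in $\bC$ and $F_0$, with $p$ the projection to the disc factor and $F_0 = \{0\} \times F_0$; hence the local intersection number of $u$ with $F_0$ at a point of $u^{-1}(F_0)$ equals the local intersection number of $v$ with $0 \in \bC$ there, which — because of the positive factor $\exp(2\pi\alpha s)$ and the orientation‑preserving substitution $z = \exp(2\pi(s+it))$ — is exactly the order of vanishing of $\tilde v$ at the corresponding point, a strictly positive integer.

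The main (mild) obstacle is the bookkeeping at the two ends needed to trap $v^{-1}(0)$ in a compact set, together with the unique‑continuation step that turns ``$u$ not entirely contained in $F_0$'' into ``$\tilde v \not\equiv 0$''. Once those are in place, the remaining input is the classical one‑variable positivity count, and the identification of the local intersection multiplicity with the order of vanishing of $\tilde v$ is routine.
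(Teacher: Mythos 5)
Your proof is correct and follows essentially the same route as the paper: discreteness of $u^{-1}(F_0)$ from the formula \eqref{eq:v-tilde}, compactness from the asymptotics \eqref{eq:m-minus}, \eqref{eq:m-plus}, and positivity by identifying the local intersection multiplicity with the order of vanishing of $\tilde v$. The one place where you do slightly more than necessary is the step ruling out $\tilde v \equiv 0$ on a component: rather than invoking unique continuation for the Floer equation, note that if $v \equiv 0$ on a connected component $\Omega$ of $v^{-1}(B)$, then $\bar\Omega \cap v^{-1}(\partial B) = \emptyset$ (since $0 \notin \partial B$), so $\Omega$ is open and closed in $\bR \times S^1$, hence all of it — a purely topological observation that seems to be what the paper implicitly uses.
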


\begin{proof}
By \eqref{eq:v-tilde}, $u^{-1}(F_0)$ is a discrete subset. Thanks to \eqref{eq:m-minus} and \eqref{eq:m-plus}, it must be compact. Hence, the intersection number is well-defined. The local contribution of each point is the same as the order of vanishing of $\tilde{v}$, which is of course positive.
\end{proof}

\begin{lemma} \label{th:nonnegative-2}
Take a solution of \eqref{eq:floer} not entirely contained in $F_0$. Then
\begin{equation} \label{eq:fb-intersection}
\begin{aligned}
\mathrm{deg}(u) = u \cdot F_0  & \, + 
\begin{cases} 
m^-(u) & \text{if $x^-$ lies in $F_0$,} \\
0 & \text{otherwise}
\end{cases}
\\ & \, +
\begin{cases}
m^+(u) & \text{if $x^+$ lies in $F_0$,} \\
0 & \text{otherwise.}
\end{cases}
\end{aligned}
\end{equation}
\end{lemma}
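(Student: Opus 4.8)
The plan is to compute $\deg(u) = u \cdot F_b$ for a small regular value $b \in B \setminus \{0\}$ by localizing the preimage set $v^{-1}(b)$ of $v := p \circ u : \bR \times S^1 \to \bC P^1$ into one interior compact piece and two neighborhoods of the cylindrical ends, and to evaluate each contribution separately. Note that $v^{-1}(0) = u^{-1}(F_0)$ is finite by Lemma \ref{th:nonnegative-1}, and that $u \cdot F_0$ is by definition the sum of the local intersection multiplicities of $u$ with $F_0$ at those points.

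First I would fix $S > 0$ large enough that $u^{-1}(F_0) \subset \{|s| < S\}$ and that on the two ends $\{\pm s \ge S\}$ one has the relevant asymptotic control on $u$: near a limit orbit lying outside $F_0$ the map $v$ takes values outside $\overline{B}$ (since such orbits lie outside $p^{-1}(B)$), whereas near a limit orbit lying in $F_0$ the expansions \eqref{eq:m-minus}, \eqref{eq:m-plus} hold with small error term. Shrinking $|b|$ afterwards, and taking $b$ generic, I can arrange that $v^{-1}(b)$ is disjoint from $\{|s| = S\}$ and splits as an interior part contained in $\{|s| < S\} \cap v^{-1}(B)$ together with parts lying in the end neighborhoods.

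For the interior part I would use that, on $v^{-1}(B)$, the function $v$ solves the linear equation \eqref{eq:v-equation} and hence has the form \eqref{eq:v-tilde} with $\tilde{v}$ holomorphic; near a point of $u^{-1}(F_0)$ the condition $v = b$ reads $\tilde{v}(\zeta) = b\,e^{-2\pi\alpha s}$ with a small nonzero right-hand side, so by the argument principle the number of nearby solutions equals the vanishing order of $\tilde{v}$ there, i.e. the local multiplicity contributing to $u \cdot F_0$; summing over $u^{-1}(F_0)$ gives the interior contribution $u \cdot F_0$. For an end at which the limit orbit lies in $F_0$ — say the $s \to -\infty$ end — I would pass to the coordinate $\zeta = \exp(2\pi(s+it))$, in which $v$ extends continuously over $\zeta = 0$ by the value $0$ (as $|v| \sim |\zeta|^{\alpha + m^-}$ with $\alpha + m^- > 0$ by \eqref{eq:m-minus}) and near $0$ equals $|\zeta|^\alpha$ times a meromorphic germ with a zero or pole of order $m^-$ at the origin; a short computation in polar coordinates, using that $(s,t) \mapsto \zeta$ is orientation-preserving, shows that the local mapping degree at $\zeta = 0$ is $m^-$, uniformly in the cases $m^- > 0$, $m^- = 0$, $m^- < 0$, and this count is stable under the small perturbation coming from the error term. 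The $s \to +\infty$ end is treated symmetrically using \eqref{eq:m-plus}, contributing $m^+$; an end whose limit orbit lies outside $F_0$ contributes nothing. Adding the three pieces yields \eqref{eq:fb-intersection}.

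The step I expect to be the main obstacle is the end computation: correctly pinning down the \emph{sign} of the local mapping degree at the puncture when $m^\pm$ may be negative — because of the real factor $|\zeta|^\alpha$ the map $v$ is only an $|\zeta|^\alpha$-twist of a holomorphic (or meromorphic) germ, so positivity of intersections is not automatic and one genuinely has to track orientations — and arranging the two limiting procedures (``end neighborhood large'' and ``$b$ small'') in the right order so that near each end the preimage count is exactly $|m^\pm|$ with the claimed sign.
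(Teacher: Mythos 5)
Your proof is correct and follows essentially the same approach as the paper: localize $v = p\circ u$ over a point $b$ near $0$, split the count into an interior contribution ($u\cdot F_0$, by the argument principle near $v^{-1}(0)$) and end contributions given by the local mapping degree of $v$ at the puncture in the coordinate $\zeta = \exp(\pm 2\pi(s+it))$, which equals $m^\mp$ by the asymptotics \eqref{eq:m-minus}, \eqref{eq:m-plus}. The paper phrases this as comparing the degree of the partially compactified $v$ over $0$ and over a nearby $b$, but the decomposition and the crucial winding-number computation at $\pm\infty$ are the same; your care about orientations and the possibility $m^\pm \le 0$ is appropriate (and indeed the reason the end contribution is a signed count rather than a positive one).
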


Note that if both $x^{\pm}$ lie in $F_0$, $m^-(u) + m^+(u) \geq 1$ by \eqref{eq:rewritten-inequalities}.

\begin{proof}
Let's consider the situation where $x^-$ lies in $F_0$. Restrict to a region where $s \ll 0$, on which $v$ is defined. Partially compactify that region by adding a point at the limit $s  \rightarrow -\infty$, and extend $v$ by setting $v(-\infty) = 0$. Consider the degree of the resulting map over $0$. The point $-\infty$ contributes $m^-(u)$, and the other points contribute $u \cdot F_0$. But the outcome is clearly equal to the degree over nearby points $b \neq 0$, where this time $-\infty$ does not contribute. The other parts are similar.
\end{proof}

\begin{lemma} \label{th:transversality}
Fix $H = (H_t)$. Then for generic $J = (J_t)$, the following properties hold. (i) All solutions of \eqref{eq:floer} are regular. (ii) Any $J_t$-holomorphic sphere with zero Chern number avoids $u(s,t)$ for any $u$ such that $\mathrm{index}(D_u) \leq 2$, and any $s \in \bR$. (iii) Any $J_t$-holomorphic sphere with Chern number $1$ avoids $x(t)$, for any one-periodic orbit $x$.
\end{lemma}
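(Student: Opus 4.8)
All three assertions are instances of the standard Sard--Smale machinery, and the only point requiring care is that the class of almost complex structures is constrained by \eqref{eq:j} over $p^{-1}(B)$, so every perturbation must be supported where $J$ is free.

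\emph{Part (i).} I would work with the universal moduli space of pairs $(J,u)$, where $J=(J_t)$ ranges over the Banach manifold of $S^1$-families satisfying \eqref{eq:j} and $u$ solves \eqref{eq:floer} with fixed asymptotics $x^\pm$. As usual, regularity of the fibre over a generic $J$ reduces to surjectivity of the linearisation augmented by the $J$-derivative, which in turn follows from somewhere-injectivity of $u$. If $u$ is not contained in $F_0$, it is somewhere injective by the asymptotic analysis \eqref{eq:v-tilde}--\eqref{eq:m-plus}, and by Lemma \ref{th:max-principle-1} it takes values in the open set $F\setminus p^{-1}(B)$, on which $J$ is completely unconstrained; density of injective points then supplies an injective point there at which a $J$-variation can hit any prescribed element of the cokernel. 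If $u$ is contained in $F_0$, it is a Floer trajectory for $(X_M,J_{M,0})$ inside $M$, and its linearisation splits into a tangential part (a Floer operator on $M$, made onto by perturbing the free parameter $J_{M,0}$) and a normal part, which is the complex-linear operator $\partial_s+i\partial_t-2\pi\alpha$ of \eqref{eq:v-equation} and is automatically surjective (a $\bar\partial$-operator on the trivial normal line bundle with non-integral asymptotic weights). Sard--Smale, intersected over the countably many pairs of orbits, yields a comeagre set of regular $J$.

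\emph{Parts (ii) and (iii).} Both are dimension counts for a fibre-product moduli space, which a further generic choice of $J$ arranges to be transversally cut out. For (ii), fix orbits $x^\pm$ with $\mathrm{index}(D_u)\le 2$; the evaluation $(u,s,t)\mapsto(u(s,t),t)$ from $\mathcal M(x^-,x^+)\times\bR\times S^1$ into $F\times S^1$ has image of dimension at most $3$ (the $\bR$-translation accounts for a one-dimensional redundancy), and only finitely many such pairs occur. On the other side, for a generic family $(J_t)$ the space of triples $(t,v,z)$ with $v$ a simple $J_t$-holomorphic sphere of Chern number $0$ and $z$ a marked point is a manifold of dimension $1+2n+2=2n+3$, so the reparametrisation-invariant evaluation $(t,v,z)\mapsto(v(z),t)$ has image in $F\times S^1$ of dimension at most $2n-3$. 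The fibre product of these two evaluations over $F\times S^1$ (of dimension $2n+1$) has expected dimension $3+(2n-3)-(2n+1)=-1$, so it is empty for generic $J$; since a multiply-covered sphere has the same image as its simple underlying component and there are no spheres of negative Chern number, this proves (ii). Statement (iii) is the same computation, with the cylinder replaced by a single orbit $x$ (the locus $\{(x(t),t)\}$, over the finitely many orbits, is a disjoint union of circles, of dimension $1$) and Chern number $0$ replaced by $1$: now every such sphere is automatically simple ($1$ is not $d$ times a non-negative integer for $d\ge 2$), the corresponding evaluation image in $F\times S^1$ has dimension at most $2n-1$, and the fibre product has expected dimension $1+(2n-1)-(2n+1)=-1$.

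\emph{Main obstacle.} The substantive point is the joint transversality underlying (ii)--(iii): one must check that, despite \eqref{eq:j}, enough freedom survives to perturb $J$ so that the two evaluations meet transversally in $F\times S^1$. This is handled by splitting according to position relative to $F_0$ — for curves lying in $F_0$ one perturbs the fibrewise parameter $J_{M,0}$, and for a Floer cylinder not contained in $F_0$ (whose intersection with $F_0$ is finite by Lemma \ref{th:nonnegative-1}, and which must leave $p^{-1}(B)$ by Lemma \ref{th:max-principle-1}) one perturbs $J$ at an injective point lying in $F\setminus p^{-1}(B)$ — so that in every case the support of the perturbation is placed where $J$ is unconstrained. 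Granting this, the rest is routine.
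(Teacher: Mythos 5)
Your approach is the same as the paper's --- Sard--Smale plus dimension counting plus placing perturbations where the constraint \eqref{eq:j} leaves $J$ free --- and the dimension counts in (ii) and (iii) are correct. Part (i) matches the paper's argument exactly: the linearisation splits for solutions contained in $F_0$, with the base component invertible by Lemma \ref{th:v-regular}, and all other solutions escape $p^{-1}(B)$ by Lemma \ref{th:max-principle-1}.

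The one gap is on the sphere side of (ii)--(iii). Your ``Main obstacle'' paragraph handles the dichotomy for Floer cylinders (in $F_0$ vs.\ leaving $p^{-1}(B)$), but the analogous statement for the holomorphic spheres --- which you need both to cut the sphere moduli spaces out transversally and to locate a free perturbation for the fibre-product evaluation --- is missing, and it does not follow from what you wrote. What is needed is: a $J_t$-sphere $v$ of Chern number $0$ is either contained in a single fibre $F_b$, $b \in B$ (in which case one perturbs $J_{M,b}$, not just $J_{M,0}$), or else disjoint from $p^{-1}(B)$ altogether; if $v$ entered $p^{-1}(B)$ without lying in a fibre it would have a positive local intersection with some $F_{b'}$, $b' \in B$, contradicting $[v]\cdot[F_{b'}] = c_1(v^*TF) = 0$. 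And a sphere of Chern number $1$ has $[v]\cdot[F_b] = 1$ for every $b$, so in particular it meets $F_\infty$ and hence escapes $p^{-1}(B)$; as you note it is also automatically simple. With these two observations added, every sphere that can appear lies somewhere where a perturbation is available, and your Sard--Smale argument then closes.
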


\begin{proof}[Sketch of proof]
Overall, this is quite standard, using only stone age methods \cite{floer-hofer-salamon94, hofer-salamon95, mcduff-salamon}. (i) Regularity of solutions $u$ which remain inside $F_0$ is equivalent to regularity as maps to that fibre (the linearized operator $D_u$ splits, and the part in base direction is always invertible). All other solutions must leave $p^{-1}(B)$, by Lemma \ref{th:max-principle-1}, and outside that subset $J$ is unconstrained. (ii) A pseudo-holomorphic sphere with zero Chern number is either contained in a single fibre $F_b$, $b \in B$; or else, it must be disjoint from $p^{-1}(B)$ by positivity of intersections. As before, one can prove generic regularity (for spheres which are not multiply covered) by considering the two cases separately. A similar remark applies to ``transversality of evaluation maps''. (iii) A pseudo-holomorphic sphere with Chern number $1$ must intersect every fibre of $p$, and can never be multiply covered. Hence, transversality is again easy to show.
\end{proof}

We now have all the basic ingredients needed for the relevant version of Hamiltonian Floer cohomology. Suppose that $J$ has been chosen as in Lemma \ref{th:transversality}. We consider solutions of \eqref{eq:floer} with the conditions
\begin{equation} \label{eq:0-deg}
\left\{
\begin{aligned}
& u^{-1}(F_0) = \emptyset, \\
& x^{\pm} = \textstyle\lim_{s \rightarrow\pm\infty} u(s,\cdot) \text{ lie outside } F_0.
\end{aligned}
\right.
\end{equation}
By Lemma \ref{th:max-principle-2}, this means that $u$ avoids $p^{-1}(B \setminus \partial B)$, hence is contained in a compact subset of $E = F \setminus F_0$. If we additionally require that $\mathrm{index}(D_u) = 1$ and put an upper bound on $u \cdot \Omega$ (or equivalently on the energy), there are only finitely many solutions, up to translation in $\bR$-direction.

Let $\mathit{CF}^*(E,H)$ be the graded $\bK$-vector space with one generator, of degree $i(x)$, for each one-periodic orbit $x$ which lies outside $F_0$. The Floer differential has the form
\begin{equation} \label{eq:floer-d}
dx^+ = \sum_{x^-} \Big( \sum_u {\pm} q^{u \cdot \Omega} \Big) x^-,
\end{equation}
where the sum is over the kind of solution considered above. To determine the sign $\pm$ associated to each $u$, some additional choices are required (in the terminology of \cite{seidel04}, a trivialization of the orientation line of each $x$), which we will not explain further. The cohomology of \eqref{eq:floer-d} is denoted by $\mathit{HF}^*(E,H)$.

\subsection{Continuation maps and the BV operator}
Independence of Floer cohomology of the various choices involved in its construction is established using continuation maps \cite{salamon-zehnder92}. Suppose that we have two choices $(H^{\pm},J^{\pm})$, with the same $\alpha$, either of which can be used to define $\mathit{HF}^*(E,H^{\pm})$. To compare them, we choose an interpolating family $(H^C, J^C)$ which depends on $(s,t) \in \bR \times S^1$, lies in the class \eqref{eq:hamiltonian}, and satisfies
\begin{equation} \label{eq:inter-hj}
(H^C_{s,t},J^C_{s,t}) \longrightarrow (H^{\pm}_t,J^{\pm}_t) \quad \text{as $s \rightarrow \pm\infty$.}
\end{equation}
This is slightly weaker than the commonly imposed requirement that $(H^C,J^C)$ should agree with $(H^{\pm},J^{\pm})$ for $\pm s \gg 0$, and we need to be precise about the notion of convergence that appears. Namely, when restricted to any fixed length cylinder $[s,s+\sigma] \times S^1 \iso [0,\sigma] \times S^1$, we want $(H^C,J^C)$ to converge to $(H^{\pm},J^{\pm})$ exponentially fast in any $C^r$-topology, as $s \rightarrow \pm\infty$.

The continuation map equation is the following generalization of \eqref{eq:floer}:
\begin{equation} \label{eq:continuation}
\left\{
\begin{aligned}
& u: \bR \times S^1 \longrightarrow F, \\
& \partial_s u + J^C_{s,t}(u)(\partial_t u - X^C_{s,t}) = 0, \\
& \textstyle\lim_{s \rightarrow \pm \infty} u(s,\cdot) = x^{\pm}.
\end{aligned}
\right.
\end{equation}
Since \eqref{eq:v-equation} still applies to solutions of this equations, all its consequences (Lemmas \ref{th:max-principle-1}--\ref{th:nonnegative-2}) continue to hold. So does the index formula \eqref{eq:index}.  It is useful to distinguish between geometric and topological energy:
\begin{align} \label{eq:geom-energy}
&
E^{\mathit{geom}}(u) = \int_{\bR \times S^1} \|\partial_s u\|^2\, \mathit{ds} \wedge \mathit{dt} \geq 0, \\
& \label{eq:top-energy}
E^{\mathit{top}}(u) = \int_{\bR \times S^1} u^*\omega_F - d(H^C_{s,t}(u)\, \mathit{dt}) \\
\notag
& \qquad \qquad \qquad \qquad = A(x^-) - A(x^+) + u \cdot \Omega.
\end{align}
These are related by
\begin{equation} \label{eq:two-energies}
E^{\mathit{geom}}(u) = E^{\mathit{top}}(u) + \int_{\bR \times S^1} (\partial_s H^C_{s,t})(u) \,
\mathit{ds} \wedge \mathit{dt}.
\end{equation}
An upper bound on $u \cdot \Omega$ yields a bound on the topological energy. Due its exponential decay as $s \rightarrow \infty$, there is an upper bound on $\int \|\partial_s H^C_{s,t}\| \mathit{ds} \wedge \mathit{dt}$. Hence, a bound on $u \cdot \Omega$ also bounds the geometric energy, which gets the Gromov compactness argument off the ground. 

\begin{lemma} \label{th:transversality-2}
Fix $(H^-,J^-)$ and $(H^+,J^+)$. Then, for a generic choice of $(H^C, J^C)$, the following holds. (i) All solutions of \eqref{eq:continuation} are regular. (ii) For any $(s,t)$, all $J_{s,t}^C$-holomorphic spheres with zero first Chern number avoid $u(s,t)$ for any solution $u$ of \eqref{eq:continuation} with $\mathrm{index}(D_u) \leq 1$.
\end{lemma}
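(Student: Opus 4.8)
The plan is to follow the proof of Lemma \ref{th:transversality} almost verbatim, using only standard (\cite{floer-hofer-salamon94, hofer-salamon95, mcduff-salamon, salamon-zehnder92}) transversality techniques. The one structural simplification compared to that situation is that the continuation equation \eqref{eq:continuation} is genuinely $(s,t)$-dependent, so its solutions are already isolated (there is no translation to quotient out), which is precisely why the index bound in (ii) drops from $2$ to $1$. The data at our disposal is $(H^C,J^C)$ on the interpolating cylinder, subject to three constraints: it must converge exponentially to the fixed $(H^{\pm},J^{\pm})$ as $s \to \pm\infty$; it must satisfy \eqref{eq:hamiltonian} and \eqref{eq:j} over $B$; and $J^C$ must keep $p$ holomorphic over $B$. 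Hence we retain full freedom to perturb $(H^C,J^C)$ on $F \setminus p^{-1}(B)$, and to perturb the fibrewise data over $B$, in particular over $F_0$. The observation that makes this enough is that, by Lemmas \ref{th:max-principle-1} and \ref{th:max-principle-2}, any solution of \eqref{eq:continuation} not entirely contained in $F_0$ must exit $p^{-1}(B)$, and hence pass through the region where $J^C$ is unconstrained.

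For part (i), I would split the solutions into two classes. A solution $u$ contained in $F_0$ is a continuation solution for the data restricted to $M = F_0$, and the linearised operator $D_u$ splits into a fibrewise part and a normal part; the normal part is the linearisation of \eqref{eq:v-equation}, which is invertible because $\alpha \notin \bZ$, so regularity of $u$ is equivalent to regularity as a map to $F_0$, and this is achieved by a generic choice of the fibrewise interpolating data over $F_0$ (and, by \eqref{eq:two-indices}, the index bound is insensitive to passing between $F$ and $F_0$). Any solution $u$ not contained in $F_0$ meets $F \setminus p^{-1}(B)$, and a Sard--Smale argument on the universal moduli space — using somewhere-injectivity of $u$ at points of that free region, in the style of Floer--Hofer--Salamon for non-autonomous equations — shows that a generic perturbation of $(H^C,J^C)$ there makes all such $u$ regular. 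Intersecting the two generic conditions proves (i).

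For part (ii), I would argue exactly as for Lemma \ref{th:transversality}(ii). A $J^C_{s,t}$-holomorphic sphere $w$ with $c_1(w) = 0$ that meets $p^{-1}(B)$ has zero degree over $B$ — since $p$ is $J^C_{s,t}$-holomorphic there and $c_1(F)$ is Poincar\'e dual to $[F_0]$ — hence is contained in a single fibre $F_b$, $b \in B$; otherwise it is disjoint from $p^{-1}(B)$. Replacing $w$ by its underlying simple sphere (still with $c_1 = 0$, since there are no spheres of negative Chern number, and with the same image), a generic choice of data makes all such spheres regular and makes the relevant evaluation maps mutually transverse. A dimension count then shows that the locus of data admitting some solution $u$ with $\mathrm{index}(D_u) \le 1$ and some such sphere $w$ with a marked point mapped to a point $u(s,t)$ has virtual dimension $\mathrm{index}(D_u) + 2 + (2c_1(w) + 2 - 6) = \mathrm{index}(D_u) - 2 < 0$ (the sphere-with-marked-point moduli contributing $\dim_{\bR} F + 2c_1(w) - 6 + 2$ against the codimension $\dim_{\bR} F$ of the incidence condition; in the fibrewise case the extra $+2$ for the choice of $b$ is offset by $\dim F_0 = \dim F - 2$), so it is empty for generic data. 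The main obstacle, as usual, is reconciling the somewhere-injectivity input with the restricted class of perturbations; this is exactly where one invokes the maximum principle to confine every solution not contained in $F_0$ — and, by positivity of intersections, every $c_1 = 0$ sphere not contained in a fibre — to the region where $(H^C,J^C)$ may be perturbed freely, while the fibrewise-over-$B$ freedom disposes of the objects living inside $F_0$ or a nearby fibre.
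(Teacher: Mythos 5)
Your proof is correct and follows the same two-case decomposition (solutions inside $F_0$ handled by fibrewise perturbation, all others by perturbation outside $p^{-1}(B)$ via Lemma \ref{th:max-principle-1}) that the paper uses in its sketch for Lemma \ref{th:transversality}, which is what the paper explicitly invokes for Lemma \ref{th:transversality-2} as well. The only thing worth adding is the paper's remark that the asymptotic (rather than eventual) matching with $(H^{\pm},J^{\pm})$ leaves $(H^C_{s,t},J^C_{s,t})$ unconstrained at every finite $(s,t)$, which is the structural reason the argument is, if anything, easier here than in Lemma \ref{th:transversality}.
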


This is the counterpart of Lemma \ref{th:transversality}; if anything, the proof is actually simpler this time, since we have allowed perturbations of both the Hamiltonian and the almost complex structure. Note that keeping $(H^{\pm}, J^{\pm})$ fixed does not constrain the value of $(H^C_{s,t}, J^C_{s,t})$ at any point $(s,t) \in \bR \times S^1$, because the two are only related asymptotically.

As in \eqref{eq:floer-d}, one counts isolated solutions of \eqref{eq:continuation} satisfying \eqref{eq:0-deg}. This leads to a chain map, the continuation map 
\begin{equation} \label{eq:continuation-map}
C: \mathit{CF}^*(E,H^+) \longrightarrow \mathit{CF}^*(E,H^-).
\end{equation}
Up to chain homotopy, continuation maps are unique and well-behaved with respect to composition. For $(H^-,J^-) = (H^+,J^+)$ and the constant deformation between them, the continuation map is the identity. These two properties imply that the maps on Floer cohomology induced by \eqref{eq:continuation-map} are (canonical) isomorphisms. With that in mind, we also write $\mathit{HF}^*(E,\alpha)$ for Floer cohomology.

A minor variation on the previous construction yields the BV operator, a chain map
\begin{equation} \label{eq:define-bv}
\Delta: \mathit{CF}^*(E,H^+) \longrightarrow \mathit{CF}^{*-1}(E,H^-).
\end{equation}
This involves a parametrized moduli space, with a single parameter $\tau \in S^1$. The auxiliary data $(H^\Delta,J^\Delta)$ depend on $\tau$ (as well as on $(s,t)$, obviously), with the analogue of \eqref{eq:inter-hj} being 
\begin{equation} \label{eq:hj-delta}
(H^\Delta_{\tau,s,t}, J^\Delta_{\tau,s,t}) \longrightarrow \begin{cases} 
(H^-_t,J^-_t) & s \rightarrow -\infty, \\ 
(H^+_{t+\tau}, J^+_{t+\tau}) & s \rightarrow +\infty.
\end{cases}
\end{equation}
The appropriate version of \eqref{eq:continuation} is an equation for pairs $(\tau,u)$:
\begin{equation} \label{eq:bv-equation}
\left\{
\begin{aligned}
& u: \bR \times S^1 \longrightarrow F, \\
& \partial_s u + J^{\Delta}_{\tau,s,t}(u)(\partial_t u - X^{\Delta}_{\tau,s,t}) = 0, \\
& \textstyle\lim_{s \rightarrow -\infty} u(s,t) = x^-(t), \\ 
& \textstyle\lim_{s \rightarrow +\infty} u(s,t) = x^+(t+\tau).
\end{aligned}
\right.
\end{equation}
As before, only those solutions which satisfy \eqref{eq:0-deg} are taken into account when defining  \eqref{eq:define-bv}. The induced map on Floer cohomology is independent of all choices, and compatible with continuation isomorphisms.

\subsection{Changing the angle of rotation}
The construction of continuation maps can be extended to the case where the Hamiltonians $H^{\pm}$ satisfy \eqref{eq:hamiltonian} for two different angles $\alpha^- \geq \alpha^+$ in $\bR \setminus \bZ$. One first chooses $\alpha_s$, $s \in \bR$, such that
\begin{equation} \label{eq:the-alphas}
\left\{
\begin{aligned}
& \alpha_s = \alpha^- && s \ll 0, \\
& \alpha_s = \alpha^+ && s \gg 0, \\
& d\alpha_s/ds \leq 0 && \text{everywhere.}
\end{aligned}
\right.
\end{equation}
Then, take $(H^C, J^C)$ as before, except that now $H_{s,t}^C$ lies in the class \eqref{eq:hamiltonian} for $\alpha_s$. Consider solutions of the associated equation \eqref{eq:continuation}. The analogue of \eqref{eq:v-equation} is 
\begin{equation} \label{eq:v-cont-equation}
\partial_s v + i\partial_t v - 2\pi \alpha_s v = 0,
\end{equation}
and the appropriate generalization of \eqref{eq:v-tilde} is
\begin{equation} \label{eq:v-tilde-2}
v(s,t) = \exp(2\pi\textstyle\int\! \alpha_s \mathit{ds}) \,\tilde{v}(\exp(2\pi (s+it))).
\end{equation}
In particular, at any point where $v(s,t) \in B \setminus \{0\}$, one has
\begin{equation} \label{eq:subharmonic}
(\partial_s^2 + \partial_t^2) \log |v| = 2\pi\, d\alpha_s/ds \leq 0.
\end{equation}
The minimum principle applies, showing that Lemma \ref{th:max-principle-2} extends to this situation. In contrast, Lemma \ref{th:max-principle-1} carries over only in a special case, namely when $[\alpha^+,\alpha^-] \cap \bZ = \emptyset$. This is a consequence of the following:

\begin{lemma} \label{th:v-regular}
Consider the left hand side of \eqref{eq:v-cont-equation} as a linear operator between suitable Sobolev completions, let's say $W^{1,2}(\bR \times S^1,\bC) \rightarrow L^2(\bR \times S^1,\bC)$. That operator is always onto. It is an isomorphism iff $[\alpha^+, \alpha^-] \cap \bZ = \emptyset$.
\end{lemma}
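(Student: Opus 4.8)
The plan is to diagonalise the operator with a Fourier series in the circle variable $t$, turning it into a family of scalar first-order ODE operators on $\bR$ whose kernels can be written down explicitly. Write the operator as
\begin{equation}
L v = \partial_s v + i\partial_t v - 2\pi\alpha_s v ,
\end{equation}
a complex-linear, asymptotically translation-invariant Cauchy--Riemann operator on the cylinder. Its asymptotic operators at $s\to\pm\infty$ are $A^\pm = i\partial_t - 2\pi\alpha^\pm$ on $L^2(S^1,\bC)$, with spectrum $\{-2\pi(k+\alpha^\pm):k\in\bZ\}$; since $\alpha^\pm\notin\bZ$, these are invertible, so by the standard theory of such operators (see e.g.\ \cite{salamon-zehnder92, mcduff-salamon}) $L\colon W^{1,2}(\bR\times S^1,\bC)\to L^2(\bR\times S^1,\bC)$ is Fredholm. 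In particular $L$ has closed range, so it is onto iff its formal adjoint $L^* v = -\partial_s v + i\partial_t v - 2\pi\alpha_s v$ has trivial kernel on $W^{1,2}$, and a kernel element of $L$ or of $L^*$ is smooth by elliptic regularity. Thus it suffices to prove $\ker L^* = 0$ and to decide when $\ker L = 0$.

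First I would expand $v(s,t)=\sum_{k\in\bZ}v_k(s)\,e^{2\pi ikt}$ (legitimate on $W^{1,2}$). Then $L$ decouples as the direct sum of the scalar operators
\begin{equation}
L_k\colon W^{1,2}(\bR,\bC)\to L^2(\bR,\bC),\qquad L_k w = w' - 2\pi(k+\alpha_s)\,w ,
\end{equation}
and $L^*$, up to sign, as the direct sum of $L_k^\dagger w = w' + 2\pi(k+\alpha_s)\,w$; hence $\ker L = \bigoplus_k \ker L_k$ and $\ker L^* = \bigoplus_k \ker L_k^\dagger$, with kernels taken in $W^{1,2}(\bR,\bC)$. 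Now $L_k$ has a one-dimensional space of homogeneous solutions, spanned by
\begin{equation}
w_k(s)=\exp\!\Big(2\pi\!\int_0^s(k+\alpha_\sigma)\,d\sigma\Big),
\end{equation}
whose exponent is asymptotic to $2\pi(k+\alpha^\pm)s$ as $s\to\pm\infty$, so $w_k\in W^{1,2}(\bR)$ iff $k+\alpha^->0$ and $k+\alpha^+<0$, i.e.\ iff $-\alpha^-<k<-\alpha^+$. Dually the homogeneous solution of $L_k^\dagger$ is $\exp\!\big(-2\pi\int_0^s(k+\alpha_\sigma)\,d\sigma\big)$, which lies in $W^{1,2}(\bR)$ iff $k+\alpha^-<0$ and $k+\alpha^+>0$, i.e.\ iff $-\alpha^+<k<-\alpha^-$. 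Since $\alpha^-\ge\alpha^+$ this last interval is empty, so $\ker L_k^\dagger=0$ for every $k$; therefore $\ker L^* = 0$ and $L$ is onto. (Only the endpoint inequality $\alpha^-\ge\alpha^+$ enters here; the monotonicity of $s\mapsto\alpha_s$ from \eqref{eq:the-alphas} is needed for the maximum principle \eqref{eq:subharmonic}, not for the Fredholm analysis.)

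Finally I would read off the kernel: $\ker L\neq0$ iff there is an integer $k$ with $-\alpha^-<k<-\alpha^+$, equivalently an integer $m=-k$ with $\alpha^+<m<\alpha^-$, which, since $\alpha^\pm\notin\bZ$, is the same as $[\alpha^+,\alpha^-]\cap\bZ\neq\emptyset$. So $L$ is injective iff $[\alpha^+,\alpha^-]\cap\bZ=\emptyset$, and since $L$ is always onto, this is exactly the condition for $L$ to be an isomorphism. I do not expect a serious obstacle: the only imported ingredient is the standard Fredholm property of asymptotically translation-invariant Cauchy--Riemann operators with invertible asymptotics, and the decisive point — entirely elementary — is the sign bookkeeping showing that no $L_k^\dagger$ admits a $W^{1,2}$-solution precisely because $\alpha^-\ge\alpha^+$; the care needed is mostly in organising the Fourier reduction cleanly. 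As a built-in check, the same computation gives $\mathrm{index}(L)=\#\{m\in\bZ:\alpha^+<m<\alpha^-\}$, consistent with the index formula \eqref{eq:index} specialised to this weight-zero setting.
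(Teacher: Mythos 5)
Your proof is correct. Your Fourier decomposition and the paper's explicit formula \eqref{eq:v-tilde-2}, $v(s,t)=\exp(2\pi\int\alpha_s\,\mathit{ds})\,\tilde v(e^{2\pi(s+it)})$, are the same computation in different coordinates: under $z=e^{2\pi(s+it)}$, the monomial $z^{-k}$ in the Laurent expansion of $\tilde v$ corresponds exactly to your Fourier mode indexed by $-k$, and the $W^{1,2}$-decay conditions you derive are the ones the paper records. The genuine divergence is in how surjectivity is established. The paper observes that the kernel dimension $|[\alpha^+,\alpha^-]\cap\bZ|$ coincides with the Fredholm index (which it quotes from an index formula, or a spectral-flow argument), and concludes that the cokernel vanishes; you instead compute the cokernel directly, solving the adjoint ODEs $L_k^\dagger w=0$ mode by mode and checking that the resulting decay conditions $-\alpha^+<k<-\alpha^-$ are never simultaneously satisfiable because $\alpha^-\ge\alpha^+$. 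Your version is more self-contained --- it needs only that $L$ is Fredholm, so that its range is closed and the cokernel identifies with $\ker L^*$, not the value of the index --- at the cost of one more round of the same explicit bookkeeping. You also make explicit a point the paper leaves implicit, namely that the monotonicity $d\alpha_s/ds\le 0$ plays no role in the Fredholm analysis and only the ordering of the endpoints $\alpha^-\ge\alpha^+$ enters.
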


\begin{proof}
If one writes solutions of \eqref{eq:v-cont-equation} explicitly using \eqref{eq:v-tilde-2}, then the $W^{1,2}$ condition says that the Taylor expansion of $\tilde{v}(z)$ can have nontrivial terms $z^{-k}$  with $k \in [\alpha^+,\alpha^-] \cap \bZ$. This matches the Fredholm index of our operator, which can be determined either directly using a suitable index formula, or by using the relation with spectral flow.
\end{proof}

Lemmas \ref{th:nonnegative-1}--\ref{th:nonnegative-2} continue to hold, with the obvious adaptation that the numbers $\alpha^{\pm}$ should be used to give bounds for $m^{\pm}(u)$. Lemma \ref{th:transversality-2} also generalizes. The proof is largely as before, with one added wrinkle, which concerns solutions $u$ of \eqref{eq:continuation} which remain inside $p^{-1}(B)$. Let's write such a solution as $u = (v,w)$, with respect to the trivialization \eqref{eq:trivialization-outer}. By Lemma \ref{th:v-regular}, $v$ is a regular solution of \eqref{eq:v-cont-equation}; and $w$ is a solution of a continuation-type equation in $M$, for which one can easily show transversality. Combining the two components then yields the desired transversality result for $u$. One now defines a continuation map exactly as in \eqref{eq:continuation-map}.

\begin{lemma} \label{th:no-integer}
Suppose that $[\alpha^+,\alpha^-] \cap \bZ = \emptyset$. Then the continuation map $\mathit{HF}^*(E,\alpha^+) \rightarrow \mathit{HF}^*(E,\alpha^-)$ is an isomorphism.
\end{lemma}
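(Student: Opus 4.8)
The plan is to produce a continuation map in the opposite direction, $C' : \mathit{HF}^*(E,\alpha^-)\to\mathit{HF}^*(E,\alpha^+)$, and to show that $C'$ and the given map $C:\mathit{HF}^*(E,\alpha^+)\to\mathit{HF}^*(E,\alpha^-)$ are mutually inverse up to the usual homotopies. Concretely, I would build $C'$ by the recipe of \eqref{eq:continuation}--\eqref{eq:continuation-map} with an interpolation $\alpha_s$ running from $\alpha^+$ at $s=-\infty$ to $\alpha^-$ at $s=+\infty$, now with the monotonicity in \eqref{eq:the-alphas} reversed ($d\alpha_s/ds\ge 0$). The crux is that, \emph{precisely} when $[\alpha^+,\alpha^-]\cap\bZ=\emptyset$, this ``anti-monotone'' continuation is as well behaved as the monotone one, so that Lemmas \ref{th:max-principle-1}--\ref{th:nonnegative-2} and the generalisation of Lemma \ref{th:transversality-2} all continue to hold for it.

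Two of the three ingredients are easy and insensitive to the direction of monotonicity. For compactness: the topological energy \eqref{eq:top-energy} is pinned down by the actions of $x^\pm$ together with an upper bound on $u\cdot\Omega$, and by \eqref{eq:two-energies} the geometric energy exceeds the topological one by at most $\int_{\bR\times S^1}\sup_F|\partial_s H^C_{s,t}|\,ds\wedge dt<\infty$; the index formula \eqref{eq:index} is topological and carries over verbatim. For transversality I would reuse the argument given after \eqref{eq:v-cont-equation}: the only delicate solutions are those remaining in $p^{-1}(B)$, and for these the base component solves \eqref{eq:v-cont-equation}, whose linearisation is onto by Lemma \ref{th:v-regular} --- a statement (and proof) insensitive to the sign of $d\alpha_s/ds$ --- while the fibre component can be made regular by perturbing within $M$.

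The step I expect to be the main obstacle is confinement: one must show that a solution with $u^{-1}(F_0)=\emptyset$ and both limits outside $p^{-1}(B)$ cannot enter $p^{-1}(B\setminus\partial B)$, so that it stays in a fixed compact subset of $E$. In the monotone case this follows from \eqref{eq:subharmonic}, which makes $\log|v|$ superharmonic and hence forces $|v|\ge r_B$ (the radius of $B$) on the region where $v=p(u)\in B\setminus\{0\}$, whose boundary lies on $\{|v|=r_B\}$. For $d\alpha_s/ds\ge 0$ the function $\log|v|$ is instead subharmonic and this argument is unavailable, so here I would appeal to Lemma \ref{th:v-regular} directly: injectivity of $\partial_s+i\partial_t-2\pi\alpha_s$ on $W^{1,2}$ when $[\alpha^+,\alpha^-]\cap\bZ=\emptyset$ rigidifies $v$ enough that it cannot dip into the disc $B$ and back out while being asymptotic at both ends to points of modulus $>r_B$; making this estimate precise (cutting $v$ off near the offending region and applying the isomorphism of Lemma \ref{th:v-regular}) is the technical heart of the argument. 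Once confinement holds, Lemmas \ref{th:max-principle-2}--\ref{th:nonnegative-2} and the definition of $C'$ go through as before.

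Finally I would compose. Up to chain homotopy, $C'\circ C$ is induced by a continuation family whose angle profile runs $\alpha^+\rightsquigarrow\alpha^-\rightsquigarrow\alpha^+$; this profile is homotopic, rel its two ends (both equal to $\alpha^+$), to the constant profile $\alpha_s\equiv\alpha^+$ through profiles whose range stays inside the open gap $(\lfloor\alpha^+\rfloor,\lfloor\alpha^+\rfloor+1)$, in particular never meeting $\bZ$. Hence every interpolating choice of Floer data inherits all the estimates above, and the standard homotopy-of-continuation-data argument identifies $C'\circ C$ with the continuation map for constant data, i.e.\ the identity on $\mathit{HF}^*(E,\alpha^+)$. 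The same reasoning with $\alpha^\pm$ exchanged gives $C\circ C'=\mathrm{id}$ on $\mathit{HF}^*(E,\alpha^-)$, so $C$ is an isomorphism.
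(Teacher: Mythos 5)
Your proposal takes a genuinely different route from the paper, and there is a real gap at the step you yourself flag as the ``technical heart.''

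The paper's (sketched) proof does not try to build a reverse continuation map at all. Instead it exploits the hypothesis $[\alpha^+,\alpha^-]\cap\bZ=\emptyset$ to choose $H^-$ and $H^+$ so that they have \emph{identical} sets of one-periodic orbits (no orbits are created or destroyed while $\alpha_s$ varies inside a single integer gap), and then a maximum-principle/action argument shows that the Floer differentials coincide and that the forward continuation map $C$ is upper-triangular with $1$'s on the diagonal, hence an isomorphism. This never requires a continuation map in the ``wrong'' direction.

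The gap in your argument is the confinement step for the anti-monotone continuation. For $d\alpha_s/ds\geq 0$ the identity $(\partial_s^2+\partial_t^2)\log|v| = 2\pi\,d\alpha_s/ds$ makes $\log|v|$ \emph{sub}harmonic on $v^{-1}(B\setminus\{0\})$, so the minimum principle that drives Lemma~\ref{th:max-principle-2} simply fails: a solution with both limits outside $p^{-1}(B)$ and $u^{-1}(F_0)=\emptyset$ could in principle dip into $p^{-1}(B\setminus\partial B)$, and nothing in the subharmonic estimate forbids it. Your proposed fix --- appealing to injectivity of $\partial_s+i\partial_t-2\pi\alpha_s$ on $W^{1,2}(\bR\times S^1)$ from Lemma~\ref{th:v-regular} --- does not address this. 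That lemma concerns the \emph{global} operator on the cylinder with $W^{1,2}$ decay at both ends, and its kernel/cokernel count is a statement about entire solutions converging to $0$. Here the linear equation only holds on the compact region $v^{-1}(B)$, the limits $p(x^\pm)$ are nonzero (so $v\notin W^{1,2}$), and ``cutting off near the offending region'' does not produce a function in the domain of that operator satisfying the homogeneous equation. In fact the directional asymmetry of the maximum principle is exactly why symplectic cohomology is a direct (not inverse) limit: continuation maps only go one way. A correct treatment of the anti-monotone case would have to either rule out the excursions into $p^{-1}(B)$ by a completely different mechanism or show that they cause no loss of compactness --- neither of which your sketch supplies --- whereas the paper's route sidesteps the problem entirely.
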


We will not give the proof of this, since it follows a standard pattern (compare e.g.\ \cite[Lemma 3.4]{seidel14b}). One can correlate the two choices of Hamiltonians so that both have the same one-periodic orbits. Again assuming specific choices of almost complex structures, a minimum principle argument shows that the Floer differentials coincide, and that the continuation map is the identity on the chain level.

\subsection{Pseudo-holomorphic thimbles}
We now consider maps defined on a partial compactification of the cylinder, the ``thimble'' Riemann surface
\begin{equation} \label{eq:thimble-surface}
T = (\bR \times S^1) \cup \{\infty\} \iso \bC,
\end{equation}
where the added point closes up the end $s \rightarrow +\infty$. Fix $\alpha > 0$, $\alpha \in \bR \setminus \bZ$, and suppose that $\mathit{HF}^*(E,\alpha)$ has been defined using some $(H,J)$. Choose a nonincreasing  $\alpha_s$, in the same sense as in \eqref{eq:the-alphas}, which goes from $\alpha$ (for $s \ll 0$) to zero (for $s \gg 0$). Next, choose $(H^{\mathit{thimble}},J^{\mathit{thimble}})$ which depend on $(s,t) \in \bR \times S^1$, such that $H^{\mathit{thimble}}_{s,t}$ belongs to the class \eqref{eq:hamiltonian} for $\alpha_s$. For $s \rightarrow -\infty$, one requires this to converge to $(H,J)$, in the same sense as before. We also require that both $H^{\mathit{thimble}}_{s,t} \mathit{dt}$ (as a one-form on $\bR \times S^1 \times M$) and $J^{\mathit{thimble}}$ should extend smoothly over $\infty \in T$. The equation one considers is a version of \eqref{eq:continuation}:
\begin{equation}
\label{eq:thimble}
\left\{
\begin{aligned}
& u: T \longrightarrow F, \\
& \partial_s u + J^{\mathit{thimble}}_{s,t}(u)(\partial_t u - X^{\mathit{thimble}}_{s,t}) = 0, \\
& \textstyle\lim_{s \rightarrow -\infty} u(s,t) = x(t).
\end{aligned}
\right.
\end{equation}
We have written down the Cauchy-Riemann equation at points $(s,t) \in \bR \times S^1$, but it extends smoothly to all of $T$. Lemma \ref{th:v-regular} has an analogue for this situation, leading to the required transversality result. Lemma \ref{th:max-principle-2}--\ref{th:nonnegative-2}, as well as the index formula \eqref{eq:index} and energy formulas \eqref{eq:geom-energy}--\eqref{eq:two-energies}, carry over after one has removed any mention of the $s \rightarrow +\infty$ limit. The simplest use of \eqref{eq:thimble} is to count solutions satisfying
\begin{equation} \label{eq:0-deg-again}
\left\{
\begin{aligned}
& u^{-1}(F_0) = \emptyset, \\
& x = \textstyle\lim_{s \rightarrow-\infty} u(s,\cdot) \text{ lies outside } F_0.
\end{aligned}
\right.
\end{equation}
This yields a cocycle 
\begin{equation} \label{eq:unit-construction}
e \in \mathit{CF}^0(E,H),
\end{equation}
whose Floer cohomology class is independent of all choices. A natural generalization is to introduce evaluation constraints. Let's fix a point $\zeta \in T$, for concreteness say $\zeta = \infty$. Suppose that we are given an oriented manifold $K$ together with a proper map $\kappa: K \rightarrow E$.
Consider pairs $(u,k)$ consisting of a solution $u$ of \eqref{eq:thimble}, \eqref{eq:0-deg-again} and a point $k \in K$, such that
\begin{equation} \label{eq:evaluation-constraint}
u(\zeta) = \kappa(k).
\end{equation}
Assuming suitably generic choices, counting solutions which satisfy this constraint yields a cocycle 
\begin{equation} \label{eq:pss-q}
b_K \in \mathit{CF}^{\mathrm{dim}(E)-\mathrm{dim}(K)}(E,H).
\end{equation}
Generalizing slightly, one can replace $K$ by a proper pseudo-cycle in $E$, let's say one with $\bR$-coefficients. Finally, one can allow proper pseudo-cycles with $\bK$-coefficients, by which we mean formal sums
\begin{equation} \label{eq:k-pseudo-cycle}
K = q^{d_0} K_0 + q^{d_1} K_1 + \cdots \qquad \text{with $d_i \in \bR$, $\textstyle\lim_i d_i = +\infty$,}
\end{equation}
where the $K_j$ are proper pseudo-cycles with $\bR$-coefficients. For \eqref{eq:k-pseudo-cycle}, one defines $b_K$ by adding up the cocycles \eqref{eq:pss-q} associated to $K_0,K_1,\dots$ with corresponding weights. The cohomology class of \eqref{eq:pss-q} is independent of all choices, and represents the image of $[K] \in H^*(E;\bK)$ under the PSS map \cite{piunikhin-salamon-schwarz94}
\begin{equation} \label{eq:pss}
B: H^*(E;\bK) \longrightarrow \mathit{HF}^*(E,\alpha).
\end{equation}
In fact, one can use this as the definition of \eqref{eq:pss}, even though that is not necessarily the most convenient or natural approach (using Morse homology instead of pseudo-cycles would have distinct advantages, one of them being that it leads more easily to a chain level map). However, here we will not actually use the entire map \eqref{eq:pss}, but only specific instances of the classes \eqref{eq:pss-q}.

\begin{remark} \label{th:pseudo-cycle}
While an extensive discussion of pseudo-cycles would be out of place here (see \cite[Section 6.5]{mcduff-salamon}), we do want to briefly explain the notion of properness which appeared above. A proper pseudo-cycle in $E$ is a map $\kappa: K \rightarrow E$, where $K$ is an oriented manifold, such that there exists another $\tilde{\kappa}: \tilde{K} \rightarrow E$, with $\mathrm{dim}(\tilde{K}) \leq \mathrm{dim}(K) - 2$, satisfying the following condition. If $k_1,k_2,\dots \in K$ is a sequence of points which goes to infinity, and such that $\kappa(k_1), \kappa(k_2),\dots$ converges, then the limit of the latter sequence lies in $\tilde{\kappa}(\tilde{K})$. In the special case $\tilde{K} = \emptyset$, this would reduce to saying that the original map $\kappa$ is proper. Also, if we have a pseudo-cycle in $F$, then its intersection with $E$ is a proper pseudo-cycle in $E$. Any proper pseudo-cycle in $E$ determines a class $[K] \in H^*(E;\bZ)$. 

To get a proper pseudo-cycle with $\bR$-coefficients, one equips $K$ with a multiplicity function (a locally constant real-valued function). One can always think of this as a formal sum
\begin{equation} \label{eq:mu-sum}
\sum_j \mu_j K_j 
\end{equation}
of proper pseudo-cycles $K_j$ (in the previously considered sense) with multiplicities $\mu_j$. It makes sense to consider these modulo some equivalence relation, namely: any term in \eqref{eq:mu-sum} with zero multiplicity may be removed freely; two terms that contain the same pseudo-cycle may be combined by adding up their multiplicities; and the orientation of any $K_j$ may be reversed, while changing the sign of $\mu_j$ at the same time.
\end{remark}

\begin{lemma} \label{th:bv-annihilates}
For any Floer cocycle $b_K$ as in \eqref{eq:pss-q}, $\Delta b_K$ is a co\-boundary.
\end{lemma}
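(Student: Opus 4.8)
The plan is to exhibit an explicit nullhomotopy for $\Delta b_K$ by building a parametrized moduli space interpolating between the thimble-with-evaluation-constraint picture defining $b_K$ and the ``same'' picture with an extra $S^1$ of rotation inserted near the puncture. The key point is that the BV operator is defined by rotating the asymptotic marker at the $s\to+\infty$ end of a cylinder, but in the thimble $T$ that end has been capped off by the interior point $\infty$; rotating an asymptotic marker at a point where no puncture survives is a homotopically trivial operation, so the composite $\Delta \circ (\text{thimble count})$ should be cobordant to the thimble count of a family that extends over the disc.

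Concretely, first I would set up the parametrized equation: glue the BV continuation datum $(H^\Delta_{\tau,s,t},J^\Delta_{\tau,s,t})$ from \eqref{eq:hj-delta}--\eqref{eq:bv-equation} onto the thimble datum $(H^{\mathit{thimble}},J^{\mathit{thimble}})$ at the $s\to+\infty$ end, producing, for each $\tau\in S^1$, a thimble-type equation on $T$ whose incoming end at $s\to-\infty$ still converges to $(H,J)$ but which carries the $\tau$-twist in the capped region. Counting pairs $(\tau,u,k)$ with $u$ a solution, $u^{-1}(F_0)=\emptyset$, $x=\lim_{s\to-\infty}u$ outside $F_0$, and the evaluation constraint \eqref{eq:evaluation-constraint} at $\zeta=\infty$, one gets by the usual dimension count a degree $-1$ cochain $h$ whose coboundary, read off from the boundary of the one-dimensional parametrized moduli space (broken configurations at $\tau=0$ versus the full $S^1$ of $\tau$), yields $dh \pm hd = \pm\,\Delta b_K \mp b_K' $, where $b_K'$ is the thimble count for the $\tau$-twisted datum. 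Second, I would argue $b_K' $ is itself cohomologous to $0$ in the relevant sense — in fact the $\tau$-family of thimble data, since the twist is absorbed into the interior region around $\infty$ where the marker no longer detects rotation, extends to a family over the disc $D^2$ bounding $S^1$ (one simply unwinds the rotation inside a coordinate chart at $\infty$, exactly as in the arguments behind \eqref{eq:kappa-class} and \eqref{eq:delta-2}). That disc-family gives a nullhomotopy of $b_K'$, or more directly shows the whole $(\tau,u,k)$-count over $S^1$ bounds, so that $\Delta b_K$ is exact.

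The transversality and compactness inputs are the ones already assembled: Lemma \ref{th:max-principle-2} keeps solutions away from $p^{-1}(B\setminus\partial B)$ once \eqref{eq:0-deg-again} is imposed, hence inside a compact part of $E$; the energy identities \eqref{eq:geom-energy}--\eqref{eq:two-energies} (carried over to the thimble as stated in the text) bound geometric energy in terms of $u\cdot\Omega$; the index formula \eqref{eq:index} plus the evaluation constraint give the right virtual dimension; and genericity of the interpolating data, via the thimble analogue of Lemma \ref{th:transversality-2}, makes the parametrized moduli space a manifold with boundary whose ends are the expected broken configurations (no sphere bubbling in the relevant range, by the avoidance statements in Lemmas \ref{th:transversality}(ii),(iii) and their continuation counterparts). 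Summing with the weights $q^{u\cdot\Omega}$, and for a $\bK$-pseudo-cycle $K=\sum_j q^{d_j}K_j$ summing the individual nullhomotopies with weights $q^{d_j}$, produces the cochain $h$ with $d h = \pm\Delta b_K$ up to the term that bounds over the disc.

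The main obstacle I expect is the bookkeeping at the capped end: one must check that the $\tau$-twisted thimble data genuinely extend over $\infty\in T$ (the condition that $H^{\mathit{thimble}}_{s,t}\,\mathit{dt}$ and $J^{\mathit{thimble}}$ extend smoothly over $\infty$, stated just before \eqref{eq:thimble}, has to be maintained \emph{uniformly in $\tau$}, and the unwinding of the rotation must be done within the coordinate chart at $\infty$ without disturbing the incoming asymptotics at $s\to-\infty$), and that the resulting disc-family contributes nothing — i.e.\ that the count over $D^2$ of the now two-dimensional family of $(u,k)$ is still cut out transversally and of the expected (negative) dimension, so it is empty. This is routine in spirit, parallel to the constructions of $\delta$, $\kappa$ in \eqref{eq:delta-2}--\eqref{eq:kappa-class}, but it is where the geometric content of the lemma — ``$\Delta$ kills everything coming from the PSS/thimble construction'' — actually lives.
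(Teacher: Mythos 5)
Your proposal follows the same route as the paper: glue the BV cylinder onto the thimble, then note that the resulting $S^1$-family of twisted thimble data can be filled in over a disc because the rotation only lives near the capped point $\infty\in T$, where there is no asymptotic marker to obstruct it. That is exactly the content of the paper's construction of $\beta_K$ (the parameter space there is $(\bR\times S^1)\cup\{-\infty\}$, topologically a disc, with $\rho\to+\infty$ producing the broken configurations that give $\Delta b_K$). Your two-step decomposition — an annulus $[\rho_0,\infty)\times S^1$ giving $dh=\pm(\Delta b_K - b_K')$, then a disc cap giving $db = \pm b_K'$ — is the same disc, just cut in two, and is perfectly workable.

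One bookkeeping slip in Step 1 should be fixed: as written you count pairs $(\tau,u,k)$ over $\tau\in S^1$ alone and call the result a ``degree $-1$ cochain $h$,'' but that one-parameter count at a fixed gluing length is a cocycle (it is $b_K'$ itself), not a nullhomotopy. The chain $h$ in your relation $dh=\pm(\Delta b_K - b_K')$ must be the count over the \emph{two}-dimensional family $[\rho_0,\infty)\times S^1$, where the second parameter is the gluing length $\rho$, and consequently $h$ has degree $\mathrm{codim}(K)-2$, not $\mathrm{codim}(K)-1$. Relatedly, ``$dh\pm hd$'' should just be $dh$ (there are no inputs here — $h$ is a Floer cochain, not a chain map), and the breaking that contributes $\Delta b_K$ happens at $\rho\to\infty$, not ``at $\tau=0$.'' None of these affect the strategy, which is correct and matches the paper's.
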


\begin{proof}
By considering a suitable parametrized moduli problem for maps on the thimble, we will construct a Floer cochain $\beta_K$ satisfying
\begin{equation}\label{eq:betaq}
d\beta_K + \Delta b_K = 0.
\end{equation}
The parameter space will be a disc, which itself has to be compactified. While this is generally speaking a standard construction, setting it up requires a bit of care. Fix all the data involved in the relevant previously defined structures, namely: the definition of $\mathit{HF}^*(E,\alpha)$ uses some $(H,J)$; the BV operator uses $(H^\Delta, J^\Delta)$, where we take both asymptotics $(H^\pm, J^\pm) = (H,J)$ to be the same for simplicity; finally, $b_K$ uses $(H^{\mathit{thimble}}, J^{\mathit{thimble}})$.

Part of our parameter space is a half-infinite cylinder $\xi = (\rho,\tau)$, where $\rho \gg 0$ and $\tau \in S^1$. For such a parameter value, we construct $(H^\beta_\xi,J^\beta_\xi)$ as follows. Consider 
\begin{align}
\label{eq:patch-1}
& (H^{\Delta}_{\tau,s+\rho,t},J^\Delta_{\tau,s+\rho,t}), && \text{defined for $(s,t)$ on the cylinder, and} \\
\label{eq:patch-2}
& (H^{\mathit{thimble}}_{s,t+\tau}, J^{\mathit{thimble}}_{s,t+\tau}), && \text{defined on the thimble.} 
\end{align}
Take a finite piece of the cylinder where $s \in [-\rho/2-L/2,-\rho/2+L/2]$, for some constant $L/2$. Because of \eqref{eq:hj-delta} and the translation by $\rho$, \eqref{eq:patch-1} is close to $(H_{t+\tau},J_{t+\tau})$ on that region (it converges uniformly exponentially fast as $\rho \rightarrow \infty$). The same holds for \eqref{eq:patch-2}, simply because of its limit. One can therefore use a partition of unity to glue the two data together to a new datum on the thimble, which we denote somewhat informally by
\begin{equation} \label{eq:makes-sense}
(H^\beta_{\rho,\tau,s,t}, J^\beta_{\rho,\tau,s,t}) = 
(H^{\Delta}_{\tau,s,t}, J^{\Delta}_{\tau,s,t}) \#_{\rho}
(H^{\mathit{thimble}}_{s,t+\tau}, J^{\mathit{thimble}}_{s,t+\tau})
\end{equation}
(the subscript in $\#_{\rho}$ is the gluing length, which in our previous description we had realized by translating the first datum by $\rho$ in $s$-direction). Our full parameter space is $\xi \in (\bR \times S^1) \cup \{\xi = -\infty\}$. We extend our choice \eqref{eq:makes-sense} arbitrarily to the rest of that space. 

Consider the parametrized moduli space of pairs $(\xi,u,k)$, where $k$ is as in \eqref{eq:evaluation-constraint}. In the limit where $\xi =(\rho,\tau)$ with $\rho \rightarrow \infty$, we can get broken solutions $(\tau,\tilde{u},u,k)$, where $(\tau,\tilde{u})$ is a point in the space contributing to $\Delta$, and $(u,k)$ (after rotation $t \mapsto t-\tau$) similarly contributes to $b_K$. Assuming transversality, such limits together with bubbling off of Floer trajectories describe the ends of the $1$-dimensional parametrized moduli spaces. By a standard argument, it follows that counting isolated points of the parametrized problem yields the desired cochain \eqref{eq:betaq}.
\end{proof}

\begin{discussion} \label{th:hf-plus}
The map \eqref{eq:pss} fits into a long exact sequence
\begin{equation} \label{eq:red-hf}
\cdots \longrightarrow H^*(E;\bK) \stackrel{B}{\longrightarrow} \mathit{HF}^*(E,\alpha) \longrightarrow \mathit{HF}^*(E,\alpha)_{\mathit{red}} \longrightarrow \cdots
\end{equation}
and the third term $\mathit{HF}^*(E,\alpha)_{\mathit{red}}$ comes with a refined BV operator
\begin{equation} \label{eq:delta-minus-2}
\xymatrix{
\mathit{HF}^*(E,\alpha) \ar[d]_-{\Delta} \ar[r]  &
\mathit{HF}^*(E,\alpha)_{\mathit{red}} \ar[dl]^-{\Delta_{\mathit{red}}}
\\
\mathit{HF}^{*-1}(E,\alpha).
}
\end{equation}
One defines $\mathit{HF}^*(E,\alpha)_{\mathit{red}}$ as the mapping cone for a suitable chain level version of $B$. 
Similarly, a nullhomotopy for the chain level version of $\Delta \circ B$ gives rise to $\Delta_{\mathit{red}}$.
More explicitly, if $C^*(E;\bK)$ stands for a suitable (e.g.\, Morse) complex underlying $H^*(E;\bK)$, then $\mathit{HF}^*(E,\alpha)_{\mathit{red}}$ is the cohomology of a complex of the form $C^{*+1}(E;\bK) \oplus \mathit{CF}^*(E,H)$, whose differential involves the two differentials on the summands and the chain level $B$. The operation $\Delta_{\mathit{red}}$ is induced by a map $C^{*+1}(E;\bK) \oplus \mathit{CF}^*(E,H) \rightarrow \mathit{CF}^{*-1}(E,H)$ which is the chain level $\Delta$ on the second summand, and the abovementioned nullhomotopy on the first summand. For our purpose, the following partial description suffices: a pair $(K,\xi)$ consisting of a proper pseudo-cycle $K$ and a Floer cochain $\xi \in \mathit{CF}^*(E,H)$ with
\begin{equation}
d\xi = b_K
\end{equation}
gives rise to a class 
\begin{equation}
\label{eq:relative-class}
[(-K,\xi)] \in \mathit{HF}^{\mathrm{codim}(K)-1}(E,\alpha)_{\mathit{red}}, 
\end{equation}
which maps to $[K]$ under the boundary map in \eqref{eq:red-hf}. Moreover, 
\begin{equation} \label{eq:combine-into-a-class}
\Delta_{\mathit{red}} [(-K,\xi)] = [\Delta \xi - \beta_K] \in \mathit{HF}^{\mathrm{codim}(K)-2}(E,\alpha).
\end{equation}
\end{discussion}


\section{The Borman-Sheridan class}

This section discusses the Borman-Sheridan class and various ideas surrounding it. The geometric setup remains the same, but we will now start to make more serious use of its specific properties.

\subsection{Definition}
Take the thimble surface \eqref{eq:thimble-surface}, with its marked point $\zeta =\infty$. We will consider solutions of \eqref{eq:thimble} such that
\begin{equation} \label{eq:extra-s-class}
\left\{
\begin{aligned}
& u(\zeta) \in F_0, \\
& \mathrm{deg}(u) = 1, \\
& x = \textstyle\lim_{s \rightarrow-\infty} u(s,\cdot) \text{ lies outside } F_0.
\end{aligned}
\right.
\end{equation}
The expected dimension of \eqref{eq:thimble}, with \eqref{eq:extra-s-class} taken into account, is 
\begin{equation}
\mathrm{index}(D_u) - 2 = i(x) + 2 \deg(u) - 2 = i(x). 
\end{equation}
Hence, counting isolated solutions (under suitable transversality assumptions) yields a cochain 
\begin{equation} \label{eq:bs-1}
s \in \mathit{CF}^0(E,H).
\end{equation}

\begin{lemma} \label{th:key-splitting}
Suppose that $\alpha > 1$. Consider solutions of \eqref{eq:thimble}, \eqref{eq:extra-s-class}. Suppose that a sequence of such solutions converges to a broken solution $(\tilde{u},u)$, consisting of a Floer trajectory $\tilde{u}$ and a solution $u$ of \eqref{eq:thimble}. Then, $\tilde{u}$ avoids $F_0$ entirely, and $u$ again satisfies \eqref{eq:extra-s-class}.
\end{lemma}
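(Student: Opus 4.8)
The plan is to argue by compactness and index bookkeeping, exploiting the structure of the thimble equation together with the behaviour of $p\circ u$ near $F_0$ established in Lemmas \ref{th:max-principle-2}--\ref{th:nonnegative-2}. First I would recall that, by Gromov compactness adapted to the thimble (energy bounds come from the degree bound $\mathrm{deg}(u)=1$ via \eqref{eq:top-energy}, \eqref{eq:two-energies}), a sequence of solutions of \eqref{eq:thimble}, \eqref{eq:extra-s-class} converges to a broken configuration consisting of a (possibly empty) chain of Floer trajectories $\tilde u = (\tilde u_1,\dots,\tilde u_r)$ for the limiting data $(H,J)$, together with a principal component $u$ solving \eqref{eq:thimble}, plus possibly some sphere bubbles. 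Sphere bubbling is excluded in the relevant (isolated and one-dimensional) strata by the transversality statements in Lemma \ref{th:transversality}(ii),(iii): a zero-Chern-number sphere cannot meet the image of a low-index trajectory, and a Chern-number-one sphere cannot pass through a periodic orbit, while higher Chern number spheres are ruled out by the energy/degree bound. So the limit is genuinely of the form $(\tilde u, u)$ as stated (for simplicity I would reduce to a single Floer trajectory $\tilde u$; the general chain is handled inductively).

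The core of the argument is the distribution of the invariants $\deg$, intersection with $F_0$, and Conley--Zehnder index across the two pieces. Write $x$ for the limit of the whole family at $s\to-\infty$, $y$ for the orbit at which $\tilde u$ and $u$ are glued; by hypothesis $x$ lies outside $F_0$. The key point is that $x$ lying outside $F_0$ forces $\tilde u$ to lie outside $F_0$: this is exactly where $\alpha>1$ enters. If $\tilde u$ met $F_0$, then since $x=\lim_{s\to-\infty}\tilde u$ is outside $F_0$ but the broken orbit $y$ could be in $F_0$, we would be in the situation of Lemma \ref{th:max-principle-2} / Lemma \ref{th:nonnegative-2}, and the contribution to $\deg$ coming from $\tilde u$ would be at least $1$ (either a genuine intersection point, contributing positively by Lemma \ref{th:nonnegative-1}, or an asymptotic contribution $m^+(\tilde u)\ge\lfloor\alpha\rfloor+1\ge 2$ from \eqref{eq:rewritten-inequalities} when $y$ lies in $F_0$). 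Conversely if $y$ lies outside $F_0$ then $\tilde u$ has both ends outside $F_0$, and by Lemma \ref{th:max-principle-2} any incursion into $p^{-1}(B\setminus\partial B)$ produces an intersection with $F_0$, again contributing at least $1$ to $\deg(\tilde u)$. In either case, since total degree is additive, $\deg(\tilde u)+\deg(u)=1$ and $\deg(u)\ge 0$ (Lemma \ref{th:nonnegative-2} together with $\deg(u)=u\cdot F_0 + m^{\pm}\ge 0$, noting $u(\zeta)\in F_0$ still requires $\deg u\ge 1$ once $u$ is not contained in $F_0$), this is impossible unless $\tilde u$ avoids $F_0$ entirely. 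Then $\deg(\tilde u)=0$, so $\deg(u)=1$, the constraint $u(\zeta)\in F_0$ persists in the limit (it is a closed condition), and the asymptotic orbit of $u$ at $s\to-\infty$ is $y$, which we have just shown lies outside $F_0$. Hence $u$ satisfies \eqref{eq:extra-s-class}.

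The main obstacle I anticipate is the careful handling of the asymptotic contributions $m^{\pm}$ in the broken limit: one must be sure that when a Floer trajectory $\tilde u$ breaks at an orbit $y\subset F_0$ and escapes to an orbit $x$ outside $F_0$, the pole-order bookkeeping of \eqref{eq:m-minus}--\eqref{eq:rewritten-inequalities} is applied at the correct ends and that the inequality $m^+(\tilde u)\ge\lfloor\alpha\rfloor+1$ is strict enough (this is precisely why $\alpha>1$, rather than $\alpha>0$, is assumed, giving $\lfloor\alpha\rfloor+1\ge 2$). A related subtlety is ruling out the degenerate possibility that $\tilde u$ is entirely contained in $F_0$: by Lemma \ref{th:max-principle-1} a trajectory contained in $p^{-1}(B)$ lies in $F_0$, but such a $\tilde u$ would have both asymptotics in $F_0$, contradicting that one of them equals $x$. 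Once these degree and index accountings are pinned down, the conclusion is forced.
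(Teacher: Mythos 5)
Your overall strategy matches the paper's: assume the broken orbit (your $y$, the paper's $\tilde x$) lies in $F_0$, get a contradiction from degree bookkeeping using \eqref{eq:rewritten-inequalities}, then conclude in the remaining case that $\deg(\tilde u)=0$ and $\deg(u)=u\cdot F_0 =1$. However, the bookkeeping in your first case contains an error that would sink the argument as written.

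Specifically, you assert ``$\deg(u)=u\cdot F_0 + m^{\pm}\ge 0$'' and that ``$u(\zeta)\in F_0$ still requires $\deg u\ge 1$ once $u$ is not contained in $F_0$.'' Neither claim is correct when $y$ lies in $F_0$. In that case Lemma \ref{th:nonnegative-2} gives $\deg(u)=u\cdot F_0 + m^-(u)$, and $m^-(u)$ can be as small as $-\lfloor\alpha\rfloor$, which for $\alpha>2$ makes $\deg(u)$ negative even though $u\cdot F_0\ge 1$. So the contradiction cannot be obtained by pairing $\deg(\tilde u)\ge 2$ with $\deg(u)\ge 0$; you need the precise lower bounds on both sides. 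Adding $\deg(\tilde u)\ge \lfloor\alpha\rfloor + 1$ (from $m^+(\tilde u)$, with $\tilde u\cdot F_0\ge 0$) to $\deg(u)\ge 1-\lfloor\alpha\rfloor$ (from $u\cdot F_0\ge 1$ and $m^-(u)\ge -\lfloor\alpha\rfloor$, or $\deg(u)=0$ if $u\subset F_0$, which also satisfies $\ge 1-\lfloor\alpha\rfloor$ since $\alpha>1$) gives $\deg(\tilde u)+\deg(u)\ge 2$, contradicting the sum equalling $1$. The paper phrases this cleanly in real form as $\deg(\tilde u)>\alpha$ and $\deg(u)>1-\alpha$, whose sum is $>1$, explicitly noting that the second inequality remains valid when $u$ is contained in $F_0$ precisely because $1-\alpha<0$.

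A minor point: the discussion of Gromov compactness and sphere bubbling, and the reduction from chains of trajectories to a single $\tilde u$, is not needed here --- the lemma's hypothesis already hands you a broken limit of the specified form $(\tilde u, u)$; the more general compactification is addressed separately in Remark \ref{th:broken-big}.
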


\begin{proof}
By definition of a broken solution, there is some one-periodic orbit $\tilde{x}$ such that
\begin{equation} \textstyle
\lim_{s \rightarrow +\infty} \tilde{u}(s,\cdot) = \tilde{x} = \lim_{s \rightarrow -\infty} u(s,\cdot).
\end{equation}
We know that
\begin{equation} \label{eq:sum-is-1}
\deg(\tilde{u}) + \deg(u) = 1.
\end{equation}
If one assumes that $\tilde{x}$ lies in $F_0$, \eqref{eq:fb-intersection} and its analogue for maps on the thimble say that
\begin{align} \label{eq:u1u2-separate}
& \deg(\tilde{u}) > \alpha, \\
& \deg(u) > 1-\alpha. \label{eq:degu-1}
\end{align}
Ordinarily, there should be an exception in \eqref{eq:degu-1} when $u$ is entirely contained in $F_0$; but in that case $\deg(u) = 0$ by definition, whereas $1-\alpha < 0$ by assumption, so the inequality still holds. Adding up \eqref{eq:u1u2-separate} and \eqref{eq:degu-1} yields a contradiction to \eqref{eq:sum-is-1}. We now know that $\tilde{x}$ does not lie in $F_0$, and therefore
\begin{align}
\label{eq:u1u2-b}
& \deg(\tilde{u}) = \tilde{u} \cdot F_0 \geq 0, \\
\label{eq:u1u2-c}
& \deg(u) = u \cdot F_0 \geq 1.
\end{align}
By \eqref{eq:sum-is-1}, equality must hold in \eqref{eq:u1u2-b} and \eqref{eq:u1u2-c}, which yields the desired behaviour.
\end{proof}

\begin{remark} \label{th:broken-big}
It is maybe instructive to look at more general broken solutions, even though (for codimension reasons) they are not relevant for our counting arguments. Suppose that a sequence of solutions of \eqref{eq:thimble}, \eqref{eq:extra-s-class} converges to a broken solution $(\tilde{u}_1,\dots,\tilde{u}_k,u)$, where the first $k$ components are Floer trajectories. As before, we know that
\begin{equation} \label{eq:all-1}
\deg(\tilde{u}_1) + \cdots + \deg(\tilde{u}_k) + \deg(u) = 1.
\end{equation}
Suppose first that $u$ is not contained in $F_0$. By applying Lemma \ref{th:nonnegative-2} to all components, and using \eqref{eq:rewritten-inequalities}, one then finds that
\begin{equation} \label{eq:geqgeq}
\deg(\tilde{u}_1) + \cdots + \deg(\tilde{u}_k) + \deg(u) \geq \tilde{u}_1 \cdot F_0 + \cdots + \tilde{u}_k \cdot F_0 + u \cdot F_0 \geq 1. 
\end{equation}
The first inequality in \eqref{eq:geqgeq} is an equality iff none of our maps has a limit lying in $F_0$. Assuming this holds, the second inequality is an equality iff all $\tilde{u}_1,\dots,\tilde{u}_k$ are disjoint from $F_0$, while $u$ satisfies \eqref{eq:extra-s-class}. The other potential situation is where, for some $j \geq 1$, $(\tilde{u}_{j+1},\dots,\tilde{u}_k,u)$ are all contained in $F_0$, while $\tilde{u}_j$ isn't. However, this means that
\begin{equation}
\deg(\tilde{u}_1) + \cdots + \deg(\tilde{u}_k) + \deg(u) \geq \tilde{u}_1 \cdot F_0 + \cdots + \tilde{u}_j \cdot F_0 + \alpha.
\end{equation}
If $\alpha > 1$, this contradicts \eqref{eq:all-1}. Hence, we see that all broken solutions that appear as limits are built by adding Floer trajectories inside $E$ as extra components. One can further extend this discussion, including sphere bubbles as well, which leads to a complete description of the Gromov compactification. As usual, sphere bubbling turns out to be always a codimension $\geq 2$ phenomenon, hence will not occur in our applications.
\end{remark}
%
%
%
%
%

Lemma \ref{th:key-splitting} implies that \eqref{eq:bs-1} is a cocycle provided that $\alpha  > 1$. Its Floer cohomology class, the Borman-Sheridan class, is independent of all choices. An argument parallel to Lemma \ref{th:bv-annihilates} (and which we will therefore omit) shows the following:

\begin{lemma} \label{th:bs-done}
For $\alpha>1$, the Floer cocycle $\Delta s$ is a coboundary.
\end{lemma}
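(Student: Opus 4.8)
\textbf{Proof proposal for Lemma \ref{th:bs-done}.}

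The plan is to imitate the argument of Lemma \ref{th:bv-annihilates} essentially verbatim, producing a Floer cochain $\beta_s$ with $d\beta_s + \Delta s = 0$ by means of a parametrized moduli problem for maps on the thimble. The ingredients are: the data $(H,J)$ defining $\mathit{CF}^*(E,H)$; the data $(H^\Delta, J^\Delta)$ defining the BV operator, with both asymptotics taken equal to $(H,J)$; and the data $(H^{\mathit{thimble}},J^{\mathit{thimble}})$ used to define $s$ via solutions of \eqref{eq:thimble} subject to \eqref{eq:extra-s-class}. Over a half-infinite cylinder $\xi = (\rho,\tau)$ with $\rho \gg 0$, one glues $(H^\Delta_{\tau,s,t},J^\Delta_{\tau,s,t})$ to $(H^{\mathit{thimble}}_{s,t+\tau},J^{\mathit{thimble}}_{s,t+\tau})$ with gluing length $\rho$, exactly as in \eqref{eq:makes-sense}; this produces data $(H^\beta_{\rho,\tau,s,t},J^\beta_{\rho,\tau,s,t})$ on the thimble, which is then extended arbitrarily over the rest of a parameter space $\xi \in (\bR \times S^1) \cup \{\xi = -\infty\}$. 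First I would set this up carefully, checking that the gluing makes sense because the ``collar'' piece of \eqref{eq:patch-1}, after translation by $\rho$, converges exponentially fast to $(H_{t+\tau},J_{t+\tau})$, and the corresponding piece of \eqref{eq:patch-2} does the same because of its limit at $s \to -\infty$.

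Next I would consider the parametrized moduli space of pairs $(\xi,u)$, where $u$ solves the $\xi$-dependent thimble equation and satisfies the constraints \eqref{eq:extra-s-class} (that is, $u(\infty) \in F_0$, $\deg(u) = 1$, and $x = \lim_{s\to-\infty} u(s,\cdot)$ lies outside $F_0$). The expected dimension is $i(x) - 1$, so for the differential I would count isolated points in the component with $i(x) = 1$. The boundary of the one-dimensional components has two sources: genuine broken configurations in which a Floer trajectory breaks off at the incoming end, and the limit $\rho \to \infty$ in which $(\xi,u)$ degenerates into a broken pair $(\tau,\tilde u, u')$ with $(\tau,\tilde u)$ contributing to $\Delta$ and $u'$ (after the rotation $t \mapsto t - \tau$) contributing to $s$. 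The first type of boundary contributes the terms of $d\beta_s$ (and $\beta_s d$, which vanishes on cocycles but is present at chain level), and the second contributes $\Delta s$; hence a standard signed count of the boundary of the one-dimensional moduli space yields \eqref{eq:betaq}.

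The essential point that must be verified — and it is where the geometry of the Lefschetz fibration enters — is that no \emph{other} boundary strata appear, in particular no stratum in which some component has positive degree over the base but its incoming asymptote lies in $F_0$. This is controlled by the same degree bookkeeping as in Lemma \ref{th:key-splitting}: any broken configuration limiting our sequence consists of the principal thimble component $u$ together with a chain of Floer trajectories (and, in higher codimension, sphere bubbles), and the constraint $\deg(\tilde u_1) + \cdots + \deg(\tilde u_k) + \deg(u) = 1$ together with the inequalities \eqref{eq:rewritten-inequalities} (valid since $\alpha > 1$) forces every component to avoid $F_0$ except at the single point $u(\infty)$, exactly as in Remark \ref{th:broken-big}. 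Sphere bubbling is codimension $\geq 2$ by the $c_1(F) = \mathrm{PD}[M]$ hypothesis and Lemma \ref{th:transversality}, hence irrelevant. The main obstacle is thus not any single hard estimate but the somewhat delicate setup of the glued perturbation data on the thimble and the verification that transversality can be achieved simultaneously for the parametrized problem and all its boundary strata; once that is in place (and it is, following \cite{seidel14b} and the proof of Lemma \ref{th:bv-annihilates} above), the conclusion is automatic. I would simply remark that the argument is parallel to Lemma \ref{th:bv-annihilates} and omit the repetitive details, as the statement itself suggests.
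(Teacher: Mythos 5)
Your proposal reproduces, with the right key input from Lemma \ref{th:key-splitting} and the hypothesis $\alpha > 1$, exactly the argument the paper intends when it declares the proof ``parallel to Lemma \ref{th:bv-annihilates}'' and omits it. One small arithmetic slip worth correcting: the parametrized moduli space of pairs $(\xi,u)$ has expected dimension $i(x)+2$ (two parameters from $\xi$, plus $2\deg(u)=2$ in the Fredholm index, minus the codimension-two incidence constraint $u(\zeta)\in F_0$), so the cochain $\beta_s$ lies in $\mathit{CF}^{-2}(E,H)$ and receives contributions from orbits with $i(x)=-2$, not $i(x)=1$ as you wrote; this is the correct degree to match $\Delta s$ in degree $-1$.
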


\subsection{The Borman-Sheridan class and Gromov-Witten theory\label{subsec:secondary}}
The starting point for the following discussion is the Gromov-Witten invariant which is the special case $k = 1$ of \eqref{eq:zk}. Our notation is as follows:
\begin{equation} \label{eq:sphere}
\left\{
\begin{aligned}
& \tilde{S} = \bC P^1, \\
& \tilde{\zeta}_1 = 0, \;\; \tilde{\zeta}_2 = 1, \;\; \tilde{\zeta}_3 = \infty \in \tilde{S}.
\end{aligned}
\right.
\end{equation}
We also fix a point
\begin{equation} \label{eq:dagger}
\dag \in B \setminus (\partial B \cup \{0\}), 
\end{equation}
to be used throughout the rest of the paper. Choose some almost complex structure $\tilde{J}$ as in \eqref{eq:j} (one could allow a family of almost complex structures varying along $\tilde{S}$, but that is not necessary or helpful for our purpose). Suppose that we have a map $u: \tilde{S} \rightarrow F$, which is pseudo-holomorphic with respect to $\tilde{J}$ and satisfies 
\begin{equation} \label{eq:pseudo-sphere-1}
\tilde{u} \cdot F_0 = 1.
\end{equation}
Any intersection point of $\tilde{u}$ with a fibre $F_b$, $b \in B$, contributes positively to the intersection number, and contributes $1$ iff the intersection is transverse. Hence, the part of $\tilde{u}$ lying inside $p^{-1}(B)$ is a pseudo-holomorphic section. In particular, there is an automorphism of $\tilde{S}$ which we can apply to achieve that
\begin{equation} \label{eq:pseudo-sphere-2}
\left\{
\begin{aligned}
& \tilde{u}(\tilde{\zeta}_1) \in F_0, \\
& \tilde{u}(\tilde{\zeta}_2) \in F_\dag.
\end{aligned}
\right.
\end{equation}
From now on, we will consider maps $\tilde{u}$ satisfying \eqref{eq:pseudo-sphere-1}, \eqref{eq:pseudo-sphere-2}. For generic choice of $\tilde{J}$, the space of such maps in any fixed homology class, with evaluation at the point $\tilde{\zeta}_3$, defines a pseudo-cycle of codimension $2$ in $F$. If we add up the contributions of all homology classes with the usual weights $q^{\tilde{u} \cdot \Omega}$, the outcome is a pseudo-cycle with $\bK$-coefficients, which represents $z^{(1)} \in H^2(F;\bK)$. Let's denote this pseudo-cycle by $Z^{(1)}$. Restricting to the open subset where $\tilde{u}(\tilde{\zeta}_3) \notin F_0$ yields a proper pseudo-cycle with $\bK$-coefficients in $E$, denoted by $Z^{(1)}|E$.

\begin{lemma} \label{th:z1-zero}
For $\alpha > 1$, $z^{(1)}|E$ lies in the kernel of the map \eqref{eq:pss}.
\end{lemma}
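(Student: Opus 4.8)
The plan is to show that the Floer cocycle $s \in \mathit{CF}^0(E,H)$ constructed from thimbles with $\deg(u)=1$ and $u(\zeta)\in F_0$ actually computes $B(z^{(1)}|E)$, so that the vanishing of $B(z^{(1)}|E)$ is equivalent to $s$ being nullhomologous—and the latter is precisely the content of Lemma \ref{th:bs-done} combined with the identification. More carefully, the cleanest route is: construct a one-parameter family of moduli problems interpolating between the moduli space defining $b_{Z^{(1)}|E}$ (thimbles with $u^{-1}(F_0)=\emptyset$ and an evaluation constraint $u(\zeta)\in Z^{(1)}|E$, where $Z^{(1)}|E$ is the proper pseudo-cycle in $E$ defined just above) and a degenerate configuration in which the evaluation constraint has been pushed to the ``boundary'' $F_0$ of $E$. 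The parameter $\dag$ was introduced in \eqref{eq:dagger} precisely for this: one moves the point $\dag \in B\setminus(\partial B\cup\{0\})$ toward $0$, dragging the pseudo-cycle $Z^{(1)}|E$ with it, and in the limit the thimble solution through a point of $Z^{(1)}$ near $F_0$ degenerates into a thimble satisfying \eqref{eq:extra-s-class} with a sphere component carrying the degree-$1$ information.

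Here are the steps in order. First, I would set up the parametrized moduli space: thimbles $u\colon T\to F$ with $x=\lim_{s\to-\infty}u(s,\cdot)$ outside $F_0$, together with the constraint $u(\zeta)\in Z^{(1)}_\lambda$, where $Z^{(1)}_\lambda$ is the family of pseudo-cycles obtained from the sphere count \eqref{eq:sphere}--\eqref{eq:pseudo-sphere-2} as the marked point $\tilde\zeta_2$ is allowed to vary (equivalently, as $\dag$ moves along a path in $B$ from an interior point toward $0$), and $\lambda\in[0,1]$ is the path parameter. Second, I would run the degree/energy bookkeeping of Lemmas \ref{th:max-principle-1}--\ref{th:nonnegative-2} (and Remark \ref{th:broken-big}) with $\alpha>1$ to control the compactification of this one-dimensional parametrized space: at one end ($\lambda=0$) one gets the cocycle $b_{Z^{(1)}|E}$ representing $B(z^{(1)}|E)$; at the other end ($\lambda=1$) the evaluation point has merged into $F_0$, and by positivity of intersections together with $\deg=1$, the only configurations are a thimble satisfying \eqref{eq:extra-s-class} (contributing $s$) with possibly a trivial sphere component, plus genuine Floer breakings in $E$; the broken ends along the way contribute $d(\text{something})$. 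Third, I would conclude that $s$ and $b_{Z^{(1)}|E}$ are cohomologous in $\mathit{CF}^0(E,H)$ (a chain-homotopy statement obtained by counting isolated points of the $1$-dimensional parametrized moduli space). Fourth, by Lemma \ref{th:bs-done}, $\Delta s$ is a coboundary; but more to the point, I would invoke Lemma \ref{th:key-splitting} which already shows $s$ is a cocycle for $\alpha>1$—and then the geometric construction of $s$ via a thimble through $F_0$ exhibits it as nullhomologous directly, because such a thimble, after capping off the degree-$1$ intersection with $F_0$, bounds in the Floer complex; concretely one builds the nullhomotopy exactly as in Lemma \ref{th:bv-annihilates}, using a disc-parametrized moduli problem whose boundary is $s$. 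Combining the third and fourth steps gives that $B(z^{(1)}|E)$ is a coboundary, i.e.\ $z^{(1)}|E$ lies in the kernel of \eqref{eq:pss}.

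The main obstacle is the gluing/degeneration analysis at the $\lambda=1$ end: one must check that when the evaluation constraint point $u(\zeta)$ is driven onto $F_0$, the only limit configurations (after the positivity-of-intersection and $\deg=1$ constraints are imposed) are the expected ones—a single thimble satisfying \eqref{eq:extra-s-class} together with at worst a constant sphere, with no extra sphere bubbling in $F$ absorbing the intersection with $F_0$ in an uncontrolled way. Remark \ref{th:broken-big} and the transversality statements of Lemma \ref{th:transversality} (items (ii), (iii)) are exactly what rule this out, but assembling them into a clean cobordism argument—and in particular checking that the sphere count defining $Z^{(1)}$ glues correctly to the thimble count defining $s$, i.e.\ that the marked-point conventions \eqref{eq:sphere} match—is the technical heart of the proof. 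Everything else (the energy estimates, the index computation $\mathrm{index}(D_u)-2 = i(x)$, genericity) is routine given the machinery of Section \ref{sec:floer}.
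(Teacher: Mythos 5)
There is a genuine gap, and it starts with a degree count. The cocycle $b_{Z^{(1)}|E}$ lives in $\mathit{CF}^{\mathrm{codim}(Z^{(1)}|E)}(E,H) = \mathit{CF}^2(E,H)$, since $Z^{(1)}|E$ is a codimension-two pseudo-cycle, whereas the Borman--Sheridan cochain $s$ lives in $\mathit{CF}^0(E,H)$. These cannot be cohomologous, so the central claim of your first step — that ``$s$ actually computes $B(z^{(1)}|E)$'' — is impossible. Correspondingly, the one-parameter family you envision would have to have as its two ends moduli problems of different dimensions (for the same asymptotic orbit $x$), which is not a cobordism. There is also a second error later on: you assert that $s$ is nullhomologous, invoking Lemma \ref{th:key-splitting} and a nullhomotopy ``exactly as in Lemma \ref{th:bv-annihilates}''. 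But Lemma \ref{th:bv-annihilates} kills $\Delta b_K$, not $b_K$, and Lemma \ref{th:bs-done} likewise kills $\Delta s$, not $s$. The Borman--Sheridan class is not zero in general; if it were, the paper's main computations (such as Theorem \ref{th:sigma1}, which expresses $\nabla s$ in terms of $s \bullet s$ and the Gromov--Witten invariants) would be vacuous.

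What actually happens is simpler and avoids $s$ entirely: one directly constructs a degree-one bounding cochain $\nu \in \mathit{CF}^1(E,H)$ with $d\nu = b_{Z^{(1)}|E}$. The parameter is not the position of $\dag$ (which merely normalizes the sphere parametrization and on which $Z^{(1)}$ does not depend), but the position $\sigma$ of a second marked point $\zeta_2 = (\sigma,0)$ on the thimble, in addition to $\zeta_1 = \infty$. One considers thimbles with $u(\zeta_1) \in F_0$, $u(\zeta_2) \in F_\dag$, $\deg(u) = 1$, and limit outside $F_0$; the constraint count gives a moduli space of parametrized dimension $i(x) - 1$, so isolated points give $\nu \in \mathit{CF}^1$. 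The $\sigma \to -\infty$ end contributes nothing, by the same argument as Lemma \ref{th:key-splitting} together with Lemma \ref{th:max-principle-2}; the $\sigma \to +\infty$ end produces exactly the sphere-bubbled configuration that defines $b_{Z^{(1)}|E}$. That establishes $d\nu = b_{Z^{(1)}|E}$, hence the lemma. The Borman--Sheridan class does appear in this circle of ideas, but only as a secondary invariant: Proposition \ref{th:interpret-bs} shows that $\Delta\nu - \beta_{Z^{(1)}|E}$ (now of degree zero) is cohomologous to $-s$. You have collapsed this two-step structure — primary vanishing of $B(z^{(1)}|E)$, then identification of the induced secondary class with $s$ — into a single, degree-inconsistent step.
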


\begin{proof}
By considering an appropriate parametrized moduli space, we will construct a Floer cochain bounding the cocycle \eqref{eq:pss-q} associated to $Z^{(1)}|E$:
\begin{equation} \label{eq:z-bounds}
d\nu = b_{Z^{(1)}|E}.
\end{equation}

The parameter is a number $\sigma \in \bR$, which appears as the position of a marked point $\zeta_2 = (\sigma,0)$ on the thimble (this is in addition to our usual marked point $\zeta_1 = \infty$). We consider the space of pairs $(\sigma,u)$, where $u$ is a solution of an equation of type \eqref{eq:thimble}, with additional conditions
\begin{equation} \label{eq:z1-kernel}
\left\{
\begin{aligned}
& u(\zeta_1) \in F_0, \\
& u(\zeta_2) \in F_{\dag}, \\
& \mathrm{deg}(u) = 1, \\
& x = \textstyle\lim_{s \rightarrow-\infty} u(s,\cdot) \text{ lies outside } F_0.
\end{aligned}
\right.
\end{equation}

There are two relevant degenerations to consider. In the first one, $\sigma \rightarrow -\infty$, the limit is a broken solution $(\tilde{u},u)$, where $\tilde{u}$ is a Floer trajectory satisfying 
\begin{equation} \label{eq:00-incidence}
\tilde{u}(0,0) \in F_{\dag}.
\end{equation}
The other component $u$ is again a solution of \eqref{eq:thimble}, with $u(\zeta_1) \in F_0$. By the same argument as in Lemma \ref{th:key-splitting}, one finds that the limits of $\tilde{u}$ must lie outside $F_0$, and that $\mathrm{deg}(\tilde{u}) = 0$; but then, $\tilde{u}$ must be disjoint from $p^{-1}(B \setminus \partial B)$ by Lemma \ref{th:max-principle-2}, which is a contradiction to \eqref{eq:00-incidence}. Hence, there are no such limit points. 

The other relevant degeneration happens when $\sigma \rightarrow +\infty$; the limit $(u,\tilde{u})$ consists of a solution $u$ of \eqref{eq:thimble}, which for the same reason as before will be disjoint from $F_0$, together with a sphere bubble which satisfies \eqref{eq:pseudo-sphere-1} and (when appropriately parametrized) \eqref{eq:pseudo-sphere-2}. More precisely, the almost complex structure $\tilde{J}$ which arises is that appearing in \eqref{eq:thimble} for the point $\zeta_1 \in T$. The two components are related by the incidence requirement
\begin{equation}
u(\zeta_1) = \tilde{u}(\tilde{\zeta}_3).
\end{equation}
Hence, counting such pairs $(u,\tilde{u})$ just yields $b_{Z^{(1)}|E}$. From this, a standard argument shows that counting points in the parametrized moduli space leads to the desired structure \eqref{eq:z-bounds}.
\end{proof}

\begin{remark}
Sphere bubbling is normally a codimension $2$ phenomenon, and correspondingly, the relevant gluing problem has two parameters: a real gluing length and a circle-valued gluing angle. In the situation of Lemma \ref{th:z1-zero}, we use $\sigma$ instead of the gluing length as a parameter, and since $\zeta_2$ is constrained to lie on $\bR \times \{0\}$, there is no gluing angle. This explains why it makes sense for sphere bubbling to yield a codimension $1$ boundary stratum of our moduli space.
\end{remark}

As an instance of Discussion \ref{th:hf-plus}, \eqref{eq:z-bounds} gives rise to a class
\begin{equation} \label{eq:z1-rel}
[(-Z^{(1)}|E, \nu)] \in \mathit{HF}^1(E,\alpha)_{\mathit{red}}
\end{equation}
which maps to $z^{(1)}|E$ under the connecting map from \eqref{eq:red-hf}. We also get a Floer cocycle 
\begin{equation} \label{eq:corrected-beta}
\Delta \nu - \beta_{Z^{(1)}|E} \in \mathit{CF}^0(E,H),
\end{equation}
which represents the image of \eqref{eq:z1-rel} under $\Delta_{\mathit{red}}$.

\begin{prop} \label{th:interpret-bs}
The cocycle \eqref{eq:corrected-beta} is cohomologous to $(-1)$ times the Borman-Sheridan cocycle \eqref{eq:bs-1}.
\end{prop}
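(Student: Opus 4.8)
The strategy is to interpolate between the two moduli problems — the one defining the Borman-Sheridan cocycle $s$ from \eqref{eq:bs-1}, \eqref{eq:extra-s-class}, and the one producing the corrected cocycle \eqref{eq:corrected-beta} — via a single parametrized moduli space whose boundary recovers both. Recall that \eqref{eq:corrected-beta} is built from two pieces: the cochain $\nu$ bounding $b_{Z^{(1)}|E}$ (constructed in Lemma \ref{th:z1-zero} by degenerating the position $\sigma$ of the auxiliary marked point $\zeta_2$), and the cochain $\beta_{Z^{(1)}|E}$ realizing the nullhomotopy of $\Delta b_{Z^{(1)}|E}$ (constructed as in Lemma \ref{th:bv-annihilates}, by gluing a $\Delta$-cylinder onto the thimble with gluing length $\rho$). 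The key geometric observation is that in $\Delta\nu - \beta_{Z^{(1)}|E}$ both auxiliary parameters are present: the $\Delta$-rotation $\tau$ together with the two competing limits $\sigma\to\pm\infty$ and $\rho\to\infty$. I want to assemble these into a family over a two-dimensional parameter space (a square, after compactification) whose four boundary edges give, respectively: (i) the $\Delta$-trajectory broken off $\beta_{Z^{(1)}|E}$, contributing $-\beta_{Z^{(1)}|E}$; (ii) $\Delta$ applied to the space defining $\nu$, contributing $\Delta\nu$; (iii) the $\sigma\to+\infty$ degeneration of Lemma \ref{th:z1-zero}, where the marked point $\zeta_2$ bubbles off a pseudo-holomorphic sphere in class $z^{(1)}$, forcing $u$ to pass through $F_0$ with the constraint $u(\zeta_1)=\tilde u(\tilde\zeta_3)\in F_0$ — and this is precisely the configuration computing $s$; and (iv) a fourth edge which either closes up trivially or contributes a term that cancels.

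First I would set up the parametrized problem carefully. The underlying domain is the thimble $T$ with two marked points $\zeta_1=\infty$ and $\zeta_2=(\sigma,0)$, and the Floer data are deformed in $\tau$-direction as in \eqref{eq:hj-delta}; the key incidence conditions are $u(\zeta_1)\in F_0$, $u(\zeta_2)\in F_\dag$, $\deg(u)=1$, and $x^-$ outside $F_0$. As $\sigma$ ranges over $\bR$ and as the $\Delta$-structure is or isn't glued on (parametrized by $\rho\in(0,\infty]$), we obtain a family over $\bR\times(0,\infty]$; compactifying $\sigma$ to $[-\infty,+\infty]$ and noting that at $\rho=\infty$ the $\Delta$-cylinder detaches, the relevant compact parameter space is a square (possibly with corners identified). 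The second step is to analyze each boundary face. The $\sigma\to-\infty$ face is ruled out exactly as in the proof of Lemma \ref{th:z1-zero}: a Floer trajectory would break off satisfying $\tilde u(0,0)\in F_\dag$, but degree considerations (using $\alpha>1$, \eqref{eq:fb-intersection}, \eqref{eq:rewritten-inequalities}) force $\tilde u$ to be disjoint from $p^{-1}(B\setminus\partial B)$, a contradiction. The $\sigma\to+\infty$ face produces a sphere bubble in class $z^{(1)}$ attached at $\zeta_1$; but now $\zeta_1$ is constrained to $F_0$, so the sphere's third marked point $\tilde\zeta_3$ maps to $F_0$, and $\tilde u\cdot F_0 = 1$ with the section part of $\tilde u$ over $B$ transverse to $F_0$. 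The remaining thimble component $u$ then has $\deg(u)=0$ and limit outside $F_0$, hence is disjoint from $F_0$ by Lemma \ref{th:max-principle-2}. Thus the total intersection with $F_0$ sits on the sphere, and — crucially — the condition $u(\zeta_1)=\tilde u(\tilde\zeta_3)\in F_0$ is automatically satisfied rather than imposing a further codimension constraint. This reorganizes exactly into the moduli space of \eqref{eq:extra-s-class}: a solution on the thimble with $u(\zeta)\in F_0$ and $\deg=1$, with the sphere component supplying the passage through $F_0$. The sign bookkeeping must be done with care; I expect the orientations to work out to give $(-1)$ times the count defining $s$.

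Then the algebraic conclusion is the standard one: counting the isolated (zero-dimensional) points of the compactified one-parameter cut of this two-parameter family, and reading off the signed count of the boundary of the one-dimensional strata, yields
\begin{equation}
\partial(\text{something}) = \Delta\nu - \beta_{Z^{(1)}|E} + s \quad\text{(up to sign)},
\end{equation}
i.e. \eqref{eq:corrected-beta} $= -s$ in cohomology. More precisely, one obtains a Floer cochain whose differential is the sum of the contributions from all the boundary faces enumerated above; the $\sigma\to-\infty$ face contributes nothing, the $\sigma\to+\infty$ face contributes $\pm s$, and the two $\rho$-faces contribute $\Delta\nu$ and $-\beta_{Z^{(1)}|E}$ respectively, whence the claimed cohomology relation. \textbf{The main obstacle} I anticipate is the gluing/transversality analysis at the corner where $\sigma\to+\infty$ and $\rho\to\infty$ simultaneously — one must verify that this corner stratum is genuinely codimension $2$ and does not contribute a spurious codimension-$1$ boundary term, and that the model for sphere-bubbling-without-gluing-angle (as flagged in the Remark following Lemma \ref{th:z1-zero}, where $\sigma$ plays the role of a gluing length and the constraint $\zeta_2\in\bR\times\{0\}$ eliminates the gluing angle) is compatible with the simultaneous $\Delta$-gluing. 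A secondary but real difficulty is pinning down all the signs, which depend on the conventions for orienting $P^\circ$, the $\Delta$-cylinder (cf.\ \eqref{eq:hj-delta}), and the thimble marked points; I would handle these by comparing with the sign computations already implicit in Lemma \ref{th:bv-annihilates} and Discussion \ref{th:hf-plus}, rather than redoing them from scratch.
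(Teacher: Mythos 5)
Your plan misidentifies what the boundary faces of the proposed parameter square actually compute, and as a result the Borman--Sheridan cocycle $s$ never appears in your analysis; meanwhile a term that the paper has to work hard to handle is silently dropped. Concretely, two things go wrong.

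First, the $\sigma\to+\infty$ face does not produce $s$. When $\zeta_2=(\sigma,0)$ slides off to $\zeta_1=\infty$, the sphere $\tilde u$ that bubbles off carries both incidence constraints: $\tilde u(\tilde\zeta_1)\in F_0$ and $\tilde u(\tilde\zeta_2)\in F_\dag$. The node $\tilde u(\tilde\zeta_3)=u(\zeta_1)$ is a \emph{generic} point of $E$, not a point of $F_0$, and the remaining principal thimble $u$ has $\deg(u)=0$ and avoids $F_0$ altogether (Lemma~\ref{th:max-principle-2}). This is precisely the configuration counting the PSS-type cocycle $b_{Z^{(1)}|E}$ — the thimble-through-pseudocycle picture, with the whole $F_0$-intersection living on the bubble. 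It is not the configuration \eqref{eq:extra-s-class}, which is a \emph{single} thimble of $\deg=1$ passing through $F_0$ at $\zeta$. Once you put the BV-gluing parameter $\rho$ back in, the $\{\sigma=+\infty\}$ face sweeps out exactly the parametrized moduli problem defining $\beta_{Z^{(1)}|E}$, not $s$.

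Second, the $\rho\to\infty$ face does not give $\Delta\nu$. After the $\Delta$-cylinder detaches, the thimble's marked point sits at $(\sigma,\tau)$ — its $t$-coordinate is rotated by the BV parameter $\tau$, because the thimble auxiliary data $(H^{\mathit{thimble}}_{s,t+\tau}, J^{\mathit{thimble}}_{s,t+\tau})$ in \eqref{eq:patch-2} were themselves rotated. So the two pieces of the broken configuration are coupled through $\tau$ (this is the condition \eqref{eq:tau-appears}), and the paper explicitly notes that the resulting contribution ``can't be written as a combination of previously introduced ones.'' In particular it is not $\Delta\nu$, which requires the $\nu$-thimble to have its marked point fixed at $t=0$, unrotated.

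What you are missing is the second parametrized moduli space: the one with parameters $\tau_1\in[0,1]$, $\sigma\in\bR$, $\tau_2\in[0,\tau_1]$, where $\tau_1$ is the BV rotation and $\tau_2$ is the (now decoupled) offset of the marked point. Its $\tau_1=\tau_2$ face reproduces the coupled term from the first space, its $\tau_2=0$ face gives $\Delta\nu$, and — crucially — its $\tau_1=1$ face produces $s$ via a multiplicity argument: $(\sigma,\tau_2)$ ranges over the whole cylinder, and the signed count of points where a $\deg=1$ thimble passes through $F_\dag$ is exactly $u\cdot F_\dag=\deg(u)=1$, so the marked-point constraint contributes trivially. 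That counting step is the real mechanism by which $s$ enters, and it has no counterpart in your proposal. Without it — and without recognising the coupling at $\rho=\infty$ — your four-edge square cannot close up to the desired relation.
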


\begin{proof}
The first step is to combine the ideas from Lemmas \ref{th:bv-annihilates} and \ref{th:z1-zero} (since this is a reprise of previous arguments, we will be rather light on details). Consider a moduli space as in Lemma \ref{th:bv-annihilates}, but with one additional parameter $\sigma$ as in Lemma \ref{th:z1-zero}, again thought of as giving a marked point $\zeta_2 = (\sigma,0)$. Look at maps $u$ satisfying \eqref{eq:z1-kernel}.
\begin{figure}
\begin{centering}
\begin{picture}(0,0)%
\includegraphics{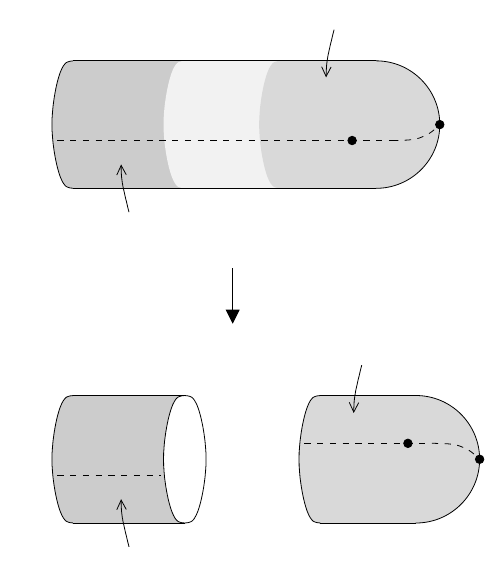}%
\end{picture}%
\setlength{\unitlength}{3355sp}%
\begingroup\makeatletter\ifx\SetFigFont\undefined%
\gdef\SetFigFont#1#2#3#4#5{%
  \reset@font\fontsize{#1}{#2pt}%
  \fontfamily{#3}\fontseries{#4}\fontshape{#5}%
  \selectfont}%
\fi\endgroup%
\begin{picture}(4605,5436)(-2339,-6139)
\put(601,-886){\makebox(0,0)[lb]{\smash{{\SetFigFont{10}{12.0}{\rmdefault}{\mddefault}{\updefault}{$(H^{\mathit{thimble}}_{s,t+\tau}, J^{\mathit{thimble}}_{s,t+\tau})$}%
}}}}
\put(-1724,-2911){\makebox(0,0)[lb]{\smash{{\SetFigFont{10}{12.0}{\rmdefault}{\mddefault}{\updefault}{$(H^{\Delta}_{\tau,s+\rho,t},J^\Delta_{\tau,s+\rho,t})$}%
}}}}
\put(-2324,-2086){\makebox(0,0)[lb]{\smash{{\SetFigFont{10}{12.0}{\rmdefault}{\mddefault}{\updefault}{$t = 0$}%
}}}}
\put(1876,-1786){\makebox(0,0)[lb]{\smash{{\SetFigFont{10}{12.0}{\rmdefault}{\mddefault}{\updefault}{$\zeta_1$}%
}}}}
\put(976,-2236){\makebox(0,0)[lb]{\smash{{\SetFigFont{10}{12.0}{\rmdefault}{\mddefault}{\updefault}{$\zeta_2$}%
}}}}
\put(526,-4036){\makebox(0,0)[lb]{\smash{{\SetFigFont{10}{12.0}{\rmdefault}{\mddefault}{\updefault}{$(H^{\mathit{thimble}}_{s,t}, J^{\mathit{thimble}}_{s,t})$}%
}}}}
\put(  1,-3436){\makebox(0,0)[lb]{\smash{{\SetFigFont{10}{12.0}{\rmdefault}{\mddefault}{\updefault}{degeneration $\rho \rightarrow \infty$}%
}}}}
\put(-1724,-6061){\makebox(0,0)[lb]{\smash{{\SetFigFont{10}{12.0}{\rmdefault}{\mddefault}{\updefault}{$(H^{\Delta}_{\tau,s,t},J^\Delta_{\tau,s,t})$}%
}}}}
\put(-2324,-5236){\makebox(0,0)[lb]{\smash{{\SetFigFont{10}{12.0}{\rmdefault}{\mddefault}{\updefault}{$t = 0$}%
}}}}
\put(  1,-4936){\makebox(0,0)[lb]{\smash{{\SetFigFont{10}{12.0}{\rmdefault}{\mddefault}{\updefault}{$t = \tau$}%
}}}}
\put(1351,-5086){\makebox(0,0)[lb]{\smash{{\SetFigFont{10}{12.0}{\rmdefault}{\mddefault}{\updefault}{$\zeta_2$}%
}}}}
\put(2251,-4936){\makebox(0,0)[lb]{\smash{{\SetFigFont{10}{12.0}{\rmdefault}{\mddefault}{\updefault}{$\zeta_1$}%
}}}}
\end{picture}%
\caption{\label{fig:breaking-thimble}One of the limits from the proof of Proposition \ref{th:interpret-bs}.}
\end{centering}
\end{figure}%

Three kinds of degenerations of such $(\xi,\sigma,u)$ are relevant for us. The first one is $\sigma \rightarrow \infty$, in which case one gets sphere bubbling as in Lemma \ref{th:z1-zero}. The contribution obtained by counting such configurations is $\beta_{Z^{(1)}|E}$. The second kind of degeneration happens when $\xi = (\rho,\tau)$ with $\rho \rightarrow \infty$, and simultaneously $\sigma \rightarrow -\infty$. This contributes zero, for the same reason as in Lemma \ref{th:z1-zero}. The final possibility is that  $\xi = (\rho,\tau)$ with $\rho \rightarrow \infty$, while $\sigma$ remains bounded (see Figure \ref{fig:breaking-thimble}). The resulting broken solutions are of the form $(\tau,\sigma,\tilde{u},u)$, where $(\tau,\tilde{u})$ satisfies the equation which defines $\Delta$, and $u$ is a map as in \eqref{eq:thimble}, such that
\begin{equation} \label{eq:tau-appears}
\left\{
\begin{aligned}
& u(\zeta_1) \in F_0, \\ 
& u(\sigma,\tau) \in F_{\dag}, \\
& \mathrm{deg}(u) = 1, \\
& \tilde{x} = \textstyle\lim_{s \rightarrow-\infty} u(s,\cdot) \text{ lies outside } F_0.
\end{aligned}
\right.
\end{equation}
The equations for $(\tau,\tilde{u})$ and $(\sigma,u)$ are not independent, because of the appearance of $\tau$ in \eqref{eq:tau-appears}. Hence, the contribution from this degeneration can't be written as a combination of previously introduced ones.

To make up for this shortcoming, we introduce another moduli space. This has parameters $\tau_1 \in [0,1]$, $\sigma \in \bR$ and $\tau_2 \in [0,\tau_1]$, and consists of $(\tilde{u},u)$ such that: $(\tau_1,\tilde{u})$ is again a  point in the moduli space defining $\Delta$, and $u$ is as in \eqref{eq:tau-appears} except that the second part of that equation must be replaced by
\begin{equation}
u(\sigma,\tau_2) \in F_{\dag}.
\end{equation}
The space of parameter values has three boundary components. One is when $\tau_1 = \tau_2$, which yields exactly the same contribution as in \eqref{eq:tau-appears}. The second boundary component $\tau_2 = 0$ contributes $\Delta \nu$. The final boundary component appears when $\tau_1 = 1$. In that case, $(\sigma,\tau_2)$ can be an arbitrary point in $\bR \times S^1$. Now, suppose that when defining \eqref{eq:bs-1}, we have chosen the auxiliary data so that isolated solutions are transverse to $F_{\dag}$. The count (with signs) of points at which any such solution passes through $F_{\dag}$ is $1$. Hence, we get a contribution which equals \eqref{eq:bs-1} (since $\tau_1$ is fixed to be $1$, one can arrange that the only maps $\tilde{u}$ that appear in that case are of the form $\tilde{u}(s,t) = x(t)$, which do not affect the contribution).

By combining both parametrized moduli spaces, and counting isolated points in them (more precisely, by subtracting the contribution of the second space from that of the first one), we therefore obtain a Floer cochain which bounds $\beta_{Z^{(1)}|E} - \Delta \nu - s$.
\end{proof}

\begin{remark}
The sign conventions we have introduced in the formal TQFT context, specifically \eqref{eq:composition-law} and \eqref{eq:boundary-axiom}, are compatible with those that arise in Floer theory from juggling determinant lines of elliptic operators (compare e.g.\ \cite[Section 11]{seidel04} or \cite{zinger13}). We do not normally dig into such details, but Proposition \ref{th:interpret-bs} may be a good point to do so, since the sign in its statement has consequences throughout the paper, for instance \eqref{eq:a-s}.

Points in the first parametrized moduli space are of the form $(\xi,\sigma,u)$. Since $\xi$ takes values in a two-dimensional space, the associated orientation does not change if we reorder the components as $(\sigma,\xi,u)$. Hence, $\sigma \rightarrow +\infty$ corresponds to a ``boundary face at infinity'' which is positively oriented. The other nontrivially contributing ``boundary face at infinity'' corresponds to writing $\xi = (\rho,\tau)$ and sending $\rho \rightarrow +\infty$, which is again positively oriented. As in \eqref{eq:boundary-axiom}, this means that the parametrized moduli space yields a cocycle $\alpha_1 \in \mathit{CF}^{-1}(E,H)$ with
\begin{equation} \label{eq:sign-test-1}
-\!d\alpha_1 + \beta_{Z^{(1)}|E} + \theta = 0,
\end{equation}
where $\theta$ is the contribution from \eqref{eq:tau-appears}. In the second parametrized moduli space, points are of the form $(\tau_1,\sigma,\tau_2,\tilde{u},u)$, or equivalently as far as orientation is concerned, $(\tau_2,\tau_1,\sigma,\tilde{u},u)$. The boundary value $\tau_1 = \tau_2$ is the maximal value of $\tau_2$, hence counts positively as in \eqref{eq:sign-test-1} (it is important that the ordering $(\tau_1,\sigma)$ of the remaining parameters is the same as the one encountered before). On the other hand, $\tau_2 = 0$ is the minimal value, which a priori means that the corresponding boundary points $(\tau_1,\sigma,\tilde{u},u)$ are counted negatively. However, to recover the orientation that corresponds to $\Delta \nu$, one has to reorder the entries as $(\tau_1,\tilde{u},\sigma,u)$, and since $\mathrm{index}(D_{\tilde{u}}) = -1$ at the relevant points, that permutation introduces another sign, which cancels the previous one. The final contribution comes from $\tau_1$ reaching its maximal value, and since the remaining parameters $(\sigma,\tau_2)$ correspond to the standard complex orientation of the cylinder, we get a positive sign. The outcome is that the second moduli space yields an $\alpha_2 \in \mathit{CF}^{-1}(E,H)$ such that
\begin{equation} \label{eq:sign-test-2}
-\!d\alpha_2 + \theta + \Delta \nu + s = 0.
\end{equation}
Combining \eqref{eq:sign-test-1} and \eqref{eq:sign-test-2} yields the desired statement.
\end{remark}

\subsection{Generalizations\label{subsec:higher-s}}
The construction of \eqref{eq:bs-1} admits generalizations in several directions. We only want to consider two examples. 

First, fix distinct points $\zeta_1,\zeta_2 \in T$ on the thimble, and consider solutions of \eqref{eq:thimble} such that
\begin{equation} \label{eq:s2-class}
\left\{
\begin{aligned}
& u(\zeta_1) \in F_0, \\ & u(\zeta_2) \in F_0, \\
& \mathrm{deg}(u) = 2, \\
& x = \textstyle\lim_{s\rightarrow -\infty} u(s,\cdot) \text{ lies outside } F_0.
\end{aligned}
\right.
\end{equation}
This leads to a cochain
\begin{equation} \label{eq:s2}
s^{(2)} \in \mathit{CF}^0(E,H)
\end{equation}
which, by a straightforward analogue of Lemma \ref{th:key-splitting}, is a cocycle if $\alpha > 2$. As the notation is intended to suggest, we will show later on that the Floer cohomology class of \eqref{eq:s2} is the (pair-of-pants) square of the Borman-Sheridan class; see Section \ref{subsec:pair-of-pants}.

The second possibility is to use only one marked point, but to ask for a tangency condition:
\begin{equation} \label{eq:tangency-condition}
\left\{
\begin{aligned}
& u(\zeta) \in F_0, \;\; Dp \circ Du_{\zeta} = 0, \\
& \mathrm{deg}(u) = 2, \\
& x = \textstyle\lim_{s\rightarrow -\infty} u(s,\cdot) \text{ lies outside } F_0.
\end{aligned}
\right.
\end{equation}
This makes sense because, if $u(\zeta) \in F_0$ holds, $Dp \circ Du_\zeta$ is complex-linear, which means that its vanishing is a condition of (real) codimension $2$. As before, counting such solutions leads to a Floer cochain, which is a cocycle if $\alpha > 2$; we denote it by
\begin{equation} \label{eq:s01}
\tilde{s}^{(2)} \in \mathit{CF}^0(E,H).
\end{equation}

\begin{lemma} \label{th:bisections}
Let $z^{(2)} \in \bK$ be the count of ``holomorphic bisections'', which means holomorphic spheres which have degree $2$ over $\bC P^1$, as in \eqref{eq:zk}. Then (assuming $\alpha > 2$ as usual) $s^{(2)}$ represents the same Floer cohomology class as $\tilde{s}^{(2)} + 2z^{(2)} e$.
\end{lemma}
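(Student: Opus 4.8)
The plan is to compare the two Floer cocycles $s^{(2)}$ and $\tilde s^{(2)}$ by interpolating between the incidence conditions \eqref{eq:s2-class} and \eqref{eq:tangency-condition} through a one-parameter family of moduli problems on the thimble, and to identify the discrepancy with a count of holomorphic bisections. Concretely, fix the marked point $\zeta = \infty \in T$ together with a second marked point $\zeta_2 = (\sigma,0)$, $\sigma \in \bR$, exactly as in the proof of Lemma \ref{th:z1-zero}. Consider pairs $(\sigma,u)$ where $u$ solves \eqref{eq:thimble}, has $\mathrm{deg}(u) = 2$, limit $x$ outside $F_0$, and satisfies $u(\zeta) \in F_0$, $u(\zeta_2) \in F_0$. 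One checks as in Lemma \ref{th:key-splitting} (using $\alpha > 2$, and the bound $\deg(u) = u \cdot F_0 \geq 2$ forced by the two incidence points when $u \not\subset F_0$) that no bubbling into $F_0$ occurs, so the relevant boundary of the $1$-dimensional component of this parametrized moduli space consists of: (a) Floer-trajectory breaking at $s \rightarrow -\infty$, which as usual contributes $d$ applied to the count; (b) the limit $\sigma \rightarrow -\infty$, which produces a broken configuration whose lower component is a thimble with the single point $\zeta$ hitting $F_0$ and the constraint $u(0,0) \in F_0$ pushed off to a Floer trajectory — but such a trajectory has $\deg = 0$, hence (Lemma \ref{th:max-principle-2}) is disjoint from $p^{-1}(B\setminus\partial B)$, contradicting the incidence, so this contributes nothing; (c) the limit $\sigma \rightarrow +\infty$, where the two marked points collide; and (d) a sphere-bubble degeneration of the type seen in Lemma \ref{th:z1-zero}.

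The heart of the argument is the analysis of (c) and (d). At $\zeta_2 \to \zeta$, the standard collision analysis on the thimble shows the limiting configuration is a solution $u$ of \eqref{eq:thimble} carrying \emph{both} incidence conditions at the single point $\zeta$; since both marked points were asked to hit $F_0$ with $\deg(u) = 2$, the limit is exactly a solution counted by $\tilde s^{(2)}$ — the tangency $Dp \circ Du_\zeta = 0$ emerges as the codimension-$2$ condition governing how two points approach while each staying on $F_0$ (the local model is $v(z) \sim c(z - \zeta_2)(z-\zeta)$, and in the limit the double zero forces the first derivative of $p \circ u$ to vanish). Degeneration (d) is the bubbling-off of a holomorphic sphere of degree $\geq 1$ over $\bC P^1$; combining this with the remaining thimble component. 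Because the total degree is $2$, the possibilities are: the sphere has degree $2$ (a bisection, with the thimble component of degree $0$), or the sphere has degree $1$ (carrying one of the incidence points with $F_0$) and the thimble has degree $1$ (carrying the other). The degree-$1$-sphere case reproduces $b_{Z^{(1)}|E}$-type contributions that are already accounted for by the $d(\cdot)$ term together with the definition of $s^{(2)}$ as a cocycle (they cancel against pieces of (a)); the genuinely new contribution is the degree-$2$ sphere, i.e. a holomorphic bisection through $F_0$ twice, glued to a constant (or degree-$0$) thimble at $\zeta$. Counting these, with the combinatorial factor accounting for the two ways the pair of $F_0$-incidence points is distributed along the bisection together with the automorphism count of the bubble configuration, produces the coefficient $2$ in $2 z^{(2)} e$ — here $z^{(2)}$ being precisely the Gromov-Witten count of degree-$2$ spheres as in \eqref{eq:zk}, and the glued-on thimble through a generic point of $E$ yielding the unit $e$ just as in the construction \eqref{eq:unit-construction}.

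Assembling: the $1$-dimensional parametrized moduli space yields a Floer cochain $\beta$ with $d\beta = s^{(2)} - \tilde s^{(2)} - 2 z^{(2)} e$ (up to the sign bookkeeping of \eqref{eq:boundary-axiom}, which one fixes exactly as in the remark following Proposition \ref{th:interpret-bs}: the $\sigma \to +\infty$ and sphere-bubbling faces are positively oriented ``faces at infinity''). Taking cohomology classes gives the claim. The step I expect to be the main obstacle is the precise identification in (c)–(d): namely, making rigorous that the $\sigma \to +\infty$ collision limit is the \emph{tangency} locus $Dp \circ Du_\zeta = 0$ rather than something weaker, and simultaneously pinning down the exact multiplicity (the factor $2$) of the bisection contribution. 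Both require a careful local gluing/orientation analysis at a point of $F_0$ — the former because one must verify that the collision of two transverse-intersection points produces exactly the $\mathbb{C}$-linear tangency condition and nothing else in the Gromov limit, and the latter because the combinatorial factor depends on correctly counting labelled versus unlabelled configurations and reconciling that with the descendant/TRR-free definition of $z^{(2)}$; one should cross-check the factor against \eqref{eq:m-ast-m}, where the same ``$4 z^{(2)}$'' versus ``$2 z^{(2)}$'' bookkeeping (marked points labelled vs. the divisor axiom) appears, to be confident the normalization is consistent with the rest of the paper.
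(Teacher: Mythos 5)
Your overall strategy — interpolating between $s^{(2)}$ and $\tilde{s}^{(2)}$ by moving a marked point along the thimble, with a bisection count emerging from non-ghost bubbling at the collision — is the right idea and matches the paper. But there are two concrete gaps in the setup, both tied to your choice of parameter space.

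First, you take $\sigma \in \bR$, copying the parameter space of Lemma \ref{th:z1-zero}. With that choice the parameter space has no finite boundary, and so there is no place for $s^{(2)}$ to arise. The paper instead uses $\rho \geq 0$: the finite boundary face at $\rho = 0$ is where the two marked points $\zeta_1 = \infty$, $\zeta_2 = (0,0)$ sit at fixed distinct positions, and its contribution to $d\beta$ is precisely the cochain $s^{(2)}$ defined by \eqref{eq:s2-class}. In your version, the identity $d\beta = s^{(2)} - \tilde{s}^{(2)} - 2z^{(2)} e$ is asserted at the end but is not actually derived: your analysis only produces $\tilde{s}^{(2)} + 2z^{(2)} e$ on the far end, with nothing on the other side to give $s^{(2)}$.

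Second, your treatment of the $\sigma \to -\infty$ limit is wrong, and this is precisely why $\sigma \in \bR$ is not the right parameter range. You argue, by analogy with Lemma \ref{th:z1-zero}, that the escaping Floer trajectory $\tilde{u}$ ``has $\deg = 0$'' and hence cannot meet $F_0$. That works in Lemma \ref{th:z1-zero} because the total degree there is $1$: the thimble's own $F_0$-incidence forces $\deg(u) \geq 1$, hence $\deg(\tilde{u}) = 0$. Here the total degree is $2$, and with $u(\zeta) \in F_0$ forcing $\deg(u) \geq 1$ while $\tilde{u}$ carries its own $F_0$-incidence forcing $\deg(\tilde{u}) \geq 1$, the split $\deg(u) = \deg(\tilde{u}) = 1$ is consistent with everything (the case of an intermediate orbit inside $F_0$ is excluded by Lemma \ref{th:nonnegative-2} and \eqref{eq:rewritten-inequalities}, which would give total degree $\geq 3$; but for an intermediate orbit outside $F_0$ nothing is ruled out). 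So the $\sigma \to -\infty$ face contributes a nontrivial count of pairs (degree-$1$ Borman--Sheridan thimble, degree-$1$ Floer trajectory through $F_0$ at a prescribed point), which is not $s^{(2)}$, and your proposed identity fails. Restricting to $\rho \geq 0$ as in the paper avoids this limit entirely.

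A smaller issue: your description of the collision at $\sigma \to +\infty$ lists ``degree-$1$ sphere carrying one incidence, thimble carrying the other'' as a sub-case. That does not occur: as $\zeta_2 \to \zeta_1 = \infty$, both $F_0$-incidences move onto the bubble at $\zeta_1$, so the bubble is either a constant ghost (degree $0$, giving the tangency condition hence $\tilde{s}^{(2)}$) or it meets $F_0$ at the two marked points and therefore has degree $2$ (giving $2z^{(2)} e$, with the factor $2$ coming from exchanging $\tilde\zeta_1$ and $\tilde\zeta_2$). A nonconstant degree-$1$ bubble would have to carry both marked points to its unique intersection with $F_0$, which is codimension $\geq 2$.
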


\begin{proof}
The basic idea is similar to that in Proposition \ref{th:interpret-bs}. We introduce one parameter $\rho \geq 0$. One of the marked points is fixed, let's say $\zeta_1 = \infty$, and the other one is $\zeta_2 = (\rho,0)$. One then considers the parametrized version of \eqref{eq:s2-class}.

As $\rho \rightarrow \infty$, the relevant limiting behaviour is sphere bubbling at $\zeta_1$, leading to a principal component $u$ together with a bubble component $\tilde{u}: \tilde{S} \rightarrow F$, which (for a suitable parametrization) satisfies
\begin{equation} \label{eq:tilde-u-para}
\left\{
\begin{aligned}
& \tilde{u}(\tilde{\zeta}_1) \in F_0, \\
& \tilde{u}(\tilde{\zeta}_2) \in F_0, \\
& \tilde{u}(\tilde{\zeta}_3) = u(\zeta_1).
\end{aligned}
\right.
\end{equation}
There are in fact two topologically distinct sub-cases. One is that $\tilde{u}$ is a constant ``ghost bubble''. In that case, the principal component is tangent to $F_0$ at $\zeta_1$. Counting such configurations contributes $\tilde{s}^{(2)}$. The other case is where $\tilde{u}$ is not contained in $F_0$, so that $\tilde{u} \cdot F_0 = 2$. In that case, the principal component satisfies $\deg(u) = u \cdot F_0 = 0$, which means that it avoids $F_0$ altogether. If one counts the maps $\tilde{u}$, thinking of $u(\zeta_1)$ as a fixed (generic) point in $E$, the outcome is $2z^{(2)}$, where the factor of $2$ comes from the possibility of changing the parametrization \eqref{eq:tilde-u-para}, exchanging $\tilde{\zeta}_1$ and $\tilde{\zeta}_2$ while keeping $\tilde{\zeta}_3$ fixed. Hence, the contribution of such pairs $(u,\tilde{u})$ is $2 z^{(2)} e$. 

Note that other kinds of sphere bubbling, for instance where $\tilde{u}$ is a non-constant map to $F_0$, are irrelevant because they are of codimension $\geq 2$. The other potentially problematic limiting behaviour as $\rho \rightarrow \infty$ is where the limit is of the form $(\tilde{u},u)$, where $\tilde{u}$ is a Floer trajectory, and the principal component $u$ is entirely contained in $F_0$. However, that can be excluded by the same arguments as in Lemma \ref{th:key-splitting}.
\end{proof}

\section{A connection\label{sec:floer-connection}}
We will construct, under Assumption \ref{th:psi-eta}, a connection on Floer cohomology, and show that it satisfies a version of \eqref{eq:nablac-s} with $c = -1$; see \eqref{eq:we-got-it}. Eventually, one has to discuss how this construction fits into the contexts of Sections \ref{subsec:sh} and \ref{subsec:differentiation-axiom}; but we postpone that to Sections \ref{sec:operations-on-floer-cohomology}--\ref{sec:symplectic-cohomology}.

\subsection{The setup}
Fix some $\alpha \in \bR \setminus \bZ$, and the data $(H,J)$ required to define the chain complex $\mathit{CF}^*(E,H)$ underlying $\mathit{HF}^*(E,\alpha)$. We denote by $\partial_q$ the operation of differentiating Floer cochains, which means differentiating their coefficients with respect to the obvious basis. It follows from the definition of the differential \eqref{eq:floer-d} that
\begin{equation} \label{eq:differentiate-d}
(\partial_q d - d \partial_q )(x^+) = \sum_{x^-} \Big( \sum_u \pm (u \cdot \Omega) q^{u \cdot \Omega-1} \Big) x^-.
\end{equation}
We can assume, since this is true for generic choices of $(H,J)$, that each solution $u$ with $\mathrm{index}(D_u) = 1$ intersects the components $\Omega_j$ of $\Omega$ transversally. Let's introduce a parameter $\tau \in S^1$, which singles out a point $\zeta = (0,-\tau) \in \bR \times S^1$. We can then consider pairs $(\tau,u)$, where $u$ is a solution of \eqref{eq:floer} such that $u(\zeta) \in \Omega_j$. For each $j$, counting points in the associated parametrized moduli space yields a chain map of degree $1$. Adding up those chain maps, with the weights $\mu_j$ from \eqref{eq:z-divisor}, and then multiplying the whole with $q^{-1}$, recovers \eqref{eq:differentiate-d}. 

\begin{remark}
One may wonder why, in order to obtain the count from \eqref{eq:differentiate-d}, one has to choose orientations of the parameter space so that the marked point $\zeta$ runs negatively around the $S^1$ factor. This has its origin in the sign convention for the differential $d$. Namely, if $u$ is a regular solution of \eqref{eq:floer} with $\mathrm{index}(D_u) = 1$, 
\begin{equation} \label{eq:only-translations}
\mathit{ker}(D_u) = \bR \,\partial_su.
\end{equation}
The general theory of orientation operators associated to one-periodic points $x$ (going back to \cite{floer88}, see also \cite{floer-hofer93}; the terminology is borrowed from \cite{seidel04}) produces orientations of $\mathit{ker}(D_u) \oplus \mathit{coker}(D_u)$ for any $u$, regular or not. In \eqref{eq:floer-d} we count $u$ with $\pm 1$ depending on whether this orientation agrees with the obvious one on the right hand side of \eqref{eq:only-translations} or not. 

An oriented basis of the tangent space to the parametrized moduli space at $(\tau,u)$ is given by $((1,0), (0, \pm \partial_s u))$, or by $((0, \pm \partial_s u), (-1,0))$, where the sign has the same meaning as before. Under the evaluation map $(\tau,u) \mapsto u(0,-\tau)$, the latter basis maps to $(\pm \partial_s u, \partial_t u)$. This agrees with the orientation that contributes to the expression $\pm (u \cdot \Omega)$ in \eqref{eq:differentiate-d}.
\end{remark}

Having reformulated the right hand side of \eqref{eq:differentiate-d} as a parametrized moduli problem, we can consider it as a special case of a more general construction. Namely, take a proper pseudo-cycle $K$ with $\bK$-coefficients in $E$, as in \eqref{eq:k-pseudo-cycle}. Choose $(H^r,J^r)$ which depend on $(\tau,s,t)$, with asymptotics for $\pm s \gg 0$ as in \eqref{eq:inter-hj}. Consider the parametrized space of solutions of the associated equation \eqref{eq:continuation}, with 
\begin{equation} \label{eq:defining-r}
\left\{
\begin{aligned}
& u(\zeta) = \kappa_j(k) \text{ for some $k \in K_j$,} \\
& \mathrm{deg}(u) = 0, \\
& x^{\pm} = \textstyle \lim_{s \rightarrow \pm\infty} u(s,\cdot) \text{ lies outside } F_0.
\end{aligned}
\right.
\end{equation}
Counting points in this space (with real multiplicities, since $K_j$ is a pseudo-cycle with $\bR$-coefficients) gives a chain map $r_{K_j}$. We then define
\begin{equation} \label{eq:o-map}
r_K = \sum_j q^{d_j} \, r_{K_j}: \mathit{CF}^*(E,H^+) \longrightarrow \mathit{CF}^{*+\mathrm{codim}(K)-1}(E,H^-).
\end{equation}
Let's specialize to $K = q^{-1}\Omega|E$. Suppose that we take $(H^r_{\tau,s,t},J^r_{\tau,s,t}) = (H_t,J_t)$. In that case, our previous argument shows that $r_{q^{-1}\Omega|E}$ agrees with the right hand side of \eqref{eq:differentiate-d}. If we still choose $(H^r,J^r)$ to converge to $(H,J)$ as $s \rightarrow \pm\infty$, but otherwise arbitrary, then $r_{q^{-1}\Omega|E}$ is still at least chain homotopic to \eqref{eq:differentiate-d} (the chain homotopy is defined in a straightforward way, by adding another parameter that deforms the auxiliary data). Finally, we can choose different asymptotics $(H^-,J^-)$ and $(H^+,J^+)$, even with different amounts of rotation $\alpha^- \geq \alpha^+$. In that case, the resulting map $r_{q^{-1}\Omega|E}$ is chain homotopic to \eqref{eq:differentiate-d} composed with the continuation map \eqref{eq:continuation-map}. To make this entirely clear, let's write $d^{\pm}$ for the two Floer differentials involved. Then we have chain homotopies
\begin{equation} \label{eq:r-homotopy-omega}
r_{q^{-1}\Omega|E} \htp (\partial_q d^- - d^-\partial_q) \, C \htp C \, (\partial_q d^+ - d^+ \partial_q).
\end{equation}
These homotopies are obtained by moving the marked point $\zeta = (0,-\tau)$ towards one of the ends of the cylinder, and considering the resulting two-parameter moduli space.

From this point onwards, we will work under Assumption \ref{th:psi-eta}. Let's choose a pseudo-chain $A$ such that
\begin{equation} \label{eq:boundary-of-a}
\partial A \iso \psi Z^{(1)} -q^{-1}\Omega - \eta F_0.
\end{equation}
To be precise, $A$ is a pseudo-chain with $\bK$-coefficients in $F$; $\iso$ stands for equivalence of pseudo-cycles; and the inclusion $F_0 \hookrightarrow F$ is itself thought of as a pseudo-cycle. Restricting to $E$ then yields a proper pseudo-chain $A|E$, which satisfies
\begin{equation} \label{eq:a-restricted}
\partial (A|E) = \partial A|E \iso \psi Z^{(1)}|E - q^{-1}\Omega|E.
\end{equation}
By imitating the construction of \eqref{eq:o-map} with $A|E$ instead of $K$, one gets a chain homotopy
\begin{equation} \label{eq:pseudo-homotopy}
\psi r_{Z^{(1)}|E} = r_{\psi Z^{(1)}|E} \htp r_{q^{-1}\Omega|E}.
\end{equation}

\begin{remark} \label{th:pseudo-cycle-2}
We refer to \cite{schwarz99, kahn01, zinger08} for a general discussion of the relation between equivalence classes of pseudo-cycles and homology, and only consider the aspect which is immediately relevant here. A pseudo-chain (or relative pseudo-cycle) in $F$ is a map $a: A \rightarrow F$, where $A$ is an oriented manifold with boundary, which satisfies the same conditions as a pseudo-cycle, and such that $a|\partial A$ is itself a pseudo-cycle. Any pseudo-cycle which is trivial in $H_*(F;\bZ)$ can be represented as such a boundary.

For a pseudo-cycle with $\bR$-coefficients, the same is true up to equivalence (in the sense mentioned in Remark \ref{th:pseudo-cycle}): if its class in $H_*(F;\bR)$ is trivial, the pseudo-cycle is equivalent to the boundary of a pseudo-chain with $\bR$-coefficients. Finally, for the case of $\bK$-coefficients which we have encountered above, one simply argues order by order in the Novikov variable. We have used this implicitly when asserting the existence of \eqref{eq:boundary-of-a}.
\end{remark}

Our strategy will be to show, using the same ideas as in Lemma \ref{th:z1-zero}, that for suitable choices of $\alpha^\pm$, the map $r_{Z^{(1)}|E}$ is chain homotopic to zero. More precisely:

\begin{lemma} \label{th:nu-null-homotopy}
Suppose that $[\alpha^+,\alpha^-] \cap \bZ \neq \emptyset$. Then there is a map 
\begin{equation} \label{eq:nu-null-homotopy}
\begin{aligned}
& \chi: \mathit{CF}^*(E,H^+) \longrightarrow \mathit{CF}^*(E,H^-), \\
& d^- \chi - \chi d^+ + r_{Z^{(1)}|E} = 0.
\end{aligned}
\end{equation}
\end{lemma}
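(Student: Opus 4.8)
The plan is to run the same parametrized-moduli argument as in Lemma \ref{th:z1-zero}, but now on the continuation cylinder rather than on the thimble. Recall that $r_{Z^{(1)}|E}$ counts pairs $(\tau,u)$, where $u$ solves a continuation equation \eqref{eq:continuation} with varying angle $\alpha_s$ (interpolating from $\alpha^+$ to $\alpha^-$), the marked point $\zeta=(0,-\tau)$ is constrained to lie on $Z^{(1)}|E$ via an auxiliary incidence against the evaluation pseudo-cycle of holomorphic sections, and $\mathrm{deg}(u)=0$. We want to trade this constraint for an honest incidence with a sphere bubble, just as in Lemma \ref{th:z1-zero}.

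First I would recall how $Z^{(1)}$ was built (Section \ref{subsec:secondary}): its defining data are a sphere $\tilde S=\bC P^1$ with marked points $\tilde\zeta_1,\tilde\zeta_2,\tilde\zeta_3$, a fibre incidence $\tilde u(\tilde\zeta_1)\in F_0$, an incidence $\tilde u(\tilde\zeta_2)\in F_\dag$ to rigidify the automorphism group, and evaluation at $\tilde\zeta_3$. Now introduce, in addition to $\tau$, a second real parameter $\sigma\in\bR$, playing the role of the position of a new marked point $\zeta_2=(\sigma,0)$ on the continuation cylinder (the old marked point $\zeta_1=(0,-\tau)$ carries the incidence $u(\zeta_1)\in F_0$ and the degree bookkeeping), and impose $u(\zeta_2)\in F_\dag$, $\deg(u)=1$, $x^\pm$ outside $F_0$. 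Counting isolated points $(\tau,\sigma,u)$ of this $2$-parameter family yields a map $\chi\colon\mathit{CF}^*(E,H^+)\to\mathit{CF}^*(E,H^-)$ of the right degree, and the boundary of the $1$-dimensional strata gives $d^-\chi-\chi d^+$ plus the contributions of the two ends $\sigma\to\pm\infty$. The key point, exactly as in the proof of Lemma \ref{th:z1-zero}, is identifying those two ends: as $\sigma\to+\infty$ a sphere bubble forms at $\zeta_1$ which satisfies \eqref{eq:pseudo-sphere-1}--\eqref{eq:pseudo-sphere-2}, so counting the broken configurations $(u,\tilde u)$ with $u(\zeta_1)=\tilde u(\tilde\zeta_3)$ reproduces precisely $r_{Z^{(1)}|E}$ (the principal component $u$ now has $\deg(u)=u\cdot F_0=0$, hence avoids $F_0$ by Lemma \ref{th:max-principle-2}); as $\sigma\to-\infty$ one would get a broken Floer configuration whose extra component must pass through $F_\dag$, but by the degree/index argument from Lemma \ref{th:key-splitting} together with Lemma \ref{th:max-principle-2} that component must avoid $p^{-1}(B\setminus\partial B)$, contradicting the $F_\dag$ incidence; so this end contributes nothing. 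This yields \eqref{eq:nu-null-homotopy}.

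Two points need care, and one of them is where I expect the real obstacle. The minor point is degenerations at finite $\sigma$: one must rule out $\zeta_2$ colliding with $\zeta_1$ (codimension $2$, harmless) and sphere bubbling away from $\zeta_1$ or Floer breaking in which the principal component lies inside $F_0$; all of these are excluded exactly as in Remark \ref{th:broken-big} and Lemma \ref{th:key-splitting}, using $[\alpha^+,\alpha^-]\cap\bZ\neq\emptyset$ only to the extent that it is what makes $r_{Z^{(1)}|E}$ potentially nonzero in the first place — the null-homotopy argument itself works regardless. The genuinely delicate point is transversality for the varying-angle continuation equation with the two pointwise fibre-incidence conditions simultaneously imposed: one has to extend Lemmas \ref{th:v-regular} and \ref{th:transversality-2} to solutions carrying the constraints $u(\zeta_1)\in F_0$ and $u(\zeta_2)\in F_\dag$, treating solutions that remain inside $p^{-1}(B)$ by the split argument ($v$ regular by Lemma \ref{th:v-regular}, $w$ a continuation solution in $M$) and all other solutions by unconstrained perturbation of $(H^r,J^r)$ outside $p^{-1}(B)$; simultaneously one must arrange that the sphere-bubble stratum appearing at $\sigma\to+\infty$ glues transversally, with the real parameter $\sigma$ substituting for the gluing length exactly as in the remark following Lemma \ref{th:z1-zero}. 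Once transversality is in place the boundary count is routine and gives the stated identity, so I would keep the write-up brief, referring back to the proof of Lemma \ref{th:z1-zero} for the shared mechanics.
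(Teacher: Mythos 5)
Your overall strategy is the right one --- trade the pseudo-cycle incidence for an explicit sphere bubble by ``pulling out'' an auxiliary marked point, exactly as in Lemma~\ref{th:z1-zero} --- and this is indeed what the paper does. But there are two genuine gaps in the execution, one geometric and one index-theoretic, and the second is precisely where the hypothesis of the lemma is actually used.

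\emph{The geometry of the second marked point is wrong.} You place $\zeta_1 = (0,-\tau)$ and $\zeta_2 = (\sigma,0)$ on the cylinder and claim that as $\sigma\to+\infty$ a sphere bubble forms at $\zeta_1$, reproducing $r_{Z^{(1)}|E}$. That cannot happen: for generic $\tau$ the points $(\sigma,0)$ and $(0,-\tau)$ never approach each other. What you are recalling is the thimble argument of Lemma~\ref{th:z1-zero}, where $\zeta_1=\infty\in T$ is the cap-off point, so $\zeta_2=(\sigma,0)$ genuinely converges to $\zeta_1$ as $\sigma\to+\infty$. On the cylinder there is no cap-off: as $\sigma\to+\infty$ the marked point $\zeta_2$ escapes to the positive end, which produces Floer breaking (with the $F_\dag$ constraint landing on the broken-off trajectory), not sphere bubbling at $\zeta_1$. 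To make the two points collide you must put $\zeta_2$ on the same horizontal line as $\zeta_1$ and run the collision at a finite parameter value: the paper takes $\zeta_2 = (\sigma,-\tau)$ with $\sigma\in(-\infty,0)$, so that $\zeta_2\to\zeta_1$ as $\sigma\to 0^-$, and the sphere bubble then does carry both the $F_0$ and $F_\dag$ conditions, matching \eqref{eq:pseudo-sphere-1}--\eqref{eq:pseudo-sphere-2}.

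\emph{The hypothesis $[\alpha^+,\alpha^-]\cap\bZ\neq\emptyset$ is not decorative.} You assert that the nullhomotopy argument works regardless of this hypothesis, and that it only matters ``to the extent that it is what makes $r_{Z^{(1)}|E}$ potentially nonzero''. That is not correct. At $\sigma\to-\infty$ you dispose of the two-component breaking by the degree argument (this part is fine and matches the paper), but there is a three-component degeneration $(\tilde u^-, u, \tilde u^+)$ in which the middle cylinder $u$ lies entirely inside $F_0$ while the outer Floer trajectories $\tilde u^\pm$ carry the remaining degree. In that stratum the constraint $u(\zeta_1)\in F_0$ is tautological, so it no longer cuts down the dimension, and a naive ``three components means codimension two'' count fails. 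The paper handles this via \eqref{eq:codim-computation}, where the excess codimension is exactly $2(\lfloor\alpha^-\rfloor-\lfloor\alpha^+\rfloor)$, and alternatively by the topological inequality $\deg(\tilde u^-)+\deg(\tilde u^+)\geq\lfloor\alpha^-\rfloor+1-\lfloor\alpha^+\rfloor$; \emph{both} arguments use $\lfloor\alpha^-\rfloor>\lfloor\alpha^+\rfloor$, i.e.\ $[\alpha^+,\alpha^-]\cap\bZ\neq\emptyset$. Your write-up omits this stratum entirely, which is where the restriction on the angles earns its keep. Without it the construction of $\chi$ as an honest chain homotopy breaks down.
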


Let's explain why this is useful. From \eqref{eq:r-homotopy-omega} and \eqref{eq:pseudo-homotopy}, we have chain homotopies (for which we now introduce the notation $h^+$ and $r_{A|E}$)
\begin{align}
\label{eq:1st-homotopy}
& d^- h^+ - h^+ d^+ + C\, (\partial_q d^+ - d^+ \partial_q) - r_{q^{-1}\Omega|E} = 0, \\
\label{eq:2nd-homotopy}
& d^- r_{A|E} - r_{A|E} d^+ + r_{q^{-1}\Omega|E} - \psi \, r_{Z^{(1)}|E} = 0.
\end{align}
We combine them with \eqref{eq:nu-null-homotopy} to form
\begin{equation} \label{eq:chi-connection}
\begin{aligned}
& \nabla^{-1}: \mathit{CF}^*(E,H^+) \longrightarrow \mathit{CF}^*(E,H^-), \\
& \nabla^{-1} = C \partial_q - h^+ - r_{A|E} - \psi \chi,
\end{aligned}
\end{equation}
which satisfies 
\begin{align}
& d^- \nabla^{-1} - \nabla^{-1} d^+ = 0, \\
& \nabla^{-1}( fx) = f \nabla^{-1} x + (\partial_q f) C(x). \label{eq:connection-derivation}
\end{align}
Hence, the induced map on cohomology, which we also denote by
\begin{equation} \label{eq:chi-connection-induced}
\nabla^{-1}: \mathit{HF}^*(E,\alpha^+) \longrightarrow \mathit{HF}^*(E,\alpha^-),
\end{equation}
is a connection with respect to the continuation map between those Floer cohomology groups, meaning that it satisfies the cohomology level counterpart of \eqref{eq:connection-derivation}.

\begin{discussion} \label{th:hplus-hminus}
Alternatively, one could use the other homotopy from \eqref{eq:r-homotopy-omega}, which we now denote by $h^-$. It satisfies
\begin{equation}
d^- h^- - h^- d^+ + r_{q^{-1}\Omega|E} - (\partial_q d^- - d^- \partial_q) C = 0.
\end{equation}
This leads to another formula for a connection:
\begin{equation}
\partial_q C + h^- - r_{A|E} - \psi \chi,
\end{equation}
which however turns out to be the same as \eqref{eq:chi-connection} up to chain homotopy. This follows from the existence of a nullhomotopy
\begin{equation} \label{eq:h-plus-and-h-minus}
 \partial_q C - C \partial_q + h^- + h^+\htp 0.
\end{equation}
To construct it, one interprets $\partial_q C - C \partial_q$ geometrically as in \eqref{eq:differentiate-d}: it counts pairs consisting of a solution $u$ of the continuation map equation and a point $(\sigma,\tau) \in \bR \times S^1$ whose image goes through $q^{-1}\Omega|E$. Now, $h^-$ and $h^+$ are obtained from the same kind of moduli problem, but where the marked point is required to lie in one of the two halves of the cylinder. Adding up the two possibilites gives a space which is very similar to that underlying $\partial_q C - C \partial_q$: the choices of auxiliary data may not agree, but one can interpolate between them, and the associated parametrized moduli space gives rise to the homotopy in \eqref{eq:h-plus-and-h-minus}.
\end{discussion}

\subsection{Constructing the nullhomotopy\label{subsec:construct-chi}}
This section gives the proof of Lemma \ref{th:nu-null-homotopy}. Fix $\alpha^\pm$ and $(H^\pm,J^\pm)$ as before. We will introduce a moduli space with two parameters $(\sigma, \tau) \in (-\infty,0) \times S^1$. The parameters determine the position of two marked points 
\begin{equation} \label{eq:two-points-correlated}
\zeta_1 = (0,-\tau), \; \zeta_2 = (\sigma,-\tau) \in \bR \times S^1.
\end{equation}
We choose auxiliary data $(H^\chi,J^\chi)$ which depend on $(\sigma,\tau,s,t)$, and which on the ends, are asymptotically equal to the previously chosen $(H^\pm,J^\pm)$. In fact, we want our data to extend smoothly to $\sigma = 0$, and there, to agree with those used to define $r_{Z^{(1)}|E}$. We impose an additional requirement on those data, which is that $J^\chi_{0,\tau,0,-\tau} = \tilde{J}$ should be the almost complex structure used to define $Z^{(1)}$. We also ask that the data should extend to the other limit $\sigma = -\infty$. It is worth emphasizing what this means: as $\sigma \rightarrow -\infty$, the marked point $\zeta_2$ goes to $-\infty$, but the data $(H^\chi,J^\chi)$ converge uniformly over the entire cylinder (in any $C^r$ topology, with some exponential weights on the ends $s \rightarrow \pm\infty$).

We consider solutions of the associated equation of type \eqref{eq:continuation}, with the conditions
\begin{equation} \label{eq:updown-left}
\left\{
\begin{aligned}
& u(\zeta_1) \in F_0, \\
& u(\zeta_2) \in F_{\dag}, \\
& \mathrm{deg}(u) = 1, \\
& x^{\pm} = \textstyle \lim_{s \rightarrow \pm\infty} u(s,\cdot) \text{ lies outside $F_0$.}
\end{aligned}
\right.
\end{equation}

The limiting behaviour as $\sigma \rightarrow 0$ is bubbling off of a holomorphic sphere. More precisely, the relevant situation is as follows. The bubble is a $\tilde{J}$-holomorphic sphere $\tilde{u}$ which (suitably parametrized) satisfies \eqref{eq:pseudo-sphere-1}, \eqref{eq:pseudo-sphere-2}. The principal component of the limit satisfies $\mathrm{deg}(u) = 0$; and the two components are joined by the incidence condition $u(\zeta_1) = \tilde{u}(\tilde{\zeta}_3)$. This is exactly what ones sees when spelling out the definition of $r_{Z^{(1)}|E}$, and has the desired codimension $1$.
More complicated potential degenerations can be excluded because they have higher codimension.

For $\sigma \rightarrow -\infty$, the simplest degeneration would lead to a configuration consisting of two components $(\tilde{u},u)$, of which the first one is a Floer trajectory for $(H^-,J^-)$, and the second is a solution of an equation \eqref{eq:continuation} for the limit datum $(H^{\chi}_{-\infty,\tau},J^\chi_{-\infty,\tau})$. With $\tilde{\zeta} = \zeta = (0,-\tau)$, these would satisfy
\begin{equation} \label{eq:two-part-incidence}
\left\{
\begin{aligned}
& u(\zeta) \in F_0, \\
& \tilde{u}(\tilde{\zeta}) \in F_{\dag}, \\
& \mathrm{deg}(u) + \mathrm{deg}(\tilde{u}) = 1, \\
& x^- = \textstyle \lim_{s \rightarrow -\infty} \tilde{u}(s,\cdot) \text{ lies outside $F_0$,} \\
& \textstyle \lim_{s \rightarrow +\infty} \tilde{u}(s,\cdot) = \tilde{x} = \lim_{s \rightarrow -\infty} u(s,\cdot), \\
& x^+ = \textstyle \lim_{s \rightarrow +\infty} \tilde{u}(s,\cdot) \text{ lies outside $F_0$.}
\end{aligned}
\right.
\end{equation}
By Lemma \ref{th:nonnegative-2} and its analogue for the continuation map equation,
\begin{multline} \label{eq:degu-u}
\mathrm{deg}(u) + \mathrm{deg}(\tilde{u}) = u \cdot F_0 + \tilde{u} \cdot F_0 \\ +
\begin{cases} m^+(\tilde{u}) + m^-(u) & \text{if $\tilde{x}$ lies in $F_0$,} \\
0 & \text{otherwise.} \end{cases}  \qquad \qquad
\end{multline}
In the first case, $m^+(\tilde{u}) + m^-(u) \geq 1$ by \eqref{eq:rewritten-inequalities}; since $u \cdot F_0 \geq 1$, that would mean that \eqref{eq:degu-u} is at least $2$, which contradicts \eqref{eq:two-part-incidence}. Hence, it follows that $\tilde{x}$ lies outside $F_0$, and that in fact, $\mathrm{deg}(u) = 1$, $\mathrm{deg}(\tilde{u}) = 0$. But then, $\tilde{u}$ will never enter $p^{-1}(B \setminus \partial B)$, which is again a contradiction to \eqref{eq:two-part-incidence}. Hence, such limits do not after all exist.

Among the other potential degenerations as $\sigma \rightarrow -\infty$, there is one which deserves a separate discussion, since it leads to the restriction on $\alpha^{\pm}$ in Lemma \ref{th:nu-null-homotopy}. Namely, suppose that the limit consists of three components $(\tilde{u}^-,u,\tilde{u}^+)$ with the following properties. The principal component $u$ lies entirely in $F_0$; The other components $\tilde{u}^{\pm}$ are Floer trajectories for $(H^\pm,J^\pm)$, but do not quite play symmetrical roles: $\tilde{u}^-$ carries a marked point with an evaluation constraint, as in \eqref{eq:two-part-incidence}; $\tilde{u}^+$ does not, and is considered up to translation in $s$-direction. To be more specific, the counterpart of \eqref{eq:two-part-incidence} is
\begin{equation} \label{eq:three-part-incidence}
\left\{
\begin{aligned}
& u(\bR \times S^1) \subset F_0, \\
& \tilde{u}^-(\tilde{\zeta}) \in F_{\dag} \quad \text{for $\tilde{\zeta} = (0,-\tau)$}, \\
& \mathrm{deg}(\tilde{u}^-) + \mathrm{deg}(\tilde{u}^+) = 1, \\
& x^- = \textstyle \lim_{s \rightarrow -\infty} \tilde{u}^-(s,\cdot) \text{ lies outside $F_0$,} \\
& \textstyle \lim_{s \rightarrow +\infty} \tilde{u}^-(s,\cdot) = \tilde{x}^- = \lim_{s \rightarrow -\infty} u(s,\cdot) \text{ lies in $F_0$,} \\
& \textstyle \lim_{s \rightarrow +\infty} u(s,\cdot) = \tilde{x}^+ = \lim_{s \rightarrow -\infty} \tilde{u}^+(s,\cdot) \text{ lies in $F_0$,} \\
& x^+ = \textstyle \lim_{s \rightarrow +\infty} \tilde{u}^+(s,\cdot) \text{ lies outside $F_0$.}
\end{aligned}
\right.
\end{equation}
Ordinarily, since such a limit has three components, one would expect to be able to rule it out on the basis having of codimension $2$. In this case, $u$ originally came with an evaluation condition $u(\zeta) \in F_0$, which now holds tautologically (hence is non-transverse), which means that the dimension count has to be reconsidered. Assuming suitably generic choices, the actual dimension of the space of solutions $(\tau,\tilde{u}^-,u,\tilde{u}^+)$ is
\begin{equation} \label{eq:codim-computation}
\begin{aligned}
& 
\overbrace{1}^{\tau} + 
\overbrace{i(x^-) - i(\tilde{x}^-) + 2\,\mathrm{deg}(\tilde{u}^-)}^{\tilde{u}^-} 
- \overbrace{2}^{\tilde{u}^-(\tilde{\zeta}) \in F_\dag}
+ \overbrace{i_M(\tilde{x}^-) - i_M(\tilde{x}_+)}^u \\
& + \overbrace{i(\tilde{x}^+) - i(x^+) + 2\,\mathrm{deg}(\tilde{u}^+) - 1}^{\tilde{u}^+} \\
& \qquad \qquad
= (i(x^-) - i(x^+)) - 2 (\lfloor \alpha^- \rfloor - \lfloor \alpha^+ \rfloor),
\end{aligned}
\end{equation}
where the $\lfloor \alpha^\pm \rfloor$ terms come from \eqref{eq:two-indices}. The first term in the last line of \eqref{eq:codim-computation} is the dimension of the main moduli space, and the second term is therefore the codimension of the boundary stratum under consideration. Our assumption $[\alpha^+,\alpha^-] \cap \bZ \neq \emptyset$ is equivalent to $\lfloor \alpha^- \rfloor - \lfloor \alpha^+ \rfloor > 0$, which ensures codimension $\geq 2$ in \eqref{eq:codim-computation}, enough to make such degenerations irrelevant. However, it turns out that in this case there is a simpler topological argument which excludes their existence. Namely, Lemma \ref{th:nonnegative-2} and \eqref{eq:rewritten-inequalities} lead to a contradiction with \eqref{eq:three-part-incidence}:
\begin{equation}
\mathrm{deg}(\tilde{u}^-) + \mathrm{deg}(\tilde{u}^+) \geq m^+(\tilde{u}^-) + m^-(\tilde{u}^+)
\geq  \lfloor \alpha^- \rfloor + 1 - \lfloor \alpha^+ \rfloor \geq 2.
\end{equation}
We have considered only the simplest possible limits, but more complicated ones can be disregarded for codimension reasons. The upshot is that counting isolated points in our parametrized moduli space indeed provides a map $\chi$ with the desired property \eqref{eq:nu-null-homotopy}, concluding our proof of Lemma \ref{th:nu-null-homotopy}.

\subsection{Applying the connection to the identity element}
Our next goal is:

\begin{proposition} \label{th:nabla-e}
The connection \eqref{eq:chi-connection} has the property that, for $\psi$ as in \eqref{eq:express-o},
\begin{equation} \label{eq:nabla-applied-to-e}
\nabla^{-1} e = \psi s + \text{\it coboundary}.
\end{equation}
\end{proposition}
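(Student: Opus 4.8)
The plan is to unwind the definition of the connection $\nabla^{-1}$ from \eqref{eq:chi-connection} applied to the chosen cocycle representing $e$, namely the count \eqref{eq:unit-construction} of degree-zero thimbles avoiding $F_0$, and track each of the four terms $C\partial_q$, $-h^+$, $-r_{A|E}$, $-\psi\chi$ separately. First I would observe that $e$ is (up to coboundary) represented by a cocycle whose coefficients are honest Novikov-power-series weights $q^{u\cdot\Omega}$, so $\partial_q e$ is given by the analogue of \eqref{eq:differentiate-d}; passing through the continuation map $C$, the composition $C\partial_q$ contributes a term which, by the first homotopy in \eqref{eq:r-homotopy-omega} (that is, \eqref{eq:1st-homotopy}), is chain-homotopic to $r_{q^{-1}\Omega|E}$ applied to $e$, so that $C\partial_q e - h^+(de) $ and $h^+ e$ reorganize into $r_{q^{-1}\Omega|E}(e)$ plus the $h^+$-term. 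Then $r_{A|E}$ and the homotopy \eqref{eq:2nd-homotopy} convert $r_{q^{-1}\Omega|E}(e)$ into $\psi\, r_{Z^{(1)}|E}(e)$, modulo coboundary. Finally the nullhomotopy $\chi$ from Lemma~\ref{th:nu-null-homotopy} would in principle kill $r_{Z^{(1)}|E}(e)$ — but that is precisely where the Borman--Sheridan class must appear instead: the point is that $e$ is \emph{not} a general input but the unit cocycle, and applying $r_{Z^{(1)}|E}$ to it means attaching a degree-zero thimble to a curve representing $Z^{(1)}|E$, which is a reformulation of the construction of the cocycle $b_{Z^{(1)}|E}$ from \eqref{eq:z-bounds}, not of $\nu$.

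More precisely, the key step is to identify $r_{Z^{(1)}|E}(e)$, up to coboundary, with the cocycle $\beta_{Z^{(1)}|E}$ (or rather $\Delta\nu - \beta_{Z^{(1)}|E}$) appearing in the analysis of Lemma~\ref{th:z1-zero} and Discussion~\ref{th:hf-plus}. I would set up a single parametrized moduli space of thimbles with two marked points — one constrained to $F_0$ (coming from $e$'s incidence condition being vacuous, but recording the degree) and one constrained to lie on the $Z^{(1)}$-pseudocycle representative, with a real parameter $\sigma$ moving the second marked point towards $-\infty$ along the thimble. One boundary ($\sigma\to 0$, or wherever the sphere bubbles off) reproduces $\beta_{Z^{(1)}|E}$; the other boundary ($\sigma \to -\infty$) produces, by the degeneration analysis exactly as in the proof of Proposition~\ref{th:interpret-bs}, the Borman--Sheridan cocycle \eqref{eq:bs-1} together with a $d$-exact term. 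Combining with Proposition~\ref{th:interpret-bs}, which says $\Delta\nu - \beta_{Z^{(1)}|E}$ is cohomologous to $-s$, and keeping track of the overall sign (the $-\psi\chi$ term in \eqref{eq:chi-connection} carries a sign, and the degeneration contributes another), one should arrive at $\nabla^{-1}e = \psi s + \text{coboundary}$.

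The main obstacle I expect is bookkeeping the signs and the precise matching of auxiliary data across the chain homotopies \eqref{eq:1st-homotopy}, \eqref{eq:2nd-homotopy}, \eqref{eq:nu-null-homotopy}: each of $h^+$, $r_{A|E}$, $\chi$ is only defined up to chain homotopy and with a specific choice of orientations on parameter spaces, and one must check that the telescoping of the four terms in $\nabla^{-1}$ really collapses to $\psi r_{Z^{(1)}|E}(e)$ with the right coefficient, with the $\partial_q$-of-the-differential contributions from $de$ (which vanishes on cohomology but not on the chain level) accounted for correctly. A secondary technical point is justifying that the degenerations at $\sigma\to -\infty$ in the new moduli space are controlled by the same $\deg$/intersection-number estimates (Lemmas~\ref{th:nonnegative-1}--\ref{th:nonnegative-2}, \eqref{eq:rewritten-inequalities}) used throughout, so that no curve component sneaks into $F_0$; for $\alpha > 1$ this should follow verbatim from the argument in Lemma~\ref{th:key-splitting}. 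Once those are in place, the identity \eqref{eq:nabla-applied-to-e} is essentially a formal consequence of already-established lemmas, which is consistent with the remark after Lemma~\ref{th:bounding} that the statement is "fundamentally not surprising."
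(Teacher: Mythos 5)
Your overall strategy — unwind the four terms of $\nabla^{-1}e$, use the homotopies \eqref{eq:1st-homotopy}, \eqref{eq:2nd-homotopy} to reduce the $\Omega$-insertion to a $\psi Z^{(1)}$-insertion, and then recognize the Borman--Sheridan cocycle in the $\chi$-term — is correctly oriented, and your opening observation that $r_{q^{-1}\Omega|E}(e)$ and $\psi r_{Z^{(1)}|E}(e)$ differ by a coboundary does follow from \eqref{eq:2nd-homotopy}. But the proposal breaks down at the ``key step.''

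First, there is a type error: you propose to identify $r_{Z^{(1)}|E}(e)$, up to coboundary, with $\Delta\nu - \beta_{Z^{(1)}|E}$. But $r_{Z^{(1)}|E}$ has degree $\mathrm{codim}(Z^{(1)}|E) - 1 = 1$, so $r_{Z^{(1)}|E}(e) \in \mathit{CF}^1(E,H^-)$, whereas $\Delta\nu - \beta_{Z^{(1)}|E}$, like $s$, lives in $\mathit{CF}^0$. Moreover $r_{Z^{(1)}|E}(e) = -d^-\chi(e)$ is itself a coboundary (since $de = 0$), so "up to coboundary" it is zero; whatever you want to identify with a representative of $-s$ must be $\chi(e)$, not $r_{Z^{(1)}|E}(e)$. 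Your proposal never explicitly isolates $\chi(e)$, which is the only term in \eqref{eq:chi-connection} that carries the information you need.

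Second, and more fundamentally, the decomposition you have in mind --- telescope $C\partial_q - h^+ - r_{A|E}$ to zero up to coboundary, and separately pin down $\chi(e)$ --- cannot work, because neither piece is a cocycle on its own: one computes $d^-\big(C\partial_q e - h^+e - r_{A|E}e\big) = -\psi\, r_{Z^{(1)}|E}(e)$, which does not vanish on the nose. The paper resolves this by introducing an explicit intermediate cocycle $\epsilon_- + \epsilon_+ \in \mathit{CF}^0(E,H)$ in \eqref{eq:two-epsilons}, with $\epsilon_-$ a thimble analogue of $\chi(e)$ and $\epsilon_+$ a moving-marked-point analogue of $b_{Z^{(1)}|E}$; these are individually not cocycles, but their sum is. Lemma~\ref{th:many} then shows, via four interlocking parametrized moduli spaces (Figures~\ref{fig:1st-step}--\ref{fig:4th-step}) whose non-matching boundary contributions all cancel, that $\nabla^{-1}e \sim \psi(\epsilon_-+\epsilon_+)$; and Lemma~\ref{th:many-2} separately shows $\epsilon_-+\epsilon_+ \sim s$. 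Your proposal has no analogue of this intermediate object, and the single sliding-parameter moduli space you sketch does not have $\beta_{Z^{(1)}|E}$ as a boundary face (that cochain arises from a two-parameter family involving the BV rotation $\tau$, which your parameter space lacks), nor does its other face give $s$ directly. The claim that the $F_0$-incidence condition "records the degree" for the unit cocycle is also inconsistent: the $e$-thimbles satisfy $\deg(u)=0$, hence have no intersection with $F_0$, while any marked-point constraint $u(\zeta)\in F_0$ forces $\deg(u)\geq 1$.
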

\begin{figure}
\begin{centering}
\begin{picture}(0,0)%
\includegraphics{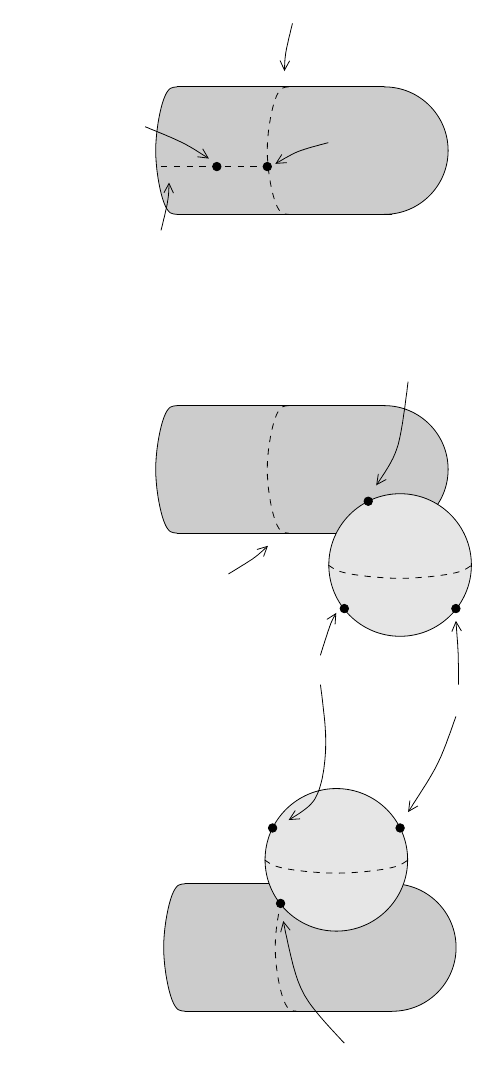}%
\end{picture}%
\setlength{\unitlength}{3355sp}%
\begingroup\makeatletter\ifx\SetFigFont\undefined%
\gdef\SetFigFont#1#2#3#4#5{%
  \reset@font\fontsize{#1}{#2pt}%
  \fontfamily{#3}\fontseries{#4}\fontshape{#5}%
  \selectfont}%
\fi\endgroup%
\begin{picture}(4530,10090)(-3314,-10541)
\put(-2849,-1561){\makebox(0,0)[lb]{\smash{{\SetFigFont{10}{12.0}{\rmdefault}{\mddefault}{\updefault}{\color[rgb]{0,0,0}$u(\zeta_2) \in F_\dag$}%
}}}}
\put(-1049,-586){\makebox(0,0)[lb]{\smash{{\SetFigFont{10}{12.0}{\rmdefault}{\mddefault}{\updefault}{\color[rgb]{0,0,0}$\zeta_1$ moves on this circle}%
}}}}
\put(-674,-10486){\makebox(0,0)[lb]{\smash{{\SetFigFont{10}{12.0}{\rmdefault}{\mddefault}{\updefault}{\color[rgb]{0,0,0}$\zeta$ lies on the circle}%
}}}}
\put(-3299,-7561){\makebox(0,0)[lb]{\smash{{\SetFigFont{10}{12.0}{\rmdefault}{\mddefault}{\updefault}{\color[rgb]{0,0,0}\underline{common boundary component:}}%
}}}}
\put(-3299,-6061){\makebox(0,0)[lb]{\smash{{\SetFigFont{10}{12.0}{\rmdefault}{\mddefault}{\updefault}{\color[rgb]{0,0,0}$\zeta$ lies to the right of this circle}%
}}}}
\put(151,-7111){\makebox(0,0)[lb]{\smash{{\SetFigFont{10}{12.0}{\rmdefault}{\mddefault}{\updefault}{\color[rgb]{0,0,0}this point maps to $F_0$}%
}}}}
\put(1201,-4861){\makebox(0,0)[lb]{\smash{{\SetFigFont{10}{12.0}{\rmdefault}{\mddefault}{\updefault}{\color[rgb]{0,0,0}$\epsilon_+$}%
}}}}
\put(-1424,-3961){\makebox(0,0)[lb]{\smash{{\SetFigFont{10}{12.0}{\rmdefault}{\mddefault}{\updefault}{\color[rgb]{0,0,0}$u(\zeta)$ lies on a pseudo-holomorphic sphere}%
}}}}
\put(-149,-1786){\makebox(0,0)[lb]{\smash{{\SetFigFont{10}{12.0}{\rmdefault}{\mddefault}{\updefault}{\color[rgb]{0,0,0}$u(\zeta_1) \in F_0$}%
}}}}
\put(1201,-1861){\makebox(0,0)[lb]{\smash{{\SetFigFont{10}{12.0}{\rmdefault}{\mddefault}{\updefault}{\color[rgb]{0,0,0}$\epsilon_-$}%
}}}}
\put(-3299,-2836){\makebox(0,0)[lb]{\smash{{\SetFigFont{10}{12.0}{\rmdefault}{\mddefault}{\updefault}{\color[rgb]{0,0,0}$\zeta_2$ moves on the half-infinite line determined by $\zeta_1$}%
}}}}
\put(-1199,-6811){\makebox(0,0)[lb]{\smash{{\SetFigFont{10}{12.0}{\rmdefault}{\mddefault}{\updefault}{\color[rgb]{0,0,0}this point maps to $F_\dag$}%
}}}}
\end{picture}%
\caption{\label{fig:two-epsilons}The cochains $\epsilon_{\pm}$ from \eqref{eq:two-epsilons}.}
\end{centering}
\end{figure}%

The proof hinges on introducing an intermediate object, a cocycle (see Figure \ref{fig:two-epsilons} for a schematic description)
\begin{equation} \label{eq:two-epsilons}
\epsilon_- + \epsilon_+ \in \mathit{CF}^0(E,H),
\end{equation}
which will then be compared to both sides of \eqref{eq:nabla-applied-to-e}. The construction of $\epsilon_-$ is very similar to that in Lemma \ref{th:nu-null-homotopy}, which means that it uses the same parameters $(\sigma,\tau)$ and associated marked points $(\zeta_1,\zeta_2)$, except that we are now working on the thimble surface \eqref{eq:thimble}. Accordingly, we consider auxiliary data $(H^{\epsilon_-},J^{\epsilon_-})$ on that surface, with limit $(H,J)$ over the end. As before, we require that these data should extend smoothly to $\sigma = -\infty$ and $\sigma = 0$. In the case of $\sigma = 0$, we impose the previous restriction that $J^{\epsilon_-}_{0,\tau,0,-\tau} = \tilde{J}$ should be the almost complex structure used to define $Z^{(1)}$. The maps $u$ are required to satisfy  \eqref{eq:updown-left} (we only have one limit instead of two, of course). As for the ends of one-dimensioal moduli spaces, we get bubbling off of $\tilde{J}$-holomorphic spheres as $\sigma \rightarrow 0$. The other limit $\sigma \rightarrow -\infty$ does not contribute, provided that $\alpha > 1$, for the same reason as in Section \ref{subsec:construct-chi}.

To define $\epsilon_+$, one considers another moduli space of maps on the thimble, where the parameters are given by a choice of point $\zeta \in T$ lying in the interior of the circle $\{0\} \times S^1$ (which means that either $\zeta = (\sigma,-\tau)$ with $\sigma > 0$, or $\zeta = \infty$). We want data $(H^{\epsilon_+},J^{\epsilon_+})$ as before, extending smoothly to the case $\sigma = 0$, and 
which in that case match up with $(H^{\epsilon_-},J^{\epsilon_-})$. Moreover, we now impose the condition $J^{\epsilon_+}_{\zeta,\zeta} = \tilde{J}$ for all $\zeta$ (to clarify: for every value of the parameter $\zeta$, we have a family of almost complex structures varying over the thimble, and we constrain that family at one point, whose position is given by $\zeta$). The maps $u$ should satisfy
\begin{equation} \label{eq:epsilon-plus}
\left\{
\begin{aligned}
& u(\zeta) \in Z^{(1)}|E, \\
& \mathrm{deg}(u) = 0, \\
& x = \mathrm{lim}_{s \rightarrow -\infty} u(s,\cdot) \text{ lies outside $F_0$.}
\end{aligned}
\right.
\end{equation}
The first line of \eqref{eq:epsilon-plus} is shorthand: we consider the moduli spaces such that $u(\zeta)$ goes through some component of $Z^{(1)}$ (because $\mathrm{deg}(u) = 0$, $u(\zeta)$ automatically lies in $E$), and add them up with suitable multiplicities. Another way to see this is to write the moduli space as one of pairs $(u,\tilde{u})$, where $\tilde{u}$ is a $\tilde{J}$-holomorphic sphere as in \eqref{eq:pseudo-sphere-1}, \eqref{eq:pseudo-sphere-2}, connected to $u$ by the incidence condition $u(\zeta) = \tilde{u}(\zeta_3)$ (this is what we drew in Figure \ref{fig:two-epsilons}). 

The important point is that at $\sigma = 0$, the two moduli spaces match. Hence, the associated contributions to $d\epsilon_-$ and $d\epsilon_+$ will cancel, which ensures that \eqref{eq:two-epsilons} is a cocycle (provided that $\alpha>1$).

\begin{lemma} \label{th:many}
The cocycle \eqref{eq:two-epsilons} is related to the connection \eqref{eq:chi-connection} by
\begin{equation} \label{eq:many}
\nabla^{-1} e = \psi (\epsilon_- + \epsilon_+) + \text{\it coboundary}.
\end{equation}
\end{lemma}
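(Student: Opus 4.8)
The plan is to compare the two sides of \eqref{eq:many} by building two parametrized moduli spaces on the cylinder (for $\nabla^{-1}e$) and on the thimble (for $\epsilon_-+\epsilon_+$), and then matching them along a common degeneration. Recall that $\nabla^{-1}$ is, by \eqref{eq:chi-connection}, the sum $C\partial_q - h^+ - r_{A|E} - \psi\chi$, and that the identity cocycle $e$ is represented by counting thimble solutions as in \eqref{eq:unit-construction}. First I would evaluate $\nabla^{-1}e$ on the cochain level: the term $C\partial_q$ applied to $e$ vanishes because $e$ is built from a pseudo-cycle with constant ($q$-independent) coefficients, so $\partial_q e = 0$; this leaves $\nabla^{-1}e = -h^+(e) - r_{A|E}(e) - \psi\chi(e)$ up to a coboundary. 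Since $e$ itself is only defined on the thimble, one has to first express $e$ as the image of the unit under a continuation, or more precisely work with a single glued surface; the clean way is to glue the thimble-surface defining $e$ onto the incoming end of the cylinder-with-marked-points underlying $r_{q^{-1}\Omega|E}$, $h^+$, and $\chi$, obtaining three families of thimbles carrying one extra marked point constrained to $q^{-1}\Omega|E$, to $Z^{(1)}|E$ respectively (after the pseudo-chain $A|E$ interpolation), plus the interior basepoint structure.

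The key step is to recognize the resulting glued family: applying $-h^+ - r_{A|E}$ to $e$ produces, after the pseudo-chain cobordism \eqref{eq:a-restricted}, exactly a count of thimble maps with one interior point on $\psi Z^{(1)}|E$ and degree zero away from $F_0$ — but this is precisely (up to the factor $\psi$) the $\sigma=0$ end of the family defining $\epsilon_-$, re-parametrized; while the extra contribution $-\psi\chi(e)$ supplies the remaining part of the $\epsilon_-$-family (the $\sigma\in(-\infty,0)$ range) together with $\epsilon_+$ (the interior-point range $\zeta$ inside the unit circle). So the plan is: construct one master family of thimbles over a parameter space obtained by gluing, whose codimension-one boundary strata are (i) the family computing $\psi(\epsilon_-+\epsilon_+)$, (ii) the family computing $\nabla^{-1}e = -h^+(e)-r_{A|E}(e)-\psi\chi(e)$, and (iii) strata that either vanish (by the degree/maximum-principle arguments of Lemma \ref{th:key-splitting} and Section \ref{subsec:construct-chi}, using $\alpha>1$) or are themselves coboundaries. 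Counting isolated points in this master family then yields a Floer cochain whose differential is $\nabla^{-1}e - \psi(\epsilon_-+\epsilon_+)$, which is \eqref{eq:many}. One has to be attentive to the fact that the defining almost complex structure $\tilde J$ for $Z^{(1)}$ must be arranged consistently at the constrained interior point across all the glued pieces — this consistency is exactly what was built into the choices $J^{\epsilon_-}_{0,\tau,0,-\tau}=\tilde J$ and $J^{\epsilon_+}_{\zeta,\zeta}=\tilde J$, and into the analogous choice for $\chi$ in Section \ref{subsec:construct-chi}.

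The main obstacle I expect is organizing the gluing and the parameter space bookkeeping so that all codimension-one faces are accounted for with the correct signs, and so that the ``boundary faces at infinity'' (where the gluing length goes to $\infty$, or where a marked point runs off to $s=-\infty$) are shown to contribute either zero or a piece of one of the three target families — this is where the degree identities \eqref{eq:fb-intersection}, the inequalities \eqref{eq:rewritten-inequalities}, and the hypothesis $\alpha>1$ do the real work, exactly as in Lemma \ref{th:key-splitting} and in the $\sigma\to-\infty$ analysis of Section \ref{subsec:construct-chi}. The sign computation is genuinely delicate (compare the remark following Proposition \ref{th:interpret-bs}), since the factor $\psi$ and the relative orientations of the $(\sigma,\tau)$ parameters must match between the cylinder and thimble pictures; I would handle it by the same determinant-line juggling as in that remark, checking that reordering $(\sigma,\tau)$ past the two-dimensional evaluation data is orientation-preserving. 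Everything else — transversality, Gromov compactness (using that the topological energy is bounded by $u\cdot\Omega$ as in \eqref{eq:two-energies}), and the exclusion of sphere bubbling in codimension $\ge 2$ — is routine and parallel to the arguments already given for Lemmas \ref{th:bv-annihilates}, \ref{th:z1-zero}, and \ref{th:nu-null-homotopy}.
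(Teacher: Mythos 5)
There is a genuine error at the start of your argument. You claim that $\partial_q e = 0$ because $e$ is ``built from a pseudo-cycle with constant ($q$-independent) coefficients,'' but this is not true: the cocycle $e$ defined in \eqref{eq:unit-construction} has coefficients $\sum_u \pm q^{u\cdot\Omega}$ coming from the energy weights in \eqref{eq:floer-d}, and these are honest power series in $q$. Differentiating in $q$ multiplies each contribution by $u\cdot\Omega$ (as in \eqref{eq:differentiate-d}), which is nonzero in general. Moreover $\partial_q e$ is not even a cocycle (one has $d(\partial_q e) = -(\partial_q d - d\partial_q)(e)$), so it certainly cannot be absorbed into the ``coboundary'' in \eqref{eq:many}. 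Dropping $C(\partial_q e)$ changes the cohomology class and would make the claimed identity false.

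The paper's proof treats $C(\partial_q(e))$ as a nontrivial term, interpreting it geometrically as a count of thimbles carrying a marked point $\zeta$ constrained to map into $q^{-1}\Omega|E$ (this is exactly the content of Figure \ref{fig:nabla-e}, top row, and is the thimble analogue of \eqref{eq:differentiate-d}); and the first of the four parametrized moduli spaces (Figure \ref{fig:1st-step}) exists precisely to mediate between this term and $h^+(e)$ by gluing the thimble onto the cylinder and letting the marked point range over the glued surface. The later parts of your proposal — gluing the thimble onto the cylinder, using the pseudo-chain cobordism $\partial(A|E) \iso \psi Z^{(1)}|E - q^{-1}\Omega|E$, matching the $\sigma=0$ end with $\epsilon_-$, and using the degree inequalities of Lemma \ref{th:key-splitting} to kill the $\sigma\to-\infty$ boundary — are in the right spirit and close to what the paper does, but they cannot rescue the argument as stated, because you have discarded one of the four contributions to $\nabla^{-1}e$ before the bookkeeping even begins. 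You should restore $C(\partial_q e)$, give it the thimble-with-marked-point interpretation, and let the first glued family (with $\zeta$ free over the whole glued surface, to the right of the distinguished circle) account for its interplay with $h^+(e)$. Also, your ``one master family'' phrasing glosses over the fact that the four pieces of $\nabla^{-1}$ naturally live over parameter spaces of different types (a gluing parameter, a marked-point position, a pair $(\sigma,\tau)$), which is why the paper works with four separate parametrized moduli spaces rather than a single one.
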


It is worth while clarifying in which Floer cochain group this statement is supposed to hold. We choose $\alpha^+ > 0$, and then $\alpha^-$ such that $[\alpha^+,\alpha^-] \cap \bZ \neq \emptyset$. On the left side of \eqref{eq:many}, $e \in \mathit{CF}^*(H^+,J^+)$, and then $\nabla^{-1} e \in \mathit{CF}^*(H^-,J^-)$. On the right side, one constructs \eqref{eq:two-epsilons} using $\alpha = \alpha^-$ (which is $>1$ by assumption) and $(H,J) = (H^-,J^-)$. 
\begin{figure}
\begin{centering}
\begin{picture}(0,0)%
\includegraphics{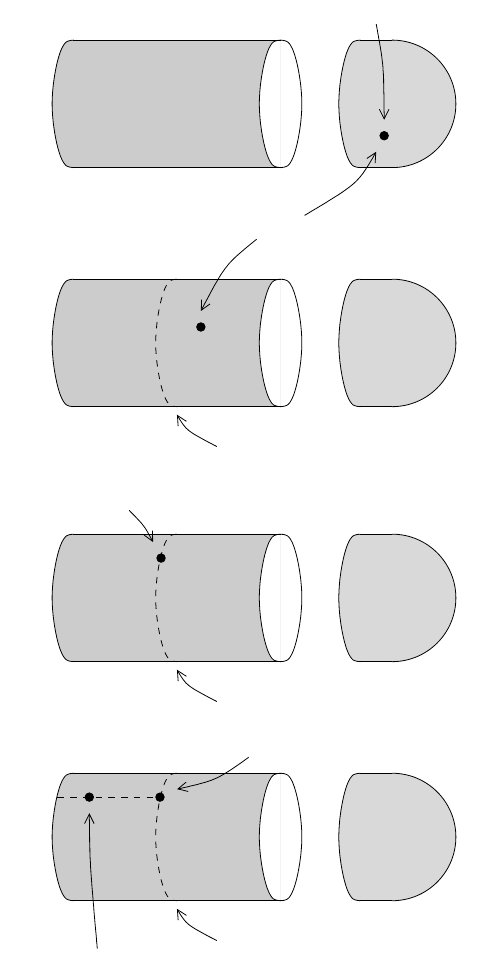}%
\end{picture}%
\setlength{\unitlength}{3355sp}%
\begingroup\makeatletter\ifx\SetFigFont\undefined%
\gdef\SetFigFont#1#2#3#4#5{%
  \reset@font\fontsize{#1}{#2pt}%
  \fontfamily{#3}\fontseries{#4}\fontshape{#5}%
  \selectfont}%
\fi\endgroup%
\begin{picture}(4605,9211)(-2114,-13250)
\put(-2099,-5011){\makebox(0,0)[lb]{\smash{{\SetFigFont{10}{12.0}{\rmdefault}{\mddefault}{\updefault}{\color[rgb]{0,0,0}(i)}%
}}}}
\put(-2099,-7261){\makebox(0,0)[lb]{\smash{{\SetFigFont{10}{12.0}{\rmdefault}{\mddefault}{\updefault}{\color[rgb]{0,0,0}(ii)}%
}}}}
\put(2476,-4936){\makebox(0,0)[lb]{\smash{{\SetFigFont{10}{12.0}{\rmdefault}{\mddefault}{\updefault}{\color[rgb]{0,0,0}$c(\partial_q(e))$}%
}}}}
\put(2476,-7261){\makebox(0,0)[lb]{\smash{{\SetFigFont{10}{12.0}{\rmdefault}{\mddefault}{\updefault}{\color[rgb]{0,0,0}$h(e)$}%
}}}}
\put(226,-11011){\makebox(0,0)[lb]{\smash{{\SetFigFont{10}{12.0}{\rmdefault}{\mddefault}{\updefault}{\color[rgb]{0,0,0}the image of $\zeta_1$ lies on $F_0$}%
}}}}
\put(2476,-9661){\makebox(0,0)[lb]{\smash{{\SetFigFont{10}{12.0}{\rmdefault}{\mddefault}{\updefault}{\color[rgb]{0,0,0}$r_{A|E}(e)$}%
}}}}
\put(-1349,-8461){\makebox(0,0)[lb]{\smash{{\SetFigFont{10}{12.0}{\rmdefault}{\mddefault}{\updefault}{\color[rgb]{0,0,0}$\zeta$ lies in the half-cylinder to the right of this circle}%
}}}}
\put( 76,-10711){\makebox(0,0)[lb]{\smash{{\SetFigFont{10}{12.0}{\rmdefault}{\mddefault}{\updefault}{\color[rgb]{0,0,0}$\zeta$ lies on this circle}%
}}}}
\put(  1,-12886){\makebox(0,0)[lb]{\smash{{\SetFigFont{10}{12.0}{\rmdefault}{\mddefault}{\updefault}{\color[rgb]{0,0,0}$\zeta_1$ lies on this circle}%
}}}}
\put(-2099,-9661){\makebox(0,0)[lb]{\smash{{\SetFigFont{10}{12.0}{\rmdefault}{\mddefault}{\updefault}{\color[rgb]{0,0,0}(iii)}%
}}}}
\put(-2099,-11911){\makebox(0,0)[lb]{\smash{{\SetFigFont{10}{12.0}{\rmdefault}{\mddefault}{\updefault}{\color[rgb]{0,0,0}(iv)}%
}}}}
\put(-1874,-8761){\makebox(0,0)[lb]{\smash{{\SetFigFont{10}{12.0}{\rmdefault}{\mddefault}{\updefault}{\color[rgb]{0,0,0}the image of $\zeta$ goes through $A|E$}%
}}}}
\put(-524,-6211){\makebox(0,0)[lb]{\smash{{\SetFigFont{10}{12.0}{\rmdefault}{\mddefault}{\updefault}{\color[rgb]{0,0,0}the image of $\zeta$ goes through $q^{-1}\Omega|E$}%
}}}}
\put(2476,-11911){\makebox(0,0)[lb]{\smash{{\SetFigFont{10}{12.0}{\rmdefault}{\mddefault}{\updefault}{\color[rgb]{0,0,0}$\chi(e)$}%
}}}}
\put(226,-4186){\makebox(0,0)[lb]{\smash{{\SetFigFont{10}{12.0}{\rmdefault}{\mddefault}{\updefault}{\color[rgb]{0,0,0}$\zeta$ can be anywhere on the thimble}%
}}}}
\put(-1649,-13186){\makebox(0,0)[lb]{\smash{{\SetFigFont{10}{12.0}{\rmdefault}{\mddefault}{\updefault}{\color[rgb]{0,0,0}the image of $\zeta_2$ lies on $F_\dag$}%
}}}}
\end{picture}%
\caption{\label{fig:nabla-e}Applying \eqref{eq:chi-connection} to the identity element.}
\end{centering}
\end{figure}%

\begin{proof}
As usual, the argument constructs an explicit coboundary which measures the discrepancy between the two sides of \eqref{eq:many}. Even though none of the steps is surprising or difficult, the number of terms threatens to be notationally overwhelming, and we will therefore use a graphical shorthand description. The starting point is to spell out what $\nabla^{-1} e$ is, which we have done in Figure \ref{fig:nabla-e}. All the terms are obvious from \eqref{eq:chi-connection} except for $\partial_q e$, which we interpret as thimbles with a marked point, in exact analogy with what we have done for \eqref{eq:differentiate-d}. To construct the coboundary, one adds up contributions from four parametrized moduli spaces, whose construction is outlined in Figure \ref{fig:1st-step}--\ref{fig:4th-step}, and where the contribution of the last of the four should be multiplied by $\psi$.
\begin{figure}
\begin{centering}
\begin{picture}(0,0)%
\includegraphics{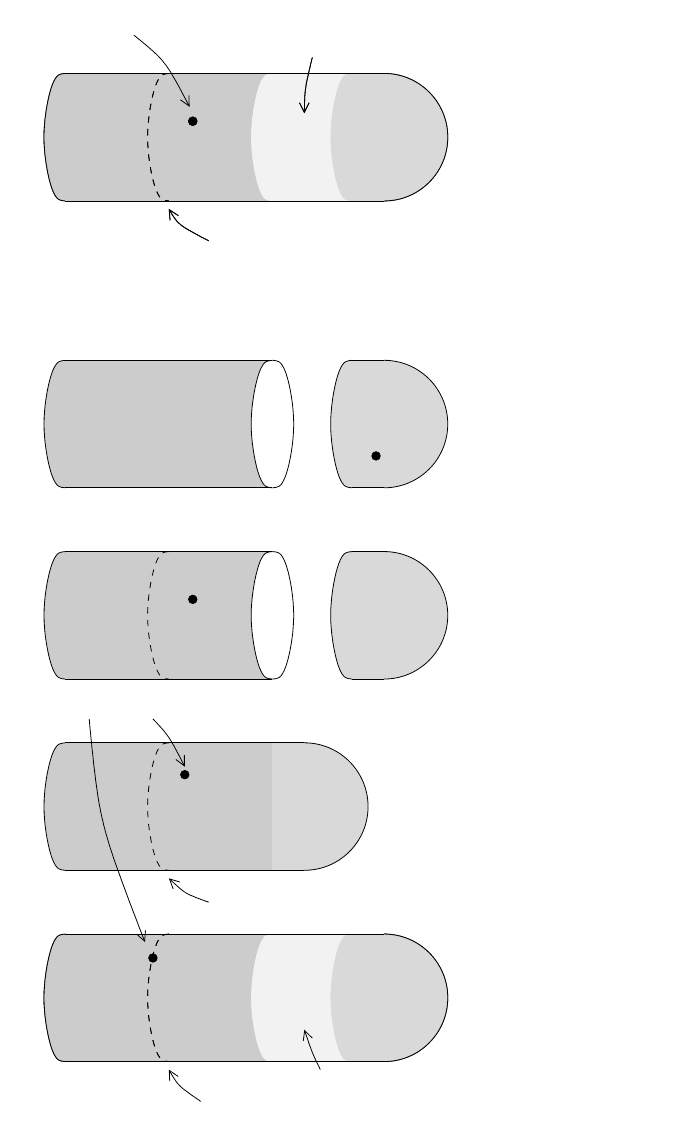}%
\end{picture}%
\setlength{\unitlength}{3355sp}%
\begingroup\makeatletter\ifx\SetFigFont\undefined%
\gdef\SetFigFont#1#2#3#4#5{%
  \reset@font\fontsize{#1}{#2pt}%
  \fontfamily{#3}\fontseries{#4}\fontshape{#5}%
  \selectfont}%
\fi\endgroup%
\begin{picture}(6474,10639)(-2039,-11666)
\put(  1,-9586){\makebox(0,0)[lb]{\smash{{\SetFigFont{10}{12.0}{\rmdefault}{\mddefault}{\updefault}{\color[rgb]{0,0,0}$\zeta$ lies to the right of this circle}%
}}}}
\put(-2024,-10411){\makebox(0,0)[lb]{\smash{{\SetFigFont{10}{12.0}{\rmdefault}{\mddefault}{\updefault}{\color[rgb]{0,0,0}(iv)}%
}}}}
\put(-2024,-8611){\makebox(0,0)[lb]{\smash{{\SetFigFont{10}{12.0}{\rmdefault}{\mddefault}{\updefault}{\color[rgb]{0,0,0}(iii)}%
}}}}
\put(-2024,-6811){\makebox(0,0)[lb]{\smash{{\SetFigFont{10}{12.0}{\rmdefault}{\mddefault}{\updefault}{\color[rgb]{0,0,0}(ii)}%
}}}}
\put(-2024,-5011){\makebox(0,0)[lb]{\smash{{\SetFigFont{10}{12.0}{\rmdefault}{\mddefault}{\updefault}{\color[rgb]{0,0,0}(i)}%
}}}}
\put(-1349,-7711){\makebox(0,0)[lb]{\smash{{\SetFigFont{10}{12.0}{\rmdefault}{\mddefault}{\updefault}{\color[rgb]{0,0,0}the image of $\zeta$ goes through $q^{-1}\Omega|E$}%
}}}}
\put(226,-11311){\makebox(0,0)[lb]{\smash{{\SetFigFont{10}{12.0}{\rmdefault}{\mddefault}{\updefault}{\color[rgb]{0,0,0}an additional parameter stretches this part}%
}}}}
\put(2551,-4936){\makebox(0,0)[lb]{\smash{{\SetFigFont{10}{12.0}{\rmdefault}{\mddefault}{\updefault}{\color[rgb]{0,0,0}$C(\partial_q(e))$}%
}}}}
\put(2551,-6811){\makebox(0,0)[lb]{\smash{{\SetFigFont{10}{12.0}{\rmdefault}{\mddefault}{\updefault}{\color[rgb]{0,0,0}$h^+(e)$}%
}}}}
\put(-1874,-3961){\makebox(0,0)[lb]{\smash{{\SetFigFont{10}{12.0}{\rmdefault}{\mddefault}{\updefault}{\color[rgb]{0,0,0}\underline{the relevant boundary/degeneration contributions are:}}%
}}}}
\put(451,-1486){\makebox(0,0)[lb]{\smash{{\SetFigFont{10}{12.0}{\rmdefault}{\mddefault}{\updefault}{\color[rgb]{0,0,0}an additional parameter stretches this part}%
}}}}
\put(-374,-11611){\makebox(0,0)[lb]{\smash{{\SetFigFont{10}{12.0}{\rmdefault}{\mddefault}{\updefault}{\color[rgb]{0,0,0}$\zeta$ lies on this circle}%
}}}}
\put(  1,-3436){\makebox(0,0)[lb]{\smash{{\SetFigFont{10}{12.0}{\rmdefault}{\mddefault}{\updefault}{\color[rgb]{0,0,0}$\zeta$ lies to the right of this circle}%
}}}}
\put(  1,-3436){\makebox(0,0)[lb]{\smash{{\SetFigFont{10}{12.0}{\rmdefault}{\mddefault}{\updefault}{\color[rgb]{0,0,0}$\zeta$ lies to the right of this circle}%
}}}}
\put(-1499,-1186){\makebox(0,0)[lb]{\smash{{\SetFigFont{10}{12.0}{\rmdefault}{\mddefault}{\updefault}{\color[rgb]{0,0,0}the image of $\zeta$ goes through $q^{-1}\Omega|E$}%
}}}}
\end{picture}%
\caption{\label{fig:1st-step}The first ingredient in the proof of Lemma \ref{th:many}.}
\end{centering}
\end{figure}%

In words, what we do in the first moduli space (Figure \ref{fig:1st-step}) is to take the two surfaces (a cylinder and a thimble) involved in defining $C(\partial_q(e))$, and glue them together (introducing a gluing parameter). Moreover, once we carry out the gluing, the marked point is allowed to move freely over the glued surface, as long as it remains to the right of the circle inherited from $\{0\} \times S^1 \subset \bR \times S^1$. The maps $u$ involved are all assumed to satisfy $\mathrm{deg}(u) = 0$, and therefore remain inside $E$. There are two different (codimension $1$) degenerations as the gluing parameter goes to infinity, depending on which component of the limit the marked point ends up in, see Figure \ref{fig:1st-step}(i) and (ii); those two components contribute $C(\partial_q(e))$ and $h^+(e)$, respectively, where $h^+$ is as in \eqref{eq:1st-homotopy}. There are two other relevant boundary components: one where the gluing length has reached its (arbitrarily chosen) minimum, Figure \ref{fig:1st-step}(iii); and one where the marked point moves as far to the left as is allowed, Figure \ref{fig:1st-step}(iv). In those two last-mentioned cases, the resulting contribution can not be expressed in terms of previously introduced operations.
\begin{figure}
\begin{centering}
\begin{picture}(0,0)%
\includegraphics{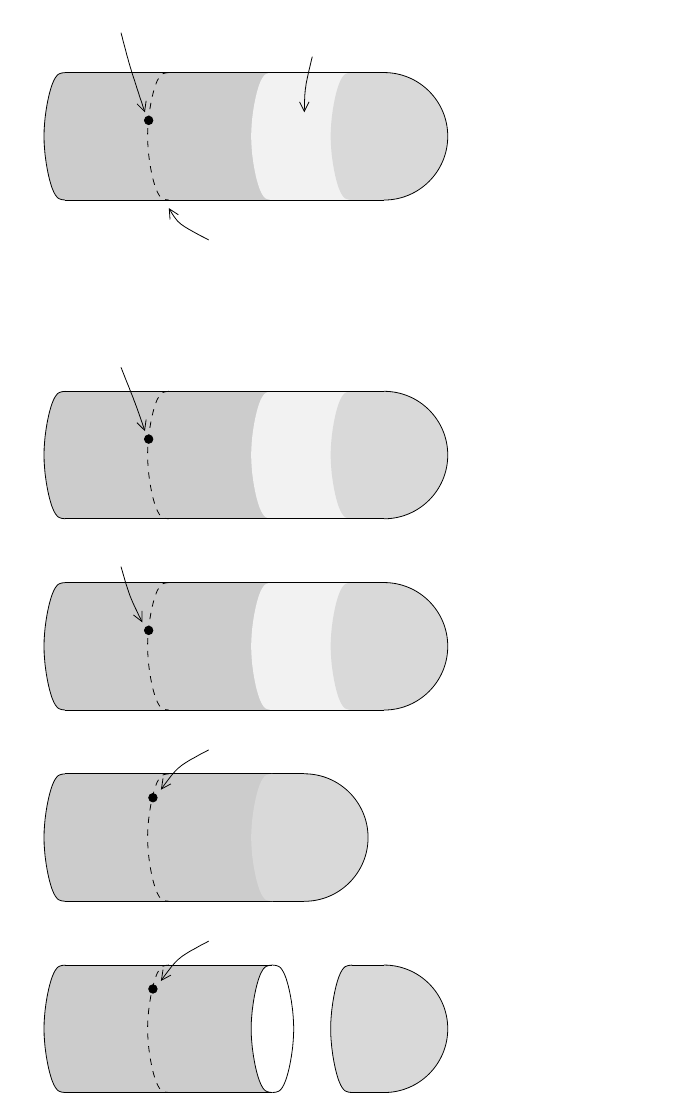}%
\end{picture}%
\setlength{\unitlength}{3355sp}%
\begingroup\makeatletter\ifx\SetFigFont\undefined%
\gdef\SetFigFont#1#2#3#4#5{%
  \reset@font\fontsize{#1}{#2pt}%
  \fontfamily{#3}\fontseries{#4}\fontshape{#5}%
  \selectfont}%
\fi\endgroup%
\begin{picture}(6474,10284)(-2039,-11248)
\put(451,-1411){\makebox(0,0)[lb]{\smash{{\SetFigFont{10}{12.0}{\rmdefault}{\mddefault}{\updefault}{\color[rgb]{0,0,0}an additional parameter stretches this part}%
}}}}
\put(-2024,-5236){\makebox(0,0)[lb]{\smash{{\SetFigFont{10}{12.0}{\rmdefault}{\mddefault}{\updefault}{\color[rgb]{0,0,0}(i)}%
}}}}
\put(-2024,-7036){\makebox(0,0)[lb]{\smash{{\SetFigFont{10}{12.0}{\rmdefault}{\mddefault}{\updefault}{\color[rgb]{0,0,0}(ii)}%
}}}}
\put(-2024,-8911){\makebox(0,0)[lb]{\smash{{\SetFigFont{10}{12.0}{\rmdefault}{\mddefault}{\updefault}{\color[rgb]{0,0,0}(iii)}%
}}}}
\put(-2024,-10636){\makebox(0,0)[lb]{\smash{{\SetFigFont{10}{12.0}{\rmdefault}{\mddefault}{\updefault}{\color[rgb]{0,0,0}(iv)}%
}}}}
\put(  1,-9811){\makebox(0,0)[lb]{\smash{{\SetFigFont{10}{12.0}{\rmdefault}{\mddefault}{\updefault}{\color[rgb]{0,0,0}the image of $\zeta$ goes through $A|E$}%
}}}}
\put(2551,-5236){\makebox(0,0)[lb]{\smash{{\SetFigFont{10}{12.0}{\rmdefault}{\mddefault}{\updefault}{\color[rgb]{0,0,0}see Fig. \ref{fig:1st-step}(iv)}%
}}}}
\put(  1,-3361){\makebox(0,0)[lb]{\smash{{\SetFigFont{10}{12.0}{\rmdefault}{\mddefault}{\updefault}{\color[rgb]{0,0,0}$\zeta$ lies on this circle}%
}}}}
\put(  1,-8011){\makebox(0,0)[lb]{\smash{{\SetFigFont{10}{12.0}{\rmdefault}{\mddefault}{\updefault}{\color[rgb]{0,0,0}the image of $\zeta$ goes through $A|E$}%
}}}}
\put(-1349,-6211){\makebox(0,0)[lb]{\smash{{\SetFigFont{10}{12.0}{\rmdefault}{\mddefault}{\updefault}{\color[rgb]{0,0,0}the image of $\zeta$ goes through $\psi Z^{(1)}|E$}%
}}}}
\put(-1349,-4336){\makebox(0,0)[lb]{\smash{{\SetFigFont{10}{12.0}{\rmdefault}{\mddefault}{\updefault}{\color[rgb]{0,0,0}the image of $\zeta$ goes through $q^{-1}\Omega|E$}%
}}}}
\put(-1874,-3886){\makebox(0,0)[lb]{\smash{{\SetFigFont{10}{12.0}{\rmdefault}{\mddefault}{\updefault}{\color[rgb]{0,0,0}\underline{the relevant boundary/degeneration contributions are:}}%
}}}}
\put(-1349,-1111){\makebox(0,0)[lb]{\smash{{\SetFigFont{10}{12.0}{\rmdefault}{\mddefault}{\updefault}{\color[rgb]{0,0,0}the image of $\zeta$ goes through $A|E$}%
}}}}
\put(2551,-10636){\makebox(0,0)[lb]{\smash{{\SetFigFont{10}{12.0}{\rmdefault}{\mddefault}{\updefault}{\color[rgb]{0,0,0}$r_{A|E}(e)$}%
}}}}
\end{picture}%
\caption{\label{fig:2nd-step}The second ingredient in the proof of Lemma \ref{th:many}.}
\end{centering}
\end{figure}%

The construction of the second moduli space (Figure \ref{fig:2nd-step}) uses the same idea as the first one, but now applied to $r_{A|E}(e)$, in the notation from \eqref{eq:2nd-homotopy}. Since here, the evaluation at the marked point is assumed to go through the proper pseudo-chain $A|E$, we obtain two boundary components corresponding to the terms in \eqref{eq:a-restricted}, shown in Figure \ref{fig:2nd-step}(i) and (ii); the first of them produces the same contribution (up to sign) as one of our previous degenerations, Figure \ref{fig:1st-step}(iv). Another contribution comes from reaching the minimal gluing length, Figure \ref{fig:2nd-step}(iii). The final one appears when the gluing length goes to infinity, which by design recovers $r_{A|E}(e)$.
\begin{figure}
\begin{centering}
\begin{picture}(0,0)%
\includegraphics{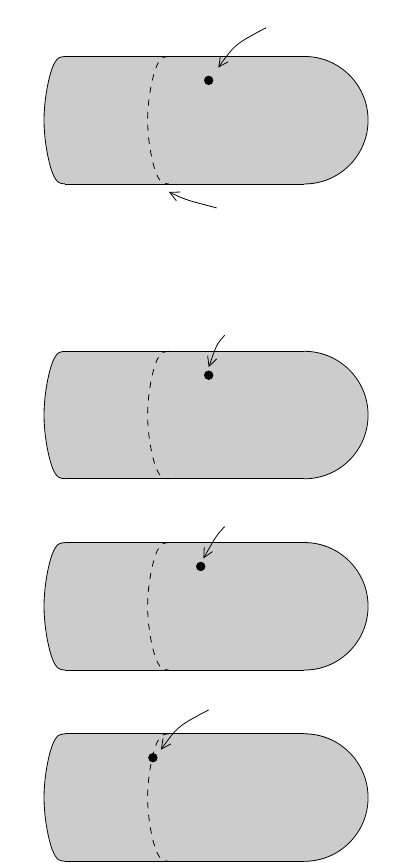}%
\end{picture}%
\setlength{\unitlength}{3355sp}%
\begingroup\makeatletter\ifx\SetFigFont\undefined%
\gdef\SetFigFont#1#2#3#4#5{%
  \reset@font\fontsize{#1}{#2pt}%
  \fontfamily{#3}\fontseries{#4}\fontshape{#5}%
  \selectfont}%
\fi\endgroup%
\begin{picture}(3930,8109)(-2039,-9448)
\put(1801,-5236){\makebox(0,0)[lb]{\smash{{\SetFigFont{10}{12.0}{\rmdefault}{\mddefault}{\updefault}{\color[rgb]{0,0,0}see Fig. \ref{fig:1st-step}(iii)}%
}}}}
\put(  1,-6211){\makebox(0,0)[lb]{\smash{{\SetFigFont{10}{12.0}{\rmdefault}{\mddefault}{\updefault}{\color[rgb]{0,0,0}the image of $\zeta$ goes through $\psi Z^{(1)}|E$}%
}}}}
\put(  1,-8011){\makebox(0,0)[lb]{\smash{{\SetFigFont{10}{12.0}{\rmdefault}{\mddefault}{\updefault}{\color[rgb]{0,0,0}the image of $\zeta$ goes through $A|E$}%
}}}}
\put(  1,-4411){\makebox(0,0)[lb]{\smash{{\SetFigFont{10}{12.0}{\rmdefault}{\mddefault}{\updefault}{\color[rgb]{0,0,0}the image of $\zeta$ goes through $q^{-1}\Omega|E$}%
}}}}
\put(-1874,-3886){\makebox(0,0)[lb]{\smash{{\SetFigFont{10}{12.0}{\rmdefault}{\mddefault}{\updefault}{\color[rgb]{0,0,0}\underline{the relevant boundary/degeneration contributions are:}}%
}}}}
\put(1876,-8836){\makebox(0,0)[lb]{\smash{{\SetFigFont{10}{12.0}{\rmdefault}{\mddefault}{\updefault}{\color[rgb]{0,0,0}see Fig. \ref{fig:2nd-step}(iii)}%
}}}}
\put(1801,-7036){\makebox(0,0)[lb]{\smash{{\SetFigFont{10}{12.0}{\rmdefault}{\mddefault}{\updefault}{\color[rgb]{0,0,0}$\psi\epsilon_+$}%
}}}}
\put(-299,-1486){\makebox(0,0)[lb]{\smash{{\SetFigFont{10}{12.0}{\rmdefault}{\mddefault}{\updefault}{\color[rgb]{0,0,0}the image of $\zeta$ goes through $A|E$}%
}}}}
\put(-2024,-5236){\makebox(0,0)[lb]{\smash{{\SetFigFont{10}{12.0}{\rmdefault}{\mddefault}{\updefault}{\color[rgb]{0,0,0}(i)}%
}}}}
\put(-2024,-7036){\makebox(0,0)[lb]{\smash{{\SetFigFont{10}{12.0}{\rmdefault}{\mddefault}{\updefault}{\color[rgb]{0,0,0}(ii)}%
}}}}
\put(-2024,-8911){\makebox(0,0)[lb]{\smash{{\SetFigFont{10}{12.0}{\rmdefault}{\mddefault}{\updefault}{\color[rgb]{0,0,0}(iii)}%
}}}}
\put( 76,-3511){\makebox(0,0)[lb]{\smash{{\SetFigFont{10}{12.0}{\rmdefault}{\mddefault}{\updefault}{\color[rgb]{0,0,0}$\zeta$ lies to the right of this circle}%
}}}}
\end{picture}%
\caption{\label{fig:3rd-step}The third ingredient in the proof of Lemma \ref{th:many}.}
\end{centering}
\end{figure}%

The third moduli space (Figure \ref{fig:3rd-step}) does not involve any neck-stretch\-ing, and the marked point lies in a bounded subset of the thimble Riemann surface. As in the previous two cases, the maps involved have $\mathrm{deg}(u) = 0$, hence remain inside $E$. The three boundary contributions appear for very straightforward reasons; two of them, Figures \ref{fig:3rd-step}(i) and (iii), match previously encountered ones, whereas the remaining one reproduces $\epsilon_+$ up to the factor of $\psi$.
\begin{figure}
\begin{centering}
\begin{picture}(0,0)%
\includegraphics{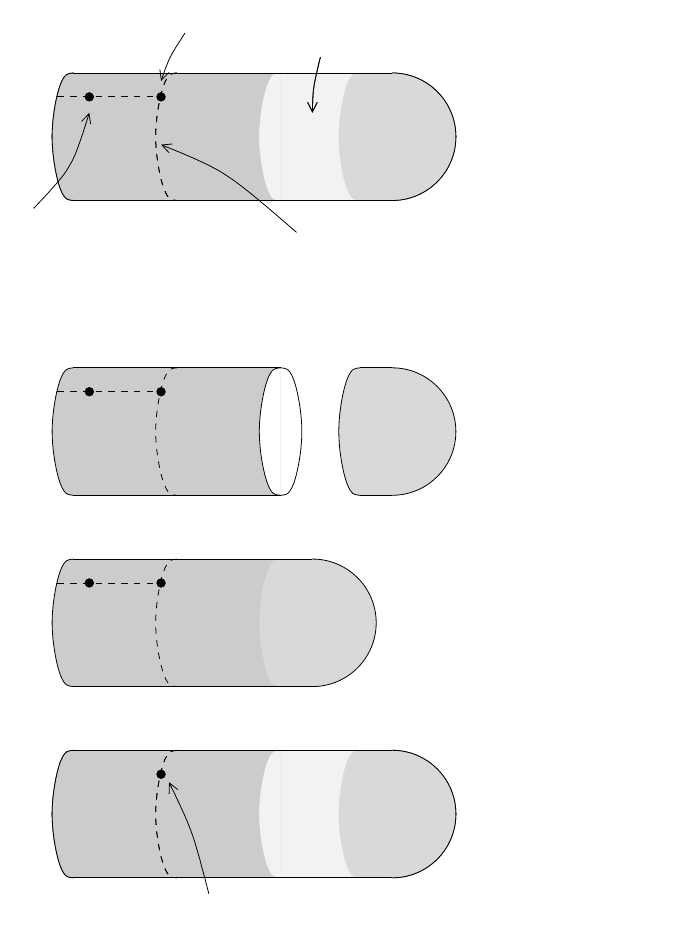}%
\end{picture}%
\setlength{\unitlength}{3355sp}%
\begingroup\makeatletter\ifx\SetFigFont\undefined%
\gdef\SetFigFont#1#2#3#4#5{%
  \reset@font\fontsize{#1}{#2pt}%
  \fontfamily{#3}\fontseries{#4}\fontshape{#5}%
  \selectfont}%
\fi\endgroup%
\begin{picture}(6549,8691)(-2114,-9655)
\put(451,-1411){\makebox(0,0)[lb]{\smash{{\SetFigFont{10}{12.0}{\rmdefault}{\mddefault}{\updefault}{\color[rgb]{0,0,0}an additional parameter stretches this part}%
}}}}
\put(-2024,-5011){\makebox(0,0)[lb]{\smash{{\SetFigFont{10}{12.0}{\rmdefault}{\mddefault}{\updefault}{\color[rgb]{0,0,0}(i)}%
}}}}
\put(-2024,-6811){\makebox(0,0)[lb]{\smash{{\SetFigFont{10}{12.0}{\rmdefault}{\mddefault}{\updefault}{\color[rgb]{0,0,0}(ii)}%
}}}}
\put(-2024,-8611){\makebox(0,0)[lb]{\smash{{\SetFigFont{10}{12.0}{\rmdefault}{\mddefault}{\updefault}{\color[rgb]{0,0,0}(iii)}%
}}}}
\put(-449,-9586){\makebox(0,0)[lb]{\smash{{\SetFigFont{10}{12.0}{\rmdefault}{\mddefault}{\updefault}{\color[rgb]{0,0,0}The image of this point goes through $Z^{(1)}$}%
}}}}
\put(676,-3286){\makebox(0,0)[lb]{\smash{{\SetFigFont{10}{12.0}{\rmdefault}{\mddefault}{\updefault}{\color[rgb]{0,0,0}$\zeta_1$ lies on this circle}%
}}}}
\put(-1874,-3886){\makebox(0,0)[lb]{\smash{{\SetFigFont{10}{12.0}{\rmdefault}{\mddefault}{\updefault}{\color[rgb]{0,0,0}\underline{the relevant boundary/degeneration contributions are:}}%
}}}}
\put(2476,-5011){\makebox(0,0)[lb]{\smash{{\SetFigFont{10}{12.0}{\rmdefault}{\mddefault}{\updefault}{\color[rgb]{0,0,0}$\chi(e)$}%
}}}}
\put(2476,-6811){\makebox(0,0)[lb]{\smash{{\SetFigFont{10}{12.0}{\rmdefault}{\mddefault}{\updefault}{\color[rgb]{0,0,0}$\epsilon_-$}%
}}}}
\put(2476,-8536){\makebox(0,0)[lb]{\smash{{\SetFigFont{10}{12.0}{\rmdefault}{\mddefault}{\updefault}{\color[rgb]{0,0,0}Fig. \ref{fig:2nd-step}(ii), up to $\psi$}%
}}}}
\put(-1349,-1111){\makebox(0,0)[lb]{\smash{{\SetFigFont{10}{12.0}{\rmdefault}{\mddefault}{\updefault}{\color[rgb]{0,0,0}the image of $\zeta_1$ lies on $F_0$}%
}}}}
\put(-2099,-3136){\makebox(0,0)[lb]{\smash{{\SetFigFont{10}{12.0}{\rmdefault}{\mddefault}{\updefault}{\color[rgb]{0,0,0}the image of $\zeta_2$ lies on $F_\dag$}%
}}}}
\end{picture}%
\caption{\label{fig:4th-step}The fourth ingredient in the proof of Lemma \ref{th:many}.}
\end{centering}
\end{figure}%

The Riemann surfaces in the final moduli space (Figure \ref{fig:4th-step}) can again be thought of as being obtained by gluing together two pieces, this time those defining $\chi(e)$. Here, the maps have $\mathrm{deg}(u) = 1$. Besides the limit where the gluing length goes to infinity, shown in Figure \ref{fig:4th-step}(i), in which one recovers $\chi(e)$, we have two other contributions to consider: one where the gluing length reaches its minimum, which yields $\epsilon_-$, and another one where the two marked points collide (Figure \ref{fig:4th-step}(ii) and (iii), respectively). In fact, there is another possible limit, which is where one of the marked points goes to $-\infty$, but its contribution is trivial, for the same reason as in the definition of \eqref{eq:nu-null-homotopy} or of $\epsilon_-$.

Inspection shows that when taking all four moduli spaces into account, all codimension $1$ contributions vanish except those which occur on both sides of \eqref{eq:many}, at least up to sign. To confirm the correctness of the signs, one has to inspect all the orientations, something which we will omit here. \end{proof}

\begin{lemma} \label{th:many-2}
The cocycle \eqref{eq:two-epsilons} is cohomologous to the Bor\-man-Sheri\-dan cocycle \eqref{eq:bs-1}.
\end{lemma}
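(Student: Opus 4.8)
The plan is to produce an explicit Floer cochain $g \in \mathit{CF}^{-1}(E,H)$ with $dg = \pm(\epsilon_- + \epsilon_+ - s)$, by the same kind of bookkeeping of parametrized moduli spaces that underlies Proposition \ref{th:interpret-bs} and Lemma \ref{th:bisections}. Both sides of the desired identity are counts of thimbles of total degree $1$ over $\bC P^1$ carrying an incidence to $F_0$: for $s$ the degree-$1$ thimble hits $F_0$ at the compactification point $\zeta=\infty$ with no sphere bubble; for $\epsilon_-$ it is an honest degree-$1$ thimble whose $F_0$-point lies on the circle $\{0\}\times S^1$ and whose $F_\dag$-point lies to the left; and for $\epsilon_+$ the principal component has degree $0$ and the degree is carried by an attached $\tilde J$-holomorphic sphere passing through $F_0$ and $F_\dag$. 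The homotopy $g$ will interpolate between these configurations.

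Concretely, I would build $g$ from (at least) two parametrized families. The first is essentially the $\epsilon_-$ family, but with the $s$-coordinate of the $F_0$-marked point promoted to a parameter running from $0$ up to $+\infty$: at the $+\infty$ end the $F_0$-point reaches $\zeta=\infty$, the $F_\dag$-incidence degenerates to the (multiplicity-one) requirement that the thimble pass through $F_\dag$ near $\infty$, and one recovers $s$; at the $0$ end one is back at the $\sigma=0$ boundary of the $\epsilon_-$ family. The second family handles the passage through the sphere-bubbling stratum that already glues $\epsilon_-$ to $\epsilon_+$: following the mechanism of Lemma \ref{th:z1-zero}, one inserts a gluing-length parameter along which the $\tilde J$-sphere of $\epsilon_+$ (carrying the $F_0$- and $F_\dag$-incidences at $\tilde\zeta_1,\tilde\zeta_2$ and joined to the thimble at $\tilde\zeta_3$) is absorbed into the principal component, turning the degree-$0$ thimble plus sphere into a genuine degree-$1$ thimble; the gluing-length-$\infty$ end reproduces $\epsilon_+$, while the other end feeds into the first family. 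As always the $\sigma\to-\infty$ end of every family contributes nothing (no sphere can meet $F_\dag\subset p^{-1}(B)$, by the maximum principle, exactly as in Lemmas \ref{th:max-principle-2} and \ref{th:z1-zero}), and all other breakings are excluded on codimension or degree grounds as in Lemma \ref{th:key-splitting}. Counting isolated points in the combined family then yields $dg = \pm(\epsilon_-+\epsilon_+-s)$.

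The main obstacle is, as in Lemma \ref{th:z1-zero} and Proposition \ref{th:interpret-bs}, arranging that the sphere-bubbling/gluing occurs in real codimension $1$ rather than $2$: this forces the marked point at which the bubble is attached to be constrained to a one-dimensional locus (so that there is a gluing length but no gluing angle), and it forces the auxiliary almost complex structures to be chosen compatibly so that the sphere which splits off is genuinely $\tilde J$-holomorphic and carries the correct $\tilde\zeta_i$-incidences. The remaining work is routine but extensive: matching the five or six relevant boundary and degeneration strata across the several families so that everything except $\epsilon_-$, $\epsilon_+$ and $s$ cancels in pairs, and then checking orientations so that the sign in the statement is exactly as claimed --- this last point matters, since through \eqref{eq:many} and Lemma \ref{th:nabla-e} it determines the sign in $\nabla^{-1}e=\psi s+\text{coboundary}$ and ultimately in \eqref{eq:a-s}. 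Alternatively, one could try to identify $\epsilon_-+\epsilon_+$ on the chain level, up to a coboundary, with $\beta_{Z^{(1)}|E}-\Delta\nu$ from \eqref{eq:corrected-beta}, and deduce the lemma directly from Proposition \ref{th:interpret-bs}.
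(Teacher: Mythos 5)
Your overall strategy is right, and your ``first family'' is, correctly understood, exactly the family $\iota$ the paper uses: parameters $(\rho,\sigma,\tau)\in(0,\infty)\times(-\infty,\rho)\times S^1$, marked points $\zeta_1=(\rho,-\tau)$, $\zeta_2=(\sigma,-\tau)$, with $u(\zeta_1)\in F_0$, $u(\zeta_2)\in F_\dag$, $\mathrm{deg}(u)=1$. But your accounting of boundary strata contains a genuine error. You assert that at the $\rho\to 0$ end one is ``back at the $\sigma=0$ boundary of the $\epsilon_-$ family.'' That is not so: at $\rho=0$ the constraint on $\sigma$ relaxes to $\sigma\in(-\infty,0)$, so one recovers the \emph{entire} two-parameter $\epsilon_-$ family, not its codimension-one sphere-bubbling stratum. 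The sphere-bubbling stratum --- the one producing $\epsilon_+$ --- is a \emph{separate} codimension-one boundary, namely the collision locus $\sigma\to\rho$; there, because $\zeta_1,\zeta_2$ are constrained to a common horizontal line (both have $t$-coordinate $-\tau$), the bubbling genuinely occurs in real codimension one, with a gluing length but no gluing angle, exactly the mechanism of Lemma \ref{th:z1-zero} and precisely the point you flagged as the main obstacle. Consequently a single three-parameter family already exhibits all four relevant ends: $\epsilon_-$ at $\rho=0$, $\epsilon_+$ at $\sigma=\rho$, a cochain cohomologous to $s$ at $\rho=\infty$ (via the $\zeta_2$-multiplicity argument, since the signed count of positions of $\zeta_2$ through $F_\dag$ is $\mathrm{deg}(u)=1$), and zero at $\sigma=-\infty$. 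Your ``second family'' is therefore redundant, and in the shape you describe it --- ``feeding into'' a boundary of the first family that is not actually there --- the two pieces would not match up. The alternative via Proposition \ref{th:interpret-bs} is legitimate in outline but offers no shortcut: identifying $\epsilon_-+\epsilon_+$ with $\beta_{Z^{(1)}|E}-\Delta\nu$ up to coboundary is a parametrized-moduli-space argument of the same complexity as the direct one.
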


\begin{proof}
We introduce a moduli space similar to that underlying $\epsilon_-$, but with an additional parameter. The outcome will be a cochain $\iota$ satisfying
\begin{equation} \label{eq:two-etas}
d\iota + \tilde{s} - (\epsilon_- + \epsilon_+) = 0,
\end{equation}
where $\tilde{s}$ is cohomologous to $s$. The construction is shown in Figure \ref{fig:eta}, but it still makes sense to spend a little time discussing it.
\begin{figure}
\begin{centering}
\begin{picture}(0,0)%
\includegraphics{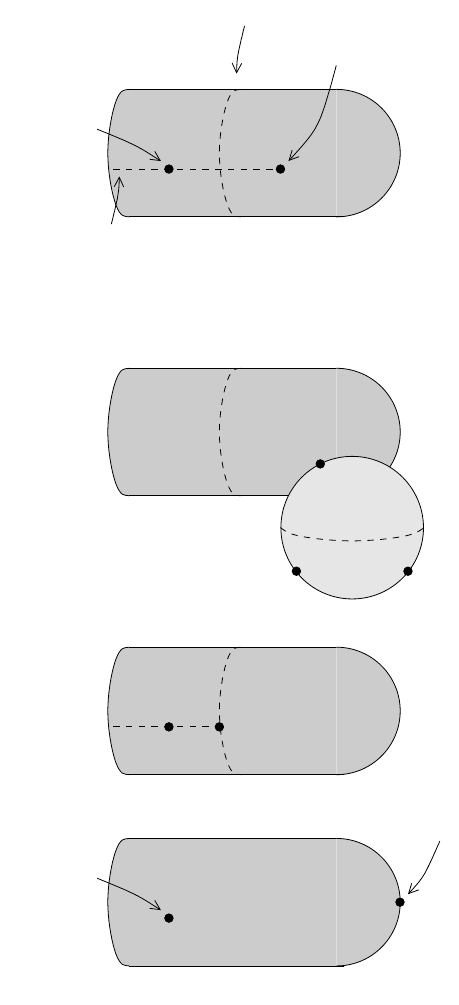}%
\end{picture}%
\setlength{\unitlength}{3355sp}%
\begingroup\makeatletter\ifx\SetFigFont\undefined%
\gdef\SetFigFont#1#2#3#4#5{%
  \reset@font\fontsize{#1}{#2pt}%
  \fontfamily{#3}\fontseries{#4}\fontshape{#5}%
  \selectfont}%
\fi\endgroup%
\begin{picture}(4305,9432)(-2864,-9871)
\put(-2849,-8611){\makebox(0,0)[lb]{\smash{{\SetFigFont{10}{12.0}{\rmdefault}{\mddefault}{\updefault}{\color[rgb]{0,0,0}$u(\zeta_2) \in F_\dag$}%
}}}}
\put(-1724,-9811){\makebox(0,0)[lb]{\smash{{\SetFigFont{10}{12.0}{\rmdefault}{\mddefault}{\updefault}{\color[rgb]{0,0,0}$\zeta_1$ is fixed, while $\zeta_2$ is unconstrained}%
}}}}
\put(-2849,-3511){\makebox(0,0)[lb]{\smash{{\SetFigFont{10}{12.0}{\rmdefault}{\mddefault}{\updefault}{\color[rgb]{0,0,0}\underline{the relevant boundary/degeneration contributions are:}}%
}}}}
\put(-2849,-2761){\makebox(0,0)[lb]{\smash{{\SetFigFont{10}{12.0}{\rmdefault}{\mddefault}{\updefault}{\color[rgb]{0,0,0}$\zeta_2$ moves on the half-infinite line determined by $\zeta_1$}%
}}}}
\put(1426,-4486){\makebox(0,0)[lb]{\smash{{\SetFigFont{10}{12.0}{\rmdefault}{\mddefault}{\updefault}{\color[rgb]{0,0,0}$\epsilon_+$}%
}}}}
\put(-2849,-1561){\makebox(0,0)[lb]{\smash{{\SetFigFont{10}{12.0}{\rmdefault}{\mddefault}{\updefault}{\color[rgb]{0,0,0}$u(\zeta_2) \in F_\dag$}%
}}}}
\put(-1049,-586){\makebox(0,0)[lb]{\smash{{\SetFigFont{10}{12.0}{\rmdefault}{\mddefault}{\updefault}{\color[rgb]{0,0,0}$\zeta_1$ lies to the right of this circle}%
}}}}
\put( 76,-961){\makebox(0,0)[lb]{\smash{{\SetFigFont{10}{12.0}{\rmdefault}{\mddefault}{\updefault}{\color[rgb]{0,0,0}$u(\zeta_1) \in F_0$}%
}}}}
\put(1351,-7111){\makebox(0,0)[lb]{\smash{{\SetFigFont{10}{12.0}{\rmdefault}{\mddefault}{\updefault}{\color[rgb]{0,0,0}$\epsilon_-$}%
}}}}
\put(1051,-8236){\makebox(0,0)[lb]{\smash{{\SetFigFont{10}{12.0}{\rmdefault}{\mddefault}{\updefault}{\color[rgb]{0,0,0}$u(\zeta_1) \in F_0$}%
}}}}
\put(1351,-8911){\makebox(0,0)[lb]{\smash{{\SetFigFont{10}{12.0}{\rmdefault}{\mddefault}{\updefault}{\color[rgb]{0,0,0}$\tilde{s}$}%
}}}}
\end{picture}%
\caption{\label{fig:eta}The construction of $\iota$ from \eqref{eq:two-etas}.}
\end{centering}
\end{figure}%

Concretely, the position of the two marked points is now $\zeta_1 = (\rho,-\tau)$ and $\zeta_2 = (\sigma,-\tau)$, where $\rho \in (0,\infty)$ is the new parameter, and $\sigma \in (-\infty,\rho)$, $\tau \in S^1$.
Concerning the auxiliary data $(H^{\iota},J^{\iota})$, in the limit $\rho \rightarrow 0$ they should agree with $(H^{\epsilon_-}, J^{\epsilon_-})$, which gives a contribution of $\epsilon_-$. There are three other limits that one has to discuss. As usual, having the two marked points collide leads to bubbling off of a holomorphic sphere at $\zeta_1$. One can arrange that the resulting contribution is exactly $\epsilon_+$. Next, the limit $\rho \rightarrow -\infty$ has zero contribution, as usual.

The most interesting limit is $\rho \rightarrow \infty$, which one thinks of as follows: $\zeta_1 = \infty \in T$, whereas the position of $\zeta_2 \in \bR \times S^1 \subset T$ can be arbitrary. We claim that this yields a cochain $\tilde{s}$ in the same homology class as $s$. To show that, one introduces another parameter space, which deforms the auxiliary data until they are no longer dependent on the position of $\zeta_2$. Then, what one is counting are maps with $u(\zeta_1) \in F_0$, with a multiplicity which depends on the possible positions of $\zeta_2$. However, the signed count of such possibilities is precisely $u \cdot F_{\dag} = \mathrm{deg}(u)$, which is $1$ by definition (we have seen the same kind of argument before, towards the end of the proof of Proposition \ref{th:interpret-bs}). Hence, the multiplicity issue turns out to be trivial, and we recover $s$.
\end{proof}

Combining Lemmas \ref{th:many} and \ref{th:many-2} establishes Proposition \ref{th:nabla-e}.

\subsection{Applying the connection to the Borman-Sheridan element}
The following result is the core of our discussion of $\nabla^{-1}$. 

\begin{proposition} \label{th:main-computation}
Applying the connection \eqref{eq:chi-connection} to the Bor\-man-Sheri\-dan cocycle \eqref{eq:bs-1} yields the following relation involving \eqref{eq:s01} and the functions from \eqref{eq:express-o}:
\begin{equation} \label{eq:thats-it}
\nabla^{-1} s + \eta s = 2\psi\, \tilde{s}^{(2)} + \text{\it coboundary}.
\end{equation}
\end{proposition}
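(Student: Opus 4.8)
The plan is to run the same kind of argument used for Proposition \ref{th:nabla-e}: spell out $\nabla^{-1}s$ via \eqref{eq:chi-connection} as a sum of explicit Floer cochains (the terms $C\partial_q(s)$, $h^+(s)$, $r_{A|E}(s)$, $\psi\chi(s)$), and then construct a sequence of auxiliary parametrized moduli spaces whose codimension-$1$ boundary strata produce, on one side, exactly these terms together with $\eta\,s$, and on the other side $2\psi\,\tilde{s}^{(2)}$, with everything else cancelling in pairs. The key geometric input that distinguishes this computation from the $e$ case is that the base cocycle $s$ is itself defined using a thimble with a $\deg = 1$ incidence condition $u(\zeta)\in F_0$ (see \eqref{eq:extra-s-class}), so the maps $u$ now have $\deg(u) = 1$ on the $C\partial_q$, $h^+$, $r_{A|E}$ pieces and $\deg(u)=2$ on the $\chi$ piece. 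The combinatorics of how that single unit of degree can split across broken configurations is what generates the new terms.

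The concrete steps I would carry out: (1) Write out $\partial_q s$ as a count of thimbles carrying two marked points --- the original $\zeta = \infty$ constrained to $F_0$, plus a free marked point $\zeta'$ whose image runs through $q^{-1}\Omega|E$ --- in exact analogy with \eqref{eq:differentiate-d}. (2) Build the analogue of the first three moduli spaces from the proof of Lemma \ref{th:many}, now glued onto the $s$-thimble; the new feature is that in each of these, when the $\Omega$-marked point's constraint is converted via \eqref{eq:a-restricted} into a constraint through $\psi Z^{(1)}|E$, a bubbling-off of a $\deg = 1$ holomorphic sphere $\tilde u$ through $M$ can occur, and because the principal component must still hit $F_0$ once with $\deg = 1$ total, one gets precisely the tangency condition \eqref{eq:tangency-condition} on the principal component --- i.e.\ a copy of $\tilde{s}^{(2)}$. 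The factor $2$ arises exactly as in Lemma \ref{th:bisections}/Lemma \ref{th:many}: there are two ways the two $F_0$-incidences (one from the original $s$-condition, one from the sphere's tangency to $M$ carrying multiplicity $2$ via $z^{(1)} = [M]\ast_F^{(1)}[M]$ bookkeeping) can be distributed, or equivalently from the symmetry of the two tangency points. (3) Produce the term $\eta s$: in \eqref{eq:boundary-of-a} the pseudo-chain $A$ has boundary $\psi Z^{(1)} - q^{-1}\Omega - \eta F_0$, and the $-\eta F_0$ piece, when $A|E$ is replaced by $A$ across $M$, contributes a boundary stratum where the marked point's image lands on $F_0$; combined with the $s$-defining thimble this reproduces $-\eta$ times the $s$-count. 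This is where the full force of Assumption \ref{th:psi-eta} (rather than just the vanishing of $z^{(1)}|E$) enters. (4) Handle the $\chi(s)$ piece exactly as $\chi(e)$ was handled, using $\alpha > 2$ now (since $s^{(2)}$-type configurations need $\alpha > 2$), checking that the $\sigma\to-\infty$ and three-component degenerations are excluded by the same $m^\pm$-inequality argument as in Section \ref{subsec:construct-chi}. (5) Assemble all moduli spaces, verify that every codimension-$1$ contribution not appearing in \eqref{eq:thats-it} cancels in pairs, and check orientations.

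I expect the main obstacle to be step (2): correctly identifying which bubbling configuration yields $\tilde{s}^{(2)}$ as opposed to $s^{(2)}$ or a multiple of $z^{(2)}e$, and pinning down the coefficient $2$ (as opposed to $1$ or $4$). The subtlety is that $z^{(1)}$ restricted to $E$ can be represented by sections through $M$, and the interaction with the pre-existing $F_0$-incidence on the thimble can produce a genuine tangency (codimension $2$, giving $\tilde{s}^{(2)}$) rather than two separated intersections (which would be $s^{(2)}$); getting the local picture at the bubbling point right, and confirming via a degeneration/TRR-style bookkeeping (compare the role of \eqref{eq:relative-gw} and \eqref{eq:wdvv} in Section \ref{subsec:gw}) that the resulting invariant is $\tilde{s}^{(2)}$ with coefficient $2\psi$, is the delicate part. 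A secondary difficulty is orientation signs: the sign of the $\eta s$ term and of $2\psi\tilde s^{(2)}$ must come out consistently with \eqref{eq:nabla-s-squared} and \eqref{eq:s22}, so one cannot be cavalier about the ordering conventions established in \eqref{eq:composition-law}--\eqref{eq:boundary-axiom}; I would check these against the known consequence \eqref{eq:a-s} (equivalently \eqref{eq:enum}), using $s\bullet s = \tilde s^{(2)} + 2z^{(2)}e$ as the consistency test.
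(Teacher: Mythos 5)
Your overall strategy — imitate the proof of Proposition~\ref{th:nabla-e}, gluing the moduli spaces underlying~\eqref{eq:chi-connection} onto the $s$-defining thimble with its extra $F_0$-incidence, and passing from $A|E$ to $A$ so that the $-\eta F_0$ term in~\eqref{eq:boundary-of-a} enters — is the right one, and matches the paper's two-step architecture via Lemmas~\ref{th:many-3}--\ref{th:many-4} and the intermediate cocycle $\sigma_-+\sigma_+$ from~\eqref{eq:two-sigmas}. Your step (3) for the $\eta s$ term is also substantially correct in outline, though the paper points out that one must check that the relevant ``ghost bubble stratum'' in the analogue of Figure~\ref{fig:3rd-step} is genuinely codimension one (by a dimension count: the whole parametrized space has dimension $i(x)+1$, the ghost stratum has dimension $i(x)$), and that those ghost bubbles are ordinary boundary points of the one-dimensional compactified moduli space.

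However, your step (2) contains a genuine error in the mechanism proposed for producing $\tilde{s}^{(2)}$, and consequently also in the accounting for the coefficient $2$. You propose that a nonconstant $\deg=1$ sphere bubbling off at a point with a $Z^{(1)}|E$-constraint produces a tangency to $F_0$ on the principal component. This would not happen: if a nonconstant bubble carrying $\deg=1$ splits off, the principal component is left with $\deg = 0$ and never touches $F_0$ (this is exactly how the paper rules out the second branch in Lemma~\ref{th:many-4}, see the bottom of Figure~\ref{fig:many-4}). In the paper's actual argument, those bubblings off of nonconstant spheres yield the intermediate cocycle $\sigma_\pm$ — a thimble with \emph{two separate} $F_0$-incidences (one at $\infty$ from the $s$-definition, one at $\zeta_1$ from the $\epsilon_\pm$-type constraint), together with the $F_\dag$-marker at $\zeta_2$ and $\deg(u)=2$. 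The tangency condition~\eqref{eq:tangency-condition} only appears in the subsequent step (Lemma~\ref{th:many-4}), when the two $F_0$-marked points are brought together and a \emph{constant} ghost bubble forms; it is the ghost bubble, not a $\deg=1$ sphere, that forces tangency of the principal component. The coefficient $2$ arises in that step for a different reason than you give: the auxiliary $F_\dag$-marker is integrated out and contributes a multiplicity equal to $u\cdot F_\dag = \mathrm{deg}(u) = 2$, exactly as in Lemma~\ref{th:many-2} (where it was $1$). It is not a symmetry/distribution factor of the kind appearing in Lemma~\ref{th:bisections}. So the delicate point you flagged is indeed where your proposal would go wrong: a correct proof must first isolate $\sigma_-+\sigma_+$ and then separately relate it to $2\tilde{s}^{(2)}$ via the ghost-bubble analysis.
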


In view of Lemma \ref{th:bisections}, \eqref{eq:thats-it} can equivalently be written in terms of \eqref{eq:s2}, in a way which makes the relation with the $c = -1$ case of \eqref{eq:nablac-s} more evident:
\begin{equation} \label{eq:we-got-it}
\nabla^{-1} s - 2\psi s^{(2)} + \eta s + 4 z^{(2)}\psi\, e = \text{\it coboundary}.
\end{equation}
\begin{figure}
\begin{centering}
\begin{picture}(0,0)%
\includegraphics{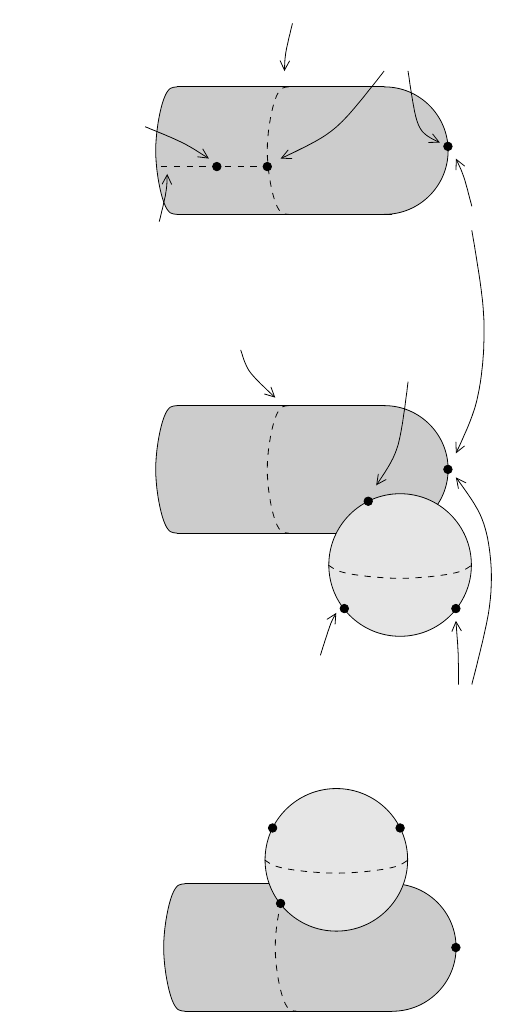}%
\end{picture}%
\setlength{\unitlength}{3355sp}%
\begingroup\makeatletter\ifx\SetFigFont\undefined%
\gdef\SetFigFont#1#2#3#4#5{%
  \reset@font\fontsize{#1}{#2pt}%
  \fontfamily{#3}\fontseries{#4}\fontshape{#5}%
  \selectfont}%
\fi\endgroup%
\begin{picture}(4755,9522)(-3314,-9973)
\put(-824,-3961){\makebox(0,0)[lb]{\smash{{\SetFigFont{10}{12.0}{\rmdefault}{\mddefault}{\updefault}{\color[rgb]{0,0,0}$u(\zeta_2)$ lies on a holomorphic sphere}%
}}}}
\put(-1049,-586){\makebox(0,0)[lb]{\smash{{\SetFigFont{10}{12.0}{\rmdefault}{\mddefault}{\updefault}{\color[rgb]{0,0,0}$\zeta_2$ moves on this circle}%
}}}}
\put(  1,-1036){\makebox(0,0)[lb]{\smash{{\SetFigFont{10}{12.0}{\rmdefault}{\mddefault}{\updefault}{\color[rgb]{0,0,0}$u(\zeta_1), u(\zeta_2) \in F_0$}%
}}}}
\put(1426,-1861){\makebox(0,0)[lb]{\smash{{\SetFigFont{10}{12.0}{\rmdefault}{\mddefault}{\updefault}{\color[rgb]{0,0,0}$\sigma_-$}%
}}}}
\put(901,-2536){\makebox(0,0)[lb]{\smash{{\SetFigFont{10}{12.0}{\rmdefault}{\mddefault}{\updefault}{\color[rgb]{0,0,0}$\zeta_1$ is fixed}%
}}}}
\put(1426,-4861){\makebox(0,0)[lb]{\smash{{\SetFigFont{10}{12.0}{\rmdefault}{\mddefault}{\updefault}{\color[rgb]{0,0,0}$\sigma_+$}%
}}}}
\put(-1199,-6811){\makebox(0,0)[lb]{\smash{{\SetFigFont{10}{12.0}{\rmdefault}{\mddefault}{\updefault}{\color[rgb]{0,0,0}this point maps to $F_\dag$}%
}}}}
\put(226,-7111){\makebox(0,0)[lb]{\smash{{\SetFigFont{10}{12.0}{\rmdefault}{\mddefault}{\updefault}{\color[rgb]{0,0,0}these points map to $F_0$}%
}}}}
\put(-1574,-3661){\makebox(0,0)[lb]{\smash{{\SetFigFont{10}{12.0}{\rmdefault}{\mddefault}{\updefault}{\color[rgb]{0,0,0}$\zeta_2$ lies to the right of this circle}%
}}}}
\put(-2740,-1561){\makebox(0,0)[lb]{\smash{{\SetFigFont{10}{12.0}{\rmdefault}{\mddefault}{\updefault}{\color[rgb]{0,0,0}$u(\zeta_3) \in F_\dag$}%
}}}}
\put(-2699,-2761){\makebox(0,0)[lb]{\smash{{\SetFigFont{10}{12.0}{\rmdefault}{\mddefault}{\updefault}{\color[rgb]{0,0,0}$\zeta_3$ moves on the half-infinite line determined by $\zeta_1$}%
}}}}
\put(-3299,-7561){\makebox(0,0)[lb]{\smash{{\SetFigFont{10}{12.0}{\rmdefault}{\mddefault}{\updefault}{\color[rgb]{0,0,0}\underline{common boundary component:}}%
}}}}
\end{picture}%
\caption{\label{fig:two-sigmas}The cochains $\sigma_{\pm}$ from \eqref{eq:two-sigmas}.}
\end{centering}
\end{figure}%
The proof is an adaptation of that of Proposition \ref{th:nabla-e}. Again, the two sides of \eqref{eq:thats-it} will each separately be related to a cocycle (assuming $\alpha > 2$, shown in Figure \ref{fig:two-sigmas})
\begin{equation} \label{eq:two-sigmas}
\sigma_- + \sigma_+ \in \mathit{CF}^0(E,H).
\end{equation}
To define $\sigma_{\pm}$, one modifies our previous construction of $\epsilon_{\pm}$ by adding another marked point at $\infty \in T$, whose image is required to go through $F_0$, and increasing $\mathrm{deg}(u)$ by $1$.

\begin{lemma} \label{th:many-3}
The cocycle \eqref{eq:two-sigmas} is related to the connection \eqref{eq:chi-connection} by
\begin{equation} \label{eq:what-is-sigma-for}
\nabla^{-1} s = \psi(\sigma_- + \sigma_+) - \eta s + \text{\it coboundary}.
\end{equation}
\end{lemma}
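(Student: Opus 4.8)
\textbf{Proof proposal for Lemma \ref{th:many-3}.}

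The plan is to mimic the proof of Lemma \ref{th:many} essentially verbatim, keeping track of the extra marked point at $\infty \in T$ carrying the incidence condition $u(\infty) \in F_0$ and the extra unit of degree. Concretely, I would unfold the definition \eqref{eq:chi-connection} of $\nabla^{-1}$ applied to $s$: the starting configuration consists of the four pieces $C(\partial_q s)$, $h^+(s)$, $r_{A|E}(s)$ and $\psi\,\chi(s)$, each built on a cylinder glued to a thimble, with $\partial_q s$ interpreted (exactly as $\partial_q e$ was in the proof of Proposition \ref{th:nabla-e}) as the count of thimbles with an extra marked point mapping through $q^{-1}\Omega|E$ in addition to the marked point at $\infty$ mapping through $F_0$. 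Just as in the proof of Lemma \ref{th:many}, I would then introduce four parametrized moduli spaces obtained by stretching the neck in each of these four pieces and letting the $q^{-1}\Omega|E$-marked point (respectively the $A|E$- or $Z^{(1)}|E$-marked point) move freely over the glued surface, subject to staying on the appropriate side of the circle inherited from $\{0\}\times S^1$. The marked point at $\infty$ with its condition $u(\infty)\in F_0$ and the requirement $\deg(u)=1$ (or $\deg(u)=2$ for the piece coming from $\chi$, parallel to \eqref{eq:updown-left}) ride along unchanged throughout.

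The key bookkeeping step is to identify the codimension-one boundary faces of these four parametrized moduli spaces. By design, the neck-stretching limits reproduce the four terms of $\nabla^{-1}s$; the ``minimal gluing length'' and ``marked point moves maximally left'' faces cancel in pairs among the four spaces, exactly as in the proof of Lemma \ref{th:many}; and the faces where the sphere bubbles off at the auxiliary marked point (as $\sigma\to 0$, using the constrained almost complex structure $\tilde J$) combine with the incidence $u(\infty)\in F_0$ and $\deg=1$ to produce precisely $\psi\sigma_-$ and $\psi\sigma_+$ respectively. The one genuinely new feature compared to Lemma \ref{th:many} is that the ordinary Floer differential $d s$ no longer vanishes; since $s$ is only a cocycle for $\alpha>1$ and here we need $\alpha>2$ for $\sigma_\pm$ to be defined, I would arrange all Hamiltonians so that $\alpha^- > 2$, use $(H,J)=(H^-,J^-)$ and $\alpha=\alpha^-$ to build $\sigma_\pm$, and note that $d s=0$ then holds. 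There is also a subtlety analogous to the three-component degeneration in Section \ref{subsec:construct-chi}: a limit in which the principal component lies entirely in $F_0$ while the marked point at $\infty$ tautologically satisfies its constraint; this is excluded by the same combination of the codimension computation \eqref{eq:codim-computation} (now with $\deg$-shifts reflecting the extra marked point) and the topological inequality coming from Lemma \ref{th:nonnegative-2} and \eqref{eq:rewritten-inequalities}, using $\alpha>2$.

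The remaining term to account for is $-\eta s$. This is the one place where the structure differs from the pure $\nabla^{-1}e$ computation: the pseudo-chain $A$ bounds $\psi Z^{(1)} - q^{-1}\Omega - \eta F_0$ in $F$ (see \eqref{eq:boundary-of-a}), and while the $F_0$-component drops out upon restricting to $E$ for the purposes of \eqref{eq:a-restricted}, here the presence of the marked point at $\infty$ with condition $u(\infty)\in F_0$ makes that component visible: the face of the $r_{A|E}$-stretching space where the auxiliary marked point is pushed onto $\partial(A)$ produces not only the $\psi Z^{(1)}|E$ and $q^{-1}\Omega|E$ contributions but also an $\eta F_0$ contribution, and the latter — a configuration with two marked points both mapping to $F_0$ and $\deg(u)=1$, i.e.\ up to a ghost-bubble correction the Borman--Sheridan configuration \eqref{eq:extra-s-class} — evaluates to $\eta s$ (the sign producing $-\eta s$ on the right of \eqref{eq:what-is-sigma-for}). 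The main obstacle will be exactly this sign- and term-matching: verifying that, after assembling all four parametrized moduli spaces and orienting everything consistently (via the determinant-line conventions compatible with \eqref{eq:composition-law} and \eqref{eq:boundary-axiom}), every codimension-one face cancels except the ones appearing on the two sides of \eqref{eq:what-is-sigma-for}, and in particular that the $\eta F_0$-boundary of $A$ contributes $-\eta s$ rather than $+\eta s$. I would handle the orientation verification in the same spirit as the sign computation following Proposition \ref{th:interpret-bs}, reordering parameters so that the $\sigma\to 0$ and neck-stretching faces count with their natural signs, and leave the full check to the reader as is done elsewhere in the paper. Granting Lemma \ref{th:many-3}, Proposition \ref{th:main-computation} then follows by combining it with the analogue of Lemma \ref{th:many-2} (a cobordism relating $\sigma_-+\sigma_+$ to $2\tilde s^{(2)}$, or equivalently to $2 s^{(2)} - 4 z^{(2)} e$ via Lemma \ref{th:bisections}), exactly as Proposition \ref{th:nabla-e} followed from Lemmas \ref{th:many} and \ref{th:many-2}.
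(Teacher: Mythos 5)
Your overall strategy is the right one: mimic the proof of Lemma~\ref{th:many}, replace $e$ by $s$, keep track of the extra marked point at $\infty$ with the $F_0$-incidence, and trace the new term $-\eta\,s$ back to the $\eta F_0$ component of $\partial A$ in \eqref{eq:boundary-of-a}. You also correctly anticipate that you need $\alpha > 2$ and that a ghost-bubble phenomenon is involved. That much matches the paper's argument.

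However, you locate the $\eta\,s$ contribution in the wrong moduli space, and that is a genuine gap. You attribute it to the ``$r_{A|E}$-stretching space'' (the analogue of Figure~\ref{fig:2nd-step}, whose $\rho\to\infty$ face gives $r_A(s)$). In that moduli space the moving marked point $\zeta_2$ is confined to the circle $\{0\}\times S^1$, which is far from $\zeta_1=\infty$; since $\deg(u)=1$, the map can meet $F_0$ in at most one point, so a configuration with $u(\zeta_1)\in F_0$ \emph{and} $u(\zeta_2)\in F_0$ with $\zeta_1\neq\zeta_2$ is impossible. The paper makes exactly this observation and concludes that the $\eta F_0$-face of the second moduli space is empty: it contributes nothing. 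The $\eta\,s$ term instead arises from the analogue of Figure~\ref{fig:3rd-step} (no neck-stretching), where $\zeta_2$ is free to move ``to the right of the circle'' all the way up to $\zeta_1=\infty$. There the degree argument fails precisely at the ghost-bubble stratum $\zeta_2\to\zeta_1$, and the paper gives a non-trivial local analysis (dimension of the full space is $i(x)+1$, of the ghost-bubble stratum is $i(x)$; auxiliary data independent of $\zeta_2$ near $\zeta_1$; endpoints of the one-manifold $\{\zeta_2 : u(\zeta_2)\in A\}$ near $\zeta_1$) to show that this stratum is a genuine codimension-one boundary rather than the usual codimension-two bubbling. Your proposal neither excludes the second-moduli-space contribution nor supplies this codimension argument for the third, and your phrase ``up to a ghost-bubble correction'' does not bridge that gap. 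A secondary but related point: you consistently write $A|E$, $Z^{(1)}|E$, $q^{-1}\Omega|E$, whereas the paper emphasizes that because $\deg(u)=1$ the maps now leave $E$, so one must use the unrestricted pseudo-chains $A$, $Z^{(1)}$, $q^{-1}\Omega$ in $F$ and invoke \eqref{eq:boundary-of-a} rather than \eqref{eq:a-restricted} — this is precisely what makes the $\eta F_0$ term appear at all.
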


\begin{proof}
The proof follows closely that of Lemma \ref{th:many}. The modifications one has to apply to the moduli spaces involved are the same as when transforming \eqref{eq:two-epsilons} into \eqref{eq:two-sigmas}, and much of the argument carries over without any essential changes. Therefore, we will mostly just do a quick review, slowing down only to focus on the new aspect of \eqref{eq:what-is-sigma-for}, namely the $\eta s$ term.

The starting point is the analogue of Figure \ref{fig:nabla-e}, appropriately modified by replacing $e$ with $s$; which means that the thimble component acquires an additional marked point, and is now of degree $1$. The same applies to Figure \ref{fig:1st-step}. Note that now, since the maps will no longer remain in $E$, we must use $q^{-1}\Omega$ instead of its restriction to $E$. Similarly, in Figure \ref{fig:2nd-step}, one uses $A$ instead of its restriction to $E$, and hence relies on \eqref{eq:boundary-of-a} instead of \eqref{eq:a-restricted}. Potentially, the term $\eta F_0$ in \eqref{eq:boundary-of-a} could cause an additional boundary contribution to appear, which would consist of thimbles with two distinct marked points (one at $\infty$, and another on the circle $\{0\} \times S^1$) whose images go through $F_0$. However, our maps have $\mathrm{deg}(u) = 1$, hence can't have two distinct intersection points with $F_0$, ruling out this contribution.
\begin{figure}
\begin{centering}
\begin{picture}(0,0)%
\includegraphics{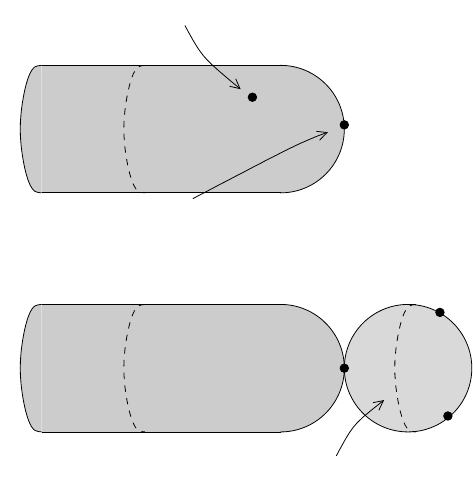}%
\end{picture}%
\setlength{\unitlength}{3355sp}%
\begingroup\makeatletter\ifx\SetFigFont\undefined%
\gdef\SetFigFont#1#2#3#4#5{%
  \reset@font\fontsize{#1}{#2pt}%
  \fontfamily{#3}\fontseries{#4}\fontshape{#5}%
  \selectfont}%
\fi\endgroup%
\begin{picture}(4448,4491)(-1814,-8380)
\put(751,-8311){\makebox(0,0)[lb]{\smash{{\SetFigFont{10}{12.0}{\rmdefault}{\mddefault}{\updefault}{\color[rgb]{0,0,0}constant ``ghost bubble'' in $F_0$}%
}}}}
\put(-1799,-6286){\makebox(0,0)[lb]{\smash{{\SetFigFont{10}{12.0}{\rmdefault}{\mddefault}{\updefault}{\color[rgb]{0,0,0}\underline{the extra degeneration, with the $\eta$ factor removed:}}%
}}}}
\put(-1424,-4036){\makebox(0,0)[lb]{\smash{{\SetFigFont{10}{12.0}{\rmdefault}{\mddefault}{\updefault}{\color[rgb]{0,0,0}$\zeta_2$ lies to the right of the circle, its image goes through $A$}%
}}}}
\put(-749,-5911){\makebox(0,0)[lb]{\smash{{\SetFigFont{10}{12.0}{\rmdefault}{\mddefault}{\updefault}{\color[rgb]{0,0,0}$\zeta_1$ is fixed, its image lies on $F_0$}%
}}}}
\end{picture}%
\caption{\label{fig:h5}Origin of the $\eta s$ term on the right side of \eqref{eq:what-is-sigma-for}.}
\end{centering}
\end{figure}%

In the analogue of Figure \ref{fig:3rd-step}, we again have to consider the impact of the extra term in \eqref{eq:boundary-of-a}. One might want to argue as before that this vanishes, but there are points in the compactification of our moduli space to which this argument does not apply. To illustrate that point, the space itself is shown in Figure \ref{fig:h5} at the top, and the problematic degenerations, which make up what we will call the ``ghost bubble stratum'', at the bottom. The dimension of the entire moduli space is $i(x) + 2\,\mathrm{deg}(u) + 2 - 3 = i(x) + 1$, and the ``ghost bubble stratum'' has dimension $i(x)$. We may assume that, if $\zeta_2$ is sufficiently close to $\zeta_1$, the auxiliary data involved (Hamiltonian and almost complex structures) are independent of the position of $\zeta_2$. In that case, and assuming suitable transversality, the situation is as follows. Given any map $u$ with $u(\zeta_1) \in F_0$, the points $\zeta_2$ sufficiently close to $\zeta_1$, and such that $u(\zeta_2)$ lies in a given component of $A$, form a one-dimensional submanifold of $T$. That submanifold has endpoints corresponding to the various parts of $\partial A$, and one kind of such endpoints corresponds precisely to our ``ghost bubbles''. In other words, this argument shows that in the compactification of a one-dimensional moduli space, the ``ghost bubbles'' are ordinary boundary points. Counting such points then makes a contribution which is at least cohomologus to $\eta s$ (like the $\tilde{s}$ term encountered in the proof of Lemma \ref{th:many-2}).
\end{proof}

\begin{lemma} \label{th:many-4}
The cocycle \eqref{eq:two-sigmas} is cohomologous to $2\tilde{s}^{(2)}$.
\end{lemma}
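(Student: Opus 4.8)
\textbf{Proof proposal for Lemma \ref{th:many-4}.}

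The plan is to mirror the structure of Lemma \ref{th:many-2} (which showed $\epsilon_-+\epsilon_+ \simeq s$), but starting from the cocycle $\sigma_-+\sigma_+$, which carries one extra marked point at $\infty\in T$ whose image is constrained to lie in $F_0$ and has $\deg(u)=2$ instead of $\deg(u)=1$. I would introduce a one-parameter moduli space interpolating between the configuration defining $\sigma_-$ and a configuration with the ``moving'' marked point $\zeta_2$ pushed out to $\infty$, exactly as the cochain $\iota$ of \eqref{eq:two-etas} did in the proof of Lemma \ref{th:many-2}. Concretely, place the three marked points at $\zeta_1 = \infty$ (image in $F_0$, coming from the extra point that distinguishes $\sigma$ from $\epsilon$), $\zeta_3 = (\rho,-\tau)$ with $\rho\in(0,\infty)$ the new interpolation parameter (image in $F_\dag$), and $\zeta_2 = (\sigma,-\tau)$ with $\sigma\in(-\infty,\rho)$ (image in $F_0$), while imposing $\deg(u)=2$ and $x = \lim_{s\to-\infty}u(s,\cdot)$ outside $F_0$. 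Running the boundary analysis of the associated $1$-dimensional moduli space, the limit $\rho\to 0$ recovers $\sigma_-$; the collision $\zeta_2\to\zeta_3$ bubbles off a $\tilde J$-holomorphic sphere and produces $\sigma_+$; the limit $\sigma\to-\infty$ contributes nothing, by the usual maximum-principle argument used throughout Section \ref{sec:floer-connection} (valid because $\alpha>2$); and the limit $\rho\to\infty$ leaves $\zeta_1=\infty$ carrying the $F_0$-constraint, $\zeta_2$ free with image in $F_0$, and $\zeta_3$ free with image in $F_\dag$.

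The heart of the argument is identifying what this last degeneration ($\rho\to\infty$) counts. One has maps $u$ with two marked points $\zeta_1,\zeta_2$ forced into $F_0$, $\deg(u)=2$, plus a marked point $\zeta_3$ forced into $F_\dag$. As in the end of the proof of Proposition \ref{th:interpret-bs} and in Lemma \ref{th:many-2}, I would deform the auxiliary data so they no longer depend on the position of $\zeta_3$; the signed count of admissible positions of $\zeta_3$ is then $u\cdot F_\dag = \deg(u) = 2$. This kills the multiplicity issue at the cost of a factor $2$, and the remaining count is exactly the count defining $s^{(2)}$ in \eqref{eq:s2-class}, \eqref{eq:s2}. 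Hence the $\rho\to\infty$ boundary contributes $2\,s^{(2)}$ up to a coboundary, and combining all four boundary faces gives $\sigma_-+\sigma_+ \simeq 2\,s^{(2)}$ in cohomology. Finally, by Lemma \ref{th:bisections}, $s^{(2)}$ represents the same class as $\tilde s^{(2)} + 2z^{(2)}e$; but the statement of the present lemma is phrased in terms of $\tilde s^{(2)}$, so I should be careful here — in fact the cleanest formulation (consistent with how \eqref{eq:thats-it} versus \eqref{eq:we-got-it} are related via Lemma \ref{th:bisections}) is to prove directly that $\sigma_-+\sigma_+$ is cohomologous to $2\tilde s^{(2)}$ by running the identical degeneration argument with the tangency condition \eqref{eq:tangency-condition} built into the $\rho\to\infty$ limit instead of the two-point condition \eqref{eq:s2-class}, which is the natural thing when both marked points $\zeta_1,\zeta_2$ are being pushed together; the sphere-bubbling / ghost-bubble bookkeeping of Lemma \ref{th:many-3} then supplies exactly the passage between the two-point count and the tangency count, matching Lemma \ref{th:bisections}.

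The main obstacle I expect is the sign and orientation bookkeeping at the $\rho\to\infty$ face, specifically keeping track of the factor of $2$ coherently (it must be $2$ and not, say, $\pm 2$ or $1$) and making sure that when one deforms away the $\zeta_3$-dependence the orientation of the parameter $\zeta_3$-circle (or half-line) is the one that produces $+\,u\cdot F_\dag$ rather than its negative — this is the same subtlety that required the careful discussion of orientations of marked-point parameters in the remark following Proposition \ref{th:interpret-bs} and in \eqref{eq:differentiate-d}. A secondary technical point is confirming that no additional boundary strata appear from the interaction of the two $F_0$-constrained marked points $\zeta_1,\zeta_2$ with $\deg(u)=2$: a priori one could imagine $\zeta_1,\zeta_2$ landing on two \emph{distinct} sheets over $F_0$, but since $\deg(u)=2$ this is exactly the generic stratum (codimension $0$ in the fibre product), whereas the tangency locus and the ghost-bubble locus are the genuine boundary, matching the analysis of Lemma \ref{th:bisections}. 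As usual, sphere bubbling into $F_0$ with nonconstant components is codimension $\geq 2$ and can be ignored.
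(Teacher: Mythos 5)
Your overall plan matches the paper's proof: interpolate from $\sigma_-$ by pushing a marked point out to $\infty$, identify the boundary faces, and analyze the collision at $\rho\to\infty$. But there are two concrete problems with the execution.

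First, your explicit placement swaps the roles of $\zeta_2$ and $\zeta_3$: you put the $F_\dag$-constrained point at $(\rho,-\tau)$ and the $F_0$-constrained point at $(\sigma,-\tau)$. With that labeling the $\rho\to 0$ face does \emph{not} recover $\sigma_-$ (in $\sigma_-$ the point on the circle maps to $F_0$ and the point behind it maps to $F_\dag$), and $\rho\to\infty$ would collide an $F_\dag$-point with the $F_0$-point at $\infty$, which is the wrong degeneration. Your opening sentence (``push the moving marked point $\zeta_2$ to $\infty$'') has the right idea; only the displayed coordinates are scrambled. The correct placement is $\zeta_1=\infty$ ($F_0$), $\zeta_2=(\rho,-\tau)$ ($F_0$), $\zeta_3=(\sigma,-\tau)$ with $\sigma<\rho$ ($F_\dag$).

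Second, and more substantively, the $\rho\to\infty$ face is not treated correctly, and your self-correction still misses the decisive step. You first describe this face as having ``$\zeta_2$ free with image in $F_0$'' and identify it with $2s^{(2)}$; but $\zeta_2$ is not free there, it collides with $\zeta_1=\infty$. You then assert that the right count is $2\tilde s^{(2)}$ and appeal to the ghost/sphere bookkeeping of Lemmas \ref{th:bisections} and \ref{th:many-3} to pass between the two-point count and the tangency count. But Lemma \ref{th:bisections} says $s^{(2)}\sim\tilde s^{(2)}+2z^{(2)}e$, so applied naively that bookkeeping would yield $2\tilde s^{(2)}+4z^{(2)}e$, not $2\tilde s^{(2)}$. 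What actually happens — and what must be verified — is that the $\rho\to\infty$ collision of $\zeta_1$ and $\zeta_2$ decomposes into the same two sub-strata as in Lemma \ref{th:bisections}, but with the constraint $u(\zeta_3)\in F_\dag$ riding along. On the nonconstant-sphere stratum the bubble absorbs all of $\deg(u)=2$, so the principal component has $\deg(u)=0$ and is disjoint from $F_\dag$; the $\zeta_3$-constraint is therefore unsatisfiable (equivalently, the multiplicity $u\cdot F_\dag$ obtained by integrating out the position of $\zeta_3$ is zero), and this stratum contributes nothing. Your dismissal of ``sphere bubbling into $F_0$ with nonconstant components'' as codimension $\geq 2$ applies only to spheres contained in $F_0$, not to spheres meeting it in two points, so it does not cover this case. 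Only the ghost stratum survives, where the multiplicity $u\cdot F_\dag=\deg(u)=2$ gives $2\tilde s^{(2)}$ directly. With these corrections, the argument does run exactly as you outline.
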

\begin{figure}
\begin{centering}
\begin{picture}(0,0)%
\includegraphics{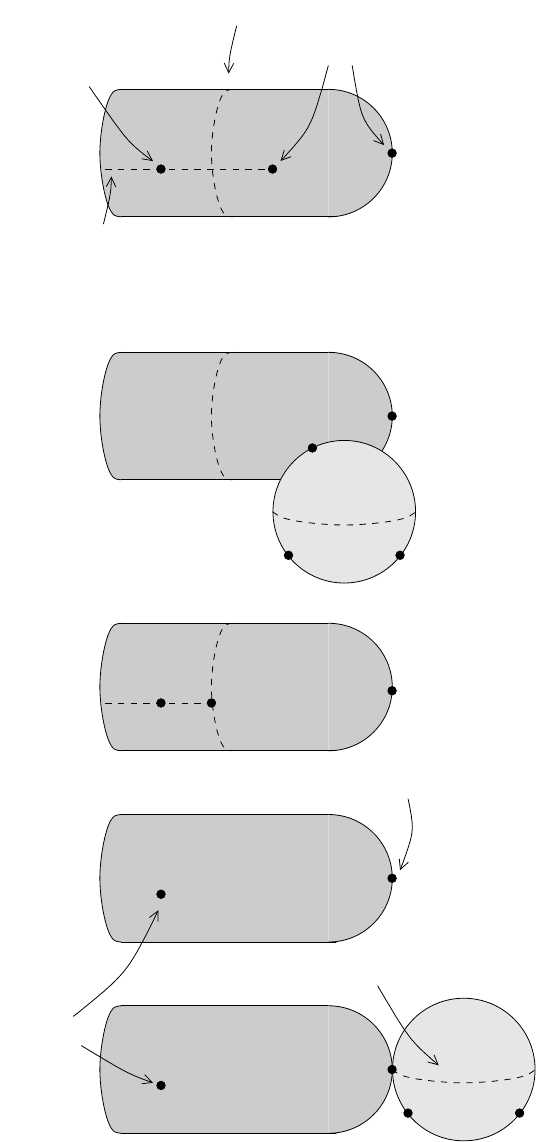}%
\end{picture}%
\setlength{\unitlength}{3355sp}%
\begingroup\makeatletter\ifx\SetFigFont\undefined%
\gdef\SetFigFont#1#2#3#4#5{%
  \reset@font\fontsize{#1}{#2pt}%
  \fontfamily{#3}\fontseries{#4}\fontshape{#5}%
  \selectfont}%
\fi\endgroup%
\begin{picture}(5052,10726)(-2789,-11165)
\put(-2190,-1111){\makebox(0,0)[lb]{\smash{{\SetFigFont{10}{12.0}{\rmdefault}{\mddefault}{\updefault}{\color[rgb]{0,0,0}$u(\zeta_3) \in F_\dag$}%
}}}}
\put(-1049,-586){\makebox(0,0)[lb]{\smash{{\SetFigFont{10}{12.0}{\rmdefault}{\mddefault}{\updefault}{\color[rgb]{0,0,0}$\zeta_2$ lies to the right of this circle}%
}}}}
\put( 76,-961){\makebox(0,0)[lb]{\smash{{\SetFigFont{10}{12.0}{\rmdefault}{\mddefault}{\updefault}{\color[rgb]{0,0,0}$u(\zeta_1), u(\zeta_2) \in F_0$}%
}}}}
\put(1351,-8911){\makebox(0,0)[lb]{\smash{{\SetFigFont{10}{12.0}{\rmdefault}{\mddefault}{\updefault}{\color[rgb]{0,0,0}$2\tilde{s}^2 + (\mathit{coboundary})$}%
}}}}
\put(1351,-6886){\makebox(0,0)[lb]{\smash{{\SetFigFont{10}{12.0}{\rmdefault}{\mddefault}{\updefault}{\color[rgb]{0,0,0}$\sigma_-$}%
}}}}
\put(1426,-4336){\makebox(0,0)[lb]{\smash{{\SetFigFont{10}{12.0}{\rmdefault}{\mddefault}{\updefault}{\color[rgb]{0,0,0}$\sigma_+$}%
}}}}
\put(-2474,-2761){\makebox(0,0)[lb]{\smash{{\SetFigFont{10}{12.0}{\rmdefault}{\mddefault}{\updefault}{\color[rgb]{0,0,0}$\zeta_3$ moves on the half-infinite line determined by $\zeta_2$}%
}}}}
\put(-2624,-3361){\makebox(0,0)[lb]{\smash{{\SetFigFont{10}{12.0}{\rmdefault}{\mddefault}{\updefault}{\color[rgb]{0,0,0}\underline{the relevant boundary/degeneration contributions are:}}%
}}}}
\put(-1049,-7861){\makebox(0,0)[lb]{\smash{{\SetFigFont{10}{12.0}{\rmdefault}{\mddefault}{\updefault}{\color[rgb]{0,0,0}$u(\zeta_1) \in F_0$, with tangency condition}%
}}}}
\put(-2774,-10186){\makebox(0,0)[lb]{\smash{{\SetFigFont{10}{12.0}{\rmdefault}{\mddefault}{\updefault}{\color[rgb]{0,0,0}$u(\zeta_2) \in F_\dag$}%
}}}}
\put(-1424,-9586){\makebox(0,0)[lb]{\smash{{\SetFigFont{10}{12.0}{\rmdefault}{\mddefault}{\updefault}{\color[rgb]{0,0,0}pseudo-holomorphic sphere intersecting $F_0$ twice}%
}}}}
\end{picture}%
\caption{\label{fig:many-4}The proof of Lemma \ref{th:many-4}.}
\end{centering}
\end{figure}

\begin{proof}
This is a modification of the proof of Lemma \ref{th:many-2}, entirely analogous to what we have done previously. We summarize it in Figure \ref{fig:many-4}. The only part that requires extra attention is the limit where the two marked points collide. In that case, there are two possibilities, in parallel with Lemma \ref{th:bisections}. One is the appearance of a constant ``ghost bubble'', in which case (for topological reasons) the principal component $u$ is tangent to $F_0$ at $\zeta_1$. We can think of the condition $u(\zeta_2) \in F_{\dag}$ as counting the possible positions of $\zeta_2$, in analogy with the argument from Lemma \ref{th:many-2}. Since $\mathrm{deg}(u) = 2$, the multiplicity is $2$ this time, hence a count leads to (a cocycle cohomologous to) $2\tilde{s}^{(2)}$. The other possibility (shown at the bottom of Figure \ref{fig:many-4}) is bubbling off of a nonconstant pseudo-holomorphic sphere, with two points on $F_0$. In that case, the principal component satisfies $\mathrm{deg}(u) = 0$, which by the  same argument as before, means that the relevant contribution is zero.
\end{proof}

The combination of Lemmas \ref{th:many-3} and \ref{th:many-4} establishes Proposition \ref{th:main-computation}.

\begin{remark}
We close our discussion with a technical point, which applies to Proposition \ref{th:main-computation} and all earlier computations regarding specific Floer cochains of degree $0$ (Proposition \ref{th:interpret-bs}, Lemma \ref{th:bisections}, Proposition \ref{th:nabla-e}). Namely, inspection of the geometry of our setup shows that it is always possible to choose $H$ so that $\mathit{CF}^*(E,H)$ is concentrated in degrees $\geq 0$. In that case, any relation in $\mathit{CF}^0(E,H)$ that holds up to coboundaries actually holds strictly. In other words, all the zero-dimensional moduli spaces that define those coboundaries would turn out to be empty for dimension reasons. For the same reason, Lemma \ref{th:bs-done} would then become trivial. We have not made use of that option, to avoid giving the false impression that a specific choice of $H$ is an important part of the construction.
\end{remark}

\section{Multilinear operations\label{sec:operations-on-floer-cohomology}}

Up to this point, our Floer-theoretic discussion has involved only two Riemann surfaces: the cylinder and the thimble. One can embed this into a more general TQFT structure, of which we will now recall a version (related expositions are \cite{schwarz95,seidel07,ritter10}). Our main goal is to reinterpret the connection introduced in Section \ref{sec:floer-connection} in terms of that structure, which brings out the similarity with the abstract theory from Sections \ref{sec:conf}--\ref{sec:bv}.

\subsection{Riemann surfaces}
We will use only genus zero surfaces, which are decorated in such a way that they produce operations with one output (in principle, one could work in greater generality; but this level is sufficient for our purpose, and allows some expository simplifications).

\begin{setup} \label{th:worldsheet}
Let $\bar{S}$ be a compact Riemann surface isomorphic to the sphere, with marked points $\zeta^-$ and $\zeta^+_i$, $i \in I$, for some finite set $I$. Let $S$ be the complement of the marked points. A set of tubular ends for $S$ consists of proper holomorphic embeddings with disjoint images,
\begin{equation} \label{eq:epsilon-maps}
\left\{
\begin{aligned}
& \epsilon^-: (-\infty,s^-] \times S^1 \longrightarrow S, \\
& \epsilon^+_i: [s^+_i,\infty) \times S^1 \longrightarrow S,
\end{aligned}
\right.
\end{equation}
for some $s^-, s^+_i \in \bR$. Each embedding \eqref{eq:epsilon-maps} should map to a punctured disc in $\bar{S}$ surrounding the corresponding marked point. By a worldsheet, we mean such a surface $S$ together with a choice of tubular ends, as well as a one-form $\beta$ satisfying 
\begin{equation} \label{eq:sub-closed}
d\beta \leq 0,
\end{equation}
and such that, for some $\alpha^+_i, \alpha^- \in \bR \setminus \bZ$, 
\begin{equation} \label{eq:beta-on-the-ends}
\left\{
\begin{aligned}
& (\epsilon^-)^* \beta = \alpha^-\, \mathit{dt}, \\
& (\epsilon^+_i)^* \beta = \alpha^+_i \, \mathit{dt}.
\end{aligned}
\right.
\end{equation}
\end{setup}

In terms of $\bar{S}$, the tubular ends equip it with distinguished framings (half-lines in the tangent spaces at the marked points). These are defined as
\begin{equation} \label{eq:framings-from-ends}
\left\{
\begin{aligned}
& \textstyle \lim_{s \rightarrow -\infty} D(\epsilon^-)_{s,0}(\bR^+ \cdot \partial_s) \subset T\bar{S}_{\zeta^-}, 
\\
& \textstyle \lim_{s \rightarrow +\infty} D(\epsilon^+_i)_{s,0}(\bR^- \cdot \partial_s) \subset T\bar{S}_{\zeta^+_i}.
\end{aligned}
\right.
\end{equation}
The framing is the ``homotopically essential'' part of choosing a tubular end (the remaining choices belong to a weakly contractible space).

\begin{setup} \label{th:worldsheet-2}
Given a worldsheet $S$, we consider pairs $(K^S,J^S)$ of the following kind. $K^S$ is a one-form on $S$ taking values in $\smooth(F,\bR)$, with the property that for any tangent vector $\xi$, $K^S(\xi)$ satisfies \eqref{eq:hamiltonian} for the angle $\alpha = \beta(\xi)$. $J^S$ is a family of almost complex structures on $F$ parametrized by $S$, all belonging to the class \eqref{eq:j}. Moreover, these should be compatible with the choice of tubular ends; this means that there are families $(H^-,J^-)$ and $(H^+_i,J^+_i)$ parametrized by $S^1$, such that (exponentially fast in any $C^r$ topology)
\begin{equation} \label{eq:kj-tubular}
\left\{
\begin{aligned}
& (\epsilon^-)^* (K^S,J^S) \rightarrow (H^-_t\, \mathit{dt}, J^-_t) \quad \text{as $s \rightarrow -\infty$}, \\
& (\epsilon^+_i)^* (K^S,J^S) \rightarrow (H^+_{i,t} \, \mathit{dt}, J^+_{i,t}) \quad \text{as $s \rightarrow +\infty$.}
\end{aligned}
\right.
\end{equation}
\end{setup}

To $K^S$, one can associate a one-form with values in (Hamiltonian) vector fields on $F$, which we denote by $Y^S$. The associated inhomogeneous Cauchy-Riemann equation is
\begin{equation} \label{eq:cauchy-riemann}
\left\{
\begin{aligned}
& u: S \longrightarrow F, \\
& (Du - Y^S)^{0,1} = 0, \\
& \textstyle \lim_{s \rightarrow -\infty} u(\epsilon^-(s,t)) = x^-(t), \\
& \textstyle \lim_{s \rightarrow +\infty} u(\epsilon^+_i(s,t)) = x^+_i(t).
\end{aligned}
\right.
\end{equation}
In the second line, we consider 
\begin{equation}
Du - Y_{z,u(z)}^S: TS_z \longrightarrow TF_{u(z)}, 
\end{equation}
and then take its complex antilinear part with respect to $J_{z,u(z)}^S$. The limits $x^+_i$ and $x^-$ are one-periodic orbits of the Hamiltonians appearing in \eqref{eq:kj-tubular}. The equation \eqref{eq:kj-tubular} generalizes the equations \eqref{eq:floer}, \eqref{eq:continuation} on the cylinder, as well as \eqref{eq:thimble} on the thimble. The appropriate generalization of \eqref{eq:index} is
\begin{equation}
\mathrm{index}(D_u) = i(x^-) - \sum_{i \in I} i(x^+_i) + 2\, \mathrm{deg}(u),
\end{equation}
where $\mathrm{deg}(u)$ is as in \eqref{eq:define-degree}. Similarly, instead of \eqref{eq:geom-energy}--\eqref{eq:two-energies} one has
\begin{align} \label{eq:geom-energy-2}
&
E^{\mathit{geom}}(u) = \int_{S} \half \|Du-Y^S\|^2 \geq 0, \\
& \label{eq:top-energy-2}
E^{\mathit{top}}(u) = \int_S u^*\omega_F - d(\tilde{u}^*K^S)
 = A(x^-) - \sum_i A(x^+_i) + u \cdot \Omega, \\
& \label{eq:two-energies-2}
E^{\mathit{geom}}(u) = E^{\mathit{top}}(u) + \int_S \tilde{u}^*R^S.
\end{align}
Here, $\tilde{u}: S \rightarrow S \times F$ is the graph of $u$. We can consider $K^S$ as a one-form $S \times F$ which vanishes along the tangent directions to $F$, and that explains the expression $\tilde{u}^*K^S$ in \eqref{eq:top-energy-2}. Similarly, the expression $R^S$ in \eqref{eq:two-energies-2} is a two-form on $S \times F$ which vanishes if one contracts it with a vector tangent to $F$. We will not write it down explicitly; the only thing that's relevant at the moment is that $R^S$ is compactly supported, which means that $\int \tilde{u}^*R^S$ can be bounded by a constant which is independent of $u$.

As before, we write $v = p(u): S \rightarrow \bC P^1$. At any point $z$ where $v(z) \in B$, this satisfies
\begin{equation} \label{eq:general-v-equation}
(Dv + 2\pi i v \beta)^{0,1} = \bar\partial v + 2\pi i \beta^{0,1} v = 0.
\end{equation}
Writing $\beta^{0,1} = \bar\partial f$, one has an analogue of \eqref{eq:v-tilde-2},
\begin{equation} \label{eq:f-twist}
v(z) = \exp(-2\pi i f) \tilde{v}(z).
\end{equation}
In parallel with \eqref{eq:subharmonic}, it follows that at any point where $v(z) \in B \setminus \{0\}$, the Laplacian (as a two-form on $S$) satisfies
\begin{equation} \label{eq:subharmonic-2}
\begin{aligned}
\Delta \log |v| &\, = 2\pi \,\Delta \mathrm{im}(f) = \pi \partial \bar\partial (f - \bar{f})
 \\ &
= \pi (\partial \beta^{0,1} + \bar\partial \beta^{1,0}) = \pi\, d\beta \leq 0.
\end{aligned} 
\end{equation}
The appropriate generalization of Lemma \ref{th:v-regular} is:

\begin{lemma} \label{th:v-regular-2}
Set
\begin{equation} \label{eq:degree-of-z}
\delta = \lfloor \alpha^- \rfloor - \sum_i (\lfloor \alpha^+_i \rfloor + 1).
\end{equation}
Consider the left hand side of \eqref{eq:general-v-equation} as linear operator between suitable Sobolev completions, let's say $W^{1,2}(S,\bC) \rightarrow L^2(S, \Omega^{0,1}_S)$. Then, that operator is surjective if and only if $\delta \geq -1$, and injective if and only if $\delta \leq -1$.
\end{lemma}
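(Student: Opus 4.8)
The plan is to reduce the statement to the Riemann--Roch dichotomy for line bundles on $\bC P^1$. First I would note that the left hand side of \eqref{eq:general-v-equation} is a Cauchy--Riemann type operator on the trivial complex line bundle over $S$, whose asymptotic operators at the punctures are $-i\partial_t - 2\pi\alpha^-$ at $\zeta^-$ and $-i\partial_t - 2\pi\alpha^+_i$ at $\zeta^+_i$; all of these are nondegenerate because the angles are not integers. Standard Fredholm theory for punctured surfaces (the same input already used for Lemma~\ref{th:v-regular}, the cylindrical special case) then gives that this operator is Fredholm between the stated Sobolev completions, and that any element of its kernel decomposes near each puncture into exponential modes $z^{k}$, of which only those that decay after the $W^{1,2}$-weighting are admissible.

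The main step is then to make the substitution $v = \exp(-2\pi i f)\,\tilde v$ with $\beta^{0,1} = \bar\partial f$ (allowing $f$ to be multivalued, as in \eqref{eq:f-twist}; only $\mathrm{re}(f)$, which grows like $\alpha^{\mp} s$ on the ends, affects the weights). This turns solutions of the homogeneous equation into holomorphic functions $\tilde v$ on $S$, and the $W^{1,2}$-condition translates --- by exactly the mode count carried out on the cylinder in Lemma~\ref{th:v-regular} --- into the requirement that $\tilde v$ have at worst a pole of order $\lfloor\alpha^-\rfloor$ at $\zeta^-$ and vanish to order at least $\lfloor\alpha^+_i\rfloor + 1$ at each $\zeta^+_i$. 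Via the isomorphism $\bar S \cong \bC P^1$ and a removable-singularities argument, $\tilde v$ extends to a global meromorphic function on $\bC P^1$, so the kernel is canonically identified with $H^0(\bC P^1,\mathcal{O}(D))$ for the divisor $D = \lfloor\alpha^-\rfloor\,[\zeta^-] - \sum_i(\lfloor\alpha^+_i\rfloor+1)\,[\zeta^+_i]$, which has degree exactly $d$. Hence $\dim\ker = \max(0, d+1)$, and in particular the operator is injective if and only if $d+1 \le 0$, i.e. if and only if $d < 0$.

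For surjectivity I would pass to the cokernel via $L^2$-duality: it is the kernel of the formal adjoint, which is again a Cauchy--Riemann type operator with nondegenerate asymptotics, and the same argument identifies its kernel with $H^0$ of the ``dual'' divisor --- equivalently, by Serre duality on $\bC P^1$, with $H^1(\bC P^1,\mathcal{O}(D))$, of dimension $\max(0, -d-1)$. Since $d+1$ and $-d-1$ cannot both be positive, at most one of $\ker$ and $\mathrm{coker}$ is nonzero (and the Fredholm index comes out to $d+1$, which I would cross-check against the known index formulas and the spectral-flow description mentioned after Lemma~\ref{th:v-regular}). Therefore the operator is onto if and only if $-d-1 \le 0$, i.e. if and only if $d \ge -1$, which together with the previous paragraph is the lemma. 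I expect the only genuine work to be the middle step: confirming that the $W^{1,2}$-admissible modes correspond to precisely the pole order $\lfloor\alpha^-\rfloor$ and the vanishing orders $\lfloor\alpha^+_i\rfloor + 1$ --- getting the floors and the $+1$'s right while carrying the non-closedness of $\beta$ through the twisting function $f$ --- but this simply repeats, puncture by puncture, the bookkeeping already done in Lemma~\ref{th:v-regular} and around \eqref{eq:m-minus}--\eqref{eq:rewritten-inequalities}.
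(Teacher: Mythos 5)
Your argument is the paper's own: twist by $\exp(-2\pi i f)$ with $\beta^{0,1}=\bar\partial f$ to identify the kernel with $H^0(\bC P^1,\scrO(Z))$ for the degree-$d$ divisor $Z = \lfloor\alpha^-\rfloor\zeta^- - \sum_i(\lfloor\alpha^+_i\rfloor+1)\zeta^+_i$, combine with the index $d+1$, and read off the dichotomy; your additional identification of the cokernel via the $L^2$-adjoint and Serre duality is a refinement of the same reduction rather than a different route. One cosmetic slip: with $v = e^{-2\pi i f}\tilde v$ it is $\mathrm{im}(f)$ (not $\mathrm{re}(f)$) that grows like $\alpha^{\mp}s$ on the ends and supplies the exponential weight, and $f$ can be taken single-valued on the open Riemann surface $S$ (as the paper does), so no multivaluedness need enter.
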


\begin{proof}
As mentioned before, one can write $\beta^{0,1} = \bar\partial f$, simply because $S$ is an open Riemann surface. Since $S$ is of genus zero, one can find such an $f$ which satisfies (writing $i = \sqrt{-1}$ to avoid confusion with the indices $i \in I$)
\begin{equation}
\left\{
\begin{aligned}
&
f(\epsilon^-(s,t)) = \sqrt{-1}\,\alpha^-s + (\text{\it bounded holomorphic}), \\
&
f(\epsilon^+_i(s,t)) = \sqrt{-1}\,\alpha^+_i s + (\text{\it bounded holomorphic}).
\end{aligned}
\right.
\end{equation}
In terms of \eqref{eq:f-twist}, if $v$ is a solution of class $W^{1,2}$, then $\tilde{v}$ must belong to $\scrO_{\bar{S}}(Z)$, where
\begin{equation}
Z = \lfloor \alpha^- \rfloor  \zeta^-  - \sum_i  (\lfloor \alpha^+_i \rfloor + 1) \zeta^+_i.
\end{equation}
This divisor has degree exactly $\delta$, hence yields a linear system of (complex) dimension $\max\{0,\delta+1\}$. On the other hand, a standard index formula shows that the (real) Fredholm index of $D_u$ is $2\delta+2$. Both statements follow from those two facts.
\end{proof}

The counterpart of Lemma \ref{th:max-principle-1} holds only if \eqref{eq:degree-of-z} is negative; from \eqref{eq:subharmonic-2} one gets a generalization of Lemma \ref{th:max-principle-2}; and Lemma \ref{th:nonnegative-1} also still holds. The appropriate version of Lemma \ref{th:nonnegative-2} says that if $u$ is not contained in $F_0$, 
\begin{equation} \label{eq:fb-intersection-3}
\begin{aligned}
\mathrm{deg}(u) = u \cdot F_0 + \, & 
\begin{cases} 
m^-(u) & \text{if $x^-$ lies in $F_0$,} \\
0 & \text{otherwise}
\end{cases}
\\ + \sum_{i \in I} &
\begin{cases}
m^+_i(u) & \text{if $x^+_i$ lies in $F_0$,} \\
0 & \text{otherwise.}
\end{cases}
\end{aligned}
\end{equation}
Here, $m^-(u)$ and $m^+_i(u)$ are defined as in \eqref{eq:m-minus} and \eqref{eq:m-plus}, respectively. The analogue of Lemma \ref{th:transversality-2} goes as follows:

\begin{lemma} \label{th:transversality-3}
Fix $(H^-,J^-)$ and $(H^+_i,J^+_i)$, and assume that \eqref{eq:degree-of-z} satisfies $\delta \geq -1$. Then, for a generic choice of $(K^S, J^S)$, the following holds. (i) All solutions of \eqref{eq:cauchy-riemann} are regular. (ii) For any $z \in S$, all $J_z^S$-holomorphic spheres with zero first Chern number avoid $u(z)$ for any solution $u$ of \eqref{eq:cauchy-riemann} with $\mathrm{index}(D_u) \leq 1$. 
\end{lemma}

If one drops the condition on $\delta$, the same statements still hold for those solutions which are not contained in $p^{-1}(B)$.

\subsection{The pair-of-pants product\label{subsec:pair-of-pants}}
A classical application of TQFT ideas is to take $I = \{1,2\}$, which means that $S$ is a three-punctured sphere, and to equip it with a one-form satisfying $d\beta = 0$, which means that 
\begin{equation} \label{eq:alpha-add-up}
\alpha^- = \alpha^+_1 + \alpha^+_2. 
\end{equation}
Let's count solutions of \eqref{eq:cauchy-riemann} such that
\begin{equation} \label{eq:pair-of-pants-0}
\left\{
\begin{aligned}
& \, \mathrm{deg}(u) = 0, \\
& \, \text{all three limits lie outside $F_0$.}
\end{aligned}
\right.
\end{equation}
The counterpart of Lemma \ref{th:max-principle-2} implies that all such solutions remain in a compact subset of $E$. By counting them, one obtains the chain level pair-of-pants product
\begin{equation} \label{eq:define-product}
\bullet: \mathit{CF}^*(E,H^+_1) \otimes \mathit{CF}^*(E,H^+_2) \longrightarrow \mathit{CF}^*(E,H^-),
\end{equation}
which is independent of all choices up to chain homotopy. We will denote the surface used to define this by $S^{\mathit{pants}}$, and the auxiliary data that enter into this construction by $(K^\mathit{pants},J^\mathit{pants})$.

Now suppose $\alpha^+_1 = \alpha^+_2 = \alpha > 1$, so that the Borman-Sheridan cocycle is defined. We take the thimble $T$ with the datum $(H^\mathit{thimble}, J^\mathit{thimble})$ which underlies the definition of that cocycle, and glue one copy of it to each input end of our pair-of-pants (using the same gluing length $\rho \gg 0$ both times). The outcome is a surface isomorphic to the thimble, schematically denoted by
\begin{equation} \label{eq:glue-2-thimbles}
S^{\mathit{pants}} \#_{\rho}\, 2 T.
\end{equation}
This comes with two marked points $\zeta_1, \zeta_2$. We choose auxiliary data on these surfaces which, as $\rho \rightarrow \infty$, converge to the previously chosen $(K^{\mathit{pants}}, J^{\mathit{pants}})$ and (two copies of) $(H^\mathit{thimble}, J^\mathit{thimble})$. Correspondingly, one considers a parametrized moduli space of $(\rho,u)$, where the map $u$ is defined on \eqref{eq:glue-2-thimbles} and subject to the topological conditions
\begin{equation} \label{eq:pair-of-pants-1}
\left\{
\begin{aligned}
& \mathrm{deg}(u) = 2, \\
& u(\zeta_1) \in F_0, \;\; u(\zeta_2) \in F_0, \\ 
& \text{the limit of $u$ lies outside $F_0$.}
\end{aligned}
\right.
\end{equation}
For a fixed value of $\rho$, this is the same space previously used to define \eqref{eq:s2}. As $\rho \rightarrow \infty$, the most relevant limiting configuration consists of a triple $(u,\tilde{u}^+_1, \tilde{u}^+_2)$, where $u$ is a map on the pair-of-pants, whereas the $\tilde{u}^+_k$ are maps on the thimble. They obviously satisfy
\begin{equation} \label{eq:double-thimble}
\left\{
\begin{aligned}
& \mathrm{deg}(u) + \mathrm{deg}(\tilde{u}^+_1) + \mathrm{deg}(\tilde{u}^+_2) = 2, \\
& \tilde{u}^+_1(\zeta) \in F_0, \\
& \tilde{u}^+_2(\zeta) \in F_0, \\
& \textstyle x_- = \lim_{s \rightarrow -\infty} u(\epsilon^-(s,t)) \text{ lies outside $F_0$}, \\
& \textstyle \tilde{x}^+_1 = \lim_{s \rightarrow +\infty} u(\epsilon^+_1(s,t)) = \lim_{s \rightarrow -\infty} \tilde{u}^+_1(s,t), \\
& \textstyle \tilde{x}^+_2 = \lim_{s \rightarrow +\infty} u(\epsilon^+_2(s,t)) = \lim_{s \rightarrow -\infty} \tilde{u}^+_2(s,t). \\
\end{aligned}
\right.
\end{equation}
where $\zeta = +\infty$ is the usual marked point on the thimble. 

{\bf Case 1:} {\em Both $\tilde{u}^+_k$ are contained in $F_0$.} This would mean $\mathrm{deg}(\tilde{u}^+_k) = 0$. Moreover, both $\tilde{x}^+_k$ lie in $F_0$, hence
\begin{equation}
\mathrm{deg}(u) \geq 2(\lfloor \alpha \rfloor + 1) \geq 4 
\end{equation}
by \eqref{eq:fb-intersection-3} and \eqref{eq:rewritten-inequalities}. This leads to a contradiction with \eqref{eq:double-thimble}, hence is impossible.

{\bf Case 2:} {\em Exactly one $\tilde{u}^+_k$ is contained in $F_0$.} Let's assume that this is $\tilde{u}^+_1$. Then $\mathrm{deg}(\tilde{u}^+_1) = 0$, and for the same reason as before, 
\begin{equation}
\begin{aligned}
& \mathrm{deg}(u) + \mathrm{deg}(\tilde{u}^+_2) \geq 
u \cdot F_0 +
\tilde{u}^+_2 \cdot F_0 + (\lfloor \alpha \rfloor + 1)  \\
& \qquad \qquad + \begin{cases} 1 & \text{if $\tilde{x}^+_2$ lies in $F_0$,} \\ 0 & \text{otherwise,} \end{cases}
\end{aligned}
\end{equation}
which is again a contradiction, since $\tilde{u}^+_2$ must intersect $F_0$ by assumption.

{\bf Case 3:} {\em Neither of the $\tilde{u}^+_k$ is contained in $F_0$.} One then has
\begin{equation}
\begin{aligned}
& \mathrm{deg}(u) + \mathrm{deg}(\tilde{u}^+_1) + \mathrm{deg}(\tilde{u}^+_2) \geq
u \cdot F_0 + \tilde{u}^+_1 \cdot F_0 + \tilde{u}^+_2 \cdot F_0 \\ & \qquad \qquad +
\begin{cases} 1 & \text{if $\tilde{x}^+_1$ lies in $F_0$,} \\ 0 & \text{otherwise} \end{cases} +
\begin{cases} 1 & \text{if $\tilde{x}^+_2$ lies in $F_0$,} \\ 0 & \text{otherwise}. \end{cases}
\end{aligned}
\end{equation}
Clearly, the only possibility is as follows: neither $\tilde{x}^+_k$ lies in $F_0$; $u \cdot F_0 = 0$ (hence $u$ is contained in $E$); and $\tilde{u}^+_k \cdot F_0 = \mathrm{deg}(\tilde{u}^+_k) = 1$. Counting such configurations yields the pair-of-pants square $s \bullet s$ of the Borman-Sheridan cocycle.

Among the other limiting configurations that could occur a priori for $\rho \rightarrow \infty$, one deserves particular mention. Namely, suppose that we have four components $(\tilde{u}^-, u,\tilde{u}^+_1,\tilde{u}^+_2)$ (whose degrees of course sum up to $2$). Here, $\tilde{u}^-$ is a Floer trajectory, $u$ is a map on the pair-of-pants, and $\tilde{u}^+_k$ are thimbles. The interesting case is when $u$ is contained in $p^{-1}(B)$. Transversality (Lemma \ref{th:transversality-3}) may or may not hold for such $u$, depending on whether $\alpha$ is less or greater than $\frac32$. However, we don't actually need this: by Lemma \ref{th:v-regular-2}, we must have
\begin{equation}
\mathrm{deg}(\tilde{u}^-)  \geq \lfloor 2\alpha \rfloor + 1 \geq 3,
\end{equation}
a contradiction. There are similar arguments for more complicated limiting configurations  (compare Remark \ref{th:broken-big}). As a consequence, counting points in our parametrized moduli space leads to:

\begin{lemma} \label{th:square-of-borman-sheridan}
The cocycle $s^{(2)}$ from \eqref{eq:s2} is cohomologus to $s \bullet s$.
\end{lemma}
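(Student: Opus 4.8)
The plan is to run a parametrized gluing argument, in the spirit of the degeneration analyses already carried out in Section \ref{sec:floer-connection} (e.g. Lemma \ref{th:many-2}, Lemma \ref{th:bisections}). The starting point is the thimble surface \eqref{eq:glue-2-thimbles} obtained by attaching two copies of the defining thimble for the Borman-Sheridan cocycle to the two inputs of the pair-of-pants $S^{\mathit{pants}}$, using a common gluing length $\rho$. One considers the parametrized moduli space of pairs $(\rho, u)$, with $\rho$ ranging over a half-infinite interval $[\rho_0, \infty)$, where $u$ is a map on \eqref{eq:glue-2-thimbles} satisfying the topological conditions \eqref{eq:pair-of-pants-zero}: degree $2$ over the base, both marked points $\zeta_1, \zeta_2$ mapping into $F_0$, and the unique limit orbit lying outside $F_0$. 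For any fixed $\rho$ this is exactly the moduli problem defining the cocycle $s^{(2)}$ from \eqref{eq:s2}, so the count at $\rho = \rho_0$ recovers $s^{(2)}$.

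The core of the argument is the identification of the boundary of the $1$-dimensional parametrized moduli space. There are two kinds of ends: the minimal gluing length $\rho = \rho_0$, contributing $s^{(2)}$, and the broken configurations appearing as $\rho \to \infty$. For the latter, the basic limiting configuration is a triple $(u, \tilde{u}^+_1, \tilde{u}^+_2)$, where $u$ lives on the pair-of-pants and the $\tilde{u}^+_k$ live on the thimbles, satisfying the degree and incidence constraints \eqref{eq:double-thimble}; the case analysis (Cases 1--3) spelled out in the text shows that the only surviving possibility is $u \cdot F_0 = 0$ (so $u$ stays in $E$), each $\tilde{u}^+_k$ of degree $1$ with a single transverse intersection with $F_0$, and no $\tilde{x}^+_k$ lying in $F_0$. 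Counting these configurations yields precisely the pair-of-pants square $s \bullet s$. The remaining work is to argue that all other potential degenerations — additional Floer trajectories bubbling off at the output end, configurations where $u$ is forced into $p^{-1}(B)$ (handled via Lemma \ref{th:v-regular-2}, since \eqref{eq:degree-of-z} equals $\lfloor 2\alpha\rfloor - 2\lfloor\alpha\rfloor - 2 < 0$, forcing $u \subset F_0$ and hence $\deg(u) = 0$, which gives the topological contradiction displayed), and sphere bubbling — are of codimension $\geq 2$ and so do not contribute to the count of the $1$-dimensional moduli space; this is exactly the pattern of Remark \ref{th:broken-big}.

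Putting these together, the signed count of isolated points in the parametrized moduli space gives a Floer cochain whose differential is $s^{(2)} - s \bullet s$ (up to sign), which is the desired statement that $s^{(2)}$ and $s \bullet s$ represent the same Floer cohomology class.

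The main obstacle I anticipate is the careful bookkeeping of the broken-configuration strata as $\rho \to \infty$: one must verify that the limiting configuration with two thimbles attached really does reduce, after the Case 1--3 analysis, to the product moduli space without any stray contributions, and in particular that the $\rho$-gluing parameter correctly accounts for the orientation/sign so that the output is genuinely $s \bullet s$ rather than a multiple of it. The degenerations involving $u$ trapped in $p^{-1}(B)$ require the transversality subtlety from Lemma \ref{th:transversality-3}(i) together with Lemma \ref{th:v-regular-2}; once one knows such $u$ must lie in $F_0$ the degree count closes the gap, but this interplay between failure of transversality in the fibre and the positivity constraints \eqref{eq:fb-intersection-3}, \eqref{eq:rewritten-inequalities} is the delicate point. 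As elsewhere in the paper, the precise sign verification is routine but tedious, and I would defer it.
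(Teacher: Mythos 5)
Your proposal is correct and follows essentially the same route as the paper: gluing two Borman--Sheridan thimbles onto $S^{\mathit{pants}}$ with a common parameter $\rho$, identifying the $\rho=\rho_0$ boundary with $s^{(2)}$, running the three-case degree analysis on the limiting triple $(u,\tilde{u}^+_1,\tilde{u}^+_2)$ to isolate $s \bullet s$, and disposing of the $p^{-1}(B)$-trapped pair-of-pants components via Lemma \ref{th:v-regular-2} together with the positivity constraints. No meaningful deviation from the paper's argument.
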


\subsection{A bracket\label{subsec:bracket-and-connection}}
We will be concerned with a specific family of worldsheets, parametrized by $\tau \in S^1$, and where $S^1$ is given the opposite of the standard orientation. The Riemann surfaces and tubular ends are
\begin{equation} \label{eq:three-ends}
\left\{
\begin{aligned}
& S_\tau = (\bR \times S^1) \setminus \{(0,-\tau)\}, \\
& \epsilon^-_\tau(s,t) = (s,t), \\
& \epsilon^+_{\tau,1}(s,t) = (0,-\tau) - \exp(-2\pi (s+it)),  \\
& \epsilon^+_{\tau,2}(s,t) = (s,t). \\
\end{aligned}
\right.
\end{equation}
If one thinks in terms of the framings \eqref{eq:framings-from-ends}, we have two marked points on $\bar{S} = \bR \times S^1 \cup \{\pm \infty\}$, namely $\zeta^- = -\infty$ and $\zeta^+_2 = \infty$, which are independent of $\tau$, and so are their framings. The other point $\zeta^+_1 = (0,-\tau)$ moves around a circle, but its framing always points in $-\partial_s$-direction, which means towards $\zeta^-$. 

Fix constants \eqref{eq:alpha-add-up}, and equip $S_\tau$ with a corresponding closed one-form. Assume additionally that we have fixed auxiliary data which define the Floer cochain complexes associated to the three ends, denoted by $(H^-,J^-)$ and $(H^+_k,J^+_k)$. We then choose, on each $S_\tau$ and smoothly depending on $\tau$, auxiliary data $(K_{S_\tau}, J_{S_\tau})$. Let's denote the entire family of such data by $(K^{\mathit{Lie}},J^{\mathit{Lie}})$. We consider a parametrized moduli space of maps on $S_\tau$ which satisfy \eqref{eq:pair-of-pants-0}, and use that to define a chain map
\begin{equation} \label{eq:define-the-1-bracket}
[\cdot,\cdot]^{-1}: \mathit{CF}^*(E,H^+_1) \otimes \mathit{CF}^*(E,H^+_2) \longrightarrow \mathit{CF}^{*-1}(E,H^-).
\end{equation}
As usual, we use descriptive notation for the Riemann surfaces involved in this construction, writing $S_\tau = S^{\mathit{Lie}(-1)}_\tau$.

\begin{discussion}
One can get an alternative picture of the same construction by rotating $S_\tau$ in $S^1$-direction, so as to remove its dependence on $\tau$. After this change of coordinates, the situation looks as follows:
\begin{equation}
\left\{
\begin{aligned}
& S = (\bR \times S^1) \setminus \{(0,0)\}, \\
& \epsilon^-_\tau(s,t) = (s,t+\tau), \\
& \epsilon^+_{\tau,1}(s,t) = - \exp(-2\pi (s+it)), \\
& \epsilon^+_{\tau,2}(s,t) = (s,t+\tau).
\end{aligned}
\right.
\end{equation}
The $\tau$-dependence now comes from rotating the ends at $\zeta^-$ and $\zeta_2^+$. A gluing argument based on this observation provides a chain homotopy
\begin{equation} \label{eq:bracket-1-again}
[x_1,x_2]^{-1} \htp \Delta (x_1 \bullet x_2) - (-1)^{|x_1|} x_1 \bullet \Delta x_2.
\end{equation}
We omit the details.
\end{discussion}

Our next task is to explain what happens when one applies the bracket to a class in the image of the PSS map \eqref{eq:pss}. Assume from now on that $\alpha^+_1 > 0$. Let $K$ be a proper pseudo-cycle with $\bK$-coefficients in $E$. Recall the previously defined structures from \eqref{eq:pss-q} and \eqref{eq:o-map},
\begin{align}
& b_K \in \mathit{CF}^{\mathrm{codim}(K)}(E,H^+_1), \\
& r_K: \mathit{CF}^*(E,H^+_2) \longrightarrow \mathit{CF}^{*+\mathrm{codim}(K)-1}(E,H^-).
\end{align}

\begin{lemma} \label{th:r-is-b}
$r_K$ is chain homotopic to $(-1)^{\mathrm{codim}(K)}[b_K,\cdot]^{-1}$.
\end{lemma}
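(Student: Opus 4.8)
The plan is to exhibit an explicit chain homotopy by means of a one-parameter degeneration of worldsheets. The starting observation is that both sides of the claimed identity are defined by counting solutions of a Cauchy-Riemann equation on a Riemann surface of the type in Setup \ref{th:worldsheet}, with the pseudo-cycle constraint $u(\zeta) \in K$ imposed at an interior marked point (for $[b_K,\cdot]^{-1}$, the marked point on the thimble used to form $b_K$, after gluing it into the second input of $S^{\mathit{Lie}(-1)}_\tau$; for $r_K$, the marked point $\zeta = (0,-\tau)$ of \eqref{eq:defining-r}). So I would take the surface $S^{\mathit{Lie}(-1)}_\tau$ with a thimble glued into the second input end with gluing length $\rho$, carrying the datum $(H^{\mathit{thimble}},J^{\mathit{thimble}})$ used for $b_K$; this yields a surface isomorphic to the cylinder, with two distinguished marked points (the interior one $\zeta$ inherited from the thimble, where the constraint $u(\zeta) \in K$ is imposed, and implicitly nothing else). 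One then considers the parametrized moduli space over $(\tau,\rho) \in S^1 \times [\rho_0,\infty)$ of solutions $u$ with $\mathrm{deg}(u) = 0$ and all limits outside $F_0$ — the constraint $\mathrm{deg}(u)=0$ confining everything to a compact subset of $E$, exactly as in \eqref{eq:pair-of-pants-0}.

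The homotopy is then read off from the boundary of the corresponding $1$-dimensional moduli spaces. There are two genuine contributions. As $\rho \to \infty$, the glued thimble separates off and the configuration breaks into a solution of the pair-of-pants-type equation on $S^{\mathit{Lie}(-1)}_\tau$ (with inputs $b_K$ in the second slot and $x$ in the first) glued to a thimble solution defining $b_K$; counting these recovers $[b_K,\cdot]^{-1}$, up to the sign $(-1)^{\mathrm{codim}(K)}$ coming from reordering the two inputs of the bracket. At the other end $\rho = \rho_0$, after shrinking the glued neck one recognizes the surface as a cylinder with a single interior marked point carrying the constraint $u(\zeta) \in K$ — but the position of $\zeta$ is now at a \emph{fixed} point (rather than ranging over $\bR \times S^1$ as in \eqref{eq:defining-r}), so this is not literally $r_K$. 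To bridge that gap I would append a second, auxiliary parametrized moduli space in which the marked point $\zeta$ is allowed to slide along $\bR \times \{-\tau\}$ (equivalently, the position parameter $\sigma$ of \eqref{eq:defining-r} is turned on), with one boundary component reproducing the $\rho = \rho_0$ count and the opposite boundary component — as $\sigma$ runs to the appropriate end — reproducing $r_K$; the remaining ends of this second space either vanish (the $\sigma \to -\infty$ type limit is empty for degree reasons, by the now-familiar argument of Lemma \ref{th:key-splitting}) or are absorbed as coboundaries. Combining the two parametrized spaces produces the desired chain homotopy $d\zeta' - \zeta' d + r_K - (-1)^{\mathrm{codim}(K)}[b_K,\cdot]^{-1} = 0$.

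As always in this paper, the degeneration analysis itself is routine: sphere bubbling is codimension $\geq 2$ and irrelevant; broken configurations in which some component has a limit on $F_0$ are excluded by \eqref{eq:fb-intersection-3}, \eqref{eq:rewritten-inequalities} together with $\mathrm{deg}(u) = 0$; and the one-periodic-orbit index bookkeeping is the same as elsewhere. The one point that deserves genuine care — and which I expect to be the main obstacle — is the sign $(-1)^{\mathrm{codim}(K)}$. This has to be tracked through (i) the Koszul reordering implicit in treating $b_K$ as the \emph{second} rather than the first input of $[\cdot,\cdot]^{-1}$ (recall $b_K$ has degree $\mathrm{codim}(K)$), (ii) the orientation conventions for the parametrized moduli spaces as codified in \eqref{eq:composition-law} and \eqref{eq:boundary-axiom}, and (iii) the orientation of the glued thimble end versus the interior marked point. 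I would verify it exactly as in the sign computation following Proposition \ref{th:interpret-bs}: fix oriented bases of the tangent spaces to the relevant $1$-dimensional moduli spaces, push them forward along the evaluation maps, and compare with the conventions defining $b_K$, $[\cdot,\cdot]^{-1}$ and $r_K$. Everything else — transversality, compactness, the existence of the auxiliary interpolating data — is standard and parallels constructions already carried out in Sections \ref{sec:floer} and \ref{sec:floer-connection}.
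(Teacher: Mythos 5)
Your core idea --- glue the thimble defining $b_K$ into the second input end of $S^{\mathit{Lie}(-1)}_\tau$ and let the neck parameter $\rho$ (the paper's $\lambda$) run --- is exactly the paper's strategy, and your degeneration and sign discussion is on target. But the second auxiliary moduli space you append is unnecessary, and it rests on a misreading of the definition of $r_K$. In \eqref{eq:defining-r}, the marked point is pinned at $\zeta = (0,-\tau)$; as $\tau$ varies it sweeps only the circle $\{0\}\times S^1$, not all of $\bR\times S^1$. (You appear to be conflating $r_K$ with the construction of $\chi$ in Section~\ref{subsec:construct-chi}, which is the one carrying an extra free $\sigma$; hence also your phantom ``$\sigma\to-\infty$'' end.) After collapsing the neck one already has a cylinder with a single $\tau$-dependent interior marked point, i.e.\ a surface of the same type as that defining $r_K$. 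The paper therefore works with a single parametrized family $(\lambda,\tau)\in\bR^+\times S^1$, set equal to $S^{\mathit{Lie}(-1)}_\tau\#_\lambda T$ for $\lambda\gg 0$ and extended by an interpolation of the auxiliary Floer data down to $\lambda=0$, where it literally coincides with the $r_K$ data. The boundary at $\lambda=0$ gives $r_K$ and the breaking at $\lambda\to\infty$ gives $(-1)^{\mathrm{codim}(K)}[b_K,\cdot]^{-1}$, producing $\rho^{-1}_K$ as in \eqref{eq:rho-for-a-cycle} directly --- no second gluing chamber is needed.
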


\begin{proof}
As usual, we will prove this by direct construction of a map
\begin{equation} \label{eq:rho-for-a-cycle}
\begin{aligned}
& \rho^{-1}_K: \mathit{CF}^*(E,H^+_2) \longrightarrow \mathit{CF}^{*+\mathrm{codim}(K)-2}(E,H^-), \\
& d\rho^{-1}_K(x) - (-1)^{\mathrm{codim}(K)} \rho_K^{-1}(dx) \\
& \qquad \qquad + (-1)^{\mathrm{codim}(K)}[b_K,x]^{-1} - r_K(x) = 0.
\end{aligned}
\end{equation}
(To simplify the notation, we have used $d$ for the Floer differentials in both complexes involved.) This is given by a moduli space with parameters $(\lambda,\tau) \in \bR^+ \times S^1$. The associated Riemann surfaces are constructed as follows: for $\lambda \gg 0$, 
\begin{equation} \label{eq:lambda-tau}
S_{\lambda,\tau} = S^{\mathit{Lie}(-1)}_\tau \#_{\lambda} T. 
\end{equation}
More precisely, start with the Riemann surface underlying the bracket (and its auxiliary data), and glue in the thimble (equipped with the auxiliary data used to define $b_K$) into the $\zeta_1^+$ end. The outcome \eqref{eq:lambda-tau} is a cylinder with one marked point (which depends on $\tau$), and this (with its auxiliary data) can easily be deformed to the corresponding Riemann surface in the family underlying $r_K$, which is what we define $S_{\lambda,\tau}$ to be for $\lambda = 0$. (It may be more intuitive to think of starting with the given surfaces defining $r_K$, and ``pulling out'' the marked point, thereby creating a neck that stretches as $\lambda \rightarrow \infty$.) The argument then follows the standard pattern.
\end{proof}

At this point, we assume that $\alpha_1^+>1$, re-impose Assumption \ref{th:psi-eta}, and choose a proper pseudo-chain $A$ as in \eqref{eq:boundary-of-a}. By the same construction as in \eqref{eq:pss-q}, this gives rise to a Floer cochain 
\begin{equation} \label{eq:b-of-a}
\begin{aligned}
& b_{A|E} \in \mathit{CF}^1(E,H^+_1), \\
& d b_{A|E} = \psi b_{Z^{(1)}|E} - b_{q^{-1}\Omega|E}.
\end{aligned}
\end{equation}
One therefore has chain homotopies
\begin{equation} \label{eq:chain-of-homotopies}
r_{q^{-1}\Omega|E} \htp [b_{q^{-1}\Omega|E},\cdot]^{-1} \htp \psi [b_{Z^{(1)}|E},\cdot]^{-1} \htp 0,
\end{equation}
which come from: Lemma \ref{th:r-is-b}; from the bracket with $b_{A|E}$; and from Lemma \ref{th:z1-zero}. This is not our first encounter with such a nullhomotopy: \eqref{eq:pseudo-homotopy} and Lemma \ref{th:nu-null-homotopy} (whose assumption is satisfied in the case relevant here, because of $\alpha_1^+>1$) yield
\begin{equation} \label{eq:chain-of-homotopies-2}
r_{q^{-1}\Omega|E} \htp \psi r_{Z^{(1)}|E} \htp 0.
\end{equation}

\begin{lemma} \label{th:0-is-0}
The two nullhomotopies \eqref{eq:chain-of-homotopies} and \eqref{eq:chain-of-homotopies-2} are essentially equivalent, which means that their difference (a chain map of degree $-1$) is itself nullhomotopic.
\end{lemma}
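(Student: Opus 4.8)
The strategy is the one used repeatedly above: to compare two nullhomotopies of the same chain map, one builds a single parametrized moduli space whose codimension one boundary produces exactly the difference of the two nullhomotopies together with the boundary term $d(\text{something})$, and then one identifies the leftover contributions. Here both nullhomotopies in question are nullhomotopies of $r_{q^{-1}\Omega|E}$, so their difference is a degree $-1$ chain map $\phi:\mathit{CF}^*(E,H^+_1)\to\mathit{CF}^{*-1}(E,H^-)$, and the goal is to produce an explicit operation $\Phi$ of degree $-2$ with $d\Phi\pm\Phi d=\phi$.

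\textbf{Step 1: decompose each nullhomotopy into its geometric pieces.} The nullhomotopy \eqref{eq:chain-of-homotopies-2} is $r_{q^{-1}\Omega|E}\htp\psi\,r_{Z^{(1)}|E}$ via $r_{A|E}$ (from the pseudo-chain $A|E$, i.e.\ from \eqref{eq:a-restricted}), followed by $\psi\,r_{Z^{(1)}|E}\htp 0$ via $\chi$ (Lemma \ref{th:nu-null-homotopy}). The nullhomotopy \eqref{eq:chain-of-homotopies} is $r_{q^{-1}\Omega|E}\htp[k,\cdot]^{-1}$ via $\rho^{-1}_{q^{-1}\Omega|E}$ (Lemma \ref{th:r-is-b}), then $[k,\cdot]^{-1}=[b_{q^{-1}\Omega|E},\cdot]^{-1}\htp\psi[b_{Z^{(1)}|E},\cdot]^{-1}$ via the bracket homotopy coming from $b_{A|E}$ in \eqref{eq:b-of-a}, then $\psi[b_{Z^{(1)}|E},\cdot]^{-1}\htp 0$ via $\rho^{-1}_{Z^{(1)}|E}$ (Lemma \ref{th:r-is-b} applied to $Z^{(1)}|E$ together with the nullhomotopy $\nu$ of $b_{Z^{(1)}|E}$ from Lemma \ref{th:z1-zero}). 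So it suffices to match these up termwise. For the ``$A|E$'' part, the comparison is just the statement of Lemma \ref{th:r-is-b} applied to the pseudo-chain $A|E$ (one has to promote that lemma, which was stated for pseudo-cycles, to a relative version for pseudo-chains, introducing the extra parameter $\lambda$ exactly as in its proof and keeping track of $\partial A|E$). For the ``$\chi$ vs.\ $\rho^{-1}_{Z^{(1)}|E}$'' part, one has to compare the two ways of witnessing that $r_{Z^{(1)}|E}$ (equivalently $[b_{Z^{(1)}|E},\cdot]^{-1}$) is nullhomotopic.

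\textbf{Step 2: build the master moduli space for the $Z^{(1)}$-part.} This is the crux. Recall $\chi$ is defined (Section \ref{subsec:construct-chi}) on the cylinder with two correlated marked points $\zeta_1=(0,-\tau)$, $\zeta_2=(\sigma,-\tau)$, $\sigma\in(-\infty,0)$, with $u(\zeta_1)\in F_0$, $u(\zeta_2)\in F_\dag$, $\deg(u)=1$; the $\sigma\to 0$ end recovers $r_{Z^{(1)}|E}$ and the $\sigma\to-\infty$ end contributes nothing. On the other side, $\rho^{-1}_{Z^{(1)}|E}$ is defined on $S^{\mathit{Lie}(-1)}_\tau\#_\lambda T$ with the thimble carrying the incidence with $Z^{(1)}$, degenerating as $\lambda\to\infty$ to $[b_{Z^{(1)}|E},\cdot]^{-1}$ and deforming at $\lambda=0$ to the surface defining $r_{Z^{(1)}|E}$; composing with $\nu$ (which bounds $b_{Z^{(1)}|E}$) gives the full nullhomotopy. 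The plan is to introduce a two-parameter family interpolating these: glue a thimble into the ``$\zeta_2^+$-type'' puncture, allowing the gluing length $\lambda\in[0,\infty]$ and the auxiliary-data interpolation parameter to range independently, while the sphere bubble / $Z^{(1)}$-incidence sits at the thimble end. Its codimension one boundary then consists of: the $\sigma\to 0$ / sphere-bubbling face (giving $r_{Z^{(1)}|E}$, common to both, hence cancelling), the $\lambda\to\infty$ face (giving $[b_{Z^{(1)}|E},\cdot]^{-1}$ composed with $\nu$, i.e.\ the second nullhomotopy), the $\lambda=0$ face (giving $\chi$, i.e.\ the first nullhomotopy), faces where the auxiliary-data interpolation reaches its endpoints (which either vanish or are absorbed into $\rho^{-1}_{Z^{(1)}|E}$ and the bracket-homotopy of Lemma \ref{th:r-is-b}), and the harmless $\sigma\to-\infty$ type faces excluded for exactly the degree/Chern-number reasons already used in Section \ref{subsec:construct-chi} and Lemma \ref{th:key-splitting}. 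Reading off the signed count of isolated points of the one-dimensional strata yields the required $\Phi$.

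\textbf{Main obstacle and remaining bookkeeping.} The genuine difficulty is not in any single gluing argument — each is a routine instance of the constructions already carried out — but in organizing the full interpolation so that \emph{all} intermediate homotopies ($r_{A|E}$ vs.\ $b_{A|E}$, $\chi$, $\rho^{-1}_{q^{-1}\Omega|E}$, $\rho^{-1}_{Z^{(1)}|E}$, $\nu$) appear exactly once as boundary faces and cancel in pairs, with matching orientations. Concretely one builds two (or three) auxiliary parametrized moduli spaces, one handling the ``$A|E$'' comparison and one the ``$Z^{(1)}$'' comparison above, and checks that their boundary contributions, added with appropriate signs and the factor $\psi$, leave precisely $\phi=d\Phi\pm\Phi d$. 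The sign verification is the delicate part; as elsewhere in the paper it is handled by the determinant-line conventions of \cite{seidel04} and we will not spell it out. Since both objects being compared are by construction nullhomotopies of the \emph{same} map, there is no enumerative input beyond what has already been used, and the result is purely formal once the moduli spaces are set up; this is why we are content to describe the construction and omit the details.
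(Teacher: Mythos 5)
Your overall architecture is essentially the paper's: decompose both nullhomotopies into their constituent steps, handle the $A|E$ piece via a pseudo-chain version of Lemma \ref{th:r-is-b}, and compare the residual pieces through a parametrized moduli space. Your treatment of the $A|E$ part matches the paper's first reduction, which gives $\rho_{q^{-1}\Omega|E}^{-1} + [b_{A|E},\cdot]^{-1} + r_{A|E} \htp \psi \rho_{Z^{(1)}|E}^{-1}$.

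However, the $Z^{(1)}$-comparison in Step 2 has a genuine gap. The paper isolates the residual claim as \eqref{eq:pentagon-triangle}, namely that $\rho_{Z^{(1)}|E}^{-1} + [\nu,\cdot]^{-1} + \chi$ is nullhomotopic, and proves it by pulling out the marked point $\zeta_1$ from the moduli space defining $\chi$. What the paper explicitly flags as the one fundamental point, and what your proposal omits entirely, is a framing observation: gluing the surfaces defining $[\nu,\cdot]^{-1}$ produces cylinders whose two marked points both lie on the horizontal half-line pointing to the negative end --- exactly the correlated geometry $\zeta_1 = (0,-\tau)$, $\zeta_2 = (\sigma,-\tau)$, $\sigma<0$, that defines $\chi$ --- and this match holds \emph{precisely because} of the particular framing convention built into $[\cdot,\cdot]^{-1}$, as opposed to any $[\cdot,\cdot]^c$ with $c \neq -1$. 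Without this observation one cannot explain why it is this specific bracket that arises, nor why the boundary contributions of the master moduli space assemble into the desired formula. Your assertion that the result is ``purely formal once the moduli spaces are set up'' and that ``there is no enumerative input beyond what has already been used'' misrepresents this part of the content.

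Two smaller points. First, your boundary-face accounting would need tightening: you assert a codimension-one face ``giving $r_{Z^{(1)}|E}$, common to both, hence cancelling,'' but $r_{Z^{(1)}|E}$ is a degree $+1$ operation arising from a one-parameter family and so sits in codimension two of a three-parameter family; the codimension-one faces should produce the three terms $\rho_{Z^{(1)}|E}^{-1}$, $[\nu,\cdot]^{-1}$, $\chi$ directly, plus topologically trivial contributions, with no cancelling pairs of faces. Second, Step 1 misattributes the last piece of \eqref{eq:chain-of-homotopies}: the nullhomotopy of $\psi[b_{Z^{(1)}|E},\cdot]^{-1}$ is $\psi[\nu,\cdot]^{-1}$, while $\rho^{-1}_{Z^{(1)}|E}$ enters only as the intermediate in the comparison.
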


\begin{proof}
Explicitly, putting in all the necessary notation, the claim is that the map
\begin{equation} \label{eq:its-null}
\begin{aligned}
& \mathit{CF}^*(E,H_2^+) \longrightarrow \mathit{CF}^*(E,H^-), \\
& x \longmapsto \rho_{q^{-1}\Omega|E}^{-1}(x) + [b_{A|E},x]^{-1} + r_{A|E}(x) - \psi [\nu,x]^{-1} + \psi \chi(x) 
\end{aligned}
\end{equation}
where  $\rho^{-1}$ is from \eqref{eq:rho-for-a-cycle}, $r_{A|E}$ is from \eqref{eq:2nd-homotopy}, $\nu$ is from \eqref{eq:z-bounds}, and $\chi$ is from \eqref{eq:nu-null-homotopy}, is nullhomotopic. The nullhomotopy will be constructed in two parts, of which the first one is not surprising. Namely, the argument from Lemma \ref{th:r-is-b} can be generalized to proper pseudo-chains, in which case the expression in \eqref{eq:rho-for-a-cycle} acquires an additional term involving the boundary of the pseudo-chain.
%
Specializing to $A|E$, one gets
\begin{equation}
\rho_{q^{-1}\Omega|E}^{-1} + [b_{A|E},\cdot]^{-1} + r_{A|E} \htp \psi \rho_{Z^{(1)}|E}^{-1}.
\end{equation}
\begin{figure}
\begin{centering}
\begin{picture}(0,0)%
\includegraphics{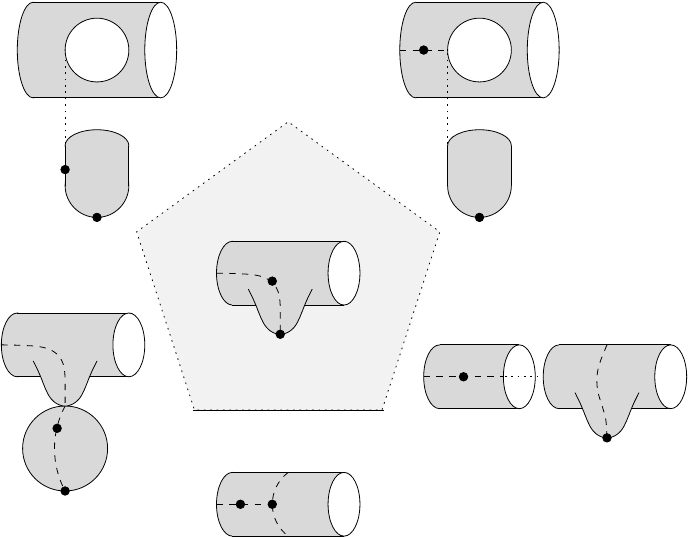}%
\end{picture}%
\setlength{\unitlength}{3355sp}%
\begingroup\makeatletter\ifx\SetFigFont\undefined%
\gdef\SetFigFont#1#2#3#4#5{%
  \reset@font\fontsize{#1}{#2pt}%
  \fontfamily{#3}\fontseries{#4}\fontshape{#5}%
  \selectfont}%
\fi\endgroup%
\begin{picture}(6470,5049)(1039,-4723)
\put(4951,-351){\makebox(0,0)[lb]{\smash{{\SetFigFont{10}{12.0}{\rmdefault}{\mddefault}{\updefault}{\color[rgb]{0,0,0}$2$}%
}}}}
\put(6001,-961){\makebox(0,0)[lb]{\smash{{\SetFigFont{10}{12.0}{\rmdefault}{\mddefault}{\updefault}{\color[rgb]{0,0,0}contributes zero}%
}}}}
\put(2401,-961){\makebox(0,0)[lb]{\smash{{\SetFigFont{10}{12.0}{\rmdefault}{\mddefault}{\updefault}{\color[rgb]{0,0,0}$[\nu,\cdot]^{-1}$}%
}}}}
\put(3676,-3961){\makebox(0,0)[lb]{\smash{{\SetFigFont{10}{12.0}{\rmdefault}{\mddefault}{\updefault}{\color[rgb]{0,0,0}$\chi$}%
}}}}
\put(1201,-2461){\makebox(0,0)[lb]{\smash{{\SetFigFont{10}{12.0}{\rmdefault}{\mddefault}{\updefault}{\color[rgb]{0,0,0}$\rho_{Z^{(1)}|E}$}%
}}}}
\put(3676,-2236){\makebox(0,0)[lb]{\smash{{\SetFigFont{10}{12.0}{\rmdefault}{\mddefault}{\updefault}{\color[rgb]{0,0,0}$2$}%
}}}}
\put(5551,-1936){\makebox(0,0)[lb]{\smash{{\SetFigFont{10}{12.0}{\rmdefault}{\mddefault}{\updefault}{\color[rgb]{0,0,0}$1$}%
}}}}
\put(5701,-2686){\makebox(0,0)[lb]{\smash{{\SetFigFont{10}{12.0}{\rmdefault}{\mddefault}{\updefault}{\color[rgb]{0,0,0}contributes zero}%
}}}}
\put(6751,-4001){\makebox(0,0)[lb]{\smash{{\SetFigFont{10}{12.0}{\rmdefault}{\mddefault}{\updefault}{\color[rgb]{0,0,0}$1$}%
}}}}
\put(5276,-3086){\makebox(0,0)[lb]{\smash{{\SetFigFont{10}{12.0}{\rmdefault}{\mddefault}{\updefault}{\color[rgb]{0,0,0}$2$}%
}}}}
\put(3726,-4436){\makebox(0,0)[lb]{\smash{{\SetFigFont{10}{12.0}{\rmdefault}{\mddefault}{\updefault}{\color[rgb]{0,0,0}$1$}%
}}}}
\put(3226,-4286){\makebox(0,0)[lb]{\smash{{\SetFigFont{10}{12.0}{\rmdefault}{\mddefault}{\updefault}{\color[rgb]{0,0,0}$2$}%
}}}}
\put(3676,-3011){\makebox(0,0)[lb]{\smash{{\SetFigFont{10}{12.0}{\rmdefault}{\mddefault}{\updefault}{\color[rgb]{0,0,0}$1$}%
}}}}
\put(1651,-3811){\makebox(0,0)[lb]{\smash{{\SetFigFont{10}{12.0}{\rmdefault}{\mddefault}{\updefault}{\color[rgb]{0,0,0}$2$}%
}}}}
\put(1576,-4511){\makebox(0,0)[lb]{\smash{{\SetFigFont{10}{12.0}{\rmdefault}{\mddefault}{\updefault}{\color[rgb]{0,0,0}$1$}%
}}}}
\put(1951,-1911){\makebox(0,0)[lb]{\smash{{\SetFigFont{10}{12.0}{\rmdefault}{\mddefault}{\updefault}{\color[rgb]{0,0,0}$1$}%
}}}}
\put(1451,-1336){\makebox(0,0)[lb]{\smash{{\SetFigFont{10}{12.0}{\rmdefault}{\mddefault}{\updefault}{\color[rgb]{0,0,0}$2$}%
}}}}
\end{picture}%
\caption{\label{fig:last}The moduli space underlying \eqref{eq:pentagon-triangle}.}
\end{centering}
\end{figure}

The second part of our proof is therefore to provide a homotopy
\begin{equation} \label{eq:pentagon-triangle}
\rho_{Z^{(1)}|E}^{-1} - [\nu,\cdot]^{-1} + \chi \htp 0.
\end{equation}
To construct that, we take the idea of pulling out a marked point (from Lemma \ref{th:r-is-b}) and apply it to the moduli space underlying $\chi$ (from Section \ref{subsec:construct-chi}); more precisely, the point we pull out is $\zeta_1$ from \eqref{eq:two-points-correlated}. The outcome is a three-dimensional parameter space with one boundary component. Figure \ref{fig:last} shows a schematic picture of that space together with the relevant compactification, where the limits lie  (this is a dimensionally reduced picture; the actual space is an $S^1$-bundle over it). Some degenerations contribute zero, for topological reasons (incidence conditions would force each of the two maps to have positive degree, in the sense of \eqref{eq:define-degree}, which is impossible). The remaining ones, together with the boundary, give rise to the three terms in \eqref{eq:pentagon-triangle}.

Rather than discussing the technical aspects of the construction, which offer nothing substantially new at this point, we want to focus on one fundamental issue. Gluing together the surfaces involved in the expression $[\nu,\cdot]^{-1}$ (for a fixed large gluing lenght) yields a family of surfaces which are all cylinders with two marked points $\zeta_1,\zeta_2$. The specific choice of framing in the definition of $[\cdot,\cdot]^{-1}$ allows us to arrange that $\zeta_2$ always lies on the straight horizontal half-line connecting $\zeta_1$ to the negative end (in more topological terms, as $\zeta_1$ moves around the circle, $\zeta_2-\zeta_1$ describes a small loop with winding number zero around the origin). This explains why this particular bracket arises.
\end{proof}

Let's introduce some notation:
\begin{align}
\label{eq:k-notation}
& k = b_{q^{-1}\Omega|E}, \\
\label{eq:kappa-notation} 
& \kappa = \beta_{q^{-1}\Omega|E}, \\
\label{eq:theta-new}
& \theta = -b_{A|E} - \psi \nu, \\
\label{eq:a-new}
& a = \Delta \theta - \kappa,
\end{align}
where \eqref{eq:kappa-notation} is a special case of the cochains introduced in Lemma \ref{th:bv-annihilates}. We can rewrite \eqref{eq:betaq} and \eqref{eq:z-bounds} as
\begin{align}
& d \kappa + \Delta k = 0, \\
& d \theta = k.
\end{align}
From the viewpoint of Discussion \ref{th:hf-plus}, this means that we get a class
\begin{equation} \label{eq:t-new-class}
t = [(-q^{-1}\Omega|E,\theta)] \in \mathit{HF}^1(E,\alpha^+_1)_{\mathit{red}},
\end{equation}
which maps to $q^{-1}[\omega_E]$ under the connecting map; and
\begin{equation} \label{eq:a-new-class}
a = \Delta_{\mathit{red}} t \in \mathit{HF}^0(E,\alpha_1^+).
\end{equation}

\begin{lemma} \label{th:determine-a}
The cocycle \eqref{eq:a-new} is cohomologous to $-\psi s$.
\end{lemma}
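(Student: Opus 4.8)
The plan is to identify the cohomology class of $a$ with $-\psi s$ by passing through the reduced Floer group $\mathit{HF}^*(E,\alpha^+_2)_{\mathit{red}}$, in exact parallel with the analysis of the Borman--Sheridan class in Proposition~\ref{th:interpret-bs}. First I would record that $a=\Delta\theta-\kappa$ is a cocycle: by \eqref{eq:theta-new}--\eqref{eq:a-new} one has $d\theta=k=b_{q^{-1}\Omega|E}$ (which is \eqref{eq:z-bounds} in the present notation) and $d\kappa=-\Delta k$ (which is \eqref{eq:betaq}), and since $\Delta$ anticommutes with $d$ this gives $da=d\Delta\theta-d\kappa=-\Delta k+\Delta k=0$. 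Moreover $(q^{-1}\Omega|E,\theta)$ is a valid pair in the sense of Discussion~\ref{th:hf-plus}, defining the class $t$ of \eqref{eq:t-new-class}, and \eqref{eq:combine-into-a-class} identifies $[a]=[\Delta\theta-\beta_{q^{-1}\Omega|E}]$ with $\Delta_{\mathit{red}}t$.

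Next I would compare $t$ with $\psi$ times the class $[-Z^{(1)}|E,\nu]\in\mathit{HF}^1(E,\alpha^+_2)_{\mathit{red}}$ from \eqref{eq:z1-rel}. Both are represented by pairs (pseudo-cycle, bounding Floer cochain), and their difference is represented by $(q^{-1}\Omega|E-\psi Z^{(1)}|E,\ \theta-\psi\nu)$. By \eqref{eq:a-restricted} the first entry is $-\partial(A|E)$, and by \eqref{eq:theta-new} together with \eqref{eq:b-of-a} the second entry is (up to the sign in \eqref{eq:theta-new}, which I would pin down against \eqref{eq:b-of-a}) equal to $-b_{A|E}$; since $db_{A|E}=b_{\partial(A|E)}$, this pair is precisely the coboundary, in the mapping-cone complex computing $\mathit{HF}^*_{\mathit{red}}$, of the pseudo-chain $A|E$. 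Hence $t=\psi\,[-Z^{(1)}|E,\nu]$. Applying $\Delta_{\mathit{red}}$ and invoking Proposition~\ref{th:interpret-bs} (which says $\Delta_{\mathit{red}}[-Z^{(1)}|E,\nu]=[\Delta\nu-\beta_{Z^{(1)}|E}]=-[s]$) then yields $[a]=\Delta_{\mathit{red}}t=\psi\,\Delta_{\mathit{red}}[-Z^{(1)}|E,\nu]=-\psi[s]$.

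A more hands-on variant, closer to the style of the rest of the paper, avoids the mapping-cone description: expand $a=-\Delta b_{A|E}+\psi\Delta\nu-\kappa$, rewrite $\psi\Delta\nu$ using Proposition~\ref{th:interpret-bs} as $\psi(\beta_{Z^{(1)}|E}-s)$ up to a coboundary, and rewrite $\Delta b_{A|E}$ via a relative version of Lemma~\ref{th:bv-annihilates}. Concretely, running the parametrized moduli problem of that lemma (thimbles with one marked point moving over the circle $\{0\}\times S^1$, the parameter ranging over a compactified disc) for the pseudo-chain $A|E$ in place of a pseudo-cycle produces a Floer cochain $\beta_{A|E}$ with $d\beta_{A|E}+\Delta b_{A|E}=\psi\beta_{Z^{(1)}|E}-\beta_{q^{-1}\Omega|E}=\psi\beta_{Z^{(1)}|E}-\kappa$, the extra term being the contribution of the boundary strata of the parameter space recording $\partial(A|E)$. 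Substituting, the $\beta_{Z^{(1)}|E}$ and $\kappa$ contributions cancel and one is left with $a=-\psi s+(\text{coboundary})$. I expect the main difficulty to be purely bookkeeping rather than geometric: first, arranging the orientation conventions so that the sign $-1$ of Proposition~\ref{th:interpret-bs} propagates correctly into the statement (the relations \eqref{eq:theta-new}, \eqref{eq:z-bounds}, \eqref{eq:b-of-a} must be read with compatible sign conventions), and second, setting up the compactification of the parameter space in the relative Lemma~\ref{th:bv-annihilates}. Both are routine variations on constructions already carried out (Lemmas~\ref{th:bv-annihilates}, \ref{th:z1-zero}, \ref{th:r-is-b}), and since $\mathrm{deg}(u)=0$ throughout, all maps stay in a compact part of $E$ and no new analytic input is required.
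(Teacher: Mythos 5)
Your second, ``hands-on'' variant is essentially the paper's proof verbatim: expand $a = \Delta\theta - \kappa$, use Proposition~\ref{th:interpret-bs} to trade $\Delta\nu$ for $\beta_{Z^{(1)}|E} - s$ up to a coboundary, and absorb the remaining terms into a coboundary by building $\beta_{A|E}$ via the pseudo-chain version of Lemma~\ref{th:bv-annihilates}; the mapping-cone variant is a mild conceptual repackaging of the same computation through $\mathit{HF}^*_{\mathit{red}}$ and $\Delta_{\mathit{red}}$, resting on the same two ingredients. You were right to flag the sign in \eqref{eq:theta-new}: consistency with $d\theta = k$, \eqref{eq:z-bounds} and \eqref{eq:b-of-a} forces $\theta = -b_{A|E} + \psi\nu$, which is the sign your expansion uses, and with that the identity $d\beta_{A|E} + \Delta b_{A|E} = \psi\beta_{Z^{(1)}|E} - \kappa$ makes all the $\beta$ and $\kappa$ terms cancel exactly as you say.
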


\begin{proof}
By Proposition \ref{th:interpret-bs}, 
\begin{equation} \label{eq:what-is-a}
\begin{aligned}
a & = -\Delta b_{A|E} - \psi \Delta \nu - \beta_{q^{-1}\Omega|E} \\
& = - \psi s + (\psi \beta_{Z^{(1)}|E} - \beta_{q^{-1}\Omega|E} -\Delta b_{A|E}) + \text{\it coboundary}.
\end{aligned}
\end{equation}
We have to show that the term in brackets in \eqref{eq:what-is-a} is nullhomologous. That is actually an instance of a general fact, valid for any proper pseudo-chain: in the same way as in Lemma \ref{th:bv-annihilates}, one can find a Floer cochain $\beta_{A|E}$ satisfying
\begin{equation}
d\beta_{A|E} + \Delta b_{A|E} = \beta_{\partial A|E} = \psi \beta_{Z^{(1)}|E} - \beta_{q^{-1}\Omega|E}.
\end{equation}
\end{proof}

Finally, we can use Lemma \ref{th:0-is-0} to rewrite the connection \eqref{eq:chi-connection-induced}, for $\alpha^+ = \alpha^+_2$, as  
\begin{equation} \label{eq:alt-nabla-1}
\begin{aligned}
& \mathit{HF}^*(E,\alpha_2^+) \longrightarrow \mathit{HF}^*(E,\alpha^-), \\
& [x] \longmapsto [C(\partial_q x) - h^+(x) + \rho^{-1}_{q^{-1}\Omega|E}(x) - [\theta,x]^{-1}].
\end{aligned}
\end{equation}

\begin{remark}
At this point, the similarity with the axiomatic constructions from Section \ref{sec:bv} is already evident. The family \eqref{eq:three-ends} defining $[\cdot,\cdot]^{-1}$ matches that from Figure \ref{fig:bv} (including the orientation convention for the parameter spaces). The formula \eqref{eq:bracket-1-again} agrees exactly with that in \eqref{eq:epsilon-homotopy}. There is also a partial similarity between \eqref{eq:nabla-1} and \eqref{eq:alt-nabla-1} (the difference is the appearance of continuation maps: geometrically, we can't get connections on Floer cohomology groups for a single value of $\alpha$).
\end{remark}

\section{Symplectic cohomology\label{sec:symplectic-cohomology}}

This section (finally) formally introduces symplectic cohomology, as well as its standard structure of operations, in the version suitable for our setup. Combining the abstract ideas from Sections \ref{subsec:differentiation-axiom} and \ref{subsec:differentiation-axiom-2} with some Floer theory from Section \ref{subsec:bracket-and-connection}, we carry out the construction of connections on symplectic cohomology. Finally, we need to discuss the compatibility between this and the previous material.

\subsection{The definition\label{subsec:define-sh}}
We consider a more general geometric situation than in Section \ref{subsec:define-floer}; it includes Lefschetz fibrations over $\bC$ with nontrivial monodromy at infinity. This is essentially for didactic purposes: we want to show that, for the abstract theory, the precise structure of our manifolds is less relevant. We retain the Calabi-Yau assumption, but only for its technical role in pseudo-holomorphic curve theory: in principle, it could be modified or dropped, with the usual implications (loss of the $\bZ$-grading, and technical complications in transversality arguments). Similarly, the theory could be set up in other situations, such as symplectic manifolds with contact type boundary (again, with modifications on the technical level).

\begin{setup} \label{th:sh-setup}
Let $E$ be an open symplectic manifold. We require that this comes with a proper map $p: E \rightarrow \bC$, which is a locally trivial symplectic fibration over $A = \{b \in \bC\,:\,|b| \geq r_0\}$, for some $r_0$. This means that every point $x \in p^{-1}(A)$ is a regular point, with $TE_x^v = \mathit{ker}(D\pi_x) \subset TE_x$ a symplectic subspace, and that its symplectic orthogonal complement $TE_x^h$ satisfies
\begin{equation}
(\omega_E - p^*\omega_{\bC}) \,|\, TE_x^h =  0
\end{equation}
for some choice (fixed once and for all) of rotationally invariant symplectic form $\omega_{\bC}$ on the base. In addition we will fix a sequence of annuli, called barriers,
\begin{equation}
W_j = \{r_j^- \leq |b| \leq r_j^+\} \subset A \subset \bC,
\end{equation}
where $r_0 < r_1^- < r_1^+ < r_2^- < \cdots$, going to $\infty$. Let $W = W_1 \cup W_2 \cup \cdots$ be their union. We also require that $c_1(E)$ should be zero, and choose a complex volume form $\eta$ (for some compatible almost complex structure). Finally, we fix a (proper) cycle representative $\Omega$ for $[\omega_E]$, in the same sense as in \eqref{eq:z-divisor}.
\end{setup}

\begin{setup} \label{th:winding-hj}
We will use compatible almost complex structures $J$ on $E$ such that $p$ is $J$-holomorphic on $p^{-1}(W)$. Similarly, we use Hamiltonian functions $H \in \smooth(E,\bR)$ whose associated vector field $X$ satisfies
\begin{equation} \label{eq:ak-rotation}
Dp_x(X) = a_j (2\pi i b \partial_b) \quad \text{for $x \in p^{-1}(W_j)$, with some $a_j \in \bR$.}
\end{equation}
The simplest way to achieve that is to take $X|p^{-1}(W_j)$ to be the unique horizontal (meaning, lying in $TE^h$) lift of $a_j(2\pi i b\partial_b)$. Indeed, for most of our discussion, it will be entirely sufficient to restrict to such choices.
\end{setup}

As usual, much of the behaviour of the solutions of the associated Cauchy-Riemann equations comes from elementary arguments involving their projection to $\bC$. The following three Lemmas summarize those arguments (compare \cite[Section 4.4]{seidel12b}, which applies the same idea in a slightly different geometric context).

\begin{lemma} \label{th:barrier-1}
Let $S$ be a compact connected Riemann surface, with nonempty boundary, carrying a one-form $\gamma$ such that $d\gamma \leq 0$. Fix some $j$, and an $r \in (r_j^-,r_j^+)$. Suppose that $v: S \rightarrow \bC$ satisfies
\begin{equation} \label{eq:partial-dbar}
\bar\partial v - 2\pi iv \gamma^{0,1} = 0 \quad \text{on $v^{-1}(W_j)$.}
\end{equation}
Suppose moreover that $|v(z)| = r$ for $z \in \partial S$, and that $v$ intersects the circle of radius $r$ transversally at each boundary point. Then there is some point $z \in S$ where $|v(z)| < r$.
\end{lemma}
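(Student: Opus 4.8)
\textbf{Proof plan for Lemma \ref{th:barrier-1}.}

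The idea is a maximum-principle argument applied to $\log|v|$ over the portion of $S$ where $v$ takes values in the barrier annulus $W_j$. First I would set $\Sigma = v^{-1}(\{|b| \leq r\})$; by the transversality hypothesis at the boundary, $\Sigma$ is a compact surface-with-boundary, and its boundary consists of $\partial S$ together with a collection of circles mapping to $\{|b| = r\}$. Suppose, for contradiction, that $|v(z)| \geq r$ everywhere on $S$; then $\Sigma = v^{-1}(\{|b| = r\})$, and in a neighbourhood of $\Sigma$ the map $v$ takes values in $W_j$, so equation \eqref{eq:partial-dbar} applies there. On that neighbourhood, writing $\gamma^{0,1} = \bar\partial f$ locally (possible since any small enough piece of $S$ is a disc), one has $v = \exp(-2\pi i f)\tilde v$ for holomorphic $\tilde v$, exactly as in \eqref{eq:f-twist}; hence, as in \eqref{eq:subharmonic-2},
\begin{equation}
\Delta \log|v| = \pi\, d\gamma \leq 0
\end{equation}
at every point where $v \in W_j \setminus \{0\}$ (note $0 \notin W_j$, so there is no issue with the logarithm). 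Thus $\log|v|$ is superharmonic near $\Sigma$.

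The second step is to derive the contradiction. Under the contradiction hypothesis, $\log|v|$ attains its global minimum value $\log r$ precisely along $\Sigma$, which includes the nonempty set $\partial S$. If $\Sigma$ has interior points, the strong minimum principle for the superharmonic function $\log|v|$ forces $\log|v|$ to be locally constant $= \log r$, hence (by connectedness of $S$ and unique continuation for the linear equation \eqref{eq:partial-dbar}) $v$ maps all of $S$ into the circle $\{|b| = r\}$; but then $v$ cannot meet that circle transversally along $\partial S$, contradicting the hypothesis. If instead $\Sigma = \partial S$ has empty interior, I apply the boundary-point (Hopf) lemma at a point of $\partial S$: since $\log|v|$ is superharmonic on a one-sided neighbourhood and attains its minimum at a smooth boundary point where it is not locally constant, the inward normal derivative of $\log|v|$ must be strictly positive there. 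That says $|v|$ is strictly increasing inward, i.e. $|v| > r$ just inside — but that contradicts nothing yet; the actual contradiction is that transversality at the boundary means $d|v|$ is nonzero there, so $\partial S$ is locally the \emph{entire} level set $\{|v| = r\}$ near that point, and moving inward $|v|$ either increases or decreases. The Hopf lemma rules out that it increases in all inward directions unless $\log|v|$ is non-constant with the wrong sign; combining both normal-derivative computations pins down that some interior point must have $|v| < r$.

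Let me restructure the endgame more cleanly to avoid the sign bookkeeping. The cleanest route: pick a boundary point $z_0 \in \partial S$. By transversality, $v$ is a local diffeomorphism near $z_0$ onto a neighbourhood of $r \in \bC$, and $\partial S$ maps to an arc of $\{|b| = r\}$; so one side of $\partial S$ near $z_0$ maps into $\{|b| < r\}$. Hence there are points of $S$ arbitrarily close to $z_0$ with $|v| < r$, which is exactly the conclusion. Wait — this makes the superharmonicity unnecessary; but transversality at a boundary point \emph{already} produces points on both sides of the level set, and $S$ contains one of those sides. So in fact the statement follows purely from transversality, provided $\partial S \neq \emptyset$ and $S$ is connected with interior near $z_0$. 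I would present the argument this way, using the superharmonic estimate \eqref{eq:subharmonic-2} only as a backup remark, or — more in the spirit of the surrounding Lemmas \ref{th:max-principle-1}--\ref{th:nonnegative-2} — keep the maximum-principle phrasing since it generalizes to the versions in later Lemmas. The main obstacle, such as it is, is simply making precise that the transversality hypothesis yields interior points mapping inside the circle: once that is observed the proof is immediate, and the only real content is recording the equation \eqref{eq:partial-dbar}$\Rightarrow$\eqref{eq:subharmonic-2} computation so that the subsequent barrier Lemmas can reuse it.
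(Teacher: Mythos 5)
Your ``cleanest route'' via transversality alone does not give the conclusion. Transversality at a boundary point $z_0 \in \partial S$ implies that the level set $\{|v| = r\}$ near $z_0$ is exactly $\partial S$; but $S$ lies on only \emph{one} side of its boundary near $z_0$, and nothing so far forces that side to map into $\{|b| < r\}$ rather than $\{|b| > r\}$. The latter is precisely the contradiction hypothesis $|v| \geq r$ everywhere, which you have not excluded. Your earlier Hopf-lemma attempt has a sign problem for the same reason: $d\gamma \leq 0$ makes $\log|v|$ \emph{super}harmonic on $v^{-1}(W_j)$, so the boundary-point lemma says the inward normal derivative of $\log|v|$ along $\partial S$ is nonnegative, i.e.\ $|v|$ increases as you move into $S$ --- consistent with, not contrary to, $|v| \geq r$. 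Likewise, the minimum principle for superharmonic functions gives $\log|v| \geq \min_{\partial S}\log|v| = \log r$, again \emph{supporting} the contradiction hypothesis. Local maximum-principle reasoning simply cannot detect a dip of $|v|$ below $r$ here.

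The paper's argument is global. Assuming $|v| \geq r$ throughout, one takes the outward normal $\nu$ along $\partial S$ normalized by $\nu.(\tfrac12|v|^2) = -1$ (possible by transversality and the contradiction hypothesis), and uses the equation \eqref{eq:partial-dbar}, which does hold near $\partial S$ since $|v| = r \in (r_j^-,r_j^+)$ there, to rewrite $\tfrac12|dv - 2\pi i v\gamma|^2 = d(\tfrac12|v|^2)\wedge(d\arg v - 2\pi\gamma)$, hence $\iota_\nu\big(\tfrac12|dv - 2\pi i v\gamma|^2\big)\big|_{\partial S} = (2\pi\gamma - d\arg v)\big|_{\partial S}$. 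The left-hand side is a strictly positive boundary integrand (the two-form is $\geq 0$ and nonzero on $\partial S$ by transversality, and $\nu$ is outward), while the right-hand side integrates by Stokes --- using $v \neq 0$ throughout $S$ --- to $\int_S 2\pi\, d\gamma \leq 0$. This winding-number/Stokes cancellation, not a pointwise minimum principle, is where $d\gamma \leq 0$ actually enters; your proposal is missing exactly this global ingredient.
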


\begin{proof} 
Let's suppose that on the contrary, $|v(z)| \geq r$ everywhere. Let $\xi$ be a vector tangent to $\partial S$, positive with respect to the boundary orientation. Then $Dv(i\xi)$ must have a positive radial component. From that and \eqref{eq:partial-dbar}, which applies at all boundary points of $S$, we get
\begin{equation}
\begin{aligned}
0 < \langle v, Dv(i\xi) \rangle_{\bR} & = \langle v, Dv(i\xi) - 2\pi i v \gamma(i\xi) \rangle_{\bR} \\
& = \langle iv, 2 \pi i v \gamma(\xi) - Dv(\xi) \rangle_{\bR} \\
& = r^2 (2\pi \gamma(\xi) - \xi.\mathrm{arg}(v)). 
\end{aligned}
\end{equation}
Integrating out yields the desired contradiction to $d\gamma \leq 0$:
\begin{equation} \label{eq:barrier-inequality}
0 < \int_{\partial S} 2\pi \gamma - d\mathrm{arg}(v) = \int_S 2\pi d\gamma.
\end{equation}
\end{proof}

\begin{lemma} \label{th:barrier-2}
Let $(S,\gamma)$ be as in the previous Lemma. Consider a map $v: S \rightarrow \bC$ which satisfies \eqref{eq:partial-dbar} for some $j$, and such that $|v(z)| < r_j^+$ for all $z \in \partial S$. Then, $|v(z)| < r_j^+$ everywhere.
\end{lemma}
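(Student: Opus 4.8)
\textbf{Proof plan for Lemma \ref{th:barrier-2}.}

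The plan is to reduce this statement to Lemma \ref{th:barrier-1} by a connectedness argument applied to the appropriate sublevel set. First I would assume for contradiction that $|v(z_0)| \geq r_j^+$ for some interior $z_0 \in S$. After an arbitrarily small perturbation of $r_j^+$ to some nearby regular value $r \in (r_j^-, r_j^+)$ of the function $|v|$ on $S$ (using Sard's theorem; note that the hypothesis $d\gamma \le 0$ is an open condition on $r$ only through the fibered structure, and the radius $r$ plays no role in \eqref{eq:partial-dbar} itself), I can arrange that $|v|$ is transverse to the circle of radius $r$ and that $v^{-1}(\{|b| \le r\})$ still misses $\partial S$ — this uses the boundary hypothesis $|v(z)| < r_j^+$. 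Then I would consider the set $S' = \{z \in S : |v(z)| \ge r\}$. This is a compact subsurface of the interior of $S$ with smooth boundary $\partial S' = \{|v| = r\}$ (by transversality), it is nonempty since it contains $z_0$, and crucially $v(S') \subset \overline{W_j}$ provided $S'$ also stays below $r_j^+$; if $|v|$ exceeds $r_j^+$ somewhere one shrinks $S'$ to a connected component where $v$ takes values in $W_j$, which is possible because $v^{-1}(W_j)$ is where \eqref{eq:partial-dbar} holds.

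The key point is then that any connected component $S''$ of $S'$ satisfies the hypotheses of Lemma \ref{th:barrier-1}: it is compact connected with nonempty boundary, carries the restriction of $\gamma$ (still with $d\gamma \le 0$), the map $v$ satisfies \eqref{eq:partial-dbar} on $v^{-1}(W_j) \supseteq S''$ (after the shrinking step), and $|v(z)| = r$ with transverse intersection along $\partial S''$. Lemma \ref{th:barrier-1} then produces a point of $S''$ where $|v| < r$, contradicting the definition of $S'$. Hence no such $z_0$ exists, which is the claim. The one technical subtlety — and the step I expect to require the most care — is making sure the component $S''$ is genuinely contained in the region $p^{-1}(W_j)$ where the Cauchy-Riemann-type equation \eqref{eq:partial-dbar} is assumed to hold: a priori $|v|$ could wander above $r_j^+$, in which case the equation is not available there. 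This is handled by the observation that the component of $\{r \le |v| \le r_j^+ - \varepsilon\}$ adjacent to $\partial S'$ is the relevant subsurface, and one runs the argument with $r_j^+ - \varepsilon$ in place of $r_j^+$ and $r$ as before; since $\varepsilon$ can be taken arbitrarily small and the set where $|v| \ge r_j^+$ would itself have to be cut off by another application of the same principle, an elementary induction (or simply taking $r$ close to $r_j^+$ and the outermost component) closes the gap. I would present this last point briefly, since it is the same circle of ideas already used repeatedly in the paper.
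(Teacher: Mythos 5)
Your main argument --- pick a regular value $r \in (r_j^-,r_j^+)$ with $|v|<r$ on $\partial S$, look at a connected component of $\{|v|\geq r\}$, and feed it to Lemma \ref{th:barrier-1} --- is exactly the paper's proof, and it is complete as it stands. However, the ``technical subtlety'' you flag, and the $\varepsilon$-shrinking device you propose to handle it, are both misplaced. Lemma \ref{th:barrier-1} does \emph{not} require $v(S'')\subset W_j$; its hypothesis is only that \eqref{eq:partial-dbar} hold on $v^{-1}(W_j)$, which the restricted map inherits automatically. Unwinding the proof of Lemma \ref{th:barrier-1}: the identity \eqref{eq:barrier-argument-0} is algebraic, the strict positivity of $\iota_\nu \half|dv - 2\pi iv\gamma|^2$ is used only along $\partial S''$ (where $|v|=r\in(r_j^-,r_j^+)$, so $v\in W_j$ and the equation is available), the passage from the boundary integral to $\int 2\pi\,d\gamma$ is Stokes plus $d(d\arg v)=0$ (which only needs $v\neq 0$, guaranteed by $|v|\geq r>0$), and $d\gamma\leq 0$ holds on all of $S$. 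Nothing in that chain needs $|v|\leq r_j^+$ in the interior.

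Moreover, the repair you sketch would not go through as written: a connected component of $\{r\leq|v|\leq r_j^+-\varepsilon\}$ generically has boundary on \emph{two} circles (radius $r$ and radius $r_j^+-\varepsilon$), whereas Lemma \ref{th:barrier-1} requires the entire boundary to map to a single circle of radius $r$, with the outward-normal normalization $\nu.(\half|v|^2)=-1$ consistent along all of $\partial S$. On the part of the boundary at radius $r_j^+-\varepsilon$ the relevant sign is reversed, and the integral inequality no longer closes. So the ``induction'' would need a genuinely different argument. Fortunately, as explained above, none of this is needed: drop the shrinking step entirely, apply Lemma \ref{th:barrier-1} directly to a connected component $\tilde{S}$ of $\{|v|\geq r\}$, and you have the paper's proof.
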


\begin{proof}
One can find an $r \in (r_j^-,r_j^+)$ such that $|v(z)| < r$ for all $z \in \partial S$, and such that $v$ intersects the circle of radius $r$ transversally. If the intersection is empty, we have $|v(z)| < r < r_j^+$ everywhere. Otherwise, take $\tilde{S}$ to be one of the connected components of $\{z \in S \;:\; |v(z)| \geq r\}$. By applying Lemma \ref{th:barrier-1} to $\tilde{S}$, one obtains a contradiction.
\end{proof}

\begin{lemma} \label{th:barrier-3}
Let $S$ be as in Setup \ref{th:worldsheet}, except that we denote its one-form by $\gamma$ (instead of $\beta$), and where $\alpha^-$, $\alpha_i^+$ can be arbitrary real numbers. Consider a map $v: S \rightarrow \bC$ which satisfies \eqref{eq:partial-dbar} for some $j$, and such that $v^{-1}(W_j) \subset S$ is compact. Define
\begin{equation} \label{eq:caricature-action}
B(\zeta^-) = \begin{cases} 
\alpha^- - w^- & \text{if } |v(\epsilon^-(s,t))| > r_j^+ \text{ for all $s \ll 0$}, \\
0 & \text{if } |v(\epsilon^-(s,t))| < r_j^- \text{ for all $s \ll 0$},
\end{cases}
\end{equation}
where $w^- = w^-(v) \in \bZ$ is the winding number of the loop $v(\epsilon^-(s,\cdot))$ around the origin, for $s \ll 0$. The same idea, applied to the other ends, yields numbers $B(\zeta^+_i)$. Then, one necessarily has
\begin{equation} \label{eq:b-energy}
B(\zeta^-) \geq \sum_i B(\zeta^+_i).
\end{equation}
Moreover, equality in \eqref{eq:b-energy} can hold only if $v^{-1}(W_j \setminus \partial W_j) = \emptyset$.
\end{lemma}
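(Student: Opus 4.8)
The statement to prove is Lemma \ref{th:barrier-3}: under the winding-number/action bookkeeping \eqref{eq:caricature-action}, the inequality \eqref{eq:b-energy} holds, with equality only if $v^{-1}(W_j\setminus\partial W_j)=\emptyset$. The strategy is the same one used to prove Lemma \ref{th:barrier-1}: extract a nonnegative two-form on the locus where $v$ lands in the barrier $W_j$, apply Stokes, and read off a boundary term that packages exactly the quantities $B(\zeta^-)$ and $B(\zeta^+_i)$. The new feature compared to Lemma \ref{th:barrier-1} is that $S$ is now a worldsheet with cylindrical ends rather than a compact surface with boundary, so the ``boundary'' of the region over which we integrate is a union of circles at the ends together with parts of the level sets $\{|v|=r\}$, and the argument must track the signs at each of these pieces.

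\textbf{Key steps.} First I would pick a regular value $r\in(r_j^-,r_j^+)$ of $|v|$ and set $\Sigma=\{z\in S:|v(z)|\geq r\}$; this is a (possibly noncompact) surface-with-boundary whose boundary $\partial\Sigma$ consists of the transverse level set $\{|v|=r\}$ together with, on each end where $|v|>r_j^+$ eventually, a circle at infinity (after truncating the cylindrical ends at $s=\mp N$ and letting $N\to\infty$). On $\Sigma$ the map $v$ satisfies \eqref{eq:partial-dbar}, so the computation \eqref{eq:barrier-argument-0} gives
\begin{equation*}
\tfrac{1}{2}|dv-2\pi i v\gamma|^2 = d\big(\tfrac12|v|^2\big)\wedge\big(d\arg(v)-2\pi\gamma\big) = d\big[(\tfrac12|v|^2-\tfrac12 r^2)(d\arg v - 2\pi\gamma)\big] + (\tfrac12 r^2)\,2\pi\,d\gamma,
\end{equation*}
where I have added the constant $-\tfrac12 r^2$ to make the primitive vanish on the level-set part of $\partial\Sigma$, and used $d(d\arg v - 2\pi\gamma) = -2\pi\,d\gamma$. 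Integrating over $\Sigma$ (first over the truncated piece, then $N\to\infty$, using the exponential convergence of $\gamma$ and $v$ on the ends and the fact that $d\gamma$ is compactly supported hence integrable): the left side is $\geq 0$; the $\tfrac12 r^2\cdot 2\pi\,d\gamma$ term is $\leq 0$; and the exact term contributes only on the circles at infinity, where $\tfrac12|v|^2\to\infty$ but the integrand is controlled because on those ends $(\epsilon)^*\gamma=\alpha\,dt$ and $\arg(v(\epsilon(s,\cdot)))$ winds $w$ times, so $\int(d\arg v - 2\pi\gamma)$ over such a circle equals $2\pi(w-\alpha)$ up to sign. Collecting the orientations — the negative end contributes with one sign, the positive ends with the opposite — the boundary-at-infinity term becomes $2\pi\big(B(\zeta^-)-\sum_i B(\zeta^+_i)\big)$ after matching with the definition \eqref{eq:caricature-action} (the ends with $|v|<r_j^-$ contribute nothing since they lie outside $\Sigma$; note $r$ can be taken so that no end has $|v|$ asymptotic to exactly $r$). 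Rearranging yields $2\pi\big(B(\zeta^-)-\sum_i B(\zeta^+_i)\big)\geq \tfrac12|dv-2\pi i v\gamma|^2\text{-integral} - \pi r^2\int_\Sigma d\gamma \geq 0$, i.e.\ \eqref{eq:b-energy}.

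\textbf{Equality.} For the last clause: if equality holds in \eqref{eq:b-energy}, then both nonnegative contributions vanish, in particular $\int_\Sigma d\gamma = 0$ for every regular $r$; letting $r\downarrow r_j^-$ (or, more carefully, noting that if $v^{-1}(W_j\setminus\partial W_j)\neq\emptyset$ then the energy term $|dv-2\pi i v\gamma|^2$ integrated over that region is strictly positive unless $v$ is locally constant there, which is incompatible with $v$ crossing from $|v|<r_j^-$ behaviour to $|v|>r_j^+$ behaviour at some end) forces $v^{-1}(W_j\setminus\partial W_j)=\emptyset$. I would argue this by contradiction: if $v$ hits the open annulus $W_j\setminus\partial W_j$, choose $r$ strictly inside so that $\Sigma$ has nonempty interior over $v^{-1}(W_j\setminus\partial W_j)$ where $d\arg v-2\pi\gamma$ is nonzero generically, making the energy integral positive.

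\textbf{Main obstacle.} The substantive difficulty is the handling of the noncompact ends: justifying the truncation-and-limit step, i.e.\ that the boundary contributions at $s=\mp N$ converge to the stated winding-number expressions and that no mass escapes to infinity. This rests on the compatibility conditions \eqref{eq:beta-on-the-ends}-type asymptotics for $\gamma$ and on the hypothesis that $v^{-1}(W_j)$ is compact, which guarantees that on each end $|v|$ is eventually either $>r_j^+$ or $<r_j^-$ — precisely the dichotomy built into \eqref{eq:caricature-action}. Once the bookkeeping of signs at the four types of boundary pieces (level set, $\zeta^-$ circle, $\zeta^+_i$ circles, and the support of $d\gamma$) is set up correctly, the inequality is immediate; getting those signs right is where I would be most careful, cross-checking against the compact model in Lemma \ref{th:barrier-1}.
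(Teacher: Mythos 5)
Your overall strategy — choose a regular value $r$, integrate a nonnegative two-form over $\Sigma=\{|v|\geq r\}$, and apply Stokes, reading off boundary contributions at the cylindrical ends — is the same as the paper's. But the bookkeeping with the primitive $(\tfrac12|v|^2-\tfrac12 r^2)\,\omega$, where $\omega=d\arg v-2\pi\gamma$, has a genuine gap. There is first a minor algebraic slip: since $d\omega=-2\pi\,d\gamma$, the correct identity is
\begin{equation*}
d(\tfrac12|v|^2)\wedge\omega \;=\; d\bigl[(\tfrac12|v|^2-\tfrac12 r^2)\,\omega\bigr] \;+\; 2\pi\,(\tfrac12|v|^2-\tfrac12 r^2)\,d\gamma,
\end{equation*}
with a nonconstant coefficient, not the constant $\tfrac12 r^2$ you wrote. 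The more substantive problem is the boundary at infinity. On an end where $|v|>r_j^+$ eventually, $v(\epsilon(s,\cdot))$ converges to the projection of the limiting Floer periodic orbit, so $|v|$ stays bounded (it does not go to $\infty$ as you assert), and the Stokes boundary contribution of your primitive on such a circle is
\begin{equation*}
\int_{S^1}(\tfrac12|v_\infty(t)|^2-\tfrac12 r^2)\,\omega_\infty,
\end{equation*}
which carries the nonconstant positive weight $\tfrac12|v_\infty|^2-\tfrac12 r^2$. This is not $\int_{S^1}\omega_\infty=2\pi(w-\alpha)$, so it does not reduce to $B(\zeta^\pm)$ unless $|v_\infty|$ happens to be constant, which the hypotheses do not give you.

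The paper's proof avoids this by applying Stokes directly to the unweighted one-form $2\pi\gamma-d\arg v$ (whose exterior derivative is $2\pi\,d\gamma\leq 0$, and whose asymptotic circle integrals are exactly $2\pi(\alpha-w)$), and establishing the strict positivity of the resulting level-set boundary term $\int_{\{|v|=r\}}(2\pi\gamma-d\arg v)=\int_{\{|v|=r\}}\iota_\nu\tfrac12|dv-2\pi iv\gamma|^2>0$ as a separate step, by the same calculation as in Lemma~\ref{th:barrier-1}; that strict positivity, available whenever the level set is nonempty, is also what gives the equality clause. If you want to keep the single-primitive style, replace your weight by a cutoff $\chi(|v|)$ with $\chi\equiv 0$ for $|v|\leq r_j^-$, $\chi\equiv 1$ for $|v|\geq r_j^+$, and $\chi'\geq 0$: then
\begin{equation*}
d\bigl[\chi(|v|)\,\omega\bigr] \;=\; \frac{\chi'(|v|)}{|v|}\cdot\tfrac12|dv-2\pi iv\gamma|^2 \;-\; 2\pi\,\chi(|v|)\,d\gamma \;\geq\; 0,
\end{equation*}
and because $\chi$ is locally constant near the ends, the boundary terms at infinity collapse to the $B$'s as required. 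Your particular weight, which vanishes at $|v|=r$ but is not locally constant at the ends, is what breaks the argument.
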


\begin{proof}
If $|v(z)| \leq r_j^-$ for all $z \in S$, the desired inequality \eqref{eq:b-energy} holds trivially, since both sides are zero. If $|v(z)| \geq r_j^+$ for all $z \in S$, the inequality is true because $w^- = \sum_i w^+_i$ and, by \eqref{eq:sub-closed}, $\alpha^- \geq \sum_i \alpha^+_i$. We therefore focus on the remaining situation, where we expect a strict inequality in \eqref{eq:b-energy}.

Choose some $r \in (r_j^-,r_j^+)$ such that $v$ intersects the circle of radius $r$ transversally, and so that the intersection is not empty. Let 
\begin{equation}
\tilde{S} = \{z \in S\;:\; |v(z)| \geq r\}. 
\end{equation}
Then, \eqref{eq:barrier-inequality} yields
\begin{equation} \label{eq:cut-stokes}
0 < \int_{\partial \tilde{S}} 2\pi \gamma - d\arg(v) = \int_{\tilde{S}} 2\pi \, d\gamma + 2\pi \Big( B(\zeta^-) - \sum_i B(\zeta^+_i) \Big).
\end{equation}
To see the equality in \eqref{eq:cut-stokes}, one cuts off parts of the cylindrical ends of $\tilde{S}$, and then applies Stokes to the resulting compact surface. The winding number term comes from integrating $-d\arg(v)$ over the boundary circles associated to the ends, and the $\alpha^{\pm}$ similarly comes from integrating $\gamma$. Since $\int 2\pi d\gamma \leq 0$ by \eqref{eq:sub-closed}, the desired inequality follows.
\end{proof}

We will construct symplectic cohomology as the Floer cohomology of a single time-dependent Hamiltonian (for similar definitions, see \cite{abouzaid10, ganatra13}). More precisely:

\begin{setup} \label{th:strange-h}
Choose numbers
\begin{equation} \label{eq:the-ak}
\left\{
\begin{aligned}
& 0 < a_1 \leq a_2 \leq \dots \in \bR \setminus \bZ, \\
& \textstyle \lim_j a_j = \infty.
\end{aligned}
\right.
\end{equation}
We use a time-dependent Hamiltonian $H = (H_t)$ which, for each time $t$ and any $j$, satisfies \eqref{eq:ak-rotation} with the given $a_j$. As a consequence, any one-periodic orbit $x$ of the associated Hamiltonian vector field is disjoint from $p^{-1}(W)$. We require three additional properties to hold for all $x$. They should be nondegenerate; disjoint from $\Omega$; and finally,
\begin{equation} \label{eq:winding-number-inequality}
\text{if $|p(x(t))| > r_j^+$, the loop $p(x)$ winds $>a_j$ times around $0$.}
\end{equation}
\end{setup}

To show that these properties can be satisfied, let's first choose a function $\psi(r)$, $r \geq 0$, with 
\begin{equation}
\left\{
\begin{aligned}
& \psi(r) = 0 \quad \text{for $r \leq r_0$}, \\
& \psi'(r) = a_j \quad \text{if $r_j^- \leq r \leq r_j^+$,} \\
& \psi''(r) \geq 0 \quad \text{everywhere}.
\end{aligned}
\right.
\end{equation}
Take our standard rotational vector field $2\pi i b\partial_b$, multiply it by $\psi'(|b|)$, and then lift that to a vector field $\tilde{X}$ on $E$, as follows. On the preimage of the closed unit disc, $\tilde{X}$ shold be zero, and elsewhere, it should be horizontal (a section of $TE^h$). These conditions determine the lift uniquely. Moreover, the resulting vector field is Hamiltonian, and we fix a Hamiltonian function $\tilde{H}$ which induces it. Because of the convexity of $\psi$, this certainly satisfies \eqref{eq:winding-number-inequality}. Now take $H$ to be a sufficiently small time-dependent perturbation of $\tilde{H}$, which does not change the function on $p^{-1}(W)$. That perturbation will still satisfy \eqref{eq:winding-number-inequality}, and for a generic choice, it will achieve nondegeneracy. 

\begin{lemma} \label{th:c0-estimate}
Take $H$ as in Setup \ref{th:strange-h} and $J = (J_t)$ as in Setup \ref{th:winding-hj}. Then, the following holds for all solutions $u$ of the associated Floer equation \eqref{eq:floer}: if $|p(x^+)| < r_j^-$ for some $j$, then $|p(u)| < r_j^+$ everywhere.
\end{lemma}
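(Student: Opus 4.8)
The plan is to argue exactly as in Lemma~\ref{th:barrier-2}, applied to suitable subsurfaces of the cylinder $\bR \times S^1$, using the barriers $W_j$. Fix a solution $u$ of \eqref{eq:floer}, set $v = p(u)$, and suppose for contradiction that $|v(s_0,t_0)| \geq r_j^+$ at some point $(s_0,t_0)$, while $|p(x^+)| < r_j^-$. First I would note that $v$ satisfies \eqref{eq:partial-dbar} (in the notation of Lemma~\ref{th:barrier-1}, with $\gamma$ the one-form $Dp_x(X)$-dual datum coming from \eqref{eq:ak-rotation}, so that $d\gamma = 0$ on $p^{-1}(W_j)$) wherever $v$ takes values in the barrier annulus $W_j$. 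The asymptotic condition $|p(x^+)| < r_j^-$ means that for $s \gg 0$ the loop $v(s,\cdot)$ lies strictly inside the inner circle of $W_j$; combined with \eqref{eq:winding-number-inequality}, which controls the winding of $p(x^-)$ should $|p(x^-)| > r_j^+$, this will give us a handle on the behaviour near $s \to -\infty$ as well.

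Next I would make the contradiction precise. Choose a generic radius $r \in (r_j^-, r_j^+)$ such that $v$ is transverse to the circle $\{|b| = r\}$; by the assumption at $(s_0,t_0)$ and the assumption at $s \gg 0$, the set $\tilde{S} = \{ (s,t) : |v(s,t)| \geq r \}$ is nonempty, has nonempty boundary, and (after cutting off the part of the negative end where $|v| < r$, if any) has the structure of a surface with boundary and at most one cylindrical end. The only subtlety is the behaviour over $s \to -\infty$: either $|v(s,\cdot)| < r$ eventually (so $\tilde{S}$ is compact after truncation and Lemma~\ref{th:barrier-1} applies verbatim, giving a contradiction), or $|v(s,\cdot)| > r_j^+$ for $s \ll 0$, in which case $x^-$ lies in the region $\{|b| > r_j^+\}$ and \eqref{eq:winding-number-inequality} tells us the winding number $w^-$ of $v(s,\cdot)$ satisfies $w^- > a_j$. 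In that second case I would run the Stokes computation of Lemma~\ref{th:barrier-3}, equation \eqref{eq:cut-stokes}: the boundary integral $\int_{\partial \tilde{S}} 2\pi\gamma - d\arg(v)$ is strictly positive by the transversality argument of \eqref{eq:barrier-inequality}, but equals $\int_{\tilde{S}} 2\pi\, d\gamma + 2\pi(a_j - w^-) \leq 2\pi(a_j - w^-) < 0$, a contradiction. Either way, $|v(s,t)| < r_j^+$ everywhere.

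The main obstacle I anticipate is bookkeeping the two ends asymmetrically: unlike Lemma~\ref{th:barrier-2}, which concerns a compact surface with boundary, here the cylinder is noncompact and the two asymptotic limits play genuinely different roles — the hypothesis pins down $x^+$ but says nothing directly about $x^-$, so one must invoke \eqref{eq:winding-number-inequality} precisely to rule out the escape scenario at $s \to -\infty$. Care is also needed that the one-form $\gamma$ on $p^{-1}(W_j)$ has $d\gamma = 0$ there (this is immediate from \eqref{eq:ak-rotation}, since $a_j$ is constant on each $W_j$), so that the term $\int_{\tilde S} 2\pi\, d\gamma$ is in fact $\leq 0$ — indeed $=0$ over the barrier region — exactly as needed. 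Everything else is a routine repetition of the maximum-principle arguments already carried out in Lemmas~\ref{th:barrier-1}--\ref{th:barrier-3}.
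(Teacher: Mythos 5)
Your argument is correct and follows essentially the same route as the paper's proof: one first rules out the escape of $x^-$ past $r_j^+$ via the Stokes computation underlying Lemma~\ref{th:barrier-3} together with the winding constraint \eqref{eq:winding-number-inequality}, and then applies the maximum-principle argument of Lemma~\ref{th:barrier-1}. The only cosmetic difference is that you apply Lemma~\ref{th:barrier-1} directly to $\tilde S = \{|v| \geq r\}$, while the paper packages the second step as Lemma~\ref{th:barrier-2} applied to the connected components of a large compact subsurface.
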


\begin{proof}
By construction, the projection $v = p(u)$ satisfies \eqref{eq:partial-dbar} for any $j$. More precisely, the Riemann surface involved is $S = \bR \times S^1$, with $\gamma = a_j\mathit{dt}$ determined by \eqref{eq:the-ak}.

Suppose first that the negative limit $x^-$ satisfies $|p(x^-)| > r_j^+$. In the notation from \eqref{eq:caricature-action}, the property \eqref{eq:winding-number-inequality} says that $B(\zeta^-) < 0$, whereas $B(\zeta^+) = 0$. This violates Lemma \ref{th:barrier-3}, yielding a contradiction.

The previous argument, together with the fact that the one-periodic orbits are disjoint from $p^{-1}(W_j)$, shows that $|p(x^-)| < r_j^-$. Of course, the other limit already satisfies the same condition. We can therefore find a large compact subset $S \subset \bR \times S^1$, which is a Riemann surface with boundary, such that $|v(z)| < r_j^+$ for all $z \in \overline{(\bR \times S^1) \setminus S}$. Applying Lemma \ref{th:barrier-2} to the connected components of $S$ yields the desired result.
\end{proof}

Results like Lemma \ref{th:c0-estimate} are usually called $C^0$-bounds in the literature on symplectic cohomology. Once one has obtained such a bound, one can define $\mathit{CF}^*(E,H)$ and its differential $d$ as before \eqref{eq:floer-d}. Symplectic cohomology is its cohomology, $\mathit{SH}^*(E) = \mathit{HF}^*(E,H)$.

\begin{remark}
The Floer complex is defined as the direct sum of copies of $\bK$ associated to one-periodic orbits. In particular, infinite sums of the form
\begin{equation}
x_0 q^{d_0} + x_1 q^{d_1} + \cdots
\end{equation}
involving infinitely many distinct one-periodic orbits $x_i$ do not define Floer cochains, even if the 
$d_i$ go to infinity.
\end{remark}

The well-defined\-ness of symplectic cohomology is again established using continuation maps \eqref{eq:continuation-map}. Suppose that we have two collection of numbers $(a_j^{\pm})$ as in Setup \ref{th:strange-h}, satisfying
\begin{equation} \label{eq:ak-inequality}
a_j^- \geq a_j^+ \quad \text{for all $j$},
\end{equation}
and associated functions $H^{\pm}$. Fix a function $\phi: \bR \rightarrow [0,1]$ with $\phi(s) = 0$ for $s \ll 0$, $\phi(s) = 1$ for $s \gg 0$, and $\phi'(s) \geq 0$ for all $s$. In a preliminary step, consider 
\begin{equation}
\tilde{H}^C_{s,t} =  (1-\phi(s)) H^-_t + \phi(s) H^+_t.
\end{equation}
The actual $H^C$ will be a perturbation of $\tilde{H}^C$, but one which leaves it unchanged on $p^{-1}(W)$, and which still converges to $H^{\pm}$ exponentially fast as $s \rightarrow \pm\infty$. If $u$ is a solution of the associated continuation map equation, then $v = p(u)$ satisfies \eqref{eq:partial-dbar} for any $j$, with
\begin{equation}
\gamma = ((1-\phi(s))a_j^- + \phi(s)a_j^+) \mathit{dt}.
\end{equation}
Lemma \ref{th:c0-estimate} and its proof therefore carry over without any changes, leading one to define a continuation map with the usual uniqueness and composition properties. Before continuing, we want to collect some more basic observations.

\begin{discussion} \label{th:properties}
(i) For any $j$, write $\mathit{CF}^*(E,H)_{\leq j} \subset \mathit{CF}^*(E,H)$ for the subspace generated by one-periodic orbits which satisfy $|p(x)| < r_{j+1}^-$. Lemma \ref{th:c0-estimate} says that these subspaces are preserved by the differential, hence constitute an increasing filtration of the chain complex $\mathit{CF}^*(E,H)$. Denote their cohomology groups by $\mathit{HF}^*(E,H)_{\leq j}$. The continuation maps are compatible with this filtration (as are the homotopies that relate different choices of continuation maps). 

(ii) When $a_j^- = a_j^+$, one can define continuation maps in either direction, which are homotopy inverses of each other. Moreover, all of this is compatible with the previously introduced filtrations. Hence, for fixed $(a_j)$, both $\mathit{HF}^*(E,H)$ and $\mathit{HF}^*(E,H)_{\leq j}$ are well-defined up to canonical isomorphisms, and so are the continuation maps which relate one choice $(a_j^-)$ to another one $(a_j^+)$. To signal this, we will temporarily change notation to $\mathit{HF}^*(E,a_1,a_2,\dots)$ and $\mathit{HF}^*(E,a_1,a_2,\dots)_{\leq j}$. 

(iii) Suppose that there is some $j^*$ such that $a_j^+ = a_j^-$ for $j \leq j^*$. Then, the continuation map 
\begin{equation}
\mathit{HF}^*(E,a_1^+,a_2^+,\dots)_{\leq j^*} \longrightarrow \mathit{HF}^*(E,a_1^-,a_2^-,\dots)_{\leq j^*}
\end{equation}
is an isomorphism. In fact, by suitably correlating the choice of Hamiltonians, one can achieve that the underlying chain map is already an isomorphism.

(iv) In all the arguments above, one is free to ignore finitely many $j$. For instance, the sequence $(a_j)$ only has to be eventually nondecreasing. Similarly, one can define continuation maps as long as \eqref{eq:ak-inequality} holds for all but finitely many $j$. As a particularly simple application, one can arbitrarily change finitely many $a_j$, and $\mathit{HF}^*(E,a_1,a_2,\dots)$ remains the same up to canonical isomorphism.
\end{discussion}

\begin{lemma} \label{th:define-sh}
The continuation map is always an isomorphism.
\end{lemma}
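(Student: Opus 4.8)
The plan is to reduce the statement to the special case handled in Discussion \ref{th:properties}(iii), using the filtration by the subspaces $\mathit{CF}^*(E,H)_{\leq j}$ together with the fact that the sequences $(a_j^\pm)$ tend to infinity. Concretely, given two admissible sequences $(a_j^-)$ and $(a_j^+)$ with $a_j^- \ge a_j^+$ for all (but finitely many) $j$, I would first observe that the continuation map $C$ is filtration-preserving, so it induces maps $C_{\le j}\colon \mathit{HF}^*(E,a_1^+,a_2^+,\dots)_{\le j} \to \mathit{HF}^*(E,a_1^-,a_2^-,\dots)_{\le j}$ for every $j$, compatible with the structure maps of the two filtered systems. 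Since every one-periodic orbit of a fixed Hamiltonian $H$ lies in some $p^{-1}(\{|b| < r^-_{j+1}\})$ (the orbits are disjoint from $p^{-1}(W)$ and there are finitely many of them below any given radius, but in any case each individual generator lies at a bounded radius), the chain complex $\mathit{CF}^*(E,H)$ is the increasing union $\bigcup_j \mathit{CF}^*(E,H)_{\le j}$, and hence $\mathit{HF}^*(E,H) = \varinjlim_j \mathit{HF}^*(E,H)_{\le j}$ (direct limits are exact). So it suffices to show that $C_{\le j}$ is an isomorphism for each fixed $j$.

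The key step is then an interpolation argument. Fix $j$. I want to find an intermediate admissible sequence $(a_i')$ with $a_i' = a_i^-$ for $i \le j$ and $a_i' = a_i^+$ for all large $i$, chosen so that $(a_i')$ sits between $(a_i^+)$ and $(a_i^-)$ in the sense that $a_i^- \ge a_i' \ge a_i^+$ for all $i$ outside a finite set; this is possible because we only need to interpolate the finitely many entries $a_{j+1}^\pm, \dots$ down from $a_i^-$ to $a_i^+$ and the inequality $a_i^- \ge a_i^+$ holds there. By functoriality of continuation maps under composition (valid up to homotopy, and respecting filtrations by Discussion \ref{th:properties}(i)), the map $C_{\le j}$ factors through $\mathit{HF}^*(E,a_1',a_2',\dots)_{\le j}$ as a composition of two continuation maps. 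The first of these, from $(a_i^+)$-data to $(a_i')$-data, is an isomorphism on the $\le j$ level: one can further correlate choices so that, as in Discussion \ref{th:properties}(iii), $a_i' = a_i^+$ for $i \le j$ makes it an isomorphism — wait, here I instead arrange $a_i'$ to agree with $a_i^-$ for $i \le j$; the point is simply that on $\mathit{CF}^*_{\le j}$ only the values $a_1,\dots,a_j$ (and the barrier data up to $W_{j+1}$) matter for the orbits and the $C^0$-estimate from Lemma \ref{th:c0-estimate}, so by Discussion \ref{th:properties}(iii) applied with $j^* = j$ one of the two factors is an isomorphism, and by the same reasoning with the roles adjusted the other is too. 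Composing, $C_{\le j}$ is an isomorphism.

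Passing to the direct limit over $j$ then yields that $C\colon \mathit{SH}^*(E) \to \mathit{SH}^*(E)$ (between the two choices of auxiliary data) is an isomorphism, which is the assertion of Lemma \ref{th:define-sh}. The main obstacle I anticipate is purely bookkeeping: making the interpolation precise so that \emph{every} comparison map in sight is simultaneously filtration-preserving, and checking that the composition property of continuation maps descends to each filtered piece $\mathit{HF}^*_{\le j}$ with the homotopies also respecting the filtration — all of which is implicitly promised by Discussion \ref{th:properties}(i)--(iv), so the proof in the paper is presumably just a short paragraph invoking those facts and the exactness of direct limits. I would write it that way: state the filtration, invoke Lemma \ref{th:c0-estimate} and Discussion \ref{th:properties}, reduce to fixed $j$ via $\varinjlim$, and finish with the interpolation plus Discussion \ref{th:properties}(iii).
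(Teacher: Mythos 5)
Your outline assembles the right ingredients—the filtration $\mathit{CF}^*(E,H)_{\leq j}$, the identification $\mathit{HF}^*(E,H) = \varinjlim_j \mathit{HF}^*(E,H)_{\leq j}$, the interpolating sequence that agrees with $(a_i^-)$ for small $i$ and with $(a_i^+)$ for large $i$, and Discussion \ref{th:properties}(iii) and (iv)—but the reduction to showing ``$C_{\leq j}$ is an isomorphism for each fixed $j$'' does not work, and that is where the proof breaks. You factor $C_{\leq j}$ as
\begin{equation*}
\mathit{HF}^*(E,a_1^+,\dots)_{\leq j} \longrightarrow \mathit{HF}^*(E,a_1',\dots)_{\leq j} \longrightarrow \mathit{HF}^*(E,a_1^-,\dots)_{\leq j}
\end{equation*}
with $a_i' = a_i^-$ for $i \leq j$. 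The second arrow is indeed an isomorphism by Discussion \ref{th:properties}(iii). But for the first arrow, $a_i^+ \neq a_i' = a_i^-$ for $i \leq j$ (that discrepancy at low levels is precisely the content of the lemma), so Discussion \ref{th:properties}(iii) is inapplicable, and ``the same reasoning with roles adjusted'' is not available. Nor does Discussion \ref{th:properties}(iv) help: it gives an isomorphism on the \emph{unfiltered} groups when the sequences differ at finitely many places, but says nothing about the filtered pieces. And the map on the filtered piece can genuinely fail to be an isomorphism: changing $a_1$ from below an integer to above it alters the profile of the Hamiltonian inside $\{|p(x)| < r_1^-\}$ (new winding classes of one-periodic orbits appear), so the chain complexes $\mathit{CF}^*_{\leq 0}$ for the two ends are different, and there is no general isomorphism statement.

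The paper's proof sidesteps exactly this by never asserting anything about $C_{\leq j^*}$ directly. It builds a $2\times 3$ commutative diagram \eqref{eq:filtered-continuation} in which one arrow is an isomorphism on the \emph{top} row (full groups, first arrow, via (iv)) and one arrow is an isomorphism on the \emph{bottom} row (filtered pieces, second arrow, via (iii)), and then runs a diagram chase for surjectivity and injectivity separately, using that every class and every relation lives at some finite filtration level $j^*$. That alternation between filtered and unfiltered groups is the step your argument is missing; without it, you cannot conclude an isomorphism at any fixed $j$, only in the colimit.
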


\begin{proof}
Given $(a_j^\pm)$ as in \eqref{eq:ak-inequality}, and some $j^*$, one can find another choice $(a_j^0)$ such that 
\begin{equation}
\left\{
\begin{aligned}
& a_j^+ \leq a_j^0 \leq a_j^- \quad \text{ for all $j$,} \\
& a_j^0 = a_j^- \quad \text{ for $j \leq j^*$}, \\
& a_j^0 = a_j^+ \quad \text{ for all but finitely many $j$.}
\end{aligned}
\right.
\end{equation}
Consider the following commutative diagram, where all the vertical arrows are continuation maps: 
\begin{equation} \label{eq:filtered-continuation}
\xymatrix{
\mathit{HF}^*(E,a_1^+,a_2^+,\dots)_{\leq j^*} \ar[r] \ar[d]
& \mathit{HF}^*(E,a_1^+,a_2^+,\dots) \ar[d]^{\iso} \\ 
\mathit{HF}^*(E,a_1^0,a_2^0,\dots)_{\leq j^*} \ar[d]_-{\iso} \ar[r]
& \mathit{HF}^*(E,a_1^0,a_2^0,\dots) \ar[d] \\ 
\mathit{HF}^*(E,a_1^-,a_2^-,\dots)_{\leq j^*} \ar[r] &
\mathit{HF}^*(E,a_1^-,a_2^-,\dots) 
}
\end{equation}
The observation that two of these arrows are isomorphisms comes from Discussion \ref{th:properties}(iii) and (iv). Any class in $\mathit{HF}^*(E,a_1^-,a_2^-,\dots)$ comes from $\mathit{HF}^*(E,a_1^-,a_2^-,\dots)_{\leq j^*}$ for some $j^*$, and then diagram-chasing in \eqref{eq:filtered-continuation} exhibits a preimage in $\mathit{HF}^*(E,a_1^+,a_2^+,\dots)$. 

Similarly, pick an element of $\mathit{HF}^*(E,a_1^+,a_2^+,\dots)$ which maps to zero in $\mathit{HF}^*(E,a_1^-,a_2^-,\dots)$. Then, one can find some $j^*$ such that our element comes from $\mathit{HF}^*(E,a_1^+,a_2^+,\dots)_{\leq j^*}$ and already maps to zero in $\mathit{HF}^*(E,a_1^-,a_2^-,\dots)_{\leq j^*}$. Diagram-chasing shows that the original element was zero.
\end{proof}

\subsection{Operations on symplectic cohomology}
Symplectic cohomology carries a BV operator \eqref{eq:define-bv}. Along the same lines, it has an identity element \eqref{eq:unit-construction}. Given a proper pseudo-cycle $K$ with $\bK$-coefficents, one can define cocycles \eqref{eq:pss-q}, secondary cochains \eqref{eq:betaq}, and a chain map \eqref{eq:o-map}. In particular, Discussion \ref{th:hf-plus} carries over immediately to our context, leading to reduced symplectic cohomology \eqref{eq:plus-sequence} and its operator $\Delta_{\mathit{red}}$ from \eqref{eq:delta-minus}.

We now turn to operations associated to other surfaces (in parallel with Section \ref{sec:operations-on-floer-cohomology}). In the case of symplectic cohomology, the ends of our surface $S$ should come with choices of sequences $(a^-_j)$ and $(a^+_{i,j})$, respectively, such that
\begin{equation}
a^-_j \geq \sum_{i} a^+_{i,j} \quad \text{for all $j$.}
\end{equation}
One also supposes that $S$ comes with one-forms $\gamma_j$, each of which satisfies $d\gamma_j \leq 0$ as well as an analogue of \eqref{eq:beta-on-the-ends}:
\begin{equation} \label{eq:beta-on-the-ends-2}
\left\{
\begin{aligned}
& (\epsilon^-)^* \gamma_j|W_j = a^-_j \mathit{dt}, \\
& (\epsilon^+_i)^* \gamma_j|W_j = a^+_{i,j} \, \mathit{dt}.
\end{aligned}
\right.
\end{equation}
Given functions $H^-$ and $H^+_i$ associated to the ends, one chooses $K^S$ (in parallel with Setup \ref{th:worldsheet-2}) such that for each tangent vector $\xi$ on $S$, $K^S(\xi)$ belongs to the class of functions \eqref{eq:ak-rotation} with $a_j = \gamma_j(\xi)$. One then considers solutions of the same equation \eqref{eq:cauchy-riemann} as before (taking values in $E$), and gets a generalization of Lemma \ref{th:c0-estimate}:

\begin{lemma} \label{th:c0-estimate-2}
If, for some $j$, all $x^+_i$ satisfy $|p(x^+_i)| < r_j^-$, then $|p(u)| < r_j^+$ everywhere.
\end{lemma}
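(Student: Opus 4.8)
The statement is the exact analogue of Lemma \ref{th:c0-estimate} for worldsheets with several inputs, so the plan is to rerun that proof using the barrier Lemmas \ref{th:barrier-1}--\ref{th:barrier-3} in place of their cylinder versions. Fix $j$, and assume that each positive asymptote satisfies $|p(x^+_i)| < r_j^-$. Write $v = p(u) : S \to \bC$; by the choice of $K^S$ together with \eqref{eq:beta-on-the-ends-2}, the projection $v$ satisfies \eqref{eq:partial-dbar} on $v^{-1}(W_j)$ with the one-form $\gamma = \gamma_j$, which obeys $d\gamma_j \le 0$.

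\textbf{Step 1: locating the negative end below the barrier.} First I would show that the negative asymptote also stays below $W_j$, i.e. $|p(x^-)| < r_j^-$. Suppose instead $|p(x^-)| > r_j^+$ (the one-periodic orbits are disjoint from $p^{-1}(W_j)$, so these are the only options). Then in the notation of \eqref{eq:caricature-action}, condition \eqref{eq:winding-number-inequality} gives $B(\zeta^-) = a_j^- - w^- < 0$ — because the loop $p(x^-)$ winds more than $a_j^-$ times — whereas for each positive end $B(\zeta^+_i) = 0$, since $|p(x^+_i)| < r_j^-$. Thus $B(\zeta^-) < 0 = \sum_i B(\zeta^+_i)$, contradicting the inequality \eqref{eq:b-energy} of Lemma \ref{th:barrier-3} (whose compactness hypothesis $v^{-1}(W_j)$ compact holds because near each end $v$ stays on one side of $W_j$). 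Hence $|p(x^-)| < r_j^-$.

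\textbf{Step 2: the interior bound.} Now all asymptotes of $v$ lie in $\{|b| < r_j^-\}$, so $v$ maps a neighbourhood of each puncture into that region. Choose a large compact subsurface $S' \subset S$ with boundary such that $|v(z)| < r_j^+$ on $\overline{S \setminus S'}$; one can pick $S'$ so that moreover some $r \in (r_j^-,r_j^+)$ is a regular value of $|v|$ on $S'$. Apply Lemma \ref{th:barrier-2} to each connected component of $S'$ (with its induced one-form $\gamma_j$, still satisfying $d\gamma_j \le 0$): since $|v| < r_j^+$ on $\partial S'$, we get $|v| < r_j^+$ throughout $S'$, and therefore $|p(u)| = |v| < r_j^+$ everywhere on $S$.

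\textbf{Main obstacle.} The routine part is Step 2; the only genuinely new ingredient relative to Lemma \ref{th:c0-estimate} is making sure the multi-input winding/area bookkeeping in Step 1 is applied correctly — in particular that the compactness hypothesis of Lemma \ref{th:barrier-3} is met and that the sign of $B(\zeta^-)$ is forced by \eqref{eq:winding-number-inequality}. Since Lemma \ref{th:barrier-3} was already stated in exactly the generality of Setup \ref{th:worldsheet} (arbitrary real $\alpha^\pm_i$, several positive ends), there is nothing deeper to do here; I would simply record that the cylinder argument of Lemma \ref{th:c0-estimate} goes through verbatim with Lemma \ref{th:barrier-3} replacing its one-input predecessor.
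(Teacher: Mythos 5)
Your argument is correct and is exactly what the paper leaves implicit: it carries the two-step proof of Lemma \ref{th:c0-estimate} over to the multi-input case, using Lemma \ref{th:barrier-3} (already stated for several positive ends) to locate the negative asymptote below the barrier, and Lemma \ref{th:barrier-2} on a large compact subsurface to propagate the bound to the interior. One small simplification: the clause in Step 2 about choosing $S'$ so that some $r\in(r_j^-,r_j^+)$ is a regular value of $|v|$ is unnecessary — Lemma \ref{th:barrier-2} selects such an $r$ internally, so you only need $|v|<r_j^+$ on $\partial S'$.
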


Given that, is is straightforward to equip symplectic cohomology with versions of the pair-of-pants product \eqref{eq:define-product} and bracket \eqref{eq:define-the-1-bracket}. Slightly more generally, suppose that we modify the last part of \eqref{eq:three-ends} by making one of the ends rotate in dependence of the modular parameter $\tau$:
\begin{equation} \label{eq:specific-end}
\epsilon_{\tau,1}^+(s,t) = (0,-\tau) - \exp(-2\pi(s+it) + 2\pi i(c+1)\tau)
\end{equation}
for some $c \in\bZ$. This gives rise to a family of bracket operations
\begin{equation} \label{eq:c-bracket}
[\cdot,\cdot]^c: \mathit{CF}^*(E,H_1^+) \otimes \mathit{CF}^*(E,H_2^+) \longrightarrow \mathit{CF}^{*-1}(E,H^-).
\end{equation}
The special case $[\cdot,\cdot] = [\cdot,\cdot]^0$ yields the bracket which, with the pair-of-pants product, forms the Gerstenhaber algebra structure on symplectic cohomology. We will use the same notational convention (of omitting the superscript for $c = 0$) several times from now on, without specifically pointing that out.
Since the only difference is the choice of framing at one end, it is not hard to produce chain homotopies which relate the brackets \eqref{eq:c-bracket} for different choices of $c$, in parallel with \eqref{eq:epsilon-homotopy}:
\begin{equation} \label{eq:all-brackets}
\begin{aligned}
& \eta^c: \mathit{CF}^*(E,H_1^+) \otimes \mathit{CF}^*(E,H_2^+) \longrightarrow \mathit{CF}^{*-2}(E,H^-), \\
& d\eta^c(x_1,x_2) - \eta^c(d x_1,x_2) - (-1)^{|x_1|} \eta^c(x_1,d x_2) \\
& \qquad + [x_1,x_2]^c - [x_1,x_2]^{c-1} + (\Delta x_1) \bullet x_2 = 0.
\end{aligned}
\end{equation}
For the proof of Lemma \ref{th:r-is-b}, the specific choice of framing at $\zeta_1^+$ is irrelevant; the same argument produces maps for any $c$,
\begin{equation} \label{eq:rho-for-a-cycle-2}
\begin{aligned}
& \rho^c_K: \mathit{CF}^*(E,H^+_2) \longrightarrow \mathit{CF}^{*+\mathrm{codim}(K)-2}(E,H^-), \\
& d \rho^c_K(x) - (-1)^{\mathrm{codim}(K)} \rho_K^c(d x) \\ & \qquad \qquad + (-1)^{\mathrm{codim}(K)}[b_K,x]^c - r_K(x) = 0.
\end{aligned}
\end{equation}
Note that the following is a chain map (of degree $\mathrm{codim}(K)-2$):
\begin{equation} \label{eq:m-map}
x \longmapsto \eta^c(b_K,x) + (-1)^{\mathrm{codim}(K)} (-\rho_K^c(x) + \rho^{c-1}_K(x))  - \beta_K \bullet x.
\end{equation}
Imitating the argument from \eqref{eq:secondary-epsilon} in our framework (which means that, instead of the operation of inserting disc configurations into each other, one has to use gluing in a Floer-theoretic sense), one sees that \eqref{eq:m-map} is nullhomotopic for $c = 0$; and it is straightforward to extend that to all $c \in \bZ$. We omit the details.

\subsection{Connections on symplectic cohomology}
Let's specialize to $K = q^{-1}\Omega$, and use the notation \eqref{eq:k-notation}, \eqref{eq:kappa-notation}. At this point, we impose Assumption \ref{th:as-1} and choose a bounding cochain, which means some $\theta \in \mathit{CF}^1(E,H_1^+)$ such that $d\theta = k$. As in \eqref{eq:t-new-class} and \eqref{eq:a-new-class}, this gives rise to classes
\begin{align}
& t = [(-q^{-1}\Omega, \theta)] \in \mathit{SH}^1(E)_{\mathit{red}}, \label{eq:infinite-t} \\
& a = [\Delta \theta - \kappa] = \Delta_{\mathit{red}} t \in \mathit{SH}^0(E). \label{eq:infinite-a}
\end{align}
One has a chain homotopy as in \eqref{eq:1st-homotopy}, defined in exactly the same way. As a consequence, one can introduce connections
\begin{equation} \label{eq:alt-nabla-m}
\begin{aligned}
& \nabla^c: \mathit{SH}^*(E) \longrightarrow \mathit{SH}^*(E), \\
& [x] \longmapsto [C(\partial_q x) - h^+(x) + \rho^c_{q^{-1}\Omega}(x) - [\theta,x]^c].
\end{aligned}
\end{equation}
One can show (but we omit the argument, which is fairly tedious) that these connections commute with continuation maps, hence are independent of the auxiliary data used to define symplectic cohomology. Moreover, using the fact that \eqref{eq:m-map} is nullhomotopic, it is straightforward to show the following result, which is modelled on our previous Lemma \ref{th:01-connections}:

\begin{lemma} \label{th:01-connections-2}
$\nabla^c x = \nabla x + c a \bullet x$ for all $c \in \bZ$.
\end{lemma}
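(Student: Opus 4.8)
\textbf{Proof strategy for Lemma \ref{th:01-connections-2}.}

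The plan is to mimic the proof of Lemma \ref{th:01-connections}, where the analogous statement $\nabla - \nabla^{-1} \htp a \,\bullet$ was deduced from two inputs: the secondary nullhomotopy \eqref{eq:secondary-epsilon} relating $\rho$, $\rho^{-1}$ and $\kappa\,\bullet$, and the chain homotopy \eqref{eq:epsilon-homotopy} relating the two brackets. In the present framework the substitutes for these are already in place: \eqref{eq:all-brackets} plays the role of \eqref{eq:epsilon-homotopy} (it gives $[x_2,x_1]^c - [x_2,x_1]^{c-1} \htp -(\Delta x_2)\bullet x_1$ up to the differential), and the nullhomotopy of \eqref{eq:m-map} plays the role of \eqref{eq:secondary-epsilon}. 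So first I would write out $\nabla^c - \nabla^{c-1}$ using the defining formula \eqref{eq:alt-nabla-m}: only the last two terms depend on $c$, so
\[
\nabla^c x - \nabla^{c-1} x = \rho^c_{q^{-1}\Omega}(x) - \rho^{c-1}_{q^{-1}\Omega}(x) - [\theta,x]^c + [\theta,x]^{c-1}.
\]
Using \eqref{eq:all-brackets} with $x_2 = \theta$ (noting $d\theta = k$, so the $d_2^+\theta$ term produces $[k,x]^c - [k,x]^{c-1}$-type contributions that will be absorbed) one replaces $-[\theta,x]^c + [\theta,x]^{c-1}$ by $(\Delta\theta)\bullet x$ up to coboundary and up to terms involving $k$. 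Then the nullhomotopy of \eqref{eq:m-map} applied with $b_K = k$ lets one trade $\rho^{c-1}_{q^{-1}\Omega} - \rho^c_{q^{-1}\Omega}$ (composed appropriately, with the sign $(-1)^{\mathrm{codim}}$; here $\mathrm{codim}(q^{-1}\Omega) = 2$, so the sign is $+1$) against $\eta^c(k,x) - \kappa\bullet x$ up to chain homotopy. Combining, the $\eta^c(k,\cdot)$ contributions and the $[k,\cdot]$-type contributions cancel against the pieces coming from $d\theta = k$ in the first step, leaving $(\Delta\theta - \kappa)\bullet x = a\bullet x$ on cohomology. That establishes $\nabla^c - \nabla^{c-1} = a\,\bullet$ on $\mathit{SH}^*(E)$, and an induction on $c$ (in both directions from $c=0$) gives $\nabla^c = \nabla + c\,a\,\bullet$ as claimed; for the base case $c=0$ there is nothing to prove since $\nabla^0 = \nabla$ by definition.

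The routine but slightly delicate bookkeeping is keeping track of the three auxiliary terms ($\eta^c(k,\cdot)$, the $\rho$-difference, and $\kappa\,\bullet$) and checking that they assemble into the stated nullhomotopy with the correct Koszul signs — this is exactly the content of "Copying the argument from \eqref{eq:secondary-epsilon} in our framework", which the text already asserts holds for all $c$. The main conceptual obstacle, and the place where one must be careful, is the same one flagged in the proof of Lemma \ref{th:0-is-0}: the framings at the ends must be chosen so that the relevant marked point stays on the correct straight horizontal half-line, which is what pins down that it is precisely the bracket $[\cdot,\cdot]^c$ (and not some other framing variant) that appears in \eqref{eq:rho-for-a-cycle-2} and hence in \eqref{eq:m-map}. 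Granting that the framing conventions are coherent across \eqref{eq:all-brackets}, \eqref{eq:rho-for-a-cycle-2} and the nullhomotopy of \eqref{eq:m-map} — which is the standard pattern by now — the proof is a formal manipulation of cochain-level identities exactly parallel to Section \ref{sec:bv}, and I would present it in that compressed style, referring back to the analogous steps rather than reproducing all the moduli-space descriptions.
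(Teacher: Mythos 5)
Your proposal matches the paper's intended argument essentially exactly: the paper proves the lemma by specializing the nullhomotopy of \eqref{eq:m-map} to $K = q^{-1}\Omega$, invoking \eqref{eq:all-brackets} with $x_2=\theta$ (using $d\theta=k$), and cancelling the resulting $\eta^c(k,\cdot)$ terms to leave $(\Delta\theta - \kappa)\bullet x = a\bullet x$, followed by induction on $c$ — which is precisely what you do. The one small imprecision is your phrase that the $d_2^+\theta$ term produces ``$[k,x]^c - [k,x]^{c-1}$-type contributions''; what that term in \eqref{eq:all-brackets} literally produces is $\eta^c(k,x)$, which is the piece cancelled by the \eqref{eq:m-map} nullhomotopy, but this does not affect the correctness of the argument.
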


Our next task is to show that $\nabla$ is compatible with the Gerstenhaber algebra structure, meaning that it satisfies \eqref{eq:nabla-derivation} and \eqref{eq:nabla-derivation-2}. The proof is a translation (Floerization) of Proposition \ref{th:nabla-gerstenhaber}. The main difficulty in carrying out such a translation is the appearance of continuation map terms in \eqref{eq:alt-nabla-m}. To explain how one addresses that, we will consider just the first property \eqref{eq:nabla-derivation}, for which the technical form of the statement is as follows:

\begin{proposition} \label{th:ninth-circle}
Take Floer complexes $\mathit{CF}^*(E,H_i^+)$ ($i = 1,2,3$) underlying symplectic cohomology, with their sequences $(a_{i,j}^+)$. Suppose that the bounding cochain $\theta$ lies in $\mathit{CF}^1(E,H_1^+)$. Define $\mathit{CF}^*(E,H^-)$ with underlying sequence $(a_j^- = 3a^+_{1,j} + a^+_{2,j} + a^+_{3,j})$. Then, the chain map
\begin{equation} \label{eq:p1}
\begin{aligned}
& \mathit{CF}^*(E,H_2^+) \otimes \mathit{CF}^*(E,H_3^+) \longrightarrow \mathit{CF}^*(E,H^-), \\
& (x_2,x_3) \longmapsto
C \partial_q(Cx_2 \bullet Cx_3) - h^+(Cx_2 \bullet Cx_3) \\ & \qquad \qquad + \rho_{q^{-1}\Omega}(Cx_2 \bullet Cx_3) - [\theta, Cx_2 \bullet Cx_3]
\end{aligned}
\end{equation}
is homotopic to the sum of the following two maps:
\begin{align}
\label{eq:p2}
& C\big( (\partial_q (Cx_2) + h^-(x_2) + \rho_{q^{-1}\Omega}(x_2) - [\theta,x_2]) \bullet Cx_3 \big), \\
\label{eq:p3}
& C\big(Cx_2 \bullet (\partial_q (Cx_3) + h^-(x_3) + \rho_{q^{-1}\Omega}(x_3) - [\theta,x_3])\big).
\end{align}
\end{proposition}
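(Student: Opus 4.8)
The strategy is to follow the proof of Proposition \ref{th:nabla-gerstenhaber} ("Floerized"), building an explicit homotopy out of a single parametrized moduli space of maps defined on a glued surface, and then to bookkeep the degenerations. The underlying surface is obtained by taking the pair-of-pants $S^{\mathit{pants}}$ and gluing a long cylindrical neck into each of its three ends: two of them (lengths $\rho_1,\rho_2 \gg 0$) carry the continuation data interpolating to $(H_i^+,J_i^+)$ for $i=1,2$, while into the output end we glue the cylinder of length $\rho_0$ used for the continuation map $C: \mathit{CF}^*(E,H^{\mathit{mid}}) \to \mathit{CF}^*(E,H^-)$ with the intermediate angle sequence $(a^+_{1,j}+a^+_{2,j}+3a^+_{3,j})$. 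On top of this we carry a marked point $\zeta$ which, as in Setup \ref{th:setup-extra} and Figure \ref{fig:hat}, is allowed to roam over the whole surface, subject to the constraint $u(\zeta) \in q^{-1}\Omega$ (using the restriction to $E$, since by the $C^0$-estimate Lemma \ref{th:c0-estimate-2} all solutions of the relevant degree $0$ equations stay in $E$), together with a second marked point carrying the $\theta$-insertion data as in the definition of the bracket $[\theta,-]$. The parameter space is therefore (roughly) a product of three half-infinite gluing rays with the surface-position parameter for $\zeta$ and the $S^1$-parameter for the $\theta$-end framing; one then argues that counting isolated points in the one-dimensional strata yields the desired nullhomotopy between \eqref{eq:p1} and \eqref{eq:p2}+\eqref{eq:p3}.

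The key steps, in order, are: (i) set up the glued family and its auxiliary data, checking the $C^0$-bound (Lemma \ref{th:c0-estimate-2}) and transversality (the analogue of Lemma \ref{th:transversality-3}) so that the compactified parameter space is a manifold with corners of the expected dimension, and that sphere bubbling and multiple covers occur only in codimension $\geq 2$; (ii) enumerate the codimension-one boundary faces. These come in two flavours: first, the $\rho_i \to \infty$ limits, where breaking at the glued necks separates the pair-of-pants product from one of the continuation/differentiation ingredients, and second, the faces where $\zeta$ runs to an end of the glued surface or where the $\zeta$-marked point collides with / crosses the $\theta$-marked point or the circle inherited from $\{0\}\times S^1$. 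As in Proposition \ref{th:nabla-gerstenhaber}, the contributions where $\zeta$ hits an end reproduce exactly the $\rho_{q^{-1}\Omega}$ and $\partial_q d$ terms (via the $\partial_q d - d\partial_q$ interpretation of \eqref{eq:differentiate-d}, localised to one of the three necks), while the "pull $\zeta$ all the way out" face produces the term where a copy of $k = b_{q^{-1}\Omega|E}$ is glued into the third input of the $\gamma$-type three-point family, which by \eqref{eq:gamma-operation} (its symplectic-cohomology incarnation) together with Assumption \ref{th:as-1} and $d\theta = k$ turns into $[\theta,-]$-derivation terms exactly as in \eqref{eq:gamma-produces-the-homotopy}; (iii) assemble these boundary contributions and use the $h^+ \htp -h^-$ type identity from Discussion \ref{th:hplus-hminus} (the nullhomotopy \eqref{eq:h-plus-and-h-minus}) to convert the $h^-$ terms appearing in \eqref{eq:p2}, \eqref{eq:p3} into the $h^+$ term appearing in \eqref{eq:p1}, absorbing the discrepancy into the homotopy; (iv) check signs against the orientation conventions \eqref{eq:composition-law}, \eqref{eq:boundary-axiom}.

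The main obstacle I expect is step (iii) together with the careful matching of which continuation-map terms appear where: in the axiomatic setting of Section \ref{subsec:differentiation-axiom} there are no continuation maps, so the combinatorics of the boundary faces is cleaner; here the presence of three separate gluing necks (and the freedom to let $\zeta$ sit in any of the resulting "rooms") produces several extra boundary faces whose contributions must be shown to cancel in pairs or to be absorbed by the $h^{\pm}$-homotopy, and one must be scrupulous that the intermediate angle sequence $(a^+_{1,j}+a^+_{2,j}+3a^+_{3,j})$ is large enough that all the relevant composed continuation maps are defined and that the degree bookkeeping $\deg(u)=0$ forces everything into $E$. The accompanying sign verification, while routine in principle, is where errors are easiest to make; I would defer it to the same level of detail as the analogous signs in the proof of Proposition \ref{th:interpret-bs}, i.e. indicate the orientation conventions and assert the outcome rather than writing out every Koszul factor.
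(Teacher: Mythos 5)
Your overall strategy is sound and lands in roughly the right place, but it differs from the paper's proof in a way worth recording, and there is one genuine simplification you are missing.

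The paper does not glue. The geometric model behind the first two lines of \eqref{eq:to-show} is a \emph{disconnected} Riemann surface $S = S^- \cup S^0 \cup S^+_1 \cup S^+_2$, with $S^0$ the pair-of-pants and the other three components genuine cylinders carrying the continuation-map data; solutions on different components are coupled only by requiring their asymptotics to match, and there are no gluing parameters at all. The extra marked point $\zeta$ may sit anywhere on $S^0$, or in the region $s \geq 0$ of $S^-$, or in the region $s \leq 0$ of $S^+_i$. Because of exactly this asymmetry in the allowed positions, the resulting count already produces $h^+$ on the output and $h^-$ on the inputs simultaneously: the half-cylinder constraint on $\zeta$ is precisely the geometric content distinguishing $h^+$ from $h^-$. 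So the paper never needs to invoke the nullhomotopy \eqref{eq:h-plus-and-h-minus}; that identity comes for free from the disconnected picture. By contrast, in your glued-surface formulation you introduce finite gluing parameters $\rho_0,\rho_1,\rho_2$, which creates extra boundary strata (the $\rho_i$ reaching a chosen minimum, broken Floer trajectories appearing at the necks, etc.) and forces you to reconcile $h^+$ with $h^-$ by a separate homotopy in step (iii). This can be made to work, but it is more elaborate and one has to check that those additional strata either vanish or cancel in pairs — which the paper's approach sidesteps entirely. The rest of your outline (pulling out $\zeta$ to produce the $\gamma$-type operation with an extra input end, feeding $k = d\theta$ into that end, then invoking the symplectic-cohomology version of \eqref{eq:gamma-produces-the-homotopy}) matches the paper's construction of \eqref{eq:new-gamma} and the final step of its argument.

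One small bookkeeping slip: in your description of the output neck you identify the intermediate Floer complex with the angle sequence $(a^+_{1,j} + a^+_{2,j} + 3a^+_{3,j})$. That is the sequence for $H^-$; the intermediate complex $\mathit{CF}^*(E,H^<)$ into which $Cx_2 \bullet Cx_1$ lands has sequence $(a^+_{1,j} + a^+_{2,j} + 2a^+_{3,j})$, with the third copy of $a^+_{3,j}$ only appearing after the final continuation/bracket step. Similarly the individual $Cx_i$ live in complexes with sequences $(a^+_{i,j} + a^+_{3,j})$. Getting this right is necessary for the composed operations even to be defined, so it deserves care.
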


In the expressions above, the same notation is used for related but different maps, which calls for some explanation. In \eqref{eq:p1}, we consider $Cx_i \in \mathit{CF}^*(E,H^>_i)$, which are Floer cochain groups associated to sequences $(a^>_{i,j} = a^+_{1,j} + a^+_{i,j})$. Their pair-of-pants product is $C x_2 \bullet C x_3 \in \mathit{CF}^*(E,H^<)$, where the Floer cochain group is associated to the sequence $(a^<_j = 2a^+_{1,j} + a^+_{2,j} + a^+_{3,j})$. One then applies a form of the connection \eqref{eq:alt-nabla-m} to get from that group to $\mathit{CF}^*(E,H^-)$. In \eqref{eq:p2}, the connection which appears goes from $\mathit{CF}^*(E,H^+_2)$ to $\mathit{CF}^*(E,H^>_2)$, and its formula has been tweaked as in Discussion \ref{th:hplus-hminus}. The pair-of-pants product is the same as in \eqref{eq:p1}, and the continuation map which is applied last goes from $\mathit{CF}^*(E,H^<)$ to $\mathit{CF}^*(E,H^-)$. The situation for \eqref{eq:p3} is analogous.  The reason for ``padding'' \eqref{eq:p1}--\eqref{eq:p3} with extra copies of continuation maps is to ensure that only one kind of pair-of-pants product appears in the terms involving differentiation.
\begin{figure}
\begin{centering}
\begin{picture}(0,0)%
\includegraphics{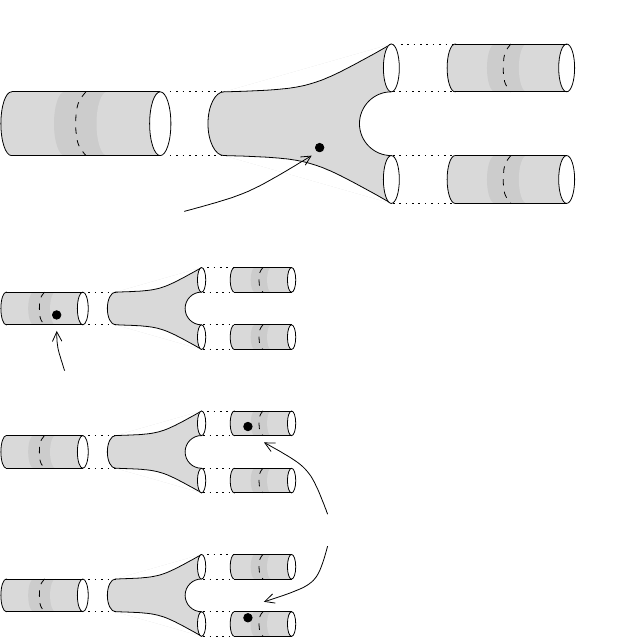}%
\end{picture}%
\setlength{\unitlength}{3355sp}%
\begingroup\makeatletter\ifx\SetFigFont\undefined%
\gdef\SetFigFont#1#2#3#4#5{%
  \reset@font\fontsize{#1}{#2pt}%
  \fontfamily{#3}\fontseries{#4}\fontshape{#5}%
  \selectfont}%
\fi\endgroup%
\begin{picture}(6030,5982)(-4132,-8122)
\put(376,-2311){\makebox(0,0)[lb]{\smash{{\SetFigFont{10}{12.0}{\rmdefault}{\mddefault}{\updefault}{\color[rgb]{0,0,0}continuation map}%
}}}}
\put(-1799,-2386){\makebox(0,0)[lb]{\smash{{\SetFigFont{10}{12.0}{\rmdefault}{\mddefault}{\updefault}{\color[rgb]{0,0,0}pair-of-pants product}%
}}}}
\put(-3524,-2761){\makebox(0,0)[lb]{\smash{{\SetFigFont{10}{12.0}{\rmdefault}{\mddefault}{\updefault}{\color[rgb]{0,0,0}continuation map}%
}}}}
\put(376,-3361){\makebox(0,0)[lb]{\smash{{\SetFigFont{10}{12.0}{\rmdefault}{\mddefault}{\updefault}{\color[rgb]{0,0,0}continuation map}%
}}}}
\put(-3749,-5761){\makebox(0,0)[lb]{\smash{{\SetFigFont{10}{12.0}{\rmdefault}{\mddefault}{\updefault}{\color[rgb]{0,0,0}\dots or here, but to the right of (or on) the dashed circle\dots}%
}}}}
\put(-3749,-4286){\makebox(0,0)[lb]{\smash{{\SetFigFont{10}{12.0}{\rmdefault}{\mddefault}{\updefault}{\color[rgb]{0,0,0}The marked point could be anywhere here\dots}%
}}}}
\put(-3749,-7111){\makebox(0,0)[lb]{\smash{{\SetFigFont{10}{12.0}{\rmdefault}{\mddefault}{\updefault}{\color[rgb]{0,0,0}\dots or here, but to the left of (or on) the dashed circle.}%
}}}}
\end{picture}%
\caption{\label{fig:ccc}The first two lines of \eqref{eq:to-show}.}
\end{centering}
\end{figure}

\begin{proof}[Proof of Proposition \ref{th:ninth-circle}]
Our claim amounts to the nullhomotopy
\begin{equation} \label{eq:to-show}
\begin{aligned}
&  C \big( \partial_q(Cx_2 \bullet Cx_3) - \partial_q (Cx_2) \bullet Cx_3 - Cx_2 \bullet \partial_q( Cx_3) \big) \\ & - h^+(C(x_2) \bullet C(x_3)) - C( h^-(x_2) \bullet C(x_3)) - C(C(x_2) \bullet h^-(x_3)) \\ & + \rho_{q^{-1}\Omega}(C(x_2) \bullet C(x_3)) -
C(\rho_{q^{-1}\Omega}(x_2) \bullet C(x_3)) \\ & - C(C(x_2) \bullet \rho_{q^{-1}\Omega}(x_3))  + [\theta, C(x_2) \bullet C(x_3)] - C([\theta,x_2] \bullet C(x_3)) 
\\ & - C(C(x_2) \bullet [\theta,x_3]) \htp 0.
\end{aligned}
\end{equation}
In the same spirit as \eqref{eq:differentiate-d}, the first two lines of \eqref{eq:to-show} can be given the following geometric interpretation. Think of the disconnected Riemann surface $S = S^- \cup S^0 \cup S^+_2 \cup S^+_3$, where $S^0$ is an open pair-of-pants, and the other components are copies of $\bR \times S^1$ (see Figure \ref{fig:ccc}). Let's suppose that these components come equipped with the auxiliary data that define the pair-of-pants product and continuation maps
\begin{equation}
\left\{
\begin{aligned}
& \mathit{CF}^*(E,H^<) \longrightarrow \mathit{CF}^*(E,H^-), \\
& \mathit{CF}^*(E,H^>_2) \otimes \mathit{CF}^*(E,H^>_3) \longrightarrow \mathit{CF}^*(E,H^<), \\
& \mathit{CF}^*(E,H^+_i) \longrightarrow \mathit{CF}^*(E,H^>_i).
\end{aligned}
\right.
\end{equation}
Let $\epsilon^-$ and $\epsilon^+_i$ be the ends of $S^0$. One considers solutions of suitable pair-of-pants and continuation map equations defined on each component, with matching limits:
\begin{equation}
\left\{
\begin{aligned}
& u^0: S \longrightarrow E, \\
& u^-: S^- = \bR \times S^1 \longrightarrow E, \\
& u^+_i: S^+_i = \bR \times S^1 \longrightarrow E, \\
& \textstyle \lim_{s \rightarrow \infty} u^-(s,\cdot) = \lim_{s \rightarrow -\infty}
u^0(\epsilon^-(s,\cdot)), \\
& \textstyle \lim_{s \rightarrow \infty} u^0(\epsilon^+_i(s,\cdot)) = \lim_{s \rightarrow -\infty} u^+_i(s,\cdot).
\end{aligned}
\right.
\end{equation}
Counting such solutions defines the operation $C(Cx_2 \bullet Cx_3)$. Now suppose that our surface comes with an additional marked point $\zeta$, which can be
\begin{equation}
\left\{
\begin{aligned}
& \text{an arbitrary $\zeta \in S$,} \\
& \text{or $\zeta = (s,t) \in S^-$ with $s \geq 0$,} \\
& \text{or $\zeta = (s,t) \in S^+_i$ with $s \leq 0$.}
\end{aligned}
\right.
\end{equation}
We require that the image of that point should go through $q^{-1}\Omega$, and count solutions accordingly. That (depending on which component of $S$ the point lies in) gives rise to four different expressions, which are exactly the four first terms of \eqref{eq:to-show} (see again Figure \ref{fig:ccc}).

One can construct an analogue of \eqref{eq:gamma-operation}, namely a map
\begin{equation} \label{eq:new-gamma}
\begin{aligned}
& \gamma: \mathit{CF}^*(E,H_1^+) \otimes \mathit{CF}^*(E,H_2^+) \otimes \mathit{CF}^*(E,H_3^+) \longrightarrow \mathit{CF}^{*-2}(E,H^-), \\
& d\gamma(x_1,x_2,x_3) - \gamma(dx_1,x_2,x_3) - (-1)^{|x_1|} \gamma(x_1,dx_2,x_3) \\
& - (-1)^{|x_1|+|x_2|} \gamma(x_1,x_2,dx_3) + [x_1,C(x_2) \bullet C(x_3)] \\ & - C([x_1,x_2] \bullet C(x_3)) - (-1)^{|x_2|(|x_1|+1)} C(C(x_2) \bullet [x_1,x_3]) = 0.
\end{aligned}
\end{equation}
Again, the definition involves four summands, which one can think of as being constructed starting from the same surface $S$ and marked point $\zeta$ as before. One removes $\zeta$, obtaining another end, and equips the result with auxiliary data that are asymptotic to those associated to $\mathit{CF}^*(E,H_1^+)$ over that end (see Figure \ref{fig:ccc2} for a schematic description).
\begin{figure}
\begin{centering}
\begin{picture}(0,0)%
\includegraphics{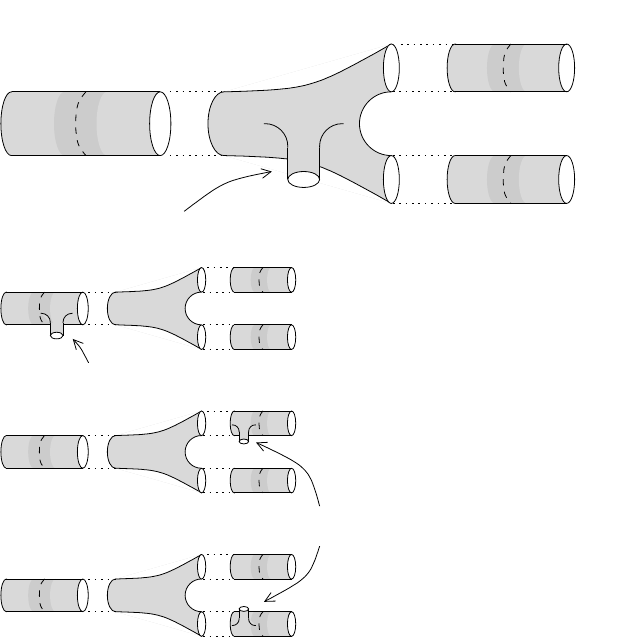}%
\end{picture}%
\setlength{\unitlength}{3355sp}%
\begingroup\makeatletter\ifx\SetFigFont\undefined%
\gdef\SetFigFont#1#2#3#4#5{%
  \reset@font\fontsize{#1}{#2pt}%
  \fontfamily{#3}\fontseries{#4}\fontshape{#5}%
  \selectfont}%
\fi\endgroup%
\begin{picture}(6030,5982)(-4132,-8122)
\put(-3524,-2761){\makebox(0,0)[lb]{\smash{{\SetFigFont{10}{12.0}{\rmdefault}{\mddefault}{\updefault}{\color[rgb]{0,0,0}continuation map}%
}}}}
\put(376,-3361){\makebox(0,0)[lb]{\smash{{\SetFigFont{10}{12.0}{\rmdefault}{\mddefault}{\updefault}{\color[rgb]{0,0,0}continuation map}%
}}}}
\put(376,-2311){\makebox(0,0)[lb]{\smash{{\SetFigFont{10}{12.0}{\rmdefault}{\mddefault}{\updefault}{\color[rgb]{0,0,0}continuation map}%
}}}}
\put(-3749,-4286){\makebox(0,0)[lb]{\smash{{\SetFigFont{10}{12.0}{\rmdefault}{\mddefault}{\updefault}{\color[rgb]{0,0,0}The additional puncture (tubular end) could be anywhere here\dots}%
}}}}
\put(-3749,-5761){\makebox(0,0)[lb]{\smash{{\SetFigFont{10}{12.0}{\rmdefault}{\mddefault}{\updefault}{\color[rgb]{0,0,0}\dots or here, but to the right of (or on) the dashed circle\dots}%
}}}}
\put(-3749,-7111){\makebox(0,0)[lb]{\smash{{\SetFigFont{10}{12.0}{\rmdefault}{\mddefault}{\updefault}{\color[rgb]{0,0,0}\dots or here, but to the left of (or on) the dashed circle.}%
}}}}
\end{picture}%
\caption{\label{fig:ccc2}The construction of \eqref{eq:new-gamma}.}
\end{centering}
\end{figure}

Without changing the statement, one can replace the last three terms in \eqref{eq:to-show} with $-\gamma(k,x_2,x_3)$, following what we've done in \eqref{eq:nullhomotopy-product}. At this point, the required construction is clear: one starts from Figure \ref{fig:ccc} and ``pulls out'' the additional marked point, a process which in the limit yields Figure \ref{fig:ccc2} with a copy of $k$ inserted at the extra end. The associated three-parameter moduli space has other relevant boundary components, namely those where the ``pulling out'' process it being carried out at a point which lies on the boundary of its allowed domain (in Figures \ref{fig:ccc}--\ref{fig:ccc2}, the dashed circles); the contributions of those precisely yield the terms involving $\rho_{q^{-1}\Omega}$ in \eqref{eq:to-show}.
\end{proof}

We omit the proof of the appropriate version of \eqref{eq:nabla-derivation-2}, which is parallel; and this concludes our discussion of Proposition \ref{th:1}. The Floerization of Proposition \ref{th:nabla-bv} is this:

\begin{proposition} \label{th:ninth-circle-2}
Take Floer complexes $\mathit{CF}^*(E,H_i^+)$ ($i = 1,2$) underlying symplectic cohomology, with their sequences $(a_{i,j}^+)$. Suppose that the bounding cochain $\theta$ lies in $\mathit{CF}^1(E,H_1^+)$. Define $\mathit{CF}^*(E,H^-)$ with underlying sequence $(a_j^- = 2a^+_{1,j} + a^+_{2,j})$. Then, the following two chain maps are homotopic:
\begin{equation} \label{eq:b1}
\begin{aligned}
& \mathit{CF}^*(E,H_2^+) \longrightarrow \mathit{CF}^*(E,H^-), \\
& x_2 \longmapsto C\partial_q (\Delta C x_2) - h^+(\Delta C x_2) + \rho^{-1}_{q^{-1}\Omega}(\Delta Cx_2) - [\theta,\Delta Cx_2]^{-1}, \\
& x_2 \longmapsto C \Delta\big(\partial_q C(x_2) + h^-(x_2) + \rho^{-1}_{q^{-1}\Omega}(x_2) - [\theta,x_2]^{-1}\big).
\end{aligned}
\end{equation}
\end{proposition}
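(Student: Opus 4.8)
The plan is to carry out the same ``pulling out the $Z$ point'' cobordism as in the proof of Proposition~\ref{th:nabla-bv}, but in the Floer-theoretic framework of Section~\ref{subsec:bracket-and-connection}, taking care of the continuation-map padding exactly as in the proof of Proposition~\ref{th:ninth-circle}. Concretely, the claim of Proposition~\ref{th:ninth-circle-2} is equivalent to the nullhomotopy of the chain map
\begin{equation} \label{eq:bv-to-show}
\begin{aligned}
& x \longmapsto C\big(\partial_q(\Delta Cx) - \Delta(\partial_q Cx)\big) - h^+(\Delta Cx) - C\Delta(h^-(x)) \\
& \qquad\quad + \rho^{-1}_{q^{-1}\Omega}(\Delta Cx) - C\Delta\big(\rho^{-1}_{q^{-1}\Omega}(x)\big) - [\theta,\Delta Cx]^{-1} + C\Delta\big([\theta,x]^{-1}\big),
\end{aligned}
\end{equation}
and the strategy is to build an explicit homotopy for this, line by line. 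The first line is the Floer-theoretic version of the differentiation axiom applied to $\Delta$, i.e.\ the content underlying \eqref{eq:qabla-delta}: interpreting $\partial_q(\Delta\,\cdot) - \Delta(\partial_q\,\cdot)$ as a moduli space of BV-type solutions carrying an extra marked point whose image goes through $q^{-1}\Omega$, one gets a three-parameter family (with parameter space modelled on $[0,1]\times(S^1)^2$, as in Figure~\ref{fig:nabla-delta}), into which one inserts the padding continuation strips at the two ends as in Figure~\ref{fig:ccc}. The remaining three pairs of terms in \eqref{eq:bv-to-show} are produced by the boundary faces where the ``pulled-out'' marked point reaches the boundary of its allowed region (yielding the $\rho^{-1}_{q^{-1}\Omega}$ and $h^{\pm}$ terms, together with the continuation-map-compatibility contributions) and by the face where the point is fully pulled out, yielding a copy of $k = b_{q^{-1}\Omega}$ glued in at an extra end.

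The key algebraic input is the Floer analogue of \eqref{eq:chi-operation}: the chain homotopy $\varpi(k,x) \simeq [\theta,\Delta x]^{-1} - \Delta[\theta,x]^{-1}$, which follows from Assumption~\ref{th:as-1} ($d\theta = k$) applied to the operation $\varpi$ constructed from a family of framed configurations with parameter space $[0,1]\times(S^1)^2$, exactly as in Section~\ref{subsec:differentiation-axiom-2}. In our situation $\varpi$ is realized geometrically by gluing the surfaces defining $\Delta$ and $[\cdot,\cdot]^{-1}$; the orientation/framing bookkeeping is the same as for $[\cdot,\cdot]^{-1}$ itself (the framing marker at the first input always points away from the second). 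Inserting $x = $ (a Floer cochain) and $k$ in the first slot, and substituting into the expression coming from the ``fully pulled out'' face, cancels the bracket terms against the residual $\varpi(k,\cdot)$ term, leaving precisely the terms that assemble into $\nabla^{-1}$ on each side of \eqref{eq:b1}. The role of the continuation-map padding, as in the proof of Proposition~\ref{th:ninth-circle}, is to ensure that only one flavour of BV operator (and one flavour of bracket) appears in the terms involving $\partial_q$, so that the surfaces involved in different terms can actually be glued; concretely one takes the three complexes $\mathit{CF}^*(E,H_1^+)$, $\mathit{CF}^*(E,H_2^+)$, $\mathit{CF}^*(E,H^-)$ with $(a_j^- = a^+_{1,j} + 2a^+_{2,j})$ as in the statement, and uses the ``tweaked'' connection formula of Discussion~\ref{th:hplus-hminus} on the right-hand side of \eqref{eq:b1}.

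The main obstacle I expect is not conceptual but organizational: keeping track of all the boundary faces of the resulting (three- or four-parameter) parametrized moduli space and checking that the contributions that are \emph{not} among the named terms of \eqref{eq:bv-to-show} genuinely vanish. Two mechanisms handle these: degenerations where a component would be forced to have positive degree over $\bC$ on both sides of a node (impossible for degree reasons, exactly as in the proof of Lemma~\ref{th:0-is-0}), and sphere-bubbling degenerations, which are codimension $\geq 2$ and hence irrelevant; but verifying that the $C^0$-estimate Lemma~\ref{th:c0-estimate-2} applies uniformly over the whole compactified parameter space, so that Gromov compactness holds, requires some care with the one-forms $\gamma_j$ on the glued surfaces. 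Finally, the signs: one must check that the orientation conventions for the ``pulling out'' parameter (first coordinate) and the two framing rotations (the $(S^1)^2$ factor) combine to give exactly the signs appearing in \eqref{eq:b1}, which is the Floer-theoretic shadow of the sign bookkeeping already carried out abstractly in Section~\ref{subsec:differentiation-axiom-2}; we will assert this and omit the detailed verification, as is done for the parallel Proposition~\ref{th:ninth-circle}. Granting all of this, counting isolated points in the parametrized moduli space produces the required homotopy, establishing Proposition~\ref{th:ninth-circle-2}.
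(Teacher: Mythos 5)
Your proposal is correct and takes essentially the same route as the paper's proof: both reformulate the claim as the nullhomotopy of the expression in \eqref{eq:to-show-2}, both interpret the differentiation terms via a moduli space with an extra marked point through $q^{-1}\Omega$ glued to padding continuation cylinders (the Floer realization of Figure \ref{fig:nabla-delta}), and both introduce the Floer-theoretic $\varpi$ by replacing the marked point with an extra end whose framing matches that of $[\cdot,\cdot]^{-1}$, using $d\theta = k$ to replace the bracket terms by $-\varpi(k,x)$ before interpolating. The only cosmetic remark is that the relevant padding figure is \ref{fig:ccc3} rather than \ref{fig:ccc}, and the homotopy $\varpi(k,x) \htp [\theta,\Delta Cx]^{-1} - C\Delta[\theta,x]^{-1}$ should be stated with the continuation-padded $\Delta C$ and $C\Delta$ rather than a bare $\Delta$, which you implicitly acknowledge but do not write out.
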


\begin{proof}
This follows the model of Proposition \ref{th:ninth-circle}. Our claim is
\begin{equation} \label{eq:to-show-2}
\begin{aligned}
& C( \partial_q \Delta - \Delta \partial_q)(Cx_2)
- h^+(\Delta C x_2) - C \Delta h^-(x_2) \\
& \quad + \rho^{-1}_{q^{-1}\Omega}(\Delta Cx_2) - C \Delta \rho^{-1}_{q^{-1}\Omega}(x_2) \\
& \quad - [\theta, \Delta Cx_2]^{-1} + C \Delta [\theta,x_2]^{-1} \htp 0. 
\end{aligned}
\end{equation}
The first line of \eqref{eq:to-show-2} can be geometrically interpreted as counting configurations as indicated in Figure \ref{fig:ccc3}. Let's replace the marked point in that picture by an additional end (whose framing agrees with that used to define $[\cdot,\cdot]^{-1}$). The outcome is a version of \eqref{eq:chi-operation}, namely a map
\begin{equation}
\begin{aligned}
& \varpi: \mathit{CF}^*(H_1^+) \otimes \mathit{CF}^*(E,H_2^+) \longrightarrow \mathit{CF}^{*-3}(E,H^-), \\
& d\varpi(x_1,x_2) + \varpi(dx_1,x_2) + (-1)^{|x_1|} \varpi(x_1,dx_2) \\ & \qquad \qquad + C\Delta[x_1,x_2]^{-1} + (-1)^{|x_1|} [x_1,\Delta C x_2]^{-1} = 0,
\end{aligned}
\end{equation}
which one can use to replace the last line in \eqref{eq:to-show-2} with $-\varpi(k,x)$. Interpolating between those two constructions yields the desired nullhomotopy.
\end{proof}
\begin{figure}
\begin{centering}
\begin{picture}(0,0)%
\includegraphics{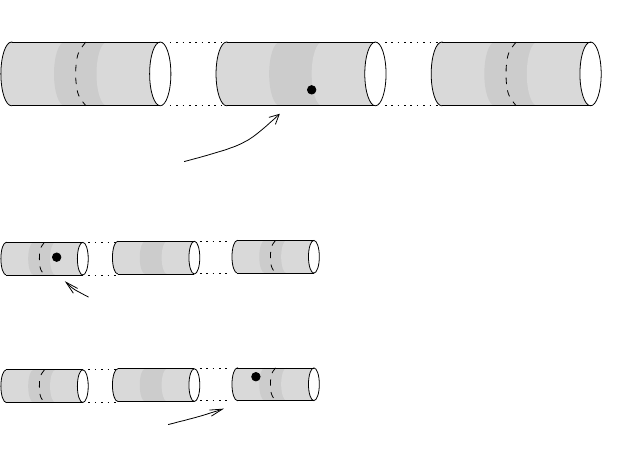}%
\end{picture}%
\setlength{\unitlength}{3355sp}%
\begingroup\makeatletter\ifx\SetFigFont\undefined%
\gdef\SetFigFont#1#2#3#4#5{%
  \reset@font\fontsize{#1}{#2pt}%
  \fontfamily{#3}\fontseries{#4}\fontshape{#5}%
  \selectfont}%
\fi\endgroup%
\begin{picture}(5805,4294)(-4132,-6884)
\put(151,-2761){\makebox(0,0)[lb]{\smash{{\SetFigFont{10}{12.0}{\rmdefault}{\mddefault}{\updefault}{\color[rgb]{0,0,0}continuation map}%
}}}}
\put(-3974,-2761){\makebox(0,0)[lb]{\smash{{\SetFigFont{10}{12.0}{\rmdefault}{\mddefault}{\updefault}{\color[rgb]{0,0,0}continuation map}%
}}}}
\put(-1724,-2761){\makebox(0,0)[lb]{\smash{{\SetFigFont{10}{12.0}{\rmdefault}{\mddefault}{\updefault}{\color[rgb]{0,0,0}BV operator}%
}}}}
\put(-3749,-5611){\makebox(0,0)[lb]{\smash{{\SetFigFont{10}{12.0}{\rmdefault}{\mddefault}{\updefault}{\color[rgb]{0,0,0}\dots or here, but to the right of (or on) the dashed circle\dots}%
}}}}
\put(-3749,-6811){\makebox(0,0)[lb]{\smash{{\SetFigFont{10}{12.0}{\rmdefault}{\mddefault}{\updefault}{\color[rgb]{0,0,0}\dots or here, but to the left of (or on) the dashed circle.}%
}}}}
\put(-3749,-4286){\makebox(0,0)[lb]{\smash{{\SetFigFont{10}{12.0}{\rmdefault}{\mddefault}{\updefault}{\color[rgb]{0,0,0}The marked point could be anywhere here\dots}%
}}}}
\end{picture}%
\caption{\label{fig:ccc3}The geometric meaning of the first two lines of \eqref{eq:to-show-2}.}
\end{centering}
\end{figure}%

Exactly as in  Corollary \ref{th:nabla-delta-cor}, combining Lemma \ref{th:01-connections-2} and Proposition \ref{th:ninth-circle-2} shows that $\nabla$ satisfies \eqref{eq:delta-nabla}, hence proves Proposition \ref{th:a-element}.

\subsection{Comparing the two versions of Floer cohomology}
We will now address the relation between symplectic cohomology and the Floer cohomology groups from Section \ref{sec:floer}. To begin with, we remain in the situation from Setup \ref{th:sh-setup}, but consider a wider class of Hamiltonian functions. Namely, we replace the second part of \eqref{eq:the-ak} with the more general requirement that
\begin{equation}
\textstyle\lim_j a_j = \alpha \in (0,\infty] \setminus \bZ.
\end{equation}
Given such $(a_j)$, we use time-dependent Hamiltonians as in Setup \ref{th:strange-h}, and consider the associated Floer cohomology groups. As before, there are continuation maps between two such groups, assuming that $a_j^- \geq a_j^+$ for all $j$.

\begin{discussion}
(i) As a first application of continuation maps, one shows that the Floer cohomology groups only depend on $(a_j)$, up to canonical isomorphism. We therefore again adopt the temporary notation $\mathit{HF}^*(E,a_1,a_2,\dots)$. Slightly more generally, changing finitely many of the $(a_j)$ does not affect Floer cohomology.

(ii) Take some $(a_j^+)$ with limit $\alpha^+ < \infty $. Choose $\alpha^- \geq \alpha^+$ such that $[\alpha^+,\alpha^-] \cap \bZ = \emptyset$. This implies that there is some $j^*$ such that $[a_j^+,\alpha^-] \cap \bZ = \emptyset$ for all $j \geq j^*$. Let's define
\begin{equation}
a_j^- = \begin{cases} a_j^+ &  j \leq j^*, \\
\alpha^- & j > j^*.
\end{cases}
\end{equation}
By an argument in the style of Lemma \ref{th:no-integer}, one can show that the continuation map $\mathit{HF}^*(E,a_1^+,a_2^+,\dots) \rightarrow \mathit{HF}^*(E,a_1^-,a_2^-,\dots)$ is an isomorphism.
\end{discussion}

By combining those two ingredients, one sees that the Floer cohomology depends only on $\alpha$, and in fact, only on $\lfloor \alpha \rfloor \in \{0,1,\dots,\infty\}$. We will therefore use the notation $\mathit{HF}^*(E,\alpha)$, where obviously $\mathit{HF}^*(E,\infty) = \mathit{SH}^*(E)$. As a special case of the general continuation map construction, we have maps
\begin{equation} \label{eq:c-to-infty}
C: \mathit{HF}^*(E,\alpha) \longrightarrow \mathit{SH}^*(E) \quad \text{for any $\alpha<\infty$.}
\end{equation}
A filtration argument similar to the proof of Lemma \ref{th:define-sh} shows that in the limit, these maps induce an isomorphism
\begin{equation}
\underrightarrow{\lim}_{\alpha<\infty}\, \mathit{HF}^*(E,\alpha) \iso \mathit{SH}^*(E).
\end{equation}

\begin{discussion} \label{th:discuss-c}
(i) Continuation maps, PSS maps, and BV operators are all compatible. As a rather unsurprising consequence, we have commutative diagrams relating some of the parallel constructions carried out in both versions of Floer cohomology:
\begin{equation}
\xymatrix{
\cdots \ar[r] & \ar@{=}[d] H^*(E;\bK) \ar[r] & \mathit{HF}^*(E,\alpha) \ar[r] \ar[d] & \mathit{HF}^*(E,\alpha)_{\mathit{red}} \ar[r] \ar[d] & \cdots \\
\cdots \ar[r] & H^*(E;\bK) \ar[r] & \mathit{SH}^*(E) \ar[r] & \mathit{SH}^*(E)_{\mathit{red}} \ar[r] & \cdots
}
\end{equation}
and
\begin{equation} \label{eq:delta-red-red}
\xymatrix{
\ar[d] \mathit{HF}^*(E,\alpha)_{\mathit{red}} \ar[rr]^-{\Delta_{\mathit{red}}} && \mathit{HF}^{*-1}(E,\alpha) \ar[d] \\
\mathit{SH}^*(E)_{\mathit{red}} \ar[rr]^-{\Delta_{\mathit{red}}} && \mathit{SH}^{*-1}(E)
}
\end{equation}

(ii) It is convenient to have a simplified description of \eqref{eq:c-to-infty} in some cases. Namely, given some $\alpha \in (0,\infty) \setminus \bZ$, let's choose $a^+_1 = a^+_2 = \cdots = \alpha$. One can arrange that all one-periodic orbits of $(H_t^+)$ lie in $\{|p(x)| < r_1^-\}$. In the notation from Discussion \ref{th:properties}, this means that
\begin{equation}
\mathit{CF}^*(E,H^+) = \mathit{CF}^*(E,H^+)_{\leq 0}.
\end{equation}
Take $\alpha^-_1 = \alpha \leq \alpha^-_2 \leq \cdots$, going to infinity, and choose $(H^-,J^-)$ so that it agrees with $(H^+,J^+)$ on the region $\{|p(x)| \leq r_1^+\}$. In view of Lemma \ref{th:c0-estimate}, all Floer trajectories with limits in that region remain entirely inside it, so that one has an equality of chain complexes
\begin{equation} \label{eq:leq0}
\mathit{CF}^*(E,H^-)_{\leq 0} = \mathit{CF}^*(E,H^+)_{\leq 0}.
\end{equation}
This means that $\mathit{CF}^*(E,H^+)$ sits inside $\mathit{CF}^*(E,H^-)$ as a subcomplex. When choosing the auxiliary data $(H^C,J^C)$ for the continuation map, one can similarly assume that $(H^C_{s,t},J^C_{s,t}) = (H^+_t,J^+_t) = (H^-_t,J^-_t)$ on $\{|p(x)| \leq r_1^+\}$. An application of Lemma \ref{th:c0-estimate-2} ensures that all solutions of the continuation map equation remain in the same region. By translation invariance, the only isolated solutions are ones of the form $u(s,t) = x(t)$. Hence, the chain map $C: \mathit{CF}^*(E,H^+) \rightarrow \mathit{CF}^*(E,H^-)$ is in fact the inclusion of the subcomplex.

To see why this is helpful, consider for instance the question of compatibility of \eqref{eq:c-to-infty} with the ring structure, which means the commutativity of the diagram
\begin{equation} \label{eq:pp-compatible}
\xymatrix{
\mathit{HF}^*(E,\alpha_1) \otimes \mathit{HF}^*(E,\alpha_2) \ar[rr]^-{\text{pair-of-pants}} \ar[d]_{C \otimes C}
&& \mathit{HF}^*(E,\alpha_1+\alpha_2) \ar[d]^C \\
\mathit{SH}^*(E) \otimes \mathit{SH}^*(E) \ar[rr]^-{\text{pair-of-pants}} &&
\mathit{SH}^*(E).
}
\end{equation}
One can arrange that on the chain level, the groups in the top line of \eqref{eq:pp-compatible} sit inside  those on the bottom as subcomplexes, with the continuation maps just being inclusions; and moreover, again using Lemma \ref{th:c0-estimate-2}, that the pair-of-pants product is strictly compatible with the inclusions, so that the desired compatibility holds trivially. This strategy is not specific to the product: it applies to all sorts of operations.
\end{discussion}

Start with $F$ as in Setup \ref{th:define-f} and its associated $E$, but carry out a coordinate change $b \mapsto 1/b$ on the base, so that we then have a map $p: E \rightarrow \bC$, which satisfies the conditions from Setup \ref{th:define-sh}. If we now choose $a_1 = a_2 = \cdots = \alpha \in (0,\infty) \setminus \bZ$, and take specific choices of Hamiltonians, the resulting Floer cohomology groups $\mathit{HF}^*(E,\alpha)$ agree with those previously defined in Section \ref{subsec:define-floer}. In particular, by taking \eqref{eq:bs-1} and using the continuation map, we can define the Borman-Sheridan class \eqref{eq:s-element} in symplectic cohomology.

Now suppose that Assumption \ref{th:psi-eta} is satisfied. In the chain complex underlying $\mathit{HF}^*(E,\alpha_1^+)$ with $\alpha_1^+ > 1$, we get a cochain which bounds $b_{q^{-1}\Omega|E}$, namely \eqref{eq:theta-new}. Through a suitable continuation map, this gives rise to a cochain in  the complex underlying $\mathit{SH}^*(E)$, with the corresponding property. By construction, the associated class \eqref{eq:infinite-t} is the image of the corresponding class \eqref{eq:t-new-class} under  continuation maps. In view of \eqref{eq:delta-red-red}, the same thing holds for \eqref{eq:a-new-class} and \eqref{eq:infinite-a}. In particular, Lemma \ref{th:determine-a} implies Lemma \ref{th:bounding}.

Let's use our bounding cochains to define connections \eqref{eq:alt-nabla-m} on symplectic cohomology. Then, these fit into a commutative diagram with \eqref{eq:alt-nabla-1}, 
\begin{equation} \label{eq:final-diagram}
\xymatrix{
\ar[d]_-{C} \mathit{HF}^*(E,\alpha^+_2) \ar[rr]^-{\nabla^{-1}} && \mathit{HF}^*(E,\alpha^-) \ar[d]^-{C} \\
\mathit{SH}^*(E) \ar[rr]^-{\nabla^{-1}} && \mathit{SH}^*(E).
}
\end{equation}
Of course, the formulae for the two connections look identical, but some explanation is still needed. As in Discussion \ref{th:discuss-c}(ii), one can arrange that the two continuation maps in \eqref{eq:final-diagram} are realized by inclusions of subcomplexes. All the chain level operations that define the connection on symplectic cohomology can be set up to preserve those subcomplexes, and then \eqref{eq:final-diagram} commutes trivially.

In view of this compatibility statement, Proposition \ref{th:main-computation} and Lemma \ref{th:square-of-borman-sheridan} imply that the connection $\nabla^{-1}$ on symplectic cohomology satisfies \eqref{eq:nablac-s}. Since we know how the connections $\nabla^c$ for different $c$ are related (Lemma \ref{th:01-connections-2}), it follows that \eqref{eq:nablac-s} holds for all $c$, and in particular $c = 0$, which proves Theorem \ref{th:sigma1}. 

\begin{remark}
One of the computations from Section \ref{sec:floer-connection}, namely Proposition \ref{th:nabla-e}, has not been used here. In fact, its main role was to serve as a model for the similar, but more complicated,  Proposition \ref{th:main-computation}. Still, it is interesting to have a direct proof of that formula, since the alternative derivation (given in Section \ref{sec:results}: one starts with \eqref{eq:nabla-e-is-0}, which implies \eqref{eq:nabla-c-e-2}, and then inserts Lemma \ref{th:bounding} into that formula, leading to \eqref{eq:nabla-c-e} for all $\nabla^c$) is not particularly geometric.
\end{remark}


\end{document}